\documentclass[11pt, oneside]{amsart}   	
\usepackage[
backend=bibtex,
style=numeric,
backref=true,
]{biblatex}
\addbibresource{HWWDS.bib}

\usepackage[margin = 0.75in]{geometry}                		
\geometry{letterpaper}                   		
\usepackage{graphicx}				


\DeclareFontEncoding{FMS}{}{}
\DeclareFontSubstitution{FMS}{futm}{m}{n}
\DeclareFontEncoding{FMX}{}{}
\DeclareFontSubstitution{FMX}{futm}{m}{n}
\DeclareSymbolFont{fouriersymbols}{FMS}{futm}{m}{n}
\DeclareSymbolFont{fourierlargesymbols}{FMX}{futm}{m}{n}
\DeclareMathDelimiter{\VERT}{\mathord}{fouriersymbols}{152}{fourierlargesymbols}{147}

								
\usepackage[english]{babel}
\usepackage{amsthm}
\usepackage{bbm}	
\usepackage{url}


\usepackage[
colorlinks=true]{hyperref} 
\usepackage{ifthen}
\usepackage{tikz}
\usetikzlibrary{decorations.pathreplacing}
\usepackage{tikz-cd}


\allowdisplaybreaks

\usepackage{blindtext}
\usepackage{scrextend}
\addtokomafont{labelinglabel}{\sffamily}

\usepackage{amssymb}
\usepackage{amsmath}
\newcommand{\R}{\mathbb{R}}

\newcommand{\K}{\mathcal{K}}
\newcommand{\T}{\mathbb{T}}
\newcommand{\C}{\mathbb{C}}
\newcommand{\E}{\mathbb{E}}
\newcommand{\A}{\mathcal{A}}
\newcommand{\1}{\mathbbm{1}}
\newcommand{\Z}{\mathbb{Z}}
\newcommand{\N}{\mathbb{N}}
\newcommand{\real}{\operatorname{Re}}


\newcommand{\norm}[1]{\left\lVert#1\right\rVert}

\newcommand{\floor}[1]{\lfloor#1\rfloor}

\newtheorem{theorem}{Theorem}[section]
\newtheorem{corollary}[theorem]{Corollary}
\newtheorem{lemma}[theorem]{Lemma}
\newtheorem{proposition}[theorem]{Proposition}
\theoremstyle{definition}
\newtheorem{definition}[theorem]{Definition}

\newtheorem{remark}[theorem]{Remark}

\title{Higher order Wiener-Wintner systems: examples and applications}
\author{Idris Assani}
\address{Department of Mathematics, The University of North Carolina at Chapel Hill, 120 E Cameron Avenue, CB 3250
Chapel Hill, NC 27599-3250, USA}
\email{assani@email.unc.edu}
\urladdr{https://idrisassani.web.unc.edu/}

\author{Jacob Folks}
\address{Department of Mathematics, The University of North Carolina at Chapel Hill, 120 E Cameron Avenue, CB 3250
Chapel Hill, NC 27599-3250, USA}
\email{jfolks1@unc.edu}

\author{Ryo Moore}
\address{Department of Mathematics, Southern University of Science and Technology, 1088 Xueyuan Avenue, Shenzhen 518055,
P.R. China (formerly affiliated)}
\email{ryom314@gmail.com}
\urladdr{https://sites.google.com/view/ryomoore/} 

\subjclass[2020]{37A05, 37A30}

\keywords{Wiener-Wintner, multiple recurrence, return times theorem, ergodic averages}

\thanks{RM was partially supported by NSFC 12250710130 from China.}

\date{}							

\begin{document}
\maketitle


\begin{abstract}
    We will construct ``higher dimensional" versions of the Wiener-Wintner dynamical system that was originally studied by I. Assani in 2003. We will show that on these systems, we can provide very simple proofs of the a.e. convergence of the multiple recurrence averages, as well as the multiple recurrence return times averages. We will do so by obtaining a quantitative control of the multiple ergodic averages by extending the estimate for the double recurrence that was attained by J. Bourgain. We will also observe that this class of dynamical systems contains numerous examples that are not bounded by the standard classifications (e.g. entropy, mixing), such as Kolmogorov systems, classical skew products, as well as systems for which the a.e. convergence of multiple recurrence is not currently known. Along our way, we will also provide alternative characteristic of the Host-Kra-Ziegler factors from the point of view of the uniform Wiener-Wintner theorem.
\end{abstract}

\tableofcontents

\section{Introduction}\label{s:intro}

\subsection{Background and contents}\label{ss:BandC}

Let $(X, \mathcal{F}, \mu, T)$ be a measure-preserving system, let $J \in \N$, and let $f_1, \dots, f_J \in L^\infty(\mu)$. Multiple recurrence averages have the form
\begin{equation}\label{Favg}
\frac{1}{N}\sum_{n=1}^N \prod_{j=1}^J f_j \circ T^{jn} \, ,
\end{equation}
and arose from Furstenberg's dynamical proof of the Szemerédi's theorem in 1977 \cite{Fur77}. These averages may sometimes be referred to as ``multiple ergodic averages" or ``nonconventional ergodic averages". Norm convergence of multiple recurrence averages has been established by Host and Kra \cite{hostkra}, and through different methods by Ziegler \cite{ziegler07} (while more general results were later obtained by Tao \cite{Tao2008} and Walsh \cite{Walsh2012} for the case of several transformations). Partial results for pointwise convergence have been established in the cases of some weakly mixing systems by Assani \cite{assani98} and distal systems by Huang, Shao, and Ye \cite{Ye}, as well as by Donoso and Sun for several commuting transformations \cite{DS2018}. Krause, Mirek, and Tao recently obtained a pointwise convergence result for double recurrence with a non-linear polynomial \cite{KMT2022}. For more on the history and open problems regarding multiple recurrence, one may consult a survey paper by Frantzikinakis \cite{Frantz2016}.

Pointwise convergence in the case of double recurrence (i.e. the average (\ref{Favg}) with $J=2$) was established by Bourgain \cite{bourgain}. While his argument is complex, the proof begins by showing that, with relatively simple steps, the $L^2$-norm of double recurrence average can be controlled by the $L^2$-norm of the average
\begin{equation} \label{UWW}
\sup_{t \in \R} \left| \frac{1}{N} \sum_{n=1}^N e^{2\pi int} f \circ T^n \right| \end{equation}
(the same can be said if we replace the $L^2$-norm with $L^1$-norm, as we will discuss the detail in Appendix \ref{a:BB}). This average, known as the \textit{Wiener-Wintner average}, has been studied greatly in prior to Bourgain's work. In the work of N. Wiener and A. Wintner from 1944 \cite{WW}, the following result is announced.
\begin{theorem}[Wiener-Wintner ergodic theorem]\label{t:WW}
Let $(X, \mathcal{F}, \mu, T)$ be a measure-preserving system, and let $f \in L^1(\mu)$. There exists a set $X_f \in \mathcal{F}$ such that $\mu(X_f) = 1$, and for every $x \in X_f$ and for every $t \in \R$, the limit
\[ \lim_{N \to \infty} \frac{1}{N} \sum_{n=1}^N e^{2\pi int} f(T^nx) \]
exists.
\end{theorem}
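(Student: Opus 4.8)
The plan is the classical one: a maximal inequality reduces the problem to a dense family of functions, and the spectral decomposition relative to the Kronecker factor handles that family. Write $A_N^t f(x) = \frac1N\sum_{n=1}^N e^{2\pi int} f(T^n x)$. \emph{Reductions.} Using the ergodic decomposition, one may assume $T$ is ergodic: the set of $x$ at which $\lim_N A_N^t f(x)$ exists for every $t$ is measurable, and it is $\mu$-conull as soon as it is conull for each ergodic component. Next, for every $t$ and $N$ one has $|A_N^t f(x)| \le \frac1N\sum_{n=1}^N|f(T^n x)| \le M|f|(x)$, where $M|f|(x) := \sup_N \frac1N\sum_{n=1}^N |f(T^n x)|$ is the Birkhoff maximal function, of weak type $(1,1)$; since this bound is uniform in $t$, a standard Banach-principle argument shows that the set $G$ of $f\in L^1(\mu)$ for which there is a \emph{single} conull set on which $\lim_N A_N^t f$ exists for all $t$ is a closed linear subspace of $L^1(\mu)$. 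As $L^2(\mu)$ is dense in $L^1(\mu)$, it is enough to show $L^2(\mu)\subseteq G$, and by the orthogonal splitting $f = P_{\K}f + (f-P_{\K}f)$ — where $\K\subseteq L^2(\mu)$ is the closed span of the eigenfunctions of the Koopman operator $U_T$ — it is enough to show $\K\subseteq G$ and that every $f\in L^2(\mu)$ with $f\perp\K$ lies in $G$. \emph{The Kronecker part} reduces, since $G$ is closed and linear, to a single eigenfunction $g$, $g\circ T = e^{2\pi i\theta}g$: then $A_N^t g(x) = g(x)\cdot\frac1N\sum_{n=1}^N e^{2\pi in(t+\theta)}$ converges for every $t$ (to $g(x)$ if $t+\theta\in\Z$, to $0$ otherwise) at every $x$ with $|g(x)|<\infty$, so $g\in G$ and hence $\K\subseteq G$.

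\emph{The orthocomplement — the crux.} Let $f\in L^2(\mu)$ with $f\perp\K$, and let $\sigma_f$ be its spectral measure, so that $\langle U_T^h f, f\rangle = \int_{\T} z^h\, d\sigma_f(z)$; since $f\perp\K$, the measure $\sigma_f$ has no atoms, so $\frac1H\sum_{h=1}^H|\widehat{\sigma_f}(h)|^2\to0$ by Wiener's lemma, and hence $\frac1H\sum_{h=1}^H|\widehat{\sigma_f}(h)|\to0$ by the Cauchy--Schwarz inequality. Fix $H\in\N$ and apply the van der Corput inequality to $u_n = e^{2\pi int}f(T^n x)$; since $u_{n+h}\overline{u_n} = e^{2\pi iht}(f\circ T^h\cdot\overline f)(T^n x)$, its modulus, and hence the entire right-hand side of the inequality, is independent of $t$, and one obtains, for $\mu$-a.e. $x$ and all $t$,
\[
|A_N^t f(x)|^2 \ \le\ \frac{C}{H}\cdot\frac1N\sum_{n=1}^N |f(T^n x)|^2 \ +\ \frac{C}{H}\sum_{h=1}^H \left| \frac1N\sum_{n=1}^{N-h} (f\circ T^h\cdot\overline f)(T^n x) \right| .
\]
Letting $N\to\infty$ with $H$ fixed, Birkhoff's theorem (using ergodicity of $T$) sends the first term to $\frac CH\norm{f}_{L^2(\mu)}^2$ and the $h$-th summand of the second to $\frac CH|\widehat{\sigma_f}(h)|$, $\mu$-a.e.; as the bound is $t$-free, $\limsup_N \sup_t |A_N^t f(x)|^2 \le \frac CH\big(\norm{f}_{L^2(\mu)}^2 + \sum_{h=1}^H|\widehat{\sigma_f}(h)|\big)$ for $\mu$-a.e. $x$ and every $H$. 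Letting $H\to\infty$, the right-hand side tends to $0$, so $\sup_t|A_N^t f(x)|\to0$ for $\mu$-a.e. $x$; in particular $f\in G$, and the proof is complete.

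I expect the heart of the matter to be this last step, and specifically the \emph{order} of the two limits: $N\to\infty$ must be taken first, with $H$ held fixed, so that Birkhoff's theorem applies to the finitely many averages present, and only afterwards may one let $H\to\infty$ to bring in the continuity of $\sigma_f$ through Wiener's lemma. Casting the van der Corput inequality in a usable form, and checking that its right-hand side genuinely does not depend on $t$, is the other delicate point. The bookkeeping in the reductions — the Banach-principle argument and the passage to ergodic components, including measurability of the exceptional set — is routine by comparison.
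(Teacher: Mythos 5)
The paper does not prove Theorem~\ref{t:WW}: it is stated as background and the reader is referred to \cite{AssaniWW} for complete proofs, so there is no in-paper argument to compare against. Your proposal is the standard (and correct) proof, and it is entirely consonant with the toolkit the paper does deploy elsewhere: the van der Corput estimate in the $t$-free form (2.2), the spectral characterization of the Kronecker factor, and Wiener's lemma are exactly the ingredients behind the paper's Theorem~\ref{t:UWW}. Indeed, on $\K^\perp$ you prove the stronger uniform statement $\sup_t|A_N^tf(x)|\to 0$ a.e., which is the direction $(1)\Rightarrow(2)$ of Theorem~\ref{t:UWW}; combined with the direct computation on eigenfunctions and the Banach principle this yields Theorem~\ref{t:WW}. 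Your identification of the delicate points is also right: the order of limits ($N\to\infty$ first with $H$ fixed, so Birkhoff applies to the finitely many correlation averages $f\circ T^h\cdot\overline f$, then $H\to\infty$ to invoke continuity of $\sigma_f$) is where the argument lives.

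Two small cautions, neither fatal. First, you assert that the set $\{x:\forall t,\ \lim_N A_N^tf(x)\text{ exists}\}$ is measurable; as an uncountable intersection this is not automatic, and the correct bookkeeping is to \emph{construct} a measurable conull set (the countable intersection of the Birkhoff sets for $|f|^2$ and for $f\circ T^h\cdot\overline f$, $h\in\N$, together with the sets where the eigenfunction identities hold) and verify convergence for all $t$ on that set; your argument in fact does exactly this, so only the phrasing needs adjusting. In the Banach-principle step the possibly nonmeasurable bad set is contained in the measurable set $\{2M|f-f_k|>\ep\}$ of small measure, which suffices. Second, the reduction to the ergodic case via ergodic decomposition presupposes a standard (Lebesgue) probability space; for an abstract system one first passes to the factor generated by $\{f\circ T^n\}_{n\in\Z}$, or alternatively avoids the decomposition by replacing $\widehat{\sigma_f}(h)$ with $\E(f\circ T^h\cdot\overline f\mid\mathcal{I})$ and using a conditional form of Wiener's lemma. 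With those caveats the proof is complete.
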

We note that the set $X_f$ is \textit{independent} of $t$, which makes this result a nontrivial extension of Birkhoff's ergodic theorem.

There was a gap in the original proof of the Wiener-Wintner ergodic theorem. Fortunately, multiple complete proofs of this result have been produced since then. One may consult \cite{AssaniWW} for these proofs as well as subjects related to this result (and/or \cite{AssaniWWB} for more concise and modern survey that includes some updates in the subject).

A stronger variant of the Wiener-Wintner theorem, which is more relevant to study the average in (\ref{UWW}), is called the \textit{uniform Wiener-Wintner theorem}, which is stated as follows: 

\begin{theorem}[Uniform Wiener-Wintner ergodic theorem]\label{t:UWW}
Let $(X, \mathcal{F}, \mu, T)$ be an ergodic measure-preserving system, and let $f \in L^1(\mu)$. The following statements are equivalent.
\begin{enumerate}
    \item The function $f$ belongs to the orthogonal complement of the Kronecker factor.
    \item For $\mu$-a.e. $x \in X$, we have
    \[ \lim_{N \to \infty} \sup_{t \in \R} \left| \frac{1}{N} \sum_{n=1}^N e^{2\pi int}f(T^nx) \right| = 0 \, .  \]
\end{enumerate}
\end{theorem}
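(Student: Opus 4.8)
The plan is to prove the two implications separately: $(2)\Rightarrow(1)$ by contraposition using only Birkhoff's pointwise ergodic theorem, and the substantive direction $(1)\Rightarrow(2)$ by a van der Corput estimate together with the spectral theorem, after first reducing to bounded functions.

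For $(2)\Rightarrow(1)$ I would argue by contraposition. If $f\notin\K^\perp$, there is an eigenfunction $g$, say $g\circ T=e^{2\pi it_0}g$, with $\langle f,g\rangle\neq0$; by ergodicity $|g|$ is a.e.\ constant, which we normalize to $1$. From $g(T^nx)=e^{2\pi int_0}g(x)$ one gets the pointwise identity $e^{-2\pi int_0}f(T^nx)=g(x)\,(f\overline{g})(T^nx)$, and since $f\overline{g}\in L^1$, Birkhoff's theorem yields $\frac1N\sum_{n=1}^N e^{-2\pi int_0}f(T^nx)\to g(x)\,\langle f,g\rangle$ for $\mu$-a.e.\ $x$. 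Hence $\sup_{t\in\R}\big|\frac1N\sum_{n=1}^N e^{2\pi int}f(T^nx)\big|\ge\big|\frac1N\sum_{n=1}^N e^{-2\pi int_0}f(T^nx)\big|\to|\langle f,g\rangle|>0$ on a set of full measure, so $(2)$ fails.

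For $(1)\Rightarrow(2)$ I would first reduce to $f\in L^\infty$. For $f\in L^1$, condition $(1)$ should be read as $\E[f\mid\K]=0$; put $f_M=f\1_{\{|f|\le M\}}$ and $f'_M=f_M-\E[f_M\mid\K]\in L^\infty\cap\K^\perp$, so that (using $\E[f_M\mid\K]=-\E[f-f_M\mid\K]$) $\norm{f-f'_M}_{L^1}\le 2\norm{f-f_M}_{L^1}\to0$. Since $x\mapsto\sup_N\sup_{t\in\R}\big|\frac1N\sum_{n=1}^N e^{2\pi int}f(T^nx)\big|$ is pointwise dominated by the ergodic maximal function $\sup_N\frac1N\sum_{n=1}^N|f|(T^nx)$, which is of weak type $(1,1)$, a standard density argument (the Banach principle) reduces matters to bounded $f\in\K^\perp$. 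For such $f$, I would apply van der Corput's inequality to $u_n=e^{2\pi int}f(T^nx)$: for $1\le H\le N$,
\[
\Big|\frac1N\sum_{n=1}^N u_n\Big|^2\ \lesssim\ \frac1H\cdot\frac1N\sum_{n=1}^N|f|^2(T^nx)\ +\ \frac1H\sum_{h=1}^{H}\Big|\frac1N\sum_{n=1}^{N-h}\big((f\circ T^h)\cdot\overline f\big)(T^nx)\Big|,
\]
the crucial point being that the phase $e^{2\pi iht}$ produced by $u_{n+h}\overline{u_n}$ has modulus $1$, so the right-hand side does not depend on $t$ and the bound survives after taking $\sup_t$ on the left. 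Sending $N\to\infty$ and invoking Birkhoff's theorem simultaneously for $|f|^2$ and for the countably many functions $(f\circ T^h)\cdot\overline f$ ($h\in\N$), I obtain, on a single set of full measure and for every $H$,
\[
\limsup_{N\to\infty}\ \sup_{t\in\R}\Big|\frac1N\sum_{n=1}^N e^{2\pi int}f(T^nx)\Big|^2\ \lesssim\ \frac1H\int|f|^2\,d\mu\ +\ \frac1H\sum_{h=1}^{H}|c_h|,\qquad c_h:=\int (f\circ T^h)\cdot\overline f\,d\mu.
\]
Now the numbers $(c_h)_{h\ge1}$ are the Fourier coefficients of the spectral measure $\sigma_f$ of $f$, and $\sigma_f$ is continuous precisely because $f\in\K^\perp$; so Wiener's lemma gives $\frac1H\sum_{h=1}^{H}|c_h|^2\to0$, and Cauchy--Schwarz then gives $\frac1H\sum_{h=1}^{H}|c_h|\to0$ as well. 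Letting $H\to\infty$ forces the left-hand side to vanish, which is exactly $(2)$.

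The step I expect to be the main obstacle is the van der Corput passage: everything turns on arranging the inequality so that, once absolute values are in place, the parameter $t$ disappears from the error terms — this is what promotes the merely pointwise-in-$t$ control coming from Birkhoff's theorem into the \emph{uniform} conclusion. The other ingredient carrying real content is the identification of $\K^\perp$ with the functions of continuous spectral type, which is where hypothesis $(1)$ genuinely enters; by contrast, the truncation and maximal-inequality reduction and the Wiener-plus-Cauchy--Schwarz endgame are routine.
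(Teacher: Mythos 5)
Your proposal is correct. Note that the paper does not prove Theorem \ref{t:UWW} itself: it cites \cite[Theorem 2.4]{AssaniWW} for $(1)\Rightarrow(2)$ and sketches the converse via the spectral theorem and Fubini. Your $(1)\Rightarrow(2)$ argument is exactly the cited standard proof --- truncate and use the weak $(1,1)$ maximal inequality to reduce to bounded $f$, apply the van der Corput estimate (the paper's inequality (\ref{vdc-1})) so that the $t$-dependence disappears from the error terms, run Birkhoff simultaneously over the countably many correlation functions $h\mapsto f\cdot\overline{f\circ T^h}$, and finish with the continuity of the spectral measure of $f\in\mathcal{K}^\perp$ plus Wiener's lemma and Cauchy--Schwarz. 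For $(2)\Rightarrow(1)$ you replace the Fubini/spectral-measure computation the paper alludes to with a direct contrapositive: pick an eigenfunction $g$ with $\langle f,g\rangle\neq 0$, use $|g|=1$ a.e.\ (ergodicity) to write $e^{-2\pi i nt_0}f(T^nx)=g(x)(f\overline g)(T^nx)$, and apply Birkhoff to $f\overline g\in L^1$; this is marginally more elementary and gives the lower bound $|\langle f,g\rangle|$ on the $\limsup$ of the supremum at a single explicit frequency $t_0$, at the cost of invoking the density of (spans of) eigenfunctions in $L^2(\mathcal{K})$ to produce $g$. Both routes are sound; no gaps.
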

See \cite[Theorem 2.4]{AssaniWW} for a proof of the theorem for (1) implying (2); the converse can be obtained by using the spectral characterization of the dynamical theorem as well as Fubini's theorem (see the proof in \cite[Proposition 7.1(1)]{AssaniWW} for a relevant computation). We remark that we \textit{cannot} drop the ergodicity assumption from this theorem (see \cite[pp. 202-203]{AssaniWWB} for a counterexample).

By observing the relevance of the Wiener-Wintner averages, Assani has noticed that the proof of the double recurrence theorem can be simplified greatly \cite[Theorem 9]{assani} with the following additional assumption on the dynamical system: Let $(X, \mathcal{F}, \mu, T)$ be an ergodic measure-preserving system. Assume that there exists a dense (in $L^2$-norm) set of functions in the orthogonal complement of the Kronecker factor, such that there exists $\alpha > 0$ such that for every $f$ in this set, there exists a constant $C_f > 0$ such that for every $N \in \N$, we have
$$\left\Vert \sup_t \left| \frac{1}{N} \sum_{n=1}^N e^{2\pi i n t} f \circ T^n \right| \right\Vert_p \leq \frac{C_f}{N^\alpha} \, .$$
We say such system is a \textit{Wiener-Wintner dynamical system of power type $\alpha$}. It was originally studied in \cite{assani}, and some spectral properties of such system were studied in \cite{assani04}.

Another type of ergodic average that is closely related to the Wiener-Wintner average is the \textit{return times average}. Such average was initially studied by A. Brunel in his PhD thesis \cite{brunel66}. Let $(X, \mathcal{F}, \mu, T)$ be a measure-preserving system, and let $f \in L^1(\mu)$. By the Wiener-Wintner ergodic theorem and an immediate application of the spectral theorem, there exists a set $X_f \subset X$ such that $\mu(X_f) = 1$, and for every $x \in X_f$, and for every other measure-preserving system $(Y, \mathcal{G}, \nu, S)$ and every $g \in L^1(\nu)$, the limit
\[ \lim_{N \to \infty} \frac{1}{N} \sum_{n=1}^N f(T^nx) g \circ S^n \]
exists in $L^2(\nu)$-norm. The pointwise counterpart of this result also exists, but it is not as immediate as the $L^2$-convergence case. Such result, which we now call the \textit{return times theorem}, was obtained by Bourgain \cite{Bourgain1988}, while this proof was later simplified by himself, along with Furstenberg, Katznelson, and Ornstein \cite{BFKO}; we will call this argument ``the BFKO argument" for short.

\begin{theorem}[Return times theorem]\label{t:RTT}
Let $(X, \mathcal{F}, \mu, T)$ be a measure-preserving system, and let $f \in L^\infty(\mu)$. Then there exists a set $X_f \in \mathcal{F}$ such that $\mu(X_f) = 1$, and for every $x \in X_f$, for every other measure-preserving system $(Y, \mathcal{G}, \nu, S)$, for every $g \in L^\infty(\nu)$, and for $\nu$-a.e. $y \in Y$, the limit
\[ \lim_{N \to \infty} \frac{1}{N} \sum_{n=1}^N f(T^nx)g(S^ny) \]
exists.\footnote{An alternative proof of this theorem by Rudolph \cite{Rudolph1994} shows that the result holds if the H\"older duality is preserved, i.e. if $p \in [1, \infty]$, $q \in \R$ for which $1/p + 1/q \leq 1$, and $f \in L^p(\mu)$ and $g \in L^q(\nu)$. The problem regarding the ``break" of duality (i.e. $1/p + 1/q > 1$) was raised by Assani in 1990, and there has been some developments since then; see \cite[\S5]{ap2014} for more detail.}
\end{theorem}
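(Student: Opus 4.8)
The plan is to follow the BFKO argument, which identifies the Kronecker factor of $(X,\mathcal F,\mu,T)$ as the characteristic factor governing the return-time behaviour and reduces everything else to Wiener--Wintner estimates. First I would dispose of routine reductions: by the ergodic decomposition of $(X,\mathcal F,\mu,T)$ it suffices to treat the case where $T$ is ergodic, and after rescaling we may assume $\norm{f}_\infty\le1$. Write $f=\tilde f+f_0$, where $\tilde f=\E(f\mid\K)$ is the conditional expectation of $f$ onto the Kronecker factor $\K$ of $(X,T)$ (so $\norm{\tilde f}_\infty\le1$ as well) and $f_0=f-\tilde f$ lies in the orthogonal complement of $\K$. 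By linearity of the averages it suffices to prove the conclusion for $\tilde f$ and for $f_0$ separately; the set $X_f$ will be the intersection of the countably many full-measure subsets of $X$ produced below, each of which is \emph{independent} of the auxiliary data $(Y,\mathcal G,\nu,S,g)$.

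For the piece $\tilde f$, I would reduce --- using that finite linear combinations of eigenfunctions are dense in $\K$ --- to the case where $\tilde f=e$ is an eigenfunction, $e\circ T=e^{2\pi i\theta}e$. Then $e(T^nx)=e^{2\pi in\theta}e(x)$, so $\frac1N\sum_{n=1}^N e(T^nx)\,g(S^ny)=e(x)\cdot\frac1N\sum_{n=1}^N e^{2\pi in\theta}g(S^ny)$, and the last average converges for $\nu$-a.e. $y$ by the Wiener--Wintner ergodic theorem (Theorem \ref{t:WW}) applied to $(Y,\mathcal G,\nu,S)$ and $g$ --- the crucial feature being that the full-measure set of admissible $y$ does not depend on $\theta$. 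The density step needs no maximal inequality: if $p$ is a trigonometric polynomial in eigenfunctions with $\norm{\tilde f-p}_{L^2(\mu)}<\delta$, then Cauchy--Schwarz gives
\[
\Bigl|\tfrac1N\sum_{n=1}^N(\tilde f-p)(T^nx)\,g(S^ny)\Bigr|\le\Bigl(\tfrac1N\sum_{n=1}^N|\tilde f-p|^2(T^nx)\Bigr)^{1/2}\Bigl(\tfrac1N\sum_{n=1}^N|g|^2(S^ny)\Bigr)^{1/2},
\]
and Birkhoff's theorem, applied to $|\tilde f-p|^2$ for $\mu$-a.e. $x$ and to $|g|^2$ for $\nu$-a.e. $y$, bounds the right-hand side in the limit by $\delta\,\norm{g}_\infty$. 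Letting $\delta\to0$ along a countable dense family of such $p$ shows that $\frac1N\sum_{n=1}^N\tilde f(T^nx)g(S^ny)$ is Cauchy, hence convergent, for $\nu$-a.e. $y$.

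For the piece $f_0$, the soft ingredient is an $L^2$-bound. Expanding the square and applying the spectral theorem to $S$, for every measure-preserving system $(Y,\mathcal G,\nu,S)$ and $g\in L^\infty(\nu)$,
\[
\norm{\tfrac1N\sum_{n=1}^N f_0(T^nx)\,g\circ S^n}_{L^2(\nu)}^2=\int_{\T}\Bigl|\tfrac1N\sum_{n=1}^N f_0(T^nx)e^{2\pi in\theta}\Bigr|^2 d\sigma_g(\theta)\le\norm{g}_{L^2(\nu)}^2\Bigl(\sup_{t\in\R}\Bigl|\tfrac1N\sum_{n=1}^N e^{2\pi int}f_0(T^nx)\Bigr|\Bigr)^2,
\]
where $\sigma_g$ is the spectral measure of $g$ with respect to $S$, so $\sigma_g(\T)=\norm{g}_{L^2(\nu)}^2$. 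Since $f_0$ is orthogonal to the Kronecker factor and $T$ is ergodic, the uniform Wiener--Wintner theorem (Theorem \ref{t:UWW}) furnishes a full-measure set of $x$ on which $\sup_t|\frac1N\sum_{n=1}^N e^{2\pi int}f_0(T^nx)|\to0$; on that set $\frac1N\sum_{n=1}^N f_0(T^nx)\,g\circ S^n\to0$ in $L^2(\nu)$, at a rate uniform over \emph{all} systems $(Y,\mathcal G,\nu,S)$ and all $g$ with $\norm{g}_{L^2(\nu)}\le1$. When $g$ belongs to the Kronecker factor of $(Y,S)$, one already gets $\frac1N\sum_{n=1}^N f_0(T^nx)\,g\circ S^n\to0$ $\nu$-a.e.: for $g$ an eigenfunction this is the reduction of the previous paragraph applied in the $y$-variable (now invoking uniform Wiener--Wintner for $(X,T)$, since $f_0\perp\K$), and the general case follows by density of eigenfunctions together with the trivial maximal bound $\sup_N|\frac1N\sum_{n=1}^N f_0(T^nx)\,g\circ S^n|\le\norm{f_0}_\infty\, M|g|$, where $M$ is the ergodic maximal operator on $(Y,S)$. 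Thus the only case left is the \emph{doubly orthogonal} one, in which $f_0\perp\K$ and $g$ is orthogonal to the Kronecker factor of $(Y,\mathcal G,\nu,S)$.

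This doubly orthogonal case is where I expect the genuine difficulty to lie. The $L^2$-decay above together with the same trivial maximal bound is not enough to force $\nu$-a.e. convergence: it merely shows that the set of $g$ for which the weighted averages converge $\nu$-a.e. is a closed subspace, which we can identify no further than the closed span of the eigenfunctions of $S$. To close the gap one must upgrade the $L^2$-estimate to a genuine maximal (or oscillation) inequality for the operators $g\mapsto\sup_N|\frac1N\sum_{n=1}^N f_0(T^nx)\,g\circ S^n|$, valid for $\mu$-a.e. $x$ uniformly over all $(Y,\mathcal G,\nu,S)$; this is the heart of the BFKO argument, carried out by a stopping-time and covering decomposition of $X$ driven by the (system-uniform) $L^2$-decay established above --- in effect checking that, for $\mu$-a.e. $x$, the return-time sequence $(f_0(T^nx))_n$ is a good universal weight, which is the step where Bourgain's original proof invoked his entropy criterion. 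Combined with the $L^2$-convergence to $0$, this maximal inequality yields $\frac1N\sum_{n=1}^N f_0(T^nx)\,g\circ S^n\to0$ $\nu$-a.e. for every $(Y,\mathcal G,\nu,S)$ and every $g\in L^\infty(\nu)$. Finally, taking $X_f$ to be the intersection of the countably many full-measure $x$-sets arising above (each independent of $(Y,\mathcal G,\nu,S,g)$) and adding the two contributions completes the proof: for $x\in X_f$, every measure-preserving system $(Y,\mathcal G,\nu,S)$, and every $g\in L^\infty(\nu)$, the limit $\lim_N\frac1N\sum_{n=1}^N f(T^nx)g(S^ny)$ exists for $\nu$-a.e. $y$.
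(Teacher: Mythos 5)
The paper does not actually prove Theorem \ref{t:RTT}; it quotes it as a known result of Bourgain, with the simplified proof by Bourgain, Furstenberg, Katznelson, and Ornstein (the ``BFKO argument'') cited as the reference. So the only meaningful comparison is between your proposal and that argument, and your skeleton does match it: the splitting $f=\E(f\mid\K)+f_0$, the treatment of the Kronecker part via the Wiener--Wintner theorem applied on $(Y,\mathcal G,\nu,S)$, and the spectral-theorem estimate $\norm{\frac1N\sum_{n=1}^N f_0(T^nx)\,g\circ S^n}_{L^2(\nu)}\le\norm{g}_{2}\sup_t\bigl|\frac1N\sum_{n=1}^N e^{2\pi int}f_0(T^nx)\bigr|$ combined with the uniform Wiener--Wintner theorem are all correct and constitute the ``soft'' half of BFKO. (That last estimate is also the $J=K=1$ instance of the paper's Lemma \ref{RT_WWest}.) The Kronecker-part density argument and the reduction of the case $g\in\K(S)$ are likewise fine.

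The gap is that the one step which carries the actual content of the theorem --- upgrading, in the doubly orthogonal case, the $L^2(\nu)$-convergence to $0$ to $\nu$-almost everywhere convergence, with the exceptional $x$-set fixed \emph{before} $(Y,\mathcal G,\nu,S,g)$ are chosen --- is described but not carried out. You correctly observe that the trivial maximal bound only shows that the set of good $g$ is a closed subspace, and that one needs the BFKO criterion (for $\mu$-a.e.\ $x$ the sequence $(f_0(T^nx))_n$ is a good universal weight with limit $0$); but you neither prove that criterion nor verify its hypotheses, you only name the mechanism (``stopping-time and covering decomposition''). Since everything preceding this point is routine and the theorem is essentially equivalent to this step, what you have is an accurate proof outline rather than a proof. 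Note that if the ambient system were assumed to be a first-order WW system of power type, the quantitative decay of $\sup_t\bigl|\frac1N\sum_{n=1}^N e^{2\pi int}f_0(T^nx)\bigr|$ would let you finish by the monotone-convergence/subsequence device used in \S\ref{s:rtt_mea}, bypassing BFKO entirely; but Theorem \ref{t:RTT} makes no such assumption, and the uniform Wiener--Wintner theorem alone provides no rate.
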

Again, a novelty of the theorem is the existence of the set of full measure $X_f$ that is independent of the other system, which is not guaranteed from a simple application of the Birkhoff theorem on the product space. For more on the history and development of the return times theorem, we refer the readers to \cite{ap2014}.

While the techniques used in the BFKO argument is elementary (in a sense that one can read it with an elementary knowledge of ergodic theory), the proof itself is quite complicated and delicate. However, just as in the case of the double recurrence theorem, one can simplify the proof tremendously if the system $(X, \mathcal{F}, \mu, T)$ is assumed to be a Wiener-Wintner dynamical system; see \cite[\S5]{AssaniWWB}. 

Let $(X, \mathcal{F}, \mu, T)$ be a measure-preserving system, let $J, K \in \N$, and let $f_1, f_2, \ldots, f_J \in L^\infty(\mu)$. We would like to know if we can extend the return times theorem for multiple ergodic averages: Does there exist a set $X' \subset X$ such that for every $x \in X'$, for every other measure-preserving system $(Y, \mathcal{G}, \nu, S)$ and functions $g_1, g_2, \ldots, g_K \in L^\infty(\nu)$, and for $\nu$-a.e. $y \in Y$, the limit
\begin{equation}\label{mrtt}
     \lim_{N \to \infty} \frac{1}{N} \sum_{n=1}^N \prod_{j=1}^J f_j(T^{jn}x) \prod_{k=1}^K g_k(S^{kn}y)
\end{equation}
exists? The case $J=K=1$ is the return times theorem, and for some weakly mixing systems, Assani showed such convergence result for the case $K=1$ for any $J \in \N$ \cite{assani2000}. The case $J=2$ and $K=1$ was obtained by Zorin-Kranich using the convergence criteria obtained in the BFKO argument \cite{ZK2019}.

For $L^2(\nu)$ convergence of (\ref{mrtt}), Host and Kra proved the case for $J=1$ for any $K \in \N$ \cite[Theorem 2.25]{HK2009} (see \cite[Corollary 7.1]{EZ2013} for an extension of this result) using nilsequences. Later, Assani and Moore obtained the norm convergence result for the case $J=2$ for any $K \in \N$ \cite{AM2017} using the double recurrence Wiener-Wintner Theorem \cite{ADM2016}.

It is worth noting that there exists a weakly mixing system that is not a Wiener-Wintner dynamical system of power type $\alpha$; such system was obtained by Assani \cite[Theorem 7]{assani04}), as well as independently by Lesigne.

\subsection{Goals and outline}
The major goal of this paper is to extend the Wiener-Wintner dynamical system to the higher order cases. We will construct classes of dynamical systems for which one can obtain simple proofs for
\begin{enumerate}
    \item the pointwise convergence of multiple recurrence average for more than two functions, and
    \item the extension of the return times theorem for multiple ergodic averages.
\end{enumerate}
We will demonstrate that there are many interesting examples of such systems. In particular, as in a case for the original Wiener-Wintner dynamical systems, Kolmogorov systems and classical skew products are included in this class of systems. We will also show that these higher-order classes contain an example that has not been studied for the pointwise convergence of multiple ergodic averages previously. Furthermore, we will provide an alternative characterization of Host-Kra-Ziegler factors (see \S\ref{ss:ns_hkz} for definitions).

Here is a rough outline of the paper:
\begin{itemize}
    \item  We summarize some concepts and conventions that will be used throughout the paper in \S\ref{s:prelim}.
    \item In \S\ref{s:HWWDS}, we present the analogous decay condition to define a \textit{$k$-th order Wiener-Wintner dynamical system.}
    \item We provide examples of systems that satisfy the condition in \S\ref{s:ex}. In particular, we will show that Kolmogorov systems (\S\ref{ss:Ksys}) and classical skew products (\S\ref{ss:csp}) are higher-order Wiener-Wintner systems.
    \item In \S\ref{s:mea}, we show how this condition allows for a simple proof of pointwise convergence in multiple recurrence averages in higher order Wiener-Wintner dynamical systems (Corollary \ref{AEMultRec}). In the process, we will extend the bound obtained by Bourgain in \cite{bourgain} for multiple recurrence (Theorem \ref{BBgeq3}). We will also prove the uniform Wiener-Wintner theorem for multiple recurrence for Wiener-Wintner dynamical systems in this section as well (Theorem \ref{t:uww_mea}).
    \item  In \S\ref{s:GHKsemnorm}, we show how one can provide alternative characteristics of the Host-Kra-Ziegler factors that characterize norm convergence of multiple recurrence in terms of higher-order Wiener-Wintner averages (Theorem \ref{t:higher.order.UWW}). As a consequence, we may extend the uniform Wiener-Wintner theorem (Theorem \ref{t:UWW}) to higher order (Theorem \ref{t:HUWW}).
    \item In \S\ref{s:K-stability}, we will address the stability property of a product of a Kolmogorov system and a Wiener-Wintner dynamical system (Theorem \ref{t:productWWS}); this provides us another example of Wiener-Wintner dynamical system that is neither weakly mixing nor distal.
    \item  A proof of the return times theorem for multiple ergodic averages for higher order Wiener-Wintner dynamical system is presented in \S\ref{s:rtt_mea} (Corollary \ref{c:retutnTimesCF}). In particular, we obtain a control of the $L^2(\nu)$-norm of the averages in (\ref{mrtt}) in terms of higher-order Wiener-Wintner averages; this control may hold for any measure-preserving systems (Lemma \ref{RT_WWest}).
\end{itemize}

One of the key estimates of the paper can be found in Theorem \ref{BBgeq3}, which may be used to control the $L^1$-norm of the multiple ergodic averages; this estimate, which was established by Bourgain for the case of two functions \cite{bourgain}, holds on any invertible measure-preserving systems. We note that this is a \textit{quantitative} estimate, in a sense that it holds for all $N \in \N$; we see in Remark \ref{WW.average.bound} that it is only when we let $N \to \infty$, we obtain a familiar \textit{qualitative} estimate where the limit of the norm of the averages is controlled by the Gowers-Host-Kra seminorms \cite{hostkra}. We have also seen this relationship between a quantitative estimate with Wiener-Wintner type averages and a qualitative estimate with the seminorms previously in \cite{assani-cubes}.

Unlike the classical Wiener-Wintner dynamical systems (which we will refer to as the first-order Wiener-Wintner dynamical systems), proving that a certain dynamical system is a higher-order Wiener-Wintner dynamical system requires additional work due to the non-linearity of the subset of functions that we are interested. We will handle this issue by establishing the ``multilinearity concerns" (Lemma \ref{l:mlc}). While this approach is theoretically simple and does not change the key ideas behind the proofs, it increases the technicality of some already lengthy computations. To help audience understand these key ideas, we will prove some of our results for simpler cases by restricting to smaller order (e.g. for the second or third order), which use the same ideas. For the reader who is interested, the proofs for the general cases are included in the appendices.

\subsection{Acknowledgment} RM would like to thank the hospitality of SUSTech Mathematics Department, as well as SUSTech International Center for Mathematics while he held his visiting position there until December 2023. 

Furthermore, we would like to thank the anonymous referees for their helpful comments.


\section{Preliminary}\label{s:prelim}
In this section, we summarize some notations, conventions, and concepts that will be used throughout the paper.
\subsection{Notations and conventions}\label{ss:nc}
We denote the quadruple like $(X, \mathcal{F}, \mu, T)$ to be a \textit{probability measure-preserving system} (or simply a \textit{system} for short), i.e. $(X, \mathcal{F}, \mu)$ is a probability space, and $T: X \to X$ is a $\mu$-invariant map (i.e. for every $A \in \mathcal{F}$, we have $T^{-1}A \in \mathcal{F}$ and $\mu(T^{-1}A) = \mu(A)$). Such a system is \textit{ergodic} if for any $A \in \mathcal{F}$ with $T^{-1}A = A$ we have $\mu(A) = 0$ or $\mu(A) = 1$ (i.e. there are no nontrivial $T$-invariant measurable subsets). If the underlying sigma algebra is clear or irrelevant in the discussion, sometimes we may omit the sigma algebra and denote the system as a triple $(X, \mu, T)$.

Given a system $(X, \mathcal{F}, \mu, T)$ and $p \in [1, \infty]$, we often denote $\norm{\cdot}_p$ to be the $L^p$-norm, i.e. if $f \in L^p(\mu)$, then $\norm{f}_p := \norm{f}_{L^p(\mu)}$. Moreover, if $\mathcal{A}$ is a $\sigma$-subalgebra of $\mathcal{F}$, we use $L^p(\mathcal{A})$ or $L^p(\mathcal{A}, \mu)$ to denote the corresponding $L^p$-space over $(X, \mathcal{A}, \mu)$ as a subspace of $L^p(\mu)$.

For $k\in \N$, denote $V_k = \{0, 1\}^k$. If $\eta = (\eta_1, \dots, \eta_k)\in V_k$, then we denote $|\eta| = \sum_{j = 1}^k \eta_j$. 

For $N\in \N$, let $[N] = \{1, \dots, N\}$. Notice that if $h := (h_1, h_2, \ldots, h_k) \in [N]^k$ and $\eta\in V_k$, then we can define the dot product in the usual way, i.e. $h \cdot \eta := \sum_{j=1}^k h_j \eta_j$.

Let $c:\C\to \C$ be complex conjugation, i.e. $cz = \bar z$. Notably, we have $c^m = c$ when $m$ is odd, and $c^m$ is the identity map when $m$ is even.

The floor function will be denoted as $\floor{\cdot}$, i.e. $\floor{\cdot} : \R \to \Z$ such that $\floor{x} := \max\{k \in \Z: k \leq x\}$. 

The term ``Wiener-Wintner" is also frequently abbreviated ``WW".

If $A$ is a finite set, then the cardinality of $A$ will be denoted as $\# A$.

\subsection{Nilsystems and Host-Kra-Ziegler factors}\label{ss:ns_hkz}
Let $G$ be a $k$-step nilpotent Lie group, and $\Gamma$ be a discrete co-compact subgroup. We say $X:=G/\Gamma$ is a \textit{$k$-step nilmanifold}. If $\mu$ is the normalized Haar measure on $X$, and $\tau \in G$, the translation $T:X \to X$ by $Tx = \tau \cdot x$ is invariant under $\mu$. Hence, we call $(X, \mu, T)$ a \textit{$k$-step nilsystem.}
		
A simple example of a nilsystem is a rotation on compact abelian group: Let $G = \R$ and $\Gamma = \Z$. Then $\T := G/\Gamma$ is a $1$-step nilmanifold, and if $R_\alpha: \T \to \T$ is a rotation (i.e. $R_\alpha(x) = x + \alpha)$, then $(X, \mathcal{B}, m, R_\alpha)$ is a $1$-step nilsystem, where $m$ is the Lebesgue measure and $\mathcal{B}$ the Borel $\sigma$-algebra. Given $k \in \N$, it can also be shown that the classical skew products (cf. \S \ref{ss:csp}) on $\T^k$ is a $k$-step nilsystem as well.

It was shown by Leibman \cite{leibman05, leibman05a} that the multiple recurrence averages in (\ref{Favg}) converge a.e. for nilsystems.

To show the mean convergence of the Furstenberg averages, the work of Host and Kra \cite{hostkra} as well as Ziegler \cite{ziegler07} identified characteristic factors: For every $k \in \N$, there exists a factor $(Z_{k-1}, \mathcal{Z}_{k-1}, \mu, T)$ (which we may sometimes denote simply as $\mathcal{Z}_{k-1})$, which we call the \textit{$k-1$-th Host-Kra-Ziegler (HKZ) factor}, of $(X, \mathcal{F}, \mu, T)$ for which
\[ \lim_{N \to \infty}\left\Vert  \frac{1}{N}  \sum_{n=1}^N\prod_{j=1}^k f_j \circ T^{jn} - \frac{1}{N} \sum_{n=1}^N  \prod_{j=1}^k  \E(f_j  | \mathcal{Z}_{k-1}) \circ T^{jn} \right\Vert_2 = 0. \]
While their constructions differ, they were shown to result in the same factor by Leibman \cite{leibman05b}. Moreover, it was shown by Host and Kra that the $k$-th HKZ factor is the inverse limit of $k$-step nilsystems. In particular, the $0$-th HKZ factor is the invariant sigma algebra, and the first HKZ factor is generated by the eigenfunctions of $T$; this would be the Kronecker factor if $T$ is ergodic (which we may sometimes denote as $\mathcal{K}$). It is also known that $\mathcal{Z}_{k-1}$ is the universal characteristic factor for $k$-term multiple ergodic averages, i.e. if there is another characteristic factor for the $k$-term multiple recurrence, then $\mathcal{Z}_{k-1}$ must be a factor of that characteristic factor.

\subsection{Key estimates}
We discuss a few key estimates that will be used throughout the paper.

The first one of such is the Van der Corput estimate (cf. \cite{KN74}), which is stated as follows:
		
		\begin{lemma}[Van der Corput's estimate]\label{vdc-lem}
			Let $N \in \N$. If $\{v_n\}_{n=0}^{N-1}$ is a finite sequence of complex numbers, and if $H$ is an integer between $0$ and $N-1$, then
			\begin{align*}
				& \left| \frac{1}{N} \sum_{n=0}^{N-1} v_n \right|^2
				\leq \frac{N+H}{N^2(H+1)} \sum_{n=0}^{N-1} |v_n|^2 + \frac{2(N+H)}{N^2(H+1)^2} \sum_{h=1}^H(H+1-h) \real \left(\sum_{n=0}^{N-h-1} \overline{v_{n+h}} v_n \right).
			\end{align*}
		\end{lemma}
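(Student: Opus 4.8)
The plan is to run the classical doubling-and-Cauchy--Schwarz argument, keeping track of the combinatorial multiplicities so that the stated constants emerge exactly. First I would extend the sequence by declaring $v_n := 0$ for every integer $n \notin \{0,1,\dots,N-1\}$; then every sum below may be taken over all of $\Z$ without changing its value, which removes all boundary bookkeeping until the very end.

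The first step is a smoothing identity. For a fixed $m$ with $0 \le m \le N-1$, the number of pairs $(n,h)$ with $-H \le n \le N-1$ and $0 \le h \le H$ and $n+h = m$ is exactly $H+1$, while pairs with $n \notin \{0,\dots,N-1\}$ contribute nothing since then $v_{n+h}=0$. Hence
\[
(H+1)\sum_{n=0}^{N-1} v_n \;=\; \sum_{n=-H}^{N-1} \ \sum_{h=0}^{H} v_{n+h}.
\]
Taking absolute values, using the triangle inequality in the outer sum (which has $N+H$ terms), and then Cauchy--Schwarz gives
\[
(H+1)^2 \left|\sum_{n=0}^{N-1} v_n\right|^2 \;\le\; (N+H)\sum_{n=-H}^{N-1}\left|\sum_{h=0}^{H} v_{n+h}\right|^2 .
\]

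The second step is to expand the square and collapse the $n$-sum. Writing $\left|\sum_{h=0}^{H} v_{n+h}\right|^2 = \sum_{h,h'=0}^{H} v_{n+h}\overline{v_{n+h'}}$ and summing over $n \in \Z$, the diagonal terms $h=h'$ give $(H+1)\sum_{n=0}^{N-1}|v_n|^2$, while for each $d \in \{1,\dots,H\}$ the $2(H+1-d)$ off-diagonal pairs with $|h-h'| = d$ contribute $(H+1-d)\bigl(\sum_n v_n\overline{v_{n+d}} + \sum_n v_{n+d}\overline{v_n}\bigr) = 2(H+1-d)\,\real\!\bigl(\sum_n \overline{v_{n+d}}\,v_n\bigr)$; the support of $v$ then restricts the inner sum to $0 \le n \le N-d-1$. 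Substituting this evaluation into the previous display, relabeling $d$ as $h$, and dividing both sides by $N^2(H+1)^2$ produces the claimed inequality verbatim.

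I do not expect a genuine obstacle here: the only points requiring care are the two multiplicity counts ($H+1$ in the smoothing identity, $H+1-d$ in the off-diagonal count), the reordering $\real(\sum_n v_n\overline{v_{n+d}}) = \real(\sum_n \overline{v_{n+h}}v_n)$, and getting the summation range $\sum_{n=0}^{N-h-1}$ from the support condition. One should also observe that the case $H=0$ is consistent, since then the double sum on the right is empty and the inequality degenerates to $\bigl|\frac1N\sum v_n\bigr|^2 \le \frac1N\sum|v_n|^2$, i.e.\ plain Cauchy--Schwarz.
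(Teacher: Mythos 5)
Your proof is correct, and it is the classical argument for van der Corput's inequality: the paper itself gives no proof of this lemma, citing Kuipers--Niederreiter instead, and your smoothing-plus-Cauchy--Schwarz derivation is exactly the standard one found there. The multiplicity counts ($H+1$ in the smoothing identity, $2(H+1-d)$ off-diagonal pairs), the support restriction giving $\sum_{n=0}^{N-h-1}$, and the final division by $N^2(H+1)^2$ all check out and reproduce the stated constants exactly.
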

		
		One useful variant of this estimate is the case $v_n = u_ne^{2\pi int}$ for some sequence of complex numbers $\{u_n\}$. In such case, we can apply Lemma \ref{vdc-lem} to obtain the following: For every $N \in \N$ and $1 \leq H \leq N-1$, we have
		\begin{equation}\label{vdc-1}
			\sup_t \left| \frac{1}{N} \sum_{n=0}^{N-1} u_ne^{2\pi int} \right|^2
			\leq \frac{2}{N(H+1)} \sum_{n=0}^{N-1} |u_n|^2 + \frac{4}{H+1} \sum_{h=1}^H \left| \frac{1}{N}\sum_{n=0}^{N-h-1} \overline{u_{n+h}} u_n \right|.
		\end{equation}
		Sometimes it is more convenient to use the summation variant of (\ref{vdc-1}); it is not too difficult to show that for every $N \in \N$, we have
		\begin{equation}\label{vdc-2}
			\sup_t\left|  \sum_{n=0}^{N-1} u_ne^{2\pi int} \right|^2
				\leq 2\sum_{n=0}^{N-1} |u_n|^2 +4 \sum_{h=1}^{N-1} \left|\sum_{n=0}^{N-h-1} \overline{u_{n+h}} u_n \right|.
		\end{equation}

Another frequently used inequality is a straightforward application of Hölder's inequality for Ces\`aro averages. It is stated and used in the following form:
\begin{lemma}[Hölder's inequality on averages]
Let $\{a_n\}_{n=1}^N$ be a finite sequence of real, nonnegative numbers. The function $A: \R \to \R$ such that
$$A_N(p) := \left(\frac{1}{N}\sum_{n=1}^N a_n^p\right)^{1/p}$$
is increasing in $p$: i.e. for $p \leq q$ we have
$$\left(\frac{1}{N}\sum_{n=1}^N a_n^p\right)^{1/p}\leq \left(\frac{1}{N}\sum_{n=1}^N a_n^q\right)^{1/q}.$$
\end{lemma}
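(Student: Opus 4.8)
The plan is to reduce the claim to a single application of Hölder's inequality (equivalently, to convexity of the power function via Jensen's inequality). First I would dispose of the trivial cases: if $p = q$ there is nothing to prove, and if every $a_n = 0$ both sides vanish, so I may assume not all $a_n$ are zero. I would also treat $q = \infty$ separately, where $A_N(\infty) = \max_{1 \le n \le N} a_n$ and the bound $\left(\frac1N \sum_{n=1}^N a_n^p\right)^{1/p} \le \max_{1 \le n \le N} a_n$ is immediate. Thus it remains to handle $0 < p < q < \infty$.

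For this main case, set $r = q/p > 1$ with conjugate exponent $r' = q/(q-p)$, so that $1/r + 1/r' = 1$. Writing $a_n^p = a_n^p \cdot 1$ and applying Hölder's inequality with respect to the normalized counting measure on $[N]$ gives
\[
\frac1N \sum_{n=1}^N a_n^p \le \left(\frac1N \sum_{n=1}^N \left(a_n^p\right)^{r}\right)^{1/r} \left(\frac1N \sum_{n=1}^N 1^{r'}\right)^{1/r'} = \left(\frac1N \sum_{n=1}^N a_n^q\right)^{p/q}.
\]
Raising both sides to the power $1/p > 0$, which preserves the inequality since $x \mapsto x^{1/p}$ is nondecreasing on $[0,\infty)$ and both sides are nonnegative, yields exactly the asserted monotonicity $A_N(p) \le A_N(q)$.

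Alternatively, I would note that the same conclusion follows at once from Jensen's inequality applied to the convex function $t \mapsto t^{q/p}$ on $[0,\infty)$: this gives $\left(\frac1N \sum_{n=1}^N a_n^p\right)^{q/p} \le \frac1N \sum_{n=1}^N a_n^q$, and taking $q$-th roots finishes the argument. No step here presents a genuine obstacle; the only points requiring a moment of care are the degenerate cases ($p = q$, $q = \infty$, or all $a_n = 0$) and the legitimacy of raising the intermediate inequality to a positive power, both of which are routine.
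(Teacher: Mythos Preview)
Your proof is correct and matches the paper's approach: the paper does not give an explicit proof but simply labels the lemma as ``a straightforward application of H\"older's inequality,'' and your argument with exponents $r = q/p$ and $r' = q/(q-p)$ is precisely that application. One minor remark: the lemma as stated declares $A_N$ as a function on all of $\R$, whereas your argument covers only $0 < p < q \le \infty$; the negative-exponent case follows by the same Jensen/H\"older trick (or by passing to reciprocals), but since every invocation of this lemma in the paper uses positive exponents, this omission is harmless in context.
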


Finally, we will be using the following maximal inequality for a certain approximation arguments (cf. \cite[Theorem 1.8]{AssaniWW} for instance):

\begin{lemma}[Maximal inequality]\label{l:maxIneq}
Let $(X, \mathcal{F}, \mu, T)$ be a measure-preserving system, and $p \in (1, \infty)$. For every real-valued function $f \in L^p(\mu)$, we have
\[ \norm{  \sup_N \frac{1}{N} \sum_{n=1}^N f \circ T^n}_p \leq \frac{p}{p-1} \norm{f}_p \, . \]
\end{lemma}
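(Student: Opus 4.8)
The plan is to prove the sharp constant $p/(p-1)=p'$ via the classical two-step route: a refined weak-type maximal inequality, followed by integration of the distribution function. Since
\[ \left| \frac{1}{N}\sum_{n=1}^N f\circ T^n \right| \le \frac{1}{N}\sum_{n=1}^N |f|\circ T^n \]
pointwise, I would reduce immediately to the case $f \ge 0$ and write $Mf := \sup_{N\in\N} \frac{1}{N}\sum_{n=1}^N f\circ T^n$ for the (one-sided) ergodic maximal function.

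The heart of the argument is the Hopf maximal ergodic theorem: for $g \in L^1(\mu)$, setting $S_N g := \sum_{n=1}^N g\circ T^n$ and $E := \{x : \sup_{N\in\N} S_N g(x) > 0\}$, one has $\int_E g\, d\mu \ge 0$. (This can be taken as known, or proved in a few lines by Garsia's ``filling'' argument applied to the partial maxima $\max_{1\le N\le J} S_N g$ and then letting $J\to\infty$.) Applying it to $g = f-\lambda$ for fixed $\lambda>0$, the set $E$ becomes exactly $\{Mf>\lambda\}$, so $\int_{\{Mf>\lambda\}}(f-\lambda)\,d\mu\ge 0$, i.e.
\[ \lambda\, \mu(\{Mf > \lambda\}) \le \int_{\{Mf > \lambda\}} f\, d\mu, \qquad \lambda > 0. \]
This \emph{refined} form, with $f$ integrated over the super-level set rather than over all of $X$, is precisely what is needed to recover the optimal constant.

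For the second step, to avoid dividing by a quantity not yet known to be finite, I would run the estimate for the truncated maximal functions $M_J f := \max_{1\le N\le J}\frac1N\sum_{n=1}^N f\circ T^n$; these lie in $L^p(\mu)$ since $M_J f \le \sum_{n=1}^J f\circ T^n$, and the refined weak-type bound holds for them verbatim (use $\max_{1\le N\le J}S_N g$ in the maximal ergodic theorem). Then
\[ \norm{M_J f}_p^p = p\int_0^\infty \lambda^{p-1}\mu(\{M_J f > \lambda\})\, d\lambda \le p\int_0^\infty \lambda^{p-2}\Bigl( \int_{\{M_J f > \lambda\}} f\, d\mu \Bigr) d\lambda, \]
and Tonelli's theorem turns the right-hand side into $p\int_X f\bigl(\int_0^{M_J f}\lambda^{p-2}\,d\lambda\bigr)d\mu = \tfrac{p}{p-1}\int_X f\,(M_J f)^{p-1}\,d\mu$, which Hölder's inequality bounds by $\tfrac{p}{p-1}\norm{f}_p\,\norm{M_J f}_p^{p-1}$. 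Dividing by the finite quantity $\norm{M_J f}_p^{p-1}$ gives $\norm{M_J f}_p \le \tfrac{p}{p-1}\norm{f}_p$ with a bound independent of $J$, and monotone convergence ($M_J f \uparrow Mf$) yields the claim.

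The only genuinely nontrivial ingredient is the maximal ergodic theorem underlying the refined weak-type estimate; everything after it — the reduction to $f\ge 0$, the truncation keeping us inside $L^p$, Tonelli, Hölder — is routine. The point requiring care is the finiteness issue in the final division, which is exactly why the truncated $M_J f$ is introduced before passing to the limit.
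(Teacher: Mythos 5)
The paper does not actually prove this lemma --- it is quoted from \cite[Theorem 1.8]{AssaniWW} --- so the relevant comparison is with the standard proof of Wiener's dominated ergodic theorem, which is exactly the route you take: Hopf's maximal ergodic theorem, a refined weak-type $(1,1)$ bound with $f$ integrated only over the super-level set, and then the layer-cake/Tonelli/H\"older computation. Your handling of the reduction to $f\ge 0$, the truncation to $M_Jf$ before dividing by $\norm{M_Jf}_p^{p-1}$, and the passage to the limit are all correct.

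There is, however, one false assertion, caused by the fact that the averages in the lemma start at $n=1$ rather than $n=0$. The version of the maximal ergodic theorem you state --- with $S_Ng=\sum_{n=1}^N g\circ T^n$ and $E=\{\sup_N S_Ng>0\}$, claiming $\int_E g\,d\mu\ge 0$ --- is not true. Take $X=\Z/2\Z$ with $Tx=x+1$, $\mu$ uniform, $g(0)=1/2$, $g(1)=-3/2$. Then $S_1g(1)=g(0)>0$, so $1\in E$, while $S_Ng(0)<0$ for every $N$, so $E=\{1\}$ and $\int_E g\,d\mu=-3/4<0$. Consequently the refined weak-type inequality as you wrote it, $\lambda\,\mu(\{Mf>\lambda\})\le\int_{\{Mf>\lambda\}}f\,d\mu$, also fails: with $f(0)=2$, $f(1)=0$, $\lambda=3/2$ one gets $3/4$ on the left and $0$ on the right. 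Garsia's filling argument, run on sums starting at $n=1$, naturally yields $\int_E g\circ T\,d\mu\ge 0$, not $\int_E g\,d\mu\ge 0$. The repair is one line: since $\frac1N\sum_{n=1}^N f\circ T^n=\bigl(\frac1N\sum_{n=0}^{N-1}f\circ T^n\bigr)\circ T$, the maximal function in the lemma equals $\widetilde{M}f\circ T$, where $\widetilde{M}$ is the usual maximal operator with sums from $n=0$; prove the bound for $\widetilde{M}f$ (for which your argument is correct verbatim) and use that composition with $T$ preserves the $L^p$ norm. Alternatively, keep the set $\{\widetilde{M}_Jf>\lambda\}$ (which has the same measure as $\{M_Jf>\lambda\}$) on the right-hand side through the Tonelli and H\"older steps; the final inequality is unchanged.
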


\section{Higher order Wiener-Wintner functions and dynamical systems}\label{s:HWWDS}

\subsection{Definitions}
Here we provide the definition of higher order Wiener-Wintner functions and dynamical systems.

\begin{definition}\label{def:hof}
Let $(X, \mathcal{F}, \mu, T)$ be a dynamical system, and $p\in [1, \infty]$. We say that $f\in L^\infty(\mu)$ is a \textit{first-order WW function of power type} $\alpha>0$ \textit{in} $L^p$ if there exists a constant $C_f>0$ such that for all $N \in \N$, 
$$\left \Vert \sup_{t} \left| \frac{1}{N} \sum_{n=1}^N e^{2\pi i n t} f\circ T^n \right| \right\Vert_p \leq \frac{C_f}{N^\alpha} \, .$$
For a positive integer $k \geq 2$, we say that $f\in L^\infty(\mu)$ is a $k$-\textit{th order WW function of power type $\alpha >0$ in $L^p$}  if there exists a constant $C_f$ such that for all $N \in \N$, 
$$\frac{1}{\lfloor \sqrt{N} \rfloor^{k-1}} \sum_{h \in [\lfloor \sqrt{N} \rfloor]^{k-1}}\left \Vert \sup_{t} \left| \frac{1}{N} \sum_{n=1}^N e^{2\pi i n t} \left[\prod_{\eta\in V_{k-1}} c^{|\eta|} f \circ T^{h \cdot \eta} \right]\circ T^n \right| \right\Vert_p^{2/3} \leq \frac{C_f}{N^\alpha} \, .$$
\end{definition}
For instance, in the case $k=2$, we see that second order WW functions are determined by polynomial decay on the averages
$$\frac{1}{\lfloor \sqrt{N} \rfloor} \sum_{h =1}^{\lfloor \sqrt{N} \rfloor }\left \Vert \sup_{t} \left| \frac{1}{N} \sum_{n=1}^N e^{2\pi i n t} f\circ T^n \cdot\overline{f \circ T^{n+h}} \right| \right\Vert_p^{2/3} \, .$$
Likewise, third order WW functions are determined by polynomial decay on the averages
$$\frac{1}{\lfloor \sqrt{N} \rfloor^2} \sum_{h_1, h_2 =1}^{\lfloor \sqrt{N} \rfloor }\left \Vert \sup_{t} \left| \frac{1}{N} \sum_{n=1}^N e^{2\pi i n t} f\circ T^n \cdot \overline{ f\circ T^{n + h_1}
 } \cdot \overline{ f\circ T^{n + h_2}} \cdot f \circ T^{n + h_1 + h_2} \right| \right\Vert_p^{2/3} \, ,$$
and so on. As first order WW functions are shown to control double recurrence averages \cite{assani}, we shall see that second order WW functions control triple recurrence averages, and so on.

\begin{remark}
    We note that there does \textit{not} exist a first-order WW function of power type $\alpha > 1/2$ in $L^2(\mu)$. This observation was made in \cite[p. 364]{assani04}. 
\end{remark}

In practice, the $2/3$ exponent is mostly irrelevant. It merely arises as the weakest bound that later arguments will require. If the inequality holds for some $\alpha$ without this exponent, then using Hölder's inequality applied to averages, we can reintroduce the $2/3$ exponent by replacing $\alpha$ with $2 \alpha/3$.

Similarly, the restriction of the $h_i$ averages to $\sqrt{N}$ is somewhat arbitrary. It arises from applications of Van der Corput's inequality, in which $H = \floor{\sqrt{N}}$ is chosen so that a remainder term of the form $1/H + H/N$ decays like a polynomial. Any choice of $H = \floor{N^\beta}$ for $0 < \beta<1$ would also yield such a remainder term, and $\beta = 1/2$ is chosen for convention and to get the best possible decay. If we were to define $WW_\beta(f, N)$ to be the above averages defined over $\floor{N^\beta}$ instead, then polynomial decay on $WW_\beta (f, N)$ for any $0 < \beta < 1$ would yield the same results that we later claim about WW functions. Further, a choice $\beta = 1$ would also suffice, as the proof of Theorem \ref{t:higher.order.UWW} establishes a bound of the form
$$WW_{1/2}(f, N) \leq WW_1(f, \floor{\sqrt{N}}) + \frac{1}{N^{2/3}}.$$
Hence, polynomial decay on the averages without square roots would transfer to polynomial decay of the WW averages from Definition \ref{def:hof}.

In any of computations of specific examples in the following section, $\floor{\sqrt{N}}$ can be replaced with $\floor{N^\beta}$ for $0 < \beta \leq 1$ with minimal changes to the argument.

\begin{definition}
Let $k \in \N$, let $\alpha > 0$, and let $p \in [1, \infty]$. We say that the ergodic dynamical system $(X, \mathcal{F}, \mu, T)$ is a \textit{k-th order WW system of power type $\alpha$ in $L^p(\mu)$} if there exists a set of $k$-th order WW functions of type $\alpha$ that is $L^2$-dense in the orthogonal complement of the $k$-th Host-Kra-Ziegler factor $\mathcal{Z}_k$. 

We say a system is a \textit{WW system} if the system is a WW system of power type $\alpha$ in $L^p$ for some $\alpha > 0$ and $p \in [1, \infty]$.
\end{definition}

In Section \ref{s:GHKsemnorm}, we show that a function $f\in L^\infty(\mu)$ lies in $L^2(\mathcal{Z}_k)^\perp$ if and only if the higher-order WW averages of Definition \ref{def:hof} converge to 0; hence, $k$-th order WW functions must necessarily lie in $L^2(\mathcal{Z}_k)^\perp$. While such averages for $f\in L^2(\mathcal{Z}_k)$ do not converge to zero, their limiting behavior is not known in generality.

\begin{remark}\label{r:rbo}
We note that any $k$-th order WW function will also be a lower order WW function, which follows from applying the Van der Corput inequality and trivially bounding the unweighted averages by the supremum over $e^{2\pi i n t}$-weighted averages. However, it does not immediately follow that $k$-th order WW systems will be lower order WW systems. We recall that the orthogonal complements $L^2(\mathcal{Z}_k)^\perp$ are decreasing as $k$ increases; i.e. $L^2(\mathcal{Z}_k)^\perp \subset L^2(\mathcal{Z}_{k-1})^\perp$ holds for any $k$. Should it happen to be case that for some $k$ this inclusion is strict, it would not immediately follow that a set that is dense in $L^2(\mathcal{Z}_k)^\perp$ is also dense in the larger set $L^2(\mathcal{Z}_{k-1})^\perp$. Such an example, which is covered in detail in \S\ref{ss:csp}, is a classical skew product on $\mathbb{T}^k$. As this transformation is a $k$-step nilsystem, it follows that $L^2(\mathcal{Z}_k)^\perp = \{0\}$, trivially satisfying the conditions for a $k$-th order WW system. However, in such a system $L^2(\mathcal{Z}_{k-1})^\perp \neq \{0\}$, and the dense set of $k$-th order WW functions in $\{0\}$ offer no immediate help for the rates of convergence of the nonzero function in $L^2(\mathcal{Z}_{k-1})^\perp$.

In cases such as weakly mixing transformations where all $\mathcal{Z}_k$ are equal, it does follow that $k$-th order systems will also be systems of lower order.
\end{remark}

\begin{remark}
In \cite{assani04} and \cite{assani}, the notion of first order WW functions and systems was generalized considering different rates of decay on the WW averages, such as a ``log-type" WW system for rate $\log(N)^{-1-\beta}$ or a $g$-system for $g(N)^{-1}$. These definitions carry over to higher orders by establishing these rates of decay to the above higher order WW averages. 
\end{remark}

\subsection{Multilinearity concerns}
For any dynamical system, first order WW functions in $L^p$ of a given power type form a subspace. However, this is not immediately the case for higher order WW functions, and causes great technical difficulties. The following provides a criterion under which we can conclude that the WW property is satisfied for certain linear spans:

\begin{lemma}[Multilinearity concerns] \label{l:mlc}
Let $(X, \mathcal{F}, \mu, T)$ be an ergodic dynamical system and $\mathcal{E}$ be a subset of $L^\infty(\mu)$ and $\alpha > 0$. Suppose it is the case that for every collection $(e_\eta)_{\eta\in V_{k-1}}$ with $e_\eta \in \mathcal{E}$, there exists $C > 0$ such that for all $N \in \N$, we have
\begin{equation}\label{est:mlc}
\frac{1}{\lfloor\sqrt{N}\rfloor^{k-1}}\sum_{h \in [\lfloor \sqrt{N}\rfloor ]^{k-1} } \left\Vert \sup_t \left|\frac{1}{N} \sum_{n=1}^N e^{2\pi i n t} \left[ \prod_{\eta \in V_{k-1}} c^{|\eta|} e_\eta \circ T^{h\cdot \eta} \right] \circ T^n \right|\right\Vert_p^{2/3} \leq \frac{C}{N^\alpha} \, .
\end{equation}
Then the elements of $S := span(\mathcal{E})$ are $k$-th order WW functions of power type $\alpha$ in $L^p$.
\end{lemma}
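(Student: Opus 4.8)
The plan is to expand a general element $f = \sum_{i=1}^m \lambda_i e_i$ (with $e_i \in \mathcal{E}$, $\lambda_i \in \C$) inside the product $\prod_{\eta \in V_{k-1}} c^{|\eta|} f \circ T^{h \cdot \eta}$ and observe that it becomes a finite linear combination of exactly the products that appear in hypothesis \eqref{est:mlc}. Concretely, since there are $2^{k-1}$ factors indexed by $\eta \in V_{k-1}$, distributing the sum over $i$ in each factor gives
\[
\prod_{\eta \in V_{k-1}} c^{|\eta|} f \circ T^{h\cdot\eta} = \sum_{\iota : V_{k-1} \to [m]} \left( \prod_{\eta \in V_{k-1}} \lambda_{\iota(\eta)}^{(\eta)} \right) \prod_{\eta \in V_{k-1}} c^{|\eta|} e_{\iota(\eta)} \circ T^{h \cdot \eta},
\]
where $\lambda_{\iota(\eta)}^{(\eta)}$ denotes $\lambda_{\iota(\eta)}$ if $|\eta|$ is even and $\overline{\lambda_{\iota(\eta)}}$ if $|\eta|$ is odd (because $c$ also conjugates the scalar). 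For each function $\iota : V_{k-1} \to [m]$, set $e_\eta := e_{\iota(\eta)} \in \mathcal{E}$; the collection $(e_\eta)_{\eta \in V_{k-1}}$ is then exactly an admissible collection for the hypothesis.

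The key estimate is then a combination of the triangle inequality for $\sup_t |\cdot|$, the triangle inequality for $\norm{\cdot}_p$, and — because of the awkward $2/3$ exponent — the elementary inequality $(\sum_j a_j)^{2/3} \le \sum_j a_j^{2/3}$ valid for nonnegative reals. Applying these in order: first pull the sum over $\iota$ outside the supremum and the Cesàro average over $n$; then pull it outside the $L^p$-norm; then, since $x \mapsto x^{2/3}$ is subadditive on $[0,\infty)$, pull it outside the $2/3$ power at the cost of replacing $\bigl(\sum_\iota \cdots\bigr)^{2/3}$ by $\sum_\iota (\cdots)^{2/3}$; finally pull it outside the normalized average $\frac{1}{\lfloor\sqrt N\rfloor^{k-1}}\sum_{h}$, which is linear. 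The scalar coefficients $\prod_\eta \lambda^{(\eta)}_{\iota(\eta)}$ have modulus at most $(\max_i |\lambda_i|)^{2^{k-1}} =: \Lambda$, so after raising to the $2/3$ power each term is bounded by $\Lambda^{2/3}$ times the corresponding average in \eqref{est:mlc} for the collection $(e_{\iota(\eta)})_\eta$. Each such average is $\le C_\iota / N^\alpha$ by hypothesis, and there are $m^{2^{k-1}}$ choices of $\iota$, so the total is bounded by $\bigl(\Lambda^{2/3} \sum_\iota C_\iota^{2/3}\bigr)/N^\alpha$; calling this constant $C_f$ and noting it is finite gives the required bound, which is precisely the defining inequality for $f$ being a $k$-th order WW function of power type $\alpha$ in $L^p$.

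The main obstacle — though a mild one — is bookkeeping: one must correctly track that the complex conjugation operator $c^{|\eta|}$ acts on the scalar coefficients as well as on the function in each factor, so that the decomposition is genuinely a linear combination of the products in the hypothesis with no leftover conjugates on the functions themselves. One should also take care that the $2/3$ exponent forces the use of subadditivity of $x \mapsto x^{2/3}$ rather than the (false) Minkowski-type inequality; this is the one place where having the exponent $\le 1$ is used, and it is exactly why the definition is stated with $2/3$ rather than, say, $3/2$. Everything else is a routine interchange of finite sums with $\sup$, $\norm{\cdot}_p$, and averages, and no dynamics beyond the hypothesis is needed, so the proof is short once the algebraic expansion is set up.
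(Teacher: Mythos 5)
Your proposal is correct and follows essentially the same route as the paper: expand the product multilinearly over the $2^{k-1}$ factors, then pull the resulting finite sum out through the supremum, the $L^p$-norm, the subadditive map $\xi \mapsto \xi^{2/3}$, and the average over $h$, applying the hypothesis to each resulting collection $(e_{\iota(\eta)})_\eta$. In fact your version is slightly more complete than the paper's: the paper only explicitly treats scalar multiples $\beta f$ and binary sums $f+g$ with $f,g\in\mathcal{E}$, which does not by itself yield the full span (since $k$-th order WW functions of a given power type are not known to form a vector space --- that is precisely the issue the lemma addresses), whereas you handle an arbitrary finite linear combination $\sum_i \lambda_i e_i$ in one pass, with correct bookkeeping of the conjugated scalars.
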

For instance, to show that the span of functions in $\mathcal{E}$ satisfies the second-order WW property in $L^p$ for $\alpha>0$, it would suffice to show that for every $e_1, e_2 \in \mathcal{E}$, there exists a constant $C > 0$ such that
$$\frac{1}{\lfloor \sqrt{N} \rfloor} \sum_{h =1}^{\lfloor \sqrt{N} \rfloor }\left \Vert \sup_{t} \left| \frac{1}{N} \sum_{n=1}^N e^{2\pi i n t} e_1 \circ T^n \cdot \overline{e_2 \circ T^{n + h}} \right| \right\Vert_p^{2/3} \leq \frac{C}{N^\alpha}$$
holds for all $N\in \N$.

\begin{proof}
Our goal is to show that every function in $S = span(E)$ is a $k$-th order WW function of type $\alpha$. First, suppose that $f \in E$. We see that $f$ is a $k$-th order WW function of type $\alpha$ by setting $e_\eta = f$ in (\ref{est:mlc}) for every $\eta \in V_{k-1}$. Furthermore, we let $\beta \in \C$, it also follows that $\beta f$ is a $k$-th order WW function, with constant multiplied by $|\beta|^{2^{k-1}}$. 

It remains to show that, for every $f, g \in E$, $f + g$ is a $k$-th order WW function of power type $\alpha$. We define a set $\Phi_k := \{ \phi := (\phi_\eta)_{\eta \in V_{k-1}} : \phi_\eta \in \{f, g\} \} \subset E^{2^{k-1}}$. Then we have
\begin{align*}
&\frac{1}{\lfloor\sqrt{N}\rfloor^{k-1}}\sum_{h \in [\lfloor \sqrt{N}\rfloor ]^{k-1} } \left\Vert \sup_t \left|\frac{1}{N} \sum_{n=1}^N e^{2\pi i n t} \left[ \prod_{\eta \in V_{k-1}} c^{|\eta|} (f+g) \circ T^{h\cdot \eta} \right] \circ T^n \right|\right\Vert_p^{2/3}\\ 
&= \frac{1}{\lfloor\sqrt{N}\rfloor^{k-1}}\sum_{h \in [\lfloor \sqrt{N}\rfloor ]^{k-1} } \left\Vert \sup_t \left|\frac{1}{N} \sum_{n=1}^N e^{2\pi i n t} \sum_{\phi \in \Phi_k} \left[ \prod_{\eta \in V_{k-1}} c^{|\eta|} \phi_\eta \circ T^{h\cdot \eta} \right] \circ T^n \right|\right\Vert_p^{2/3} \, .
\end{align*}
Since the supremum, norm, and function $\xi \to \xi^{2/3}$ are all subadditive, we can pull out the sum over such terms so that the last average is bounded above by
\begin{align*}
&\sum_{\phi \in \Phi_k} \frac{1}{\lfloor\sqrt{N}\rfloor^{k-1}}\sum_{h \in [\lfloor \sqrt{N}\rfloor ]^{k-1} } \left\Vert \sup_t \left|\frac{1}{N} \sum_{n=1}^N e^{2\pi i n t} \left[ \prod_{\eta \in V_{k-1}} c^{|\eta|} \phi_\eta \circ T^{h\cdot \eta} \right] \circ T^n \right|\right\Vert_p^{2/3}  \, .
\end{align*}
By the hypothesis of the lemma, for every $\phi \in \Phi_k$, there exists a positive constant $C(\phi) > 0$ so that the last average (inside the summand) is bounded above by $C(\phi) N^{-\alpha}$. Therefore, we have
\begin{align*}
&\frac{1}{\lfloor\sqrt{N}\rfloor^{k-1}}\sum_{h \in [\lfloor \sqrt{N}\rfloor ]^{k-1} } \left\Vert \sup_t \left|\frac{1}{N} \sum_{n=1}^N e^{2\pi i n t} \left[ \prod_{\eta \in V_{k-1}} c^{|\eta|} (f+g) \circ T^{h\cdot \eta} \right] \circ T^n \right|\right\Vert_p^{2/3} &\leq \frac{\sum_{\phi \in \Phi_k} C(\phi)}{N^\alpha} \, ,
\end{align*}
which proves that $f+g$ is indeed a $k$-th order WW function of power type $\alpha$.
\end{proof}

\subsection{Conjugacy class of a WW system}\label{ss:conjugacy}
Two dynamical systems $(X, \mathcal{F}, \mu, T)$ and $(Y, \mathcal{G}, \nu, S)$ are said to be \textit{conjugate} if there exists a measure-preserving bijection $\sigma:X \to Y$ such that $S = \sigma T \sigma^{-1}$. Such a map $\sigma$ may be called the \textit{conjugacy map}, and is only required to be defined between sets $X'$ and $Y'$ of full measure.

Here, we will briefly show that a conjugate of a WW system is also a WW system.
\begin{proposition}\label{p:conjugacy}
    Let $k \in \N$, let $\alpha > 0$, let $p \in [1, \infty]$, let $(X, \mathcal{F}, \mu, T)$ be a $k$-th order WW system of power type $\alpha$ in $L^p(\mu)$, and let $(Y, \mathcal{G}, \nu, S)$ be an ergodic system that is a conjugate of $(X, \mathcal{F}, \mu, T)$. Then $(Y, \mathcal{G}, \nu, S)$ is also a $k$-th order WW system of power type $\alpha$ in $L^p(\mu)$. 
\end{proposition}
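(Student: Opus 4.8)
The plan is to transfer the $k$-th order WW functions from $X$ to $Y$ via the conjugacy map $\sigma$ and verify that the defining inequality of Definition \ref{def:hof} is preserved. First I would set up notation: let $\sigma: X' \to Y'$ be the conjugacy map (with $X', Y'$ of full measure) satisfying $S = \sigma T \sigma^{-1}$, so that $S^n = \sigma T^n \sigma^{-1}$ for all $n \in \Z$ (one must first check that $\sigma$, being a measure-preserving bijection off null sets, extends so that this holds; this is standard). For a function $g \in L^\infty(\nu)$ define its pullback $g^\sigma := g \circ \sigma \in L^\infty(\mu)$. Since $\sigma$ is measure-preserving, $\|g^\sigma\|_{L^p(\mu)} = \|g\|_{L^p(\nu)}$, and more generally the map $g \mapsto g^\sigma$ is a linear isometric isomorphism $L^p(\nu) \to L^p(\mu)$ for every $p$, which moreover commutes with the dynamics in the sense that $(g \circ S^m)^\sigma = g^\sigma \circ T^m$.

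The key step is the following identity for the WW averages. Fix $g \in L^\infty(\nu)$ and let $f := g^\sigma$. For any $N$, any $h \in [\lfloor\sqrt N\rfloor]^{k-1}$, and any $t$, the function
\[
\left|\frac{1}{N}\sum_{n=1}^N e^{2\pi i n t}\Bigl[\prod_{\eta \in V_{k-1}} c^{|\eta|} g \circ S^{h\cdot\eta}\Bigr]\circ S^n\right|
\]
pulled back by $\sigma$ equals
\[
\left|\frac{1}{N}\sum_{n=1}^N e^{2\pi i n t}\Bigl[\prod_{\eta \in V_{k-1}} c^{|\eta|} f \circ T^{h\cdot\eta}\Bigr]\circ T^n\right|,
\]
because $\sigma$ intertwines $S$ and $T$, pulls products back to products, and commutes with complex conjugation and multiplication by the scalar $e^{2\pi i n t}$. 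Since the $L^p$-norms agree under pullback, for each fixed $t$ the $L^p(\nu)$-norm of the $S$-average equals the $L^p(\mu)$-norm of the corresponding $T$-average; taking the supremum over $t$ then gives equality of the $\sup_t \|\cdot\|_p$ quantities (one should be slightly careful: the supremum over $t$ should be taken before the norm, as in the definition, but the pointwise identity above holds for each $(x,t)$ simultaneously, so $\sup_t |\cdots|$ pulls back to $\sup_t|\cdots|$ as a function on $X$, and then the norms agree). Raising to the $2/3$ power and averaging over $h$ preserves equality, so the entire left-hand side of the Definition \ref{def:hof} inequality for $g$ on $Y$ equals that for $f = g^\sigma$ on $X$; in particular the bound $C_f/N^\alpha$ transfers with the same $C$ and same $\alpha$.

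To finish: let $\mathcal{D} \subset L^\infty(\mu)$ be the $L^2$-dense subset of $L^2(\mathcal{Z}_k(X))^\perp$ consisting of $k$-th order WW functions of power type $\alpha$ in $L^p(\mu)$ guaranteed by hypothesis. I would take $\mathcal{D}' := \{g \in L^\infty(\nu) : g^\sigma \in \mathcal{D}\}$, equivalently the image of $\mathcal{D}$ under $(\cdot)^\sigma{}^{-1}$. By the identity above, every element of $\mathcal{D}'$ is a $k$-th order WW function of power type $\alpha$ in $L^p(\nu)$. It remains to check that $\mathcal{D}'$ is $L^2$-dense in $L^2(\mathcal{Z}_k(Y))^\perp$. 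This uses two facts: (i) the pullback $(\cdot)^\sigma$ is an $L^2$-isometry, hence a homeomorphism for the $L^2$ topology, and (ii) conjugacy carries the Host--Kra--Ziegler factor $\mathcal{Z}_k(Y)$ to $\mathcal{Z}_k(X)$ — more precisely $(\cdot)^\sigma$ maps $L^2(\mathcal{Z}_k(Y))$ isometrically onto $L^2(\mathcal{Z}_k(X))$, since the HKZ factors are canonically associated to the isomorphism class of the system (they are characterized by the Gowers--Host--Kra seminorms, which are conjugacy invariants). Consequently $(\cdot)^\sigma$ maps $L^2(\mathcal{Z}_k(Y))^\perp$ isometrically onto $L^2(\mathcal{Z}_k(X))^\perp$, and an $L^2$-dense subset of the latter pulls back to an $L^2$-dense subset of the former; since $\mathcal{D}$ is dense in $L^2(\mathcal{Z}_k(X))^\perp$, $\mathcal{D}'$ is dense in $L^2(\mathcal{Z}_k(Y))^\perp$. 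This establishes that $(Y,\mathcal{G},\nu,S)$ is a $k$-th order WW system of power type $\alpha$ in $L^p$. (A typo in the statement writes $L^p(\mu)$ for the conclusion; it should read $L^p(\nu)$.)

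I expect the only genuine obstacle to be the bookkeeping around null sets and the clean statement that HKZ factors are conjugacy-invariant; both are routine but should be stated explicitly. Everything else is a direct change of variables.
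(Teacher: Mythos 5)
Your proposal is correct and follows essentially the same route as the paper: a change of variables along the conjugacy showing the WW averages are preserved exactly, combined with the fact that the conjugacy carries $L^2(\mathcal{Z}_k)^\perp$ of one system isometrically onto that of the other (the paper works with the pushforward $\sigma^* f = f\circ\sigma^{-1}$ rather than your pullback, which is only a cosmetic difference). Your added justification that the HKZ factors are conjugacy-invariant via the Gowers--Host--Kra seminorms, and your note that the conclusion should read $L^p(\nu)$, are both apt.
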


\begin{proof}
    We will prove this for the case $k \geq 2$ since the case $k=1$ is similar (and simpler).

    Let $\sigma: (X, \mathcal{F}, \mu, T) \to (Y, \mathcal{G}, \nu, S)$ be the conjugacy map. This means that $\sigma$ is invertible, $S = \sigma T \sigma^{-1}$, the pushforward operator $\sigma^*: L^p(\mu) \to L^p(\nu)$, where $\sigma^*f = f\circ \sigma^{-1}$, is a bijective linear map, and $\nu = \sigma^*\mu$. This implies that for every $f \in L^p(\mu)$, we have 
    \begin{align*}
    &\norm{ \sup_{t} \left| \frac{1}{N} \sum_{n=1}^N e^{2\pi i n t} \left[\prod_{\eta\in V_{k-1}} c^{|\eta|} \sigma^*f \circ S^{h \cdot \eta} \right]\circ S^n \right|}_{L^p(\nu)}\\
    &= \norm{ \sup_{t} \left| \frac{1}{N} \sum_{n=1}^N e^{2\pi i n t} \left[\prod_{\eta\in V_{k-1}} c^{|\eta|} f \circ T^{h \cdot \eta} \sigma \right]\circ T^n\sigma \right|}_{L^p(\sigma^*\mu)} \\
    &= \norm{ \sup_{t} \left| \frac{1}{N} \sum_{n=1}^N e^{2\pi i n t} \left[\prod_{\eta\in V_{k-1}} c^{|\eta|} f \circ T^{h \cdot \eta} \right]\circ T^n \right|}_{L^p(\mu)} \, .
    \end{align*}
    Therefore, if $f$ is a WW function of power type $\alpha > 0$ in $L^p(\mu)$, then $\sigma^*f$ is a WW function of power type $\alpha > 0$ in $L^p(\nu)$. Because the image of a dense set of the map $\sigma^*:L^p(\mu) \to L^p(\nu)$ is a dense set in its range and $\sigma^*L^p(\mathcal{Z}_{k}(T), \mu) = L^p(\mathcal{Z}_{k}(S),  \nu)$, we have shown that $(Y, \mathcal{G}, \nu, S)$ is a WW system of power type $\alpha$ in $L^p(\nu)$.
\end{proof}

\begin{remark}
    Let $(X, \mathcal{F}, \mu)$ be a probability measure space, and let $G$ be the group of all of the invertible $\mu$-invariant transformations of $X$. If there exists $T \in G$ for which $(X, \mathcal{F}, \mu, T)$ is a $k$-th order WW system of power type $\alpha$ in $L^p(\mu)$, then its conjugacy class $\{ S^{-1}TS: S \in G \}$, where every system $(X, \mathcal{F}, \mu, S^{-1}TS)$ is a WW system of power type $\alpha$ in $L^p(\mu)$ by Proposition \ref{p:conjugacy}, is dense in $G$ with respect to the weak topology (in a sense of Halmos, cf. sections on ``weak topology" and ``weak approximation" in \cite{HalmosBook}).
    \end{remark}

    

\section{Examples}\label{s:ex}

In this section we will provide some examples of higher order WW systems. These include a K system (strong mixing, positive entropy) as well as a classical skew product (non-mixing, zero entropy). In \cite{assani04}, a system is constructed which is not a first order WW system of power type for any $\alpha \in (0, 1)$; as this system is weakly mixing, it follows by the comments in Remark \ref{r:rbo} that this system is also not a $k$-th order WW system of power type for any $k \geq 2$.

\textit{Trivial example:} Let $k \in \N$. A $k$-step nilsystem, as well as a system that is isomorphic to a $k$-th HKZ factor, is trivially a $k$-th order WW system, since $L^2(\mathcal{Z}_k)^\perp = \{0\}$.

\subsection{Pinsker algebras and Kolmogorov automorphisms}\label{ss:Ksys}

Recall properties of the Pinsker $\sigma$-subalgebra $\mathcal{P}$: for any ergodic dynamical system, $(X, \mathcal{F}, \mu, T)$, there exists a $\sigma$-subalgebra $\mathcal{A} \subset \mathcal{F}$ such that
\begin{equation}\label{Pinsker}
    T^{-1} \mathcal{A} \subset \mathcal{A} \, , \quad \quad \bigcap_{n\in \mathbb{N}} T^{-n}\mathcal{A} = \mathcal{P} \, , \quad \quad \bigvee_{n\in \mathbb{N}} T^n \mathcal{A} \text{ is dense in }\mathcal{F} \, .
\end{equation}
Moreover, the span of the set of functions 
\begin{equation}\label{setE}
    \mathcal{E} :=  \{ f_k^A := \1_A - \E(\1_A | T^{-k}\A) : k \in \Z \text{ and } A \in T^{-l}\A \text{ for some } l \in \Z\} 
\end{equation}
 is dense in the set $L^2(\mathcal{P})^\perp = \{f\in L^2(\mu): \E (f|\mathcal{P}) = 0\}$ by the martingale convergence theorem. The functions in the span of $\mathcal{E}$ are first-order WW functions \cite[Theorem 4]{assani}. We claim the following:
\begin{theorem}\label{t:Ksys}
Every function in the span of $\mathcal{E}$ from (\ref{setE}) is a $J$th-order WW functions of power type $1/6$ in $L^2$ for all $J\geq 2$.
\end{theorem}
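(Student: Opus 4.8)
The plan is to verify the hypothesis of Lemma~\ref{l:mlc} with $p=2$ and $\alpha=1/6$, which reduces the theorem to one concrete estimate. Fix $J\ge 2$ and an arbitrary collection $(e_\eta)_{\eta\in V_{J-1}}$ with $e_\eta=f_{k_\eta}^{A_\eta}\in\mathcal{E}$; discarding the trivial case we may assume $k_\eta>l_\eta$ (where $A_\eta\in T^{-l_\eta}\A$), so $e_\eta$ is $T^{-l_\eta}\A$-measurable, $\E(e_\eta\mid T^{-k_\eta}\A)=0$, and $\norm{e_\eta}_\infty\le 1$. We will repeatedly use that $T^{-a}\A\subseteq T^{-b}\A$ iff $a\ge b$, that $g$ being $\mathcal{B}$-measurable makes $g\circ T^m$ measurable with respect to $T^{-m}\mathcal{B}$, and that conditional expectation commutes with complex conjugation. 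Writing $g_h:=\prod_{\eta\in V_{J-1}}c^{|\eta|}e_\eta\circ T^{h\cdot\eta}$ and $\langle u,v\rangle:=\int u\overline v\,d\mu$, the target is
\[
\frac{1}{\floor{\sqrt N}^{J-1}}\sum_{h\in[\floor{\sqrt N}]^{J-1}}\left\Vert\sup_t\left|\frac1N\sum_{n=1}^N e^{2\pi int}g_h\circ T^n\right|\right\Vert_2^{2/3}\le \frac{C}{N^{1/6}}.
\]

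Since $x\mapsto x^{1/3}$ is concave, Hölder's inequality on averages bounds the left-hand side by $\Sigma(N)^{1/3}$, where $\Sigma(N):=\floor{\sqrt N}^{-(J-1)}\sum_h\norm{\sup_t|\frac1N\sum_n e^{2\pi int}g_h\circ T^n|}_2^2$; it therefore suffices to prove $\Sigma(N)\le CN^{-1/2}$, since then cube-rooting gives the desired decay. Using $\norm{\sup_t|\cdot|}_2^2=\norm{\sup_t|\cdot|^2}_1$, applying the Van der Corput inequality~(\ref{vdc-1}) with $H=\floor{\sqrt N}$ and integrating, the first term contributes $2\norm{g_h}_2^2/(H+1)\le 2N^{-1/2}$. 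For the second term, expanding the conjugates in $\overline{g_h\circ T^{n+h'}}\,g_h\circ T^n$ and re-indexing $\eta\mapsto(\eta,0),(\eta,1)$ identifies it with $G_{\tilde h}\circ T^n$, where $\tilde h=(h,h')\in[\floor{\sqrt N}]^{J}$ and
\[
G_{\tilde h}:=\prod_{\tilde\eta\in V_J}c^{|\tilde\eta|}e_{\tilde\eta}\circ T^{\tilde h\cdot\tilde\eta},\qquad e_{(\eta,\varepsilon)}:=e_\eta,
\]
so that, up to boundary/reindexing errors of size $O(N^{-1/2})$, we are reduced to showing $\floor{\sqrt N}^{-J}\sum_{\tilde h\in[\floor{\sqrt N}]^J}\norm{\frac1N\sum_{n=1}^N G_{\tilde h}\circ T^n}_1\le CN^{-1/2}$.

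The core is the decay of this ergodic average for a fixed $\tilde h$, coming entirely from the filtration in~(\ref{Pinsker}). Let $\mathbf{0}\in V_J$ be the zero vector; the $\tilde\eta=\mathbf{0}$ factor of $G_{\tilde h}$ is exactly $e_{\mathbf 0}$ (trivial conjugation, shift $0$), with $\E(e_{\mathbf 0}\mid T^{-k_{\mathbf 0}}\A)=0$. Put $K_0:=\max\bigl(1,\max_{\tilde\eta\ne\mathbf 0}(k_{\mathbf 0}-l_{\tilde\eta})\bigr)$ and $R:=k_{\mathbf 0}-\min_{\tilde\eta}l_{\tilde\eta}$, both constants depending only on the fixed collection, and call $\tilde h$ \emph{good} if all its coordinates are $\ge K_0$. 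For good $\tilde h$ and $\tilde\eta\ne\mathbf 0$ we have $\tilde h\cdot\tilde\eta\ge K_0\ge k_{\mathbf 0}-l_{\tilde\eta}$, so $c^{|\tilde\eta|}e_{\tilde\eta}\circ T^{\tilde h\cdot\tilde\eta}$ is $T^{-(l_{\tilde\eta}+\tilde h\cdot\tilde\eta)}\A\subseteq T^{-k_{\mathbf 0}}\A$-measurable; pulling these factors out of $\E(\,\cdot\mid T^{-k_{\mathbf 0}}\A)$ gives $\E(G_{\tilde h}\mid T^{-k_{\mathbf 0}}\A)=0$, in particular $\int G_{\tilde h}\,d\mu=0$. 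Moreover $G_{\tilde h}$ is $T^{-\min_{\tilde\eta}l_{\tilde\eta}}\A$-measurable, so $\overline{G_{\tilde h}\circ T^m}$ is $T^{-k_{\mathbf 0}}\A$-measurable once $m\ge R$, whence $\langle G_{\tilde h},G_{\tilde h}\circ T^m\rangle=0$ for $m\ge R$. Expanding $\norm{\frac1N\sum_n G_{\tilde h}\circ T^n}_2^2$ as a weighted sum of correlations $\langle G_{\tilde h},G_{\tilde h}\circ T^m\rangle$ over $|m|<N$, discarding those with $|m|\ge R$, and using $\norm{G_{\tilde h}}_\infty\le 1$, we obtain $\norm{\frac1N\sum_n G_{\tilde h}\circ T^n}_1\le\norm{\frac1N\sum_n G_{\tilde h}\circ T^n}_2\le\sqrt{2R}\,N^{-1/2}$ for every good $\tilde h$. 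Finally the non-good $\tilde h\in[\floor{\sqrt N}]^J$ number at most $JK_0\floor{\sqrt N}^{J-1}$, and for those we use the trivial bound $\le 1$; summing gives $\floor{\sqrt N}^{-J}\sum_{\tilde h}\norm{\frac1N\sum_n G_{\tilde h}\circ T^n}_1\le\sqrt{2R}\,N^{-1/2}+2JK_0 N^{-1/2}$. Collecting the estimates yields $\Sigma(N)\le CN^{-1/2}$, hence the target bound; Lemma~\ref{l:mlc} then promotes this to all of $\operatorname{span}(\mathcal{E})$, for every $J\ge 2$. (Small $N$ and degenerate Van der Corput ranges are absorbed into $C$.)

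The main difficulty will be the bookkeeping in the last paragraph: singling out the ``zero factor'' $e_{\mathbf 0}$ sitting at shift $0$, and checking uniformly over the good $\tilde h$ — and with a \emph{constant} correlation range $R$ — both $\int G_{\tilde h}\,d\mu=0$ and the finite-range vanishing of $\langle G_{\tilde h},G_{\tilde h}\circ T^m\rangle$ purely from the inclusions among the $T^{-j}\A$. The good/non-good dichotomy is unavoidable, since for small $\tilde h$ the mean-zero property genuinely fails, and it is precisely the averaging over $h$ built into Definition~\ref{def:hof} that lets the non-good set (of density $O(N^{-1/2})$) be absorbed. The conjugation-tracking that turns the Van der Corput correlation into the $V_J$-box product $G_{\tilde h}$ is routine but must be carried out carefully.
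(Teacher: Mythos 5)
Your proof is correct, and it reaches the same destination by the same underlying mechanism as the paper (one application of Van der Corput, followed by a vanishing-of-correlations argument driven by the filtration $T^{-1}\A\subset\A$, with the finitely many ``bad'' shifts absorbed into the cube average), but the decomposition is genuinely different and worth comparing. The paper fixes a single sufficiently separated cube shift $h$, applies Van der Corput in a new shift $m$, splits off small $m$, and for large $m$ passes to the second moment of the correlation sum via Cauchy--Schwarz, expanding $\bigl|\sum_n F(T^nx)F(T^{n+m}x)\bigr|^2$ into a double sum whose off-diagonal terms are killed by an explicit term-by-term cancellation of the $\1_A$ versus $\E(\1_A\mid T^{-k}\A)$ pieces (four terms for $J=2$, a sum over $\theta\in V_{2^{J-1}-1}$ in Appendix~\ref{a:PA}); this yields a bound uniform in large $h$, which is then averaged. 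You instead apply H\"older to pass to the $L^2$-squared average over $h$ first, fold the Van der Corput shift $h'$ into the cube to form the next-level box product $G_{(h,h')}$ over $V_J$, and reduce everything to the single clean statement that a bounded function with $\E(G\mid T^{-k_{\mathbf 0}}\A)=0$ and measurable with respect to $T^{-\min l_{\tilde\eta}}\A$ has autocorrelations supported in $|m|<R$ and hence $\norm{\frac1N\sum_n G\circ T^n}_2\le\sqrt{2R/N}$. The two computations are literally the same integral --- your $\langle G_{\tilde h},G_{\tilde h}\circ T^{j-n}\rangle$ is the paper's $\int F\circ T^n\cdot F\circ T^{n+m}\cdot F\circ T^j\cdot F\circ T^{j+m}\,d\mu$ --- but your packaging replaces the explicit multi-term cancellation by the mean-zero/measurability-depth argument, treats all $J\ge2$ in one stroke rather than deferring the general case to an appendix, and makes the role of the $h$-average in Definition~\ref{def:hof} (absorbing the density-$O(N^{-1/2})$ set of bad shifts, now including the Van der Corput shift itself) more transparent. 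One cosmetic slip: $\norm{f_k^A}_\infty\le 2$, not $\le 1$, so your trivial bounds should carry a factor $2^{2^{J}}$; this only inflates the constant $C$, which is allowed to depend on the collection.
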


\begin{proof}
For clarity, we present the case $J=2$, corresponding to triple recurrence. A similar argument shows the general case, and is included in Appendix \ref{a:PA}.

Let $j \in \{1, 2\}$, let $k_j \in \Z$, $A_j \in T^{-l_j}\A$, and $l_j \in \Z$. We can assume that $l_j < k_j$, or else $f_{k_j}^{A_j} = 0$. To satisfy the hypotheses of the multilinearity concerns lemma (Lemma \ref{l:mlc}), we are interested in showing polynomial decay on the terms
$$\frac{1}{\lfloor \sqrt{N} \rfloor} \sum_{h=1}^{\lfloor \sqrt{N} \rfloor} \left \Vert \sup_{t} \left | \frac{1}{N} \sum_{n=1}^N e^{2\pi i n t} f_{k_1}^{A_1} \circ T^n \cdot f_{k_2}^{A_2}\circ T^{n+h} \right| \right\Vert_p^{2/3}$$
(since the functions are real-valued, the complex conjugate is not necessary). To this end, fix a large natural number $h$ which satisfies $h > \max\{0, l_1 - l_2, k_1 - l_2, k_1 - k_2\}$. We wish to show the desired decay for the above summand of $h$ alone. 

Define $F = f_{k_1}^{A_1} \cdot f_{k_2}^{A_2} \circ T^h$. As each $\norm{f_{k_j}^{A_j}}_{\infty}\leq 2$, we have $\norm{F}_\infty \leq 4$. We compute pointwise by the summation variant of the Van der Corput lemma (\ref{vdc-2}) that
\begin{align*}
\sup_{t} \left|\sum_{n=1}^N e^{2\pi i n t} F(T^{n} x) \right|^2 &\leq 32N + 4\sum_{m = 1}^{N-1}\left| \sum_{n = 1}^{N-m} F(T^{n} x)F(T^{n + m} x)\right| \\
&\leq 32N + 64(k_1 - l_1)N + 4\sum_{m = k_1 - l_1+1}^{N-1}\left| \sum_{n = 1}^{N-m} F(T^{n} x)F(T^{n + m} x)\right| \, .
\end{align*}
By integrating both sides and bounding the $L^1(\mu)$ norm by the $L^2(\mu)$ norm, we have
\begin{align*}
&\int \sup_{t} \left| \sum_{n=1}^N e^{2\pi i n t} F(T^{n} x) \right|^2 \, d\mu(x) \\ &\leq 32N + 64(k_1 - l_1)N + 4\sum_{m = k_1 - l_1+1}^{N-1} \left(\int\left| \sum_{n = 1}^{N-m} F(T^{n} x)F(T^{n + m} x)\right|^2 \, d\mu(x)\right)^{1/2} \, .
\end{align*}
Consider the inner $n$ sum, which is squared. If we factor out this product, the diagonal terms can be bounded above by $256N$, and we are left with off-diagonal terms, which are
\begin{align*}
&2\sum_{n < j}^{N-m} \int  F(T^{n} x)F(T^{n + m} x)F(T^{j} x)F(T^{j + m} x) \, d\mu(x) \\
&= 2\sum_{n < j}^{N-m} \int f_{k_1}^{A_1} \circ T^n\cdot  f_{k_2}^{A_2} \circ T^{n+h} \cdot f_{k_1}^{A_1} \circ T^{n+m} \cdot f_{k_2}^{A_2} \circ T^{n+ m + h} \\
&\cdot f_{k_1}^{A_1} \circ T^j \cdot f_{k_2}^{A_2} \circ T^{j+h} \cdot f_{k_1}^{A_1} \circ T^{j+m} \cdot f_{k_2}^{A_2} \circ T^{j+ m + h} \, d\mu \, .
\end{align*}
Recall that $m > k_1 - l_1$. We claim that if $j > n + k_1 - l_1$, then 
\begin{equation}\label{0int} \quad \int f_{k_1}^{A_1} \circ T^n \cdot f_{k_2}^{A_2} \circ T^{n+h} \cdot f_{k_1}^{A_1} \circ T^{n+m} \cdot f_{k_2}^{A_2} \circ T^{n+ m + h} \cdot f_{k_1}^{A_1} \circ T^j \cdot f_{k_2}^{A_2} \circ T^{j+h} \cdot f_{k_1}^{A_1} \circ T^{j+m} \cdot f_{k_2}^{A_2} \circ T^{j+ m + h} \, d\mu = 0 \, .
\end{equation}
To this end, we consider the case $j > n + m$. Notice that the above eight functions are measurable to the degrees $T^{-q}\A$ for the following values of $q$:
$$n + l_1 \quad n + h + l_2 \quad n + m + l_1 \quad n + m + h + l_2 \quad j + l_1 \quad j + h + l_2 \quad j + m + l_1 \quad j + m + h + l_2 $$
Hence, by the condition $j > n+m$, and the initially chosen condition that $h > l_1 -l_2$ (or $h + l_2 > l_1$), it follows that
$$f_{k_1}^{A_1} \circ T^{n+m} \cdot f_{k_2}^{A_2} \circ T^{n+ m + h} \cdot f_{k_1}^{A_1} \circ T^j \cdot f_{k_2}^{A_2} \circ T^{j+h} \cdot f_{k_1}^{A_1} \circ T^{j+m} \cdot f_{k_2}^{A_2} \circ T^{j+ m + h} \quad \text{is } T^{-(n + m + l_1)}\A \text{ measurable.}$$
So for the integral $(\ref{0int})$, we would get the same value if we conditioned the integrand on $T^{-(n + m + l_1)}\A$, and the above product of six functions can factor out of this conditional expectation. So the claim that $(\ref{0int})$ is zero reduces to showing 
$$\E( f_{k_1}^{A_1} \circ T^n \cdot f_{k_2}^{A_2} \circ T^{n+h} | T^{-(n + m + l_1)}\A) = 0 \, .$$
By definition, we have 
\begin{align*}
f_{k_1}^{A_1} \circ T^n \cdot f_{k_2}^{A_2}\circ T^{n+h} &= \1_{A_1} \circ T^n \cdot \1_{A_2}\circ T^{n+h} - \E(\1_{A_1} | T^{-k_1}\A)\circ T^n \cdot \1_{A_2}\circ T^{n+h} \\ &\quad - \1_{A_1} \circ T^n \cdot \E(\1_{A_2} | T^{-k_2}\A)\circ T^{n+h} + \E(\1_{A_1} | T^{-k_1}\A)\circ T^n \cdot \E(\1_{A_2} | T^{-k_2}\A)\circ T^{n+h} \, .
\end{align*}
The first and second terms above will cancel each other out under the $T^{-(n + m + l_1)}\A$ conditional: By recalling that we chose $h > k_1 - l_2$ (or $h + l_2 > k_1$), we observe that
$$\E(\1_{A_1} \circ T^n \cdot \1_{A_2}\circ T^{n+h} | T^{-(n + m + l_1)}\A) = \E(\1_{A_1}  \cdot \1_{A_2}\circ T^{h} | T^{-(m + l_1)}\A) \circ T^n \, ,$$
and
\begin{align*}
\E(\E(\1_{A_1}|T^{-k_1}\A) \circ T^n \cdot \1_{A_2}\circ T^{n+h} | T^{-(n + m + l_1)}\A) &= \E(\E(\1_{A_1}|T^{-k_1}\A) \cdot \1_{A_2}\circ T^{h} | T^{-( m + l_1)}\A) \circ T^n \\
&= \E(\E(\1_{A_1} \cdot \1_{A_2}\circ T^{h} |T^{-k_1}\A)  | T^{-( m + l_1)}\A) \circ T^n \\
&= \E(\1_{A_1} \cdot \1_{A_2}\circ T^{h}  | T^{-( m + l_1)}\A) \circ T^n
\end{align*}
as $m +l_1 > k_1$. Likewise, the third and fourth terms cancel: Recalling that we picked $h > k_1 - k_2$, we observe 
$$\E(\1_{A_1} \circ T^n \cdot \E(\1_{A_2}| T^{-k_2}\A) \circ T^{n+h} | T^{-(n + m + l_1)}\A) = \E(\1_{A_1} \cdot \E(\1_{A_2}|T^{-k_2}\A)\circ T^{h} | T^{-(m + l_1)}\A) \circ T^n \, ,$$
and
\begin{align*}
&\E(\E(\1_{A_1}|T^{-k_1}\A) \circ T^n \cdot \E(\1_{A_2}|T^{-k_2}\A)\circ T^{n+h} | T^{-(n + m + l_1)}\A) \\
&= \E(\E(\1_{A_1}|T^{-k_1}\A) \cdot \E(\1_{A_2}|T^{-k_2}\A)\circ T^{h} | T^{-( m + l_1)}\A) \circ T^n \\
&= \E(\E(\1_{A_1} \cdot \E(\1_{A_2}\circ T^{h}|T^{-k_2}\A) |T^{-k_1}\A)  | T^{-( m + l_1)}\A) \circ T^n \\
&= \E(\1_{A_1} \cdot \E(\1_{A_2}|T^{-k_2}\A)\circ T^{h}  | T^{-( m + l_1)}\A) \circ T^n \, .
\end{align*}
Hence, the integral $(\ref{0int})$ does indeed vanish when $j > n + m$. For the case $n + m \geq j > n + k_1 - l_1$, we similarly observe that
$$f_{k_1}^{A_1} \circ T^{n+m} \cdot f_{k_2}^{A_2} \circ T^{n+ m + h} \cdot f_{k_1}^{A_1} \circ T^j \cdot f_{k_2}^{A_2} \circ T^{j+h} \cdot f_{k_1}^{A_1} \circ T^{j+m} \cdot f_{k_2}^{A_2} \circ T^{j+ m + h} \quad \text{is } T^{-(j + l_1)}\A \text{ measurable,}$$
and by the same argument, conditioning $(\ref{0int})$ under $T^{-(j + l_1)}\A$ shows that it is zero.

Returning to our bound, we have observed that the off-diagonal terms vanish when $j$ is larger than $n$ by $k_1 - l_1$. Hence, for each $n = 1$ to $N - m$, at most $k_1 - l_1$ terms are nonzero, and the total number of nonzero terms in the sum over $n$ and $J$ can be bounded above by $N(k_1 -l_1)$. Plugging this into our estimate, we see
\begin{align*}
\int \sup_{t} \left| \sum_{n=1}^N e^{2\pi i n t} F(T^{n} x) \right|^2 \, d\mu(x) &\leq 32N + 64(k_1 - l_1)N + 4\sum_{m = k_1 - l_1+1}^{N-1} \left(256N + 512N(k_1-l_1) \right)^{1/2}\, .
\end{align*}
Hence, we have the bound
\begin{align*}
\int \sup_{t} \left| \frac{1}{N} \sum_{n=1}^N e^{2\pi i n t} f_{k_1}^{A_1}(T^n x) f_{k_2}^{A_2}(T^{n + h}x) \right|^2 \, d\mu(x) &\leq \frac{C}{N^{1/2}} \, ,
\end{align*}
which is uniform in sufficiently large $h > L :=\max\{l_1 - l_2, k_1 - l_2, k_1 - k_2\}$ and the constant $C$ depends only on $A_1, k_1, l_1$. Raising both sides to the $1/3$ power, we have
\begin{align*}
\left\Vert \sup_{t} \left| \frac{1}{N} \sum_{n=1}^N e^{2\pi i n t} f_{k_1}^{A_1} \circ T^n \cdot  f_{k_2}^{A_2} \circ T^{n + h} \right|\right\Vert_2^{2/3} &\leq \frac{C}{N^{1/6}} \, .
\end{align*}
For small $1 \leq h \leq L$, we have the bound
\begin{align*}
\left\Vert \sup_{t} \left| \frac{1}{N} \sum_{n=1}^N e^{2\pi i n t} f_{k_1}^{A_1}\circ T^n \cdot  f_{k_2}^{A_2} \circ T^{n + h} \right|\right\Vert_2^{2/3} &\leq 4^{2/3}
\end{align*}
by the triangle inequality. But since there are only finitely many of such $h$, they are lost in the average for $N > (L + 1)^2$. Therefore,
\begin{align*}
&\quad\frac{1}{\lfloor \sqrt{N} \rfloor} \sum_{h=1}^{\lfloor \sqrt{N} \rfloor} \left\Vert \sup_{t} \left| \frac{1}{N} \sum_{n=1}^N e^{2\pi i n t} f_{k_1}^{A_1} \circ T^n \cdot  f_{k_2}^{A_2} \circ T^{n + h} \right|\right\Vert_2^{2/3} \\
& = \frac{1}{\lfloor \sqrt{N} \rfloor} \sum_{h=1}^{L} \left\Vert \sup_{t} \left| \frac{1}{N} \sum_{n=1}^N e^{2\pi i n t} f_{k_1}^{A_1} \circ T^n \cdot f_{k_2}^{A_2}\circ T^{n + h} \right|\right\Vert_2^{2/3} \\
&\quad + \frac{1}{\lfloor \sqrt{N} \rfloor} \sum_{h=L+1}^{\lfloor \sqrt{N} \rfloor} \left\Vert \sup_{t} \left| \frac{1}{N} \sum_{n=1}^N e^{2\pi i n t} f_{k_1}^{A_1} \circ T^n \cdot  f_{k_2}^{A_2} \circ T^{n + h} \right|\right\Vert_2^{2/3} \\
& \leq \frac{4^{2/3}L}{\lfloor \sqrt{N} \rfloor}  + \frac{\lfloor \sqrt{N} \rfloor - L - 1}{\lfloor \sqrt{N} \rfloor} \frac{C}{N^{1/6}} \leq \frac{C'}{N^{1/6}}
\end{align*}
for a larger $C'$ that still only depends on $A_j, l_j, k_j$. Since the bound $N>(L+1)^2$ only depends on the same constants, we can increase $C'$ further to get the above bound for small $N$, without changing the dependence.
\end{proof}

An immediate corollary of the theorem above via Lemma \ref{l:mlc} is the following:
\begin{corollary}\label{c:PinskerWW}
    Let $J \in \N$. Then the set of $J$-th order WW functions of power type $1/6$ in $L^2$ is dense in $L^2(\mathcal{P})^\perp$.
\end{corollary}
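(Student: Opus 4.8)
The plan is to obtain the statement directly from Theorem \ref{t:Ksys} together with the $L^2$-density of $span(\mathcal{E})$ in $L^2(\mathcal{P})^\perp$ that is recorded right after (\ref{setE}). Fix $J\in\N$. When $J\ge 2$, Theorem \ref{t:Ksys} already asserts that every element of $span(\mathcal{E})$ is a $J$-th order WW function of power type $1/6$ in $L^2$; internally this is achieved by verifying the decay estimate (\ref{est:mlc}) of the multilinearity concerns lemma for the generating set $\mathcal{E}$ with $k=J$, $p=2$ and $\alpha=1/6$ (carried out for $J=2$ in the text and for general $J$ in Appendix \ref{a:PA}), after which Lemma \ref{l:mlc} promotes the estimate from $\mathcal{E}$ to its linear span. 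When $J=1$ the statement is the classical one: by \cite[Theorem 4]{assani} the elements of $span(\mathcal{E})$ are first-order WW functions, in fact of power type $1/2$ by the single-function specialization of the computation in the proof of Theorem \ref{t:Ksys}, and a first-order WW function of power type $\alpha\ge 1/6$ is a fortiori one of power type $1/6$ (with the same constant, since $N^{-\alpha}\le N^{-1/6}$ for $N\ge 1$). In all cases, then, $span(\mathcal{E})$ is contained in the set of $J$-th order WW functions of power type $1/6$ in $L^2$.

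It remains to invoke the density statement following (\ref{setE}): $span(\mathcal{E})$ is $L^2$-dense in $L^2(\mathcal{P})^\perp$. One argues that for $A\in\bigcup_{l\in\Z}T^{-l}\A$ the reverse martingale convergence theorem, together with the identity $\bigcap_{n\in\N}T^{-n}\A=\mathcal{P}$ from (\ref{Pinsker}), yields $f_k^A=\1_A-\E(\1_A\mid T^{-k}\A)\to\1_A-\E(\1_A\mid\mathcal{P})$ in $L^2$ as $k\to\infty$, while the limits $\1_A-\E(\1_A\mid\mathcal{P})$ obtained this way have $L^2$-dense linear span in $L^2(\mathcal{P})^\perp$ because $\bigvee_{l\in\Z}T^{-l}\A$ is dense in $\mathcal{F}$. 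Since $span(\mathcal{E})$ sits inside the set of $J$-th order WW functions of power type $1/6$ in $L^2$ and is dense in $L^2(\mathcal{P})^\perp$, that larger set is dense in $L^2(\mathcal{P})^\perp$ as well, which is the assertion.

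There is no genuine obstacle specific to this corollary, which is a bookkeeping consequence; all of the substance lives in Theorem \ref{t:Ksys} and its general-$J$ analogue in Appendix \ref{a:PA}. The two points worth a moment's care are: (i) confirming that the estimate proved there is literally the hypothesis (\ref{est:mlc}) of Lemma \ref{l:mlc} for $\mathcal{E}$, keeping in mind that, the functions in $\mathcal{E}$ being real-valued, the factors $c^{|\eta|}e_\eta\circ T^{h\cdot\eta}$ carry no actual conjugation; and (ii) remembering that it is Lemma \ref{l:mlc}, not mere linearity, that is used to pass to $span(\mathcal{E})$, since higher-order WW functions are not known to form a vector space, and closure under sums relies on the uniform decay estimate being available for \emph{every} mixed tuple $(e_\eta)_{\eta\in V_{J-1}}$ of elements of $\mathcal{E}$.
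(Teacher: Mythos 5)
Your proposal is correct and takes essentially the same approach as the paper, which derives the corollary immediately from Theorem \ref{t:Ksys} via Lemma \ref{l:mlc} together with the martingale-convergence density of $span(\mathcal{E})$ in $L^2(\mathcal{P})^\perp$ recorded after (\ref{setE}). Your explicit treatment of the $J=1$ case (where the cited first-order result gives a power type at least $1/6$, hence a fortiori power type $1/6$) is a harmless supplement to what the paper leaves implicit.
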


Recall that the set $(X, \mathcal{F}, \mu, T)$ is a \textit{Kolmogorov system} (``\textit{K system}" for short) if $\mathcal{P}$ from (\ref{Pinsker}) is trivial (example: A Bernoulli shift is a K system). This implies that the span of $\mathcal{E}$ is dense in the set of $L^2(\mu)$ functions with zero integral, which is the orthogonal complement of $L^2(\mathcal{Z}_k)$ for any $k \in \N$ since a K system is strongly mixing. Since it was shown in \cite[Theorem 5]{assani} that every K system is a first-order WW system, we conclude that
\begin{corollary}\label{c:Ksys}
Every K system is a $J$-th order WW system for every $J \in \N$.
\end{corollary}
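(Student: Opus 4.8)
The plan is to obtain this as a short consequence of Corollary~\ref{c:PinskerWW}, once the relevant orthogonal complements are identified. Fix a Kolmogorov system $(X,\mathcal{F},\mu,T)$; by definition its Pinsker $\sigma$-algebra $\mathcal{P}$ is trivial, and recall that a K system is strongly mixing, hence weakly mixing and ergodic. Triviality of $\mathcal{P}$ immediately gives $L^2(\mathcal{P})^\perp = \{ f \in L^2(\mu) : \int f\,d\mu = 0 \}$.

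Next I would check that for a K system every Host--Kra--Ziegler factor is trivial, so that $L^2(\mathcal{Z}_J)^\perp$ coincides with the same space $\{ f \in L^2(\mu) : \int f\,d\mu = 0 \}$ for every $J \in \N$. Since $T$ is ergodic, $\mathcal{Z}_0$ is trivial; since $T$ is weakly mixing it has no nonconstant eigenfunctions, so the first HKZ factor $\mathcal{Z}_1$ (the Kronecker factor) is trivial as well. As $\mathcal{Z}_J$ is an inverse limit of $J$-step nilsystems, a nontrivial $\mathcal{Z}_J$ would carry a nontrivial compact abelian group rotation as a factor, hence a nontrivial eigenfunction of $T$, contradicting weak mixing; thus $\mathcal{Z}_J$ is trivial for all $J$. (This is precisely the fact invoked in the discussion preceding the statement; it rests on the structure theory of \cite{hostkra}.)

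Combining these two observations, for every $J \geq 2$ we have $L^2(\mathcal{Z}_J)^\perp = L^2(\mathcal{P})^\perp$, so Corollary~\ref{c:PinskerWW} supplies a set of $J$-th order WW functions of power type $1/6$ in $L^2$ that is $L^2$-dense in $L^2(\mathcal{Z}_J)^\perp$; by the definition of a $k$-th order WW system this says $(X,\mathcal{F},\mu,T)$ is a $J$-th order WW system (of power type $1/6$ in $L^2$). The remaining case $J=1$ is exactly the assertion that every K system is a first-order WW system, which is \cite[Theorem 5]{assani}, so together these cover all $J \in \N$. There is essentially no obstacle here: all of the substance lies in Theorem~\ref{t:Ksys} and hence in Corollary~\ref{c:PinskerWW}, and the only point that warrants a sentence of care is the identification $L^2(\mathcal{Z}_J)^\perp = \{ f : \int f\,d\mu = 0 \}$ for all $J$, i.e. the triviality of all HKZ factors of a weakly mixing system, which is classical and may simply be quoted.
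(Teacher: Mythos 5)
Your proposal is correct and follows essentially the same route as the paper: triviality of the Pinsker algebra identifies $L^2(\mathcal{P})^\perp$ with the mean-zero functions, strong (hence weak) mixing makes every HKZ factor trivial so that $L^2(\mathcal{Z}_J)^\perp$ is the same space, and Corollary \ref{c:PinskerWW} (together with \cite[Theorem 5]{assani} for $J=1$) finishes the argument. The extra sentence justifying triviality of $\mathcal{Z}_J$ via eigenfunctions of the inverse limit of nilsystems is a welcome elaboration of what the paper merely asserts.
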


We  note that the same argument can be applied to establish $J$-th order WW functions in product systems for $J > 2$.

\begin{proposition}\label{t:Kprod}
Let $(X, \mathcal{F}, \mu, T)$ be a K system, and $(Y, \mathcal{G}, \nu, S)$ be a measure-preserving system. Let $\mathcal{E} \subset L^\infty(\mu)$ be the set in (\ref{setE}). All of the functions in the span of the set
\[ \{ f \otimes g: f \in \mathcal{E}, g \in L^\infty(\nu) \}\]
are $J$th-order WW functions of power type $1/6$ in $L^2$ for the product system $(X \times Y, \mathcal{F} \otimes \mathcal{G}, \mu \times \nu, T\times S)$ for all $J \geq 2$
\end{proposition}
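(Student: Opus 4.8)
The plan is to verify the hypothesis of the multilinearity concerns lemma (Lemma~\ref{l:mlc}) for the set $\mathcal{E}' := \{f\otimes g : f\in\mathcal{E},\ g\in L^\infty(\nu)\}$ on the product system $(X\times Y,\mathcal{F}\otimes\mathcal{G},\mu\times\nu,T\times S)$, and then to quote that lemma. Fix a collection $(e_\eta)_{\eta\in V_{J-1}}$ with $e_\eta = f_{k_\eta}^{A_\eta}\otimes g_\eta$. The key structural remark is that composition with $(T\times S)^m$ respects tensor products, so the weighted product appearing in~(\ref{est:mlc}) factors as $F_h\otimes G_h$, where $F_h := \prod_{\eta} c^{|\eta|} f_{k_\eta}^{A_\eta}\circ T^{h\cdot\eta}$ depends only on $X$, $G_h := \prod_{\eta} c^{|\eta|} g_\eta\circ S^{h\cdot\eta}$ depends only on $Y$, and $\|G_h\|_\infty$ is bounded by a fixed power of $\max_\eta\|g_\eta\|_\infty$ independent of $h$. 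From here the proof of Theorem~\ref{t:Ksys} — in its general-$J$ form of Appendix~\ref{a:PA}, with the case $J=2$ serving as the natural template — runs with only cosmetic changes on $X\times Y$ with $\Phi := F_h\otimes G_h$ in place of $F$ (since the $f_{k_\eta}^{A_\eta}$ are real-valued, the conjugations on the $X$-factor are trivial): apply the Van der Corput inequality~(\ref{vdc-2}) pointwise, integrate over $X\times Y$, bound $L^1$ by $L^2$, and reduce to controlling integrals over $X\times Y$ of products of $(T\times S)$-translates of $\Phi$.

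The one genuinely new ingredient is that, $\Phi$ being a tensor product, Fubini's theorem factors every such integral over $X\times Y$ as an integral over $X$ of the corresponding product of $T$-translates of the $f_{k_\eta}^{A_\eta}$, times an integral over $Y$ of a product of $S$-translates of the $g_\eta$; the latter has modulus bounded by a fixed power of $\max_\eta\|g_\eta\|_\infty$, uniformly in $N$, in $h$, and in all the shift parameters. Therefore every vanishing statement established in the proof of Theorem~\ref{t:Ksys} for the $X$-integrals — all of which rested only on the identity $f_k^A = \1_A - \E(\1_A\mid T^{-k}\A)$, the relation $\E(f_k^A\mid T^{-k}\A)=0$, the $T^{-l}\A$-measurability of $f_k^A$, and the monotonicity of the filtration $\{T^{-q}\A\}_q$ — forces the corresponding integral over $X\times Y$ to vanish as well, while the surviving diagonal and off-diagonal terms pick up nothing worse than this harmless multiplicative constant. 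Consequently the off-diagonal cancellation, the count of nonzero terms, and the resulting $O(N^{-1/2})$ bound for $\int\sup_t|\cdots|^2\,d(\mu\times\nu)$ are all reproduced; raising to the power $1/3$ gives an $O(N^{-1/6})$ bound for the $\|\cdot\|_2^{2/3}$ quantity, and averaging over $h$ (with the finitely many small $h$ absorbed into the constant once $N$ is large enough) leaves power type $1/6$ in $L^2$, with a constant now also depending on $\max_\eta\|g_\eta\|_\infty$. Lemma~\ref{l:mlc} then yields the conclusion for the full linear span; note that its proof — the decomposition of $f+g$ over the set $\Phi_k$ — uses only subadditivity of the supremum, the norm, and $\xi\mapsto\xi^{2/3}$, and not the ergodicity assumption, so it applies to the (possibly non-ergodic) product system.

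I expect the only real difficulty to be organizational rather than conceptual: one must thread the Fubini factorization through each step of the general-$J$ Van der Corput iteration of Appendix~\ref{a:PA} and confirm that the separation conditions on the $h_i$ used there — which dictate exactly which off-diagonal terms vanish — are untouched by the extra $Y$-coordinate, as the $g_\eta$ never enter a conditional-expectation computation. A clean way to package this, avoiding any rerun of the appendix calculation, is to isolate as a standalone lemma the statement that if all relevant translate-integrals of a function $F$ on $X$ vanish under a prescribed separation hypothesis, then the same is true for $F\otimes G$ on $X\times Y$ up to the factor $\|G\|_\infty^{\,\#}$, and to invoke it at the single place where the proof of Theorem~\ref{t:Ksys} uses the vanishing of~(\ref{0int}).
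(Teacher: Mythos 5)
Your proposal is correct and follows essentially the same route as the paper: the paper proves the Theorem \ref{t:Ksys} estimate with an arbitrary bounded weight sequence $a_n$ inserted (noting that the weights factor out of every integral whose vanishing is decided by the conditional-expectation argument on $X$ alone), then sets $a_n = g_1(S^n y)\overline{g_2(S^{n+h}y)}$ and integrates in $y$, which is just your Fubini factorization read in the other order. Your observations that the conjugations only matter on the $Y$-factor and that Lemma \ref{l:mlc} does not actually use ergodicity are both accurate and consistent with how the paper proceeds.
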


\begin{proof}
Again, we prove the $J = 2$ case. The $J\geq 3$ case follows by applying the following argument to the general case of Theorem \ref{t:Ksys}. 

As before, let $k_j \in \Z$, $A_j \in T^{-l_j}\A$, $l_j \in \Z$, and $g_i\in L^\infty(\nu)$ hold for $j = 1, 2$. We can assume that $l_j < k_j$, or else $f_{k_j}^{A_j} = 0$. Again, we are interested in showing polynomial decay on the terms
$$\frac{1}{\lfloor \sqrt{N} \rfloor} \sum_{h=1}^{\lfloor \sqrt{N} \rfloor} \left \Vert \sup_{t} \left | \frac{1}{N} \sum_{n=1}^N e^{2\pi i n t} f_{k_1}^{A_1}  \circ T^n \otimes g_1 \circ S^n \cdot f_{k_2}^{A_2} \circ T^{n+h} \otimes\overline{g_2 \circ S^{n+h}} \right| \right\Vert_{L^2(\mu \times \nu)}^{2/3} \, .$$
To this end, we note that the argument from the previous theorem is unhindered if we add in an arbitrary bounded sequence of complex numbers $a_n$. We fix $h$ as before, 
with $F = f_{k_1}^{A_1} \cdot f_{k_2}^{A_2} \circ T^h$, and see that
\begin{align*}
&\int \sup_{t} \left| \sum_{n=1}^N e^{2\pi i n t} a_n f_{k_1}^{A_1}(T^n x) f_{k_2}^{A_2}(T^{n+h} x) \right|^2 \, d\mu(x) \\ 
&= \int \sup_{t} \left| \sum_{n=1}^N e^{2\pi i n t} a_n F(T^{n} x) \right|^2 \, d\mu(x) \\
 &\leq 32\Vert a_n \Vert_{\ell^\infty}^2  N + 64(k_1 - l_1)\Vert a_n \Vert_{\ell^\infty}^2 \ N + 4\sum_{m = k_1 - l_1+1}^{N-1} \left(\int\left| \sum_{n = 1}^{N-m} a_n \overline{a_{n+m}}F(T^{n} x)F(T^{n + m} x)\right|^2 \, d\mu(x) \right)^{1/2} \, .
\end{align*}
If we expand the square around the $n$ sum, the diagonal terms can be bounded by $256 \Vert a_n \Vert_{\ell^\infty}^4  N$. The off-diagonal terms have the form
\begin{align*}
&\int  a_n \overline{a_{n+m}}F(T^{n} x)F(T^{n + m} x) \overline{a_j}a_{j+m}F(T^{j} x)F(T^{j + m} x) \, d\mu(x)\\
&=   a_n \overline{a_{n+m}} \overline{a_j}a_{j+m} \int F(T^{n} x)F(T^{n + m} x) F(T^{j} x)F(T^{j + m} x) \, d\mu(x)
\end{align*}
which vanish for $j > n + k_1 - l_1$, for the exact same reasons as in the proof of Theorem \ref{t:Ksys}. Hence, we are left with a bound
\begin{align*}
\int \sup_{t} \left| \sum_{n=1}^N e^{2\pi i n t} a_n F(T^{n} x) \right|^2 \, d\mu(x)
&\leq 32\Vert a_n \Vert_{\ell^\infty}^2  N + 64\Vert a_n \Vert_{\ell^\infty}^2 (k_1 - l_1)N \\ &+ 4\sum_{m = k_1 - l_1+1}^{N-1} \left(256\Vert a_n \Vert_{\ell^\infty}^4  N + 512\Vert a_n \Vert_{\ell^\infty}^4N(k_1-l_1) \right)^{1/2}\, ,
\end{align*}
which is of the form
\begin{align*}
\int \sup_{t} \left| \frac{1}{N} \sum_{n=1}^N e^{2\pi i n t} a_n f_{k_1}^{A_1}(T^n x)f_{k_2}^{A_2}(T^{n + h}x) \right|^2 d\mu(x)&\leq \frac{C\Vert a_n \Vert_{\ell^\infty}^2 }{N^{1/2}}
\end{align*}
for a constant $C$ that depends only on $k_1, l_1$. So we set $a_n = g_1(S^n y)\overline{g_2(S^{n+h} y)}$, and integrate the above with respect to $\nu$ to see 
\begin{align*}
\left\Vert \sup_{t} \left| \frac{1}{N} \sum_{n=1}^N e^{2\pi i n t} f_{k_1}^{A_1} \circ T^n \otimes g_1 \circ S^n \cdot f_{k_2}^{A_2} \circ T^{n + h} \otimes \overline{g_2\circ S^{n+h}} \right|  \right\Vert_{L^2(\mu \times \nu)} ^2 &\leq \frac{C\Vert g_1 \Vert_{L^\infty(\nu)}^2 \Vert g_2 \Vert_{L^\infty(\nu)}^2 }{N^{1/2}}
\end{align*}
in the product measure. Raising to the third power, we have 
\begin{align*}
\left\Vert \sup_{t} \left| \frac{1}{N} \sum_{n=1}^N e^{2\pi i n t} f_{k_1}^{A_1} \circ T^n \otimes g_1 \circ S^n \cdot f_{k_2}^{A_2} \circ T^{n + h} \otimes \overline{g_2 \circ S^{n+h}} \right|  \right\Vert_{L^2(\mu \times \nu)} ^{2/3} &\leq \frac{C\Vert g_1 \Vert_{L^\infty(\nu)}^{2/3} \Vert g_2 \Vert_{L^\infty(\nu)}^{2/3} }{N^{1/6}}
\end{align*}
for uniformly large $h$. Since we have the same trivial bound on small values of $h$, their contribution vanishes in the average over $h$ as before. Hence, we get the desired bound.
\end{proof}

\begin{remark}\label{r:product}
    An analogous case of Proposition \ref{t:Kprod} for $J=1$ was done in \cite[Theorem 7]{assani}, and that was enough to guarantee the following: If $(X, \mathcal{F}, \mu, T)$ is a K system, and $(Y, \mathcal{G}, \nu, S)$ is any ergodic system, then the product system $(X \times Y, \mathcal{F} \times \mathcal{G}, \mu \otimes \nu, T \times S)$ is a first-order WW system. However, we cannot automatically say the same for the higher-order WW dynamical systems, due to the multilinearity concerns. We will handle this delicate issue in \S \ref{s:K-stability}.
\end{remark}

\subsection{Classical skew products}\label{ss:csp}

For an example with zero entropy, let $\alpha\in (0,1)$ and $k\in \N$ and consider the skew shift $T_\alpha:\T^k \to \T^k$ given by
$$T_\alpha (x_1, \dots, x_k) = (x_1 + \alpha, x_2 + x_1, x_3 + x_2, \dots, x_k + x_{k-1}) \, .$$
Define
$$P_j(n) =\sum_{m_{j-1}=0}^{n-1}\dots \sum_{m_2 = 0}^{m_3 - 1} \sum_{m_1=0}^{m_2 - 1} m_1 \quad  \text{for } j \geq 2.$$
By Faulhaber's formula, each $P_j$ is a polynomial of degree exactly $j$, with leading coefficient $1/(j!)$. Moreover, we compute
$$T_\alpha^n (x_1, \dots, x_k) = \left( x_1 + n\alpha, x_2 + nx_1 + P_2(n) \alpha, \dots, x_{k} + nx_{k-1} +\sum_{j=2}^{k-1} P_{j}(n)x_{k-j} + P_{k}(n) \alpha\right) \, .$$
Let $\mu$ be the normalized Haar measure on $\T^2$. It was demonstrated in \cite[Proposition 8]{assani} that the system $(\T^2, T_\alpha, \mu)$ is a (first order) WW system for Lebesgue a.e. $\alpha \in \R$. Here we will show that the higher dimensional skew product system is indeed a higher order WW system.

\begin{theorem}\label{t:csp}
Let $k\geq 3$, and let $\mu$ be the normalized Haar measure on $\T^k$. For Lebesgue a.e. $\alpha\in \R$, the system $(\T^k, T_{\alpha}, \mu)$ is a $(k-1)$th order WW system of power type $1/24$ in $L^2$.
\end{theorem}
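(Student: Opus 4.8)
\textit{Setup and reduction.} The plan is to produce an explicit $L^2$-dense family of $(k-1)$-th order WW functions of power type $1/24$ inside $L^2(\mathcal{Z}_{k-1})^\perp$. Since $T_\alpha$ is a $k$-step nilsystem whose $(k-1)$-th Host-Kra-Ziegler factor is the sub-nilmanifold carried by the first $k-1$ coordinates (so $\mathcal{Z}_{k-1}\cong\T^{k-1}$), the space $L^2(\mathcal{Z}_{k-1})^\perp$ is spanned by the characters $\chi_r(x)=e^{2\pi i(r_1x_1+\cdots+r_kx_k)}$ with $r=(r_1,\dots,r_k)\in\Z^k$ and $r_k\neq 0$. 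I would set $\mathcal{E}:=\{\chi_r:r_k\neq 0\}$ and apply the multilinearity concerns lemma (Lemma~\ref{l:mlc}): it then suffices to verify the decay estimate \eqref{est:mlc} with $\alpha=1/24$ for every family $(e_\eta)_{\eta\in V_{k-2}}$ with $e_\eta=\chi_{r^{(\eta)}}\in\mathcal{E}$, since $\mathrm{span}(\mathcal{E})$ is $L^2$-dense in $L^2(\mathcal{Z}_{k-1})^\perp$.

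\textit{Algebraic step.} Write $T_\alpha^m x=M^m x+v_m(\alpha)$ with $M$ the unipotent skew-shift matrix. Then $e_\eta\circ T_\alpha^{h\cdot\eta}$ is a unimodular constant times $\chi_{(M^{h\cdot\eta})^{\mathsf{T}}r^{(\eta)}}$, so the product $\prod_{\eta\in V_{k-2}}c^{|\eta|}e_\eta\circ T_\alpha^{h\cdot\eta}$ is a unimodular constant (depending on $\vec h,\alpha$) times a single character $\chi_{s(\vec h)}$, where $s(\vec h)=\sum_{\eta}(-1)^{|\eta|}(M^{h\cdot\eta})^{\mathsf{T}}r^{(\eta)}\in\Z^k$. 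Composing with $T_\alpha^n$, the inner average becomes a constant times $\tfrac1N\sum_{n=1}^N e^{2\pi int}e^{2\pi iQ_{\vec h,x}(n)}$, where $Q_{\vec h,x}(n)$ is the (mod $1$) pairing of $s(\vec h)$ with $T_\alpha^n x$, a polynomial in $n$ of degree $j_0(\vec h):=\max\{j:s_j(\vec h)\neq0\}$ whose leading coefficient is $\tfrac{s_{j_0(\vec h)}(\vec h)}{j_0(\vec h)!}\,\alpha$ --- a nonzero integer multiple of $\alpha$, independent of $x$ (the $x_i$-contributions to $(T_\alpha^n x)_j$ have $n$-degree $<j$). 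Two structural facts drive the estimate: first, $(M^m)^{\mathsf{T}}r$ always has last coordinate $r_k$, so $s_k(\vec h)=\sum_\eta(-1)^{|\eta|}r^{(\eta)}_k$ does not depend on $\vec h$; second, expanding $((M^m)^{\mathsf{T}}r)_2=\sum_{l=0}^{k-2}\binom{m}{l}r_{2+l}$ and summing the iterated finite differences shows that $s_2(\cdot)$, viewed as a polynomial in $\vec h$, contains the monomial $h_1\cdots h_{k-2}$ with coefficient $\pm r^{(\mathbf{1})}_k\neq 0$ (where $\mathbf{1}=(1,\dots,1)\in V_{k-2}$ and $e_{\mathbf{1}}\in\mathcal{E}$). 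Hence $s_2$ is a nonzero polynomial of degree $\le k-2$, so $\{\vec h\in[\floor{\sqrt{N}}]^{k-2}:s_2(\vec h)=0\}$ has cardinality $O(N^{(k-3)/2})$ by a Schwartz-Zippel-type count; for all other $\vec h$, $Q_{\vec h,x}$ has genuine degree $\ge 2$ in $n$.

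\textit{Analytic step.} On the thin set of $\vec h$, bound the WW average trivially by $1$; its contribution to the $\vec h$-average is $O(N^{-1/2})$. For the remaining $\vec h$, iterate the Van der Corput inequality \eqref{vdc-1} $j_0(\vec h)-1$ times (equivalently, invoke Weyl's inequality) to reduce to geometric sums in $n$; the $\sup_t$ absorbs the linear term, and because the leading coefficient of $Q_{\vec h,x}$ is $x$-free, the resulting bound for $\sup_t|\cdots|$ is a constant in $x$ depending only on the rational multiple $\tfrac{s_{j_0(\vec h)}(\vec h)}{j_0(\vec h)!}\alpha$. Here I would use that for Lebesgue-a.e.\ $\alpha$ (via Borel--Cantelli, a.e.\ $\alpha$ has finite Diophantine type, hence so does $q_0\alpha$ for every nonzero rational $q_0$, with a controlled loss), the divisor bound to absorb the multiplicities of the products $h_1\cdots h_{j-1}$ produced by the Van der Corput steps, and the fact that the integers $s_j(\vec h)$ grow only polynomially in $N$ over $\vec h\in[\floor{\sqrt{N}}]^{k-2}$, with the Van der Corput window lengths chosen (as $\floor{N^\beta}$) to make the remainder terms decay polynomially. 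Summing the resulting polynomial-in-$N$ decay over $\vec h$, inserting the trivial bound on the thin set, and raising to the power $2/3$ via Hölder's inequality on averages yields a bound $CN^{-1/24}$, the exponent arising as the weakest decay that survives these reductions. By Lemma~\ref{l:mlc}, $\mathrm{span}(\mathcal{E})$ then consists of $(k-1)$-th order WW functions of power type $1/24$ in $L^2$, which is dense in $L^2(\mathcal{Z}_{k-1})^\perp$, so $(\T^k,T_\alpha,\mu)$ is a $(k-1)$-th order WW system of power type $1/24$ for a.e.\ $\alpha$.

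\textit{Main obstacle.} The delicate part is the analytic step for the ``non-diagonal'' families $(e_\eta)$, i.e.\ those for which $\chi_{s(\vec h)}$ genuinely involves the top coordinate $x_k$ and $Q_{\vec h,x}$ has degree exceeding $2$: one must keep the Van der Corput/Weyl decay uniform in $x$, over the polynomially many relevant $\vec h$ whose entries grow with $N$, and over the varying integer coefficients $s_j(\vec h)$, while still extracting a genuine power of $N$ --- this is where the a.e.-$\alpha$ Diophantine input, the divisor-function losses, and the choice of window lengths all have to be balanced, and where the stated exponent is ultimately determined. By contrast, for a single function $f$ (the diagonal family) the computation collapses: $s_j(\vec h)=0$ for all $j\ge 3$, so $Q_{\vec h,x}$ is purely quadratic with leading coefficient $\pm\tfrac{r_k\alpha}{2}h_1\cdots h_{k-2}$, and a single Van der Corput step together with a standard counting argument gives a cleaner (and stronger) estimate; isolating the extra cancellations that make the general multilinear case match this behaviour is the crux of the proof.
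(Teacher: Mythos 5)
Your setup, reduction, and exceptional-set count all match the paper: the same dense family of characters $\chi_r$ with $r_k\neq 0$, the same appeal to Lemma~\ref{l:mlc}, the same observation that the product along the cube is a single polynomial phase $e^{2\pi i[Q(n)\alpha+\dot Q_x(n)]}$ whose quadratic content is a nonzero polynomial in $\vec h$ containing the monomial $h_1\cdots h_{k-2}$ (contributed only by $\eta=\mathbf 1$, with coefficient $\pm r_k^{(\mathbf 1)}$), so that the bad $\vec h$ form a set of relative density $O(N^{-1/2})$ in $[\floor{\sqrt N}]^{k-2}$. Where you diverge is the analytic core, and there is a genuine gap there. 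You propose to iterate Van der Corput $j_0(\vec h)-1$ times (Weyl's inequality) and then feed in a Diophantine condition on $\alpha$, divisor bounds, and the polynomial growth of the integer coefficients $s_j(\vec h)$. Besides the fact that this — which you yourself flag as the delicate part — is not actually carried out, it cannot produce the stated conclusion: Weyl differencing of a degree-$j_0$ phase loses a factor of $2$ in the exponent at each step, so even under optimal Diophantine hypotheses the saving degrades like $N^{-c/2^{\,j_0-1}}$ with $j_0$ as large as $k$. The resulting power type would be $k$-dependent and, for large $k$, far smaller than $1/24$; the $k$-uniform exponent $1/24$ claimed in the theorem does not survive these reductions.

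The paper avoids all of this with a single Van der Corput step followed by an averaging-in-$\alpha$ argument. After one differencing, the phase polynomial $P(n)=Q(n)-Q(n+m)$ is integer-valued of degree between $1$ and $k-1$; integrating the \emph{square} of the inner sum over $\alpha\in[0,1]$ and expanding into an $n,j$ double sum, the exact orthogonality $\int_0^1 e^{2\pi i\ell\alpha}\,d\alpha=\delta_{\ell,0}$ for integers $\ell$ kills every cross term except the at most $k-1$ values of $n$ (per $j$) where $P(n)=P(j)$. This gives $\int g_N(\alpha,h)\,d\alpha\leq CN^{-1/2}$ uniformly in the good $h$ and \emph{independently of the degree}; Borel--Cantelli along $N=M^4$ then upgrades this to an a.e.-$\alpha$ bound $C(\alpha)N^{-1/8}$ after the standard extension off the subsequence, and H\"older's inequality on averages converts the exponent to $\tfrac13\cdot\tfrac18=\tfrac1{24}$. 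No Diophantine classification of $\alpha$, no iterated differencing, and no divisor-function bookkeeping are needed. If you want to salvage your route, you would either have to accept a $k$-dependent power type (which still yields a WW system, just not the one stated) or replace the Weyl step by an argument whose saving does not degrade with the degree — which is precisely what the paper's $\alpha$-integration accomplishes.
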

\begin{remark}
Since $\mathcal{Z}_l$ is the entire $\sigma$-algebra for $\T^k$ in the case that $l \geq k$, the system $(\T^k, T_\alpha, \mu)$ is trivially a $l$-th order WW system for $l \geq k$.
\end{remark}

\begin{proof} As before, we present here the case $k=3$. The proof of the general statement follows the same argument and is included in Appendix \ref{a:sp}.

The orthogonal complement of $L^2(\mathcal{Z}_2)$ of $(\T^3, T_\alpha, m)$ is spanned by functions 
$$f_a(x_1, x_2, x_3) = e^{2\pi i a_1 x_1}e^{2\pi i a_2 x_2}e^{2\pi i a_3 x_3} = e^{2\pi i( a \cdot x)} \, ,$$
where $a = (a_1, a_2, a_3)\in \Z^3$ has $a_3 \neq 0$. By the multilinearity concerns (in this case bilinearity), we are interested in 
$$\left \Vert \sup_{t} \left|\frac{1}{N} \sum_{n=1}^N e^{2\pi i n t} f_a \circ T_\alpha^n \cdot \overline{f_b \circ T_\alpha^{n+h}} \right|\right \Vert_2$$
for $a = (a_1, a_2, a_3)$, $b = (b_1, b_2, b_3)$, both $a_3$ and $b_3$ nonzero, and $h$ fixed. Suppose $\alpha \neq 0$. For $x \in \T^3$, we compute
\begin{align*}
f_a(T_\alpha^nx) = \text{Exp}\big[& 1(a_1x_1 + a_2x_2 + a_3x_3) 
\\ + &n(a_1\alpha + a_2 x_1 + a_3 x_2) 
\\ + &P_2(n)(a_2\alpha + a_3x_1) 
\\ + &P_3(n)(a_3\alpha ) \big] \, ,
\end{align*}
from which it follows
\begin{align*}
f_a(T_\alpha^nx)&\overline{f_b(T_\alpha^{n+h} x)} \\
= \text{Exp}\big[& 1(a_1x_1 + a_2x_2 + a_3x_3) - 1(b_1x_1 + b_2x_2 + b_3x_3)
\\ + &n(a_1\alpha + a_2 x_1 + a_3 x_2) - (n+h)(b_1 \alpha + b_2 x_1 + b_3 x_2)
\\ + &P_2(n)(a_2\alpha + a_3x_1) - P_2(n+h)(b_2 \alpha + b_3 x_1)
\\ + &P_3(n)(a_3\alpha ) - P_3(n+h)(b_3 \alpha )\big]
\\ = \text{Exp} & \left[ \begin{pmatrix} n(a_1) - (n+h)(b_1)
\\ + P_2(n)(a_2) - P_2(n+h)(b_2 )
\\ + P_3(n)(a_3 ) - P_3(n+h)(b_3)\end{pmatrix} \alpha + \begin{pmatrix} 1(a_1x_1 + a_2x_2 + a_3x_3) - 1(b_1x_1 + b_2x_2 + b_3x_3) \\ + n(a_2x_1 + a_3 x_2) - (n+h) (b_2 x_1 + b_3 x_2) \\ +P_2(n)(a_3x_1) - P_2(n+h)(b_3x_1) \end{pmatrix} \right] \\
=: \text{Exp}&\left[ Q(n)\alpha + \dot Q_x(n)\right] \, .
\end{align*}
Notice that the polynomial $Q$ depends on $a, b$, and $h$, all of which are fixed. Since all $a_j, b_j$ are integers, and each $P_j$ is integer-valued, it follows that $Q$ is integer valued for $n$. The remainder polynomial $\dot Q$ depends on $a, b, h$ and the point $x$.

As each $P_j$ has degree exactly $j$, it follows that $Q(n)$ has degree at most 3. The $n^2$ term of $Q$, which depends on $h$, is given by $q_2(h) = \frac{Q''(0)}{2}$. Under two derivatives, the $P_2$ terms will become constant and have no $h$ dependence. The $P_3$ terms become linear, and $P_3''(0)(a_3) - P_3''(h)(b_3)$ must have nonzero linear $h$-dependence, because $b_3$ is nonzero. Hence, $q_2(h)$ is linear in $h$, and there is at most one value $h_0$ such that $q_{2}(h_0) = 0$. For all other $h$, it follows that the degree of $Q$ is at least 2.

Now, we apply the summation variant of the Van der Corput Lemma (\ref{vdc-2}) pointwise:
\begin{align*}
\sup_t\left|\sum_{n=1}^N e^{2\pi i n t} f_a(T^n_\alpha x)\overline{f_b(T_\alpha^{n+h}x)} \right|^2 &= \sup_t\left|\sum_{n=1}^N e^{2\pi i n t} e^{2\pi i [Q(n)\alpha + \dot Q(n)]}\right|^2 \\
&\leq 2N + 4\sum_{m=1}^{N-1} \left|\sum_{n=1}^{N-m} e^{2\pi i [(Q(n) - Q(n+m))\alpha + (\dot Q_x(n) - \dot Q_x(n+m))]}\right| \, .
\end{align*}
Define $P(n)= Q(n) - Q(n+m)$ and $\dot P_x(n) = \dot Q_x(n) - \dot Q_x(n+m)$. Notice that this finite difference operation decreases the degrees of $Q$ and $\dot Q$ by exactly one, as $m\neq 0$. Hence, $P(n)$ has degree either 1 or 2. Notably, $P(n)$ has $m$ dependence, but is still integer valued. By the Cauchy-Schwarz inequality, we arrive at
\begin{align*}
\sup_t\left|\frac{1}{N}\sum_{n=1}^N e^{2\pi i n t} f_a(T^n_\alpha x)\overline{f_b(T_\alpha^{n+h}x)} \right|^2 \leq \frac{2}{N} + 4\left( \frac{1}{N}\sum_{m=1}^{N-1} \left|\frac{1}{N}\sum_{n=1}^{N-m} e^{2\pi i [P(n)\alpha + \dot P_x(n)]}\right|^2\right)^{1/2} \, .
\end{align*}
Taking the integral of both sides in $x$, we have
\begin{align*}
\left\Vert \sup_t\left|\frac{1}{N}\sum_{n=1}^N e^{2\pi i n t} f_a \circ T^n_\alpha \cdot \overline{f_b\circ T_\alpha^{n+h}} \right| \right\Vert_2^2 \leq \int \frac{2}{N} + 4\left( \frac{1}{N}\sum_{m=1}^{N-1} \left|\frac{1}{N}\sum_{n=1}^{N-m} e^{2\pi i [P(n)\alpha + \dot P_x(n)]}\right|^2\right)^{1/2} d\mu(x) \, .
\end{align*}
Let us denote $g_N(\alpha, h) := \left\Vert \sup_t\left|\frac{1}{N}\sum_{n=1}^N e^{2\pi i n t} f_a \circ T^n_\alpha\overline{f_b\circ T_\alpha^{n+h}} \right| \right\Vert_2^2$. If we integrate both sides in $\alpha$, notice that since everything is uniformly bounded, we can use Fubini-Tonelli to switch the order of integration:
\begin{align*}
\int g_N(\alpha, h) \, d\alpha \leq \int \frac{2}{N} + 4\left( \frac{1}{N}\sum_{m=1}^{N-1}  \frac{1}{N^2} \int \left|\sum_{n=1}^{N-m} e^{2\pi i [P(n)\alpha + \dot P_x(n)] \alpha}\right|^2 \, d\alpha \right)^{1/2} d\mu(x) \, .
\end{align*}
On the inside integral, we expand as a double sum. Since the value itself is positive, we can retain an absolute value on the outside:
\begin{align*}
\int \left|\sum_{n=1}^{N-m} e^{2\pi i [P(n)\alpha + \dot P_x(n)] \alpha}\right|^2 \, d\alpha &= \left| \int \sum_{n, j}^{N-m} e^{2\pi i [(P(n) - P(j))\alpha + (\dot P_x(n) - \dot P_x(j))]} \, d\alpha\right | \\
&= \left| \sum_{n, j}^{N-m} e^{2\pi i [\dot P_x(n) - \dot P_x(j)]}\int e^{2\pi i (P(n) - P(j))\alpha } \, d\alpha\right | \\
&\leq \sum_{n, j}^{N-m} \left| \int e^{2\pi i (P(n) - P(j))\alpha }  \, d\alpha  \right| \\
&\leq \sum_{j=1}^N \left(\sum_{n=1}^{N} \left| \int e^{2\pi i (P(n) - P(j))\alpha  }\, d\alpha \right |  \right) \, .
\end{align*}
Recall that $P$ depends on $m, h$. Since $h$ is fixed and we are looking inside the sum in $m$, the only nontrivial dependence of $P$ is in the variables $n$ and $j$. Inside the parenthesis, $j$ is fixed, and $P(n) - P(j)$ is an integer-valued polynomial of degree 1 or 2. Hence, there are at most 2 values of $n$ where $P(n) - P(j) = 0$. Since $\int e^{2\pi i n k \alpha} \, d\alpha$ is 0 for $k \neq 0$ and 1 otherwise, it follows that the inner $n$ sum is bounded by 2, and the whole term bounded by $2N$.

Substituting this into the original estimate, we see
\begin{align*}
\int g_N(\alpha, h) \, d\alpha \leq \int \frac{2}{N} + 4\left( \frac{1}{N}\sum_{m=1}^{N-1}  \frac{1}{N^2} (2N) \right)^{1/2} d\mu(x) \leq \frac{2}{N} + \frac{4\sqrt{2}}{N^{1/2}} \leq \frac{C}{N^{1/2}}
\end{align*}
for $C = 2 + 4\sqrt{2}$. Hence, if we take $N = M^4$, we have 
$$\int M^{1/2}g_{M^4}(\alpha, h) \, d\alpha \leq \frac{C}{M^{3/2}} \, .$$
Recall that this estimate holds for all $h$ but the one $h_0$. Bounding $g_{M^4}(\alpha, h_0)$ trivially by 1, we get the estimate $\int M^{1/2}g_{M^4}(\alpha, h_0) \, d\alpha \leq M^{1/2}$. Hence, for $M^2 > h_0$, we have the following:
\begin{align*}
\int \frac{M^{1/2}}{M^2}\sum_{h=1}^{M^2} g_{M^4}(\alpha, h) \, d\alpha &= \frac{1}{M^2}\sum_{h=1}^{M^2} \int M^{1/2}g_{M^4}(\alpha, h) \, d\alpha \\
&= \frac{1}{M^2}\int M^{1/2} g_{M^4}(\alpha, h_0) \, d\alpha + \frac{1}{M^2}\sum_{h=1, h \neq h_0}^{M^2} \int M^{1/2}g_{M^4}(\alpha, h) \, d\alpha \\
&\leq \frac{1}{M^2}M^{1/2} + \frac{1}{M^2}\sum_{h=1, h \neq h_0}^{M^2} \frac{C}{M^{3/2}} \leq \frac{1+C}{M^{3/2}} \, .
\end{align*}
The monotone convergence theorem then tells us that $\int \sum_{M} \frac{M^{1/2}}{M^2}\sum_{h=1}^{M^2} g_{M^4}(\alpha, h) \, d\alpha < \infty$. So for almost all $\alpha$, the term $\frac{M^{1/2}}{M^2}\sum_{h=1}^{M^2} g_{M^4}(\alpha, h)$ goes to zero, and is hence bounded by some constant $C(\alpha)$. So 
$$\frac{1}{M^2}\sum_{h=1}^{M^2} g_{M^4}(\alpha, h) \leq \frac{C(\alpha)}{M^{1/2}}$$
holds for all $M$. Substituting the term $g$ back in, we have 
$$\frac{1}{M^2}\sum_{h=1}^{M^2} \left\Vert \sup_t\left|\frac{1}{M^4}\sum_{n=1}^{M^4} e^{2\pi i n t} f_a \circ T^n_\alpha \cdot \overline{f_b\circ T_\alpha^{n+h}} \right| \right\Vert_2^2\leq \frac{C(\alpha)}{M^{1/2}} \, .$$
The extension from $M^4$ to $N$ must be done on both the $h$ and $n$ sums simultaneously, but is standard (see \cite[Proposition 8]{assani} or Appendix \ref{a:sp}) and yields a term $\frac{C'(\alpha)}{(N^{1/4})^{1/2}} = \frac{C'(\alpha)}{N^{1/8}}$ for an increased $C'(\alpha) = 4C(\alpha) + 788$.

To account for the $2/3$ exponent in the definition, we apply Hölder's inequality on averages:
\begin{align*}
&\frac{1}{\lfloor \sqrt{N} \rfloor}\sum_{h=1}^{\lfloor \sqrt{N} \rfloor} \left\Vert \sup_t\left|\frac{1}{N}\sum_{n=1}^{N} e^{2\pi i n t} f_a \circ T^n_\alpha \cdot \overline{f_b\circ T_\alpha^{n+h}} \right| \right\Vert_2^{2/3} \\
&\leq \left(\frac{1}{\lfloor \sqrt{N} \rfloor}\sum_{h=1}^{\lfloor \sqrt{N} \rfloor} \left\Vert \sup_t\left|\frac{1}{N}\sum_{n=1}^{N} e^{2\pi i n t} f_a \circ T^n_\alpha \cdot \overline{f_b\circ T_\alpha^{n+h}} \right| \right\Vert_2^{2}\right)^{1/3} \leq \frac{(4C(\alpha) + 788)^{1/3}}{N^{1/24}}
\end{align*}
Hence, for each choice of $a$ and $b$, there exists a set of $\alpha$ full measure where the above holds. Since there are countably many choices for $a$ and $b$, we can find a set of $\alpha$ of full measure where the above holds for every $a$ and $b$. Hence, for all $\alpha$ in this set, we have satisfied the multilinearity conditions, and $(\T^3, T_{\alpha}, \mu)$ is a second-order WW system of order $1/24$ in $L^2$.
\end{proof}

\section{Pointwise convergence of multiple ergodic averages}\label{s:mea}

\subsection{Extending Bourgain's bound on double recurrence}

The following bound can be distilled from the argument in the first section of J. Bourgain's article on double recurrence \cite{bourgain}:

\begin{theorem}[Bourgain's bound on double recurrence]\label{t:BBfor2} Let $(X, \mathcal{F}, \mu, T)$ be an invertible dynamical system, and let $a_1, a_2 \in \Z$ be distinct and both nonzero. Then there exists $C > 0$ such that for every $f_1, f_2\in L^\infty(\mu)$ for which $\max_{j=1, 2}\norm{f_j}_\infty \leq 1$, and for all $N \in \N$, we have
$$\left \Vert \frac{1}{N} \sum_{n=1}^N f_1\circ T^{a_1n} \cdot f_2\circ T^{a_2n} \right\Vert_1 \leq C\left( \frac{1}{N} + \left\Vert \sup_t \left| \frac{1}{N} \sum_{n=1}^N e^{2\pi i n t} f_1\circ T^n\right|\right\Vert_1^{2/3} \right)\, .$$
\end{theorem}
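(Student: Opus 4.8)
The plan is to reproduce, in quantitative form, the reduction that opens \S1 of Bourgain's double recurrence paper \cite{bourgain}. Throughout write $A_N:=\frac1N\sum_{n=1}^N f_1\circ T^{a_1 n}\cdot f_2\circ T^{a_2 n}$ and $W_N:=\norm{\sup_t|\frac1N\sum_{n=1}^N e^{2\pi int}f_1\circ T^n|}_1$. Since $\sup_t|\frac1N\sum_n e^{2\pi int}f_1\circ T^n|\le\frac1N\sum_n|f_1\circ T^n|\le1$ pointwise, we have $W_N\le1$, so any estimate of the shape $\norm{A_N}_1\le C(N^{-1}+W_N)$ upgrades automatically to one with $W_N^{2/3}$; more generally, Hölder's inequality on averages lets us trade an exponent $1$ on a Wiener--Wintner-type average for any smaller one. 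Thus the $2/3$ is comfortable slack, and what must really be produced is \emph{some} positive power of $N$ together with \emph{some} positive power of $W_N$. A few cosmetic reductions come first: if the system is not ergodic, run everything on each ergodic component and integrate over the ergodic decomposition; replacing $T$ by $T^{\pm1}$ we may take $a_1,a_2>0$; and the passage from the Wiener--Wintner maximal function of $f_1$ formed along $T^{a_1}$ at scale $a_1N$ (which is what the argument actually produces) to the one along $T$ at scale $N$ is made by the roots-of-unity identity $\1_{a_1\mid m}=\frac1{a_1}\sum_{j=0}^{a_1-1}e^{2\pi ijm/a_1}$ combined with the $T$-invariance of $\mu$, at the cost of a constant depending on $a_1$ and an additive $O(1/N)$.

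It is instructive to dispose first of the case where $f_2$ is an eigenfunction, $f_2\circ T=e^{2\pi i\theta}f_2$ (so $|f_2|\le1$): then $f_2\circ T^{a_2n}=e^{2\pi ia_2n\theta}f_2$, whence $|A_N|\le|\frac1N\sum_ne^{2\pi i(a_2\theta)n}f_1\circ T^{a_1n}|\le\sup_s|\frac1N\sum_ne^{2\pi isn}f_1\circ T^{a_1n}|$, and after the change of transformation this is $\le C(a_1)(W_N^{2/3}+N^{-1})$, with no use of Van der Corput. The task of the genuine argument is to replace the structure of $f_2$ by nothing more than $\norm{f_2}_\infty\le1$. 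Its first step is Van der Corput's inequality (Lemma~\ref{vdc-lem}): applied pointwise to $v_n=f_1\circ T^{a_1n}f_2\circ T^{a_2n}$ and then integrated in $x$ (bounding $\int(\cdot)^{1/2}$ by $(\int\cdot)^{1/2}$), it yields, for $1\le H\le N-1$ and with $\Phi_h:=f_1\,\overline{f_1\circ T^{a_1h}}$, $\Psi_h:=f_2\,\overline{f_2\circ T^{a_2h}}$,
\[
\norm{A_N}_1\ \lesssim\ H^{-1/2}\ +\ \Bigl(\frac1{H+1}\sum_{h=1}^H\Bigl\|\frac1N\sum_{n=1}^N\Phi_h\circ T^{a_1n}\cdot\Psi_h\circ T^{a_2n}\Bigr\|_1\Bigr)^{1/2}.
\]
Each summand is again a double recurrence average, for the first-difference functions $\Phi_h$ and $\Psi_h$. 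Bounding its $L^1$-norm by its $L^2$-norm, expanding the square, and using $T$-invariance to collapse one summation index onto a single orbit of $T^{b}$ with $b:=a_1-a_2\neq0$, the factor coming from $f_2$ enters only through $|\Psi_h\circ T^{a_2(\cdot)}\cdot\overline{\Psi_h}|\le1$ and is discarded; what survives is controlled by a Cesàro average over the difference parameters of $L^1$-norms of Birkhoff averages, along $T^{b}$, of functions built \emph{solely} from $f_1$.

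The decisive step is to dominate this last quantity by a positive power of $W_N$. One applies Van der Corput once more, now in the form \eqref{vdc-1}, so that the trivial bound $|\frac1N\sum_mg\circ T^{bm}|\le\sup_t|\frac1N\sum_me^{2\pi imt}g\circ T^{bm}|$ introduces the character weight; then one passes through the spectral theorem and observes that, after averaging over the difference parameters, the spectral measures of the difference functions of $f_1$ assemble into a single measure whose pairing with the Fejér-type kernel $|\frac1N\sum_me^{2\pi imt}|^2$ is precisely what the Wiener--Wintner maximal function $\sup_t|\frac1N\sum_ne^{2\pi int}f_1\circ T^n|$ controls. Collecting the $H^{-1/2}$-type errors, choosing $H$ suitably (a power of $N$, or simply $H=N-1$, works), and converting the resulting exponent to $2/3$ via Hölder's inequality on averages together with $\norm{g}_2\le\norm{g}_1^{1/2}$ for $0\le g\le1$ (applied to $g=\sup_t|\cdots|$), one arrives at the stated inequality with $C=C(a_1,a_2)$.

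The step I expect to be the main obstacle is the one just sketched: extracting the Wiener--Wintner maximal function of $f_1$ --- crucially, \emph{with the supremum over $t$ intact} --- out of the iterated differences of $f_1$, and doing so with an error that is an honest power of $N$ rather than a convergent-but-rate-free quantity; this is exactly the content of \S1 of \cite{bourgain}, while everything preceding it (the two Van der Corput applications, the discarding of $f_2$, the ergodic reduction, and the change of transformation $T^{a_1}\rightsquigarrow T$) is routine once the groupings are chosen. A secondary, purely bookkeeping nuisance is to keep each difference function of $f_2$ bounded by $1$ at every stage, so that $f_2$ never contributes beyond $\norm{f_2}_\infty\le1$, and to carry the fixed integers $a_1,a_2,b$ correctly through the rescalings.
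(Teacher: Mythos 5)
Your proposal takes a genuinely different route from the paper's, and it has a gap at exactly the step you yourself flag as ``the main obstacle,'' so the proof is not complete. The paper's argument (Appendix \ref{a:BB}) uses no Van der Corput at all: it replaces $A_N(f_1,f_2)$ by the average over $k\le N_1$ of the shifted averages $A_N(f_1\circ T^{a_1k},f_2\circ T^{a_2k})=A_N(f_1,f_2\circ T^{(a_2-a_1)k})$ at a cost $O(N_1/N)$, passes to the square function $\bigl(\frac{1}{N_1}\sum_{k}|A_N(f_1,f_2\circ T^{(a_2-a_1)k})|^2\bigr)^{1/2}$, recognizes each term as the $k$-th Fourier coefficient in $t$ of the product $\bigl(\sum_n e^{2\pi i na_2t}f_1(T^{a_1n}x)\bigr)\bigl(\sum_m e^{-2\pi imt}f_2(T^mx)\bigr)$, and applies Bessel's inequality; the supremum over $t$ then falls out of the $f_1$-factor while Parseval absorbs the $f_2$-factor. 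The exponent $2/3$ is produced by optimizing $N_1\approx N\delta^{2/3}$ with $\delta=W_N$; it is not ``comfortable slack.'' Your reduction ``$W_N\le1$, so exponent $1$ implies exponent $2/3$'' only goes one way: a bound with $W_N^{\alpha}$ for $\alpha<2/3$ is \emph{weaker} than the claim, so ``some positive power of $W_N$'' does not suffice, and no argument in sight produces exponent $1$.

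The concrete gap is the following. After your first Van der Corput application you hold $\frac1H\sum_{h\le H}\norm{\frac1N\sum_n\Phi_h\circ T^{bn}}_1$ with $\Phi_h=f_1\,\overline{f_1\circ T^{a_1h}}$ and $b=a_1-a_2$. (Note also that ``expanding the square and discarding $|\Psi_h|\le1$'' is not legitimate inside the integral, since the integrand is not a nonnegative function times the $f_2$-factor; $f_2$ must be discarded at the linear stage $\frac1N\sum_n\langle v_{n+h},v_n\rangle=\int\bigl(\frac1N\sum_n\Phi_h\circ T^{bn}\bigr)\Psi_h\,d\mu$, as the paper does in Theorem \ref{BB3}.) The decisive claim --- that this Ces\`aro average of Birkhoff averages of the \emph{difference} functions of $f_1$ is bounded by a power of $W_N$ with a constant uniform in $f_1$ and $N$ --- is only described, not proved. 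The spectral pairings you produce involve $\sigma_{\Phi_h}$, not $\sigma_{f_1}$, and their average over $h$ assembles in the limit into the degree-three object $\VERT f_1\VERT_3$; this is precisely the manoeuvre the paper uses in Theorem \ref{BB3}, where iterating Van der Corput yields the \emph{second-order} Wiener--Wintner average of $f_1$, not the first-order one appearing in the statement. Converting that back down to $\sup_t|\frac1N\sum_ne^{2\pi int}f_1(T^nx)|$ with an honest power of $N$ is the entire content of the theorem, and your appeal to ``\S1 of Bourgain'' for it is misdirected: that section proves the bound by the shift/Bessel argument sketched above, not by iterated Van der Corput and spectral measures. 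As written, the proposal carries out the routine reductions and defers the substance.
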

\begin{remark}
Without significant changes to the argument, the Bourgain bound also holds if we replace the 1-norms with 2-norms.
\end{remark}
This is proved following the argument by Assani \cite{AssaniWW}, with subtle but important modifications to maintain the bound for all $N$ without any dependence on $f_1$ or $f_2$. A proof of this bound can be found in Appendix \ref{a:BB}.  We can extend this bound to triple recurrence as follows:
\begin{theorem}\label{BB3} Let $(X, \mathcal{F}, \mu, T)$ be an invertible dynamical system, and let $a_1, a_2, a_3 \in \Z$ be distinct and all nonzero. Then there exists $C > 0$ such that for every $f_1, f_2, f_3\in L^\infty(\mu)$ for which $\max_{j=1, 2, 3}\norm{f_j}_\infty \leq 1$, and for every $N \in \N$ such that $N \geq |a_1|^2$, we have
$$\left \Vert \frac{1}{N} \sum_{n=1}^N \prod_{j=1}^3 f_j \circ T^{a_jn}\right\Vert_1 \leq C\left( \frac{1}{\lfloor \sqrt{N}\rfloor^{1/2}} + \left(\frac{1}{\lfloor\sqrt{N}\rfloor} \sum_{h=1}^{\lfloor\sqrt{N}\rfloor}\left\Vert \sup_t \left| \frac{1}{N} \sum_{n=1}^N e^{2\pi i n t} [f_1 \cdot \overline{f_1 \circ T^{h}}] \circ T^n \right|\right\Vert_1^{2/3} \right)^{1/2} \right) \, .$$ 
\end{theorem}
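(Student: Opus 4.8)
We reduce, by a single van der Corput step, to Bourgain's double recurrence bound (Theorem~\ref{t:BBfor2}). First bound
\[
\Big\|\tfrac1N\sum_{n=1}^N\prod_{j=1}^3 f_j\circ T^{a_jn}\Big\|_1\ \le\ \Big\|\tfrac1N\sum_{n=1}^N\prod_{j=1}^3 f_j\circ T^{a_jn}\Big\|_2
\]
and estimate the square of the right-hand side. Working pointwise in $x$, apply the summation form of the van der Corput estimate (\ref{vdc-2}) with $H=\floor{\sqrt N}$ to $v_n(x)=\prod_{j=1}^3 f_j(T^{a_jn}x)$; since $|v_n|\le 1$ the diagonal term is $O(1/\floor{\sqrt N})$ and, after discarding the tail $N-h<n\le N$ (an $O(1/\sqrt N)$ error) and integrating, one obtains
\[
\Big\|\tfrac1N\sum_{n=1}^N\prod_{j=1}^3 f_j\circ T^{a_jn}\Big\|_2^2\ \lesssim\ \frac1{\floor{\sqrt N}}\ +\ \frac1{\floor{\sqrt N}}\sum_{h=1}^{\floor{\sqrt N}}\Big\|\tfrac1N\sum_{n=1}^N\prod_{j=1}^3\big[f_j\cdot\overline{f_j\circ T^{a_jh}}\big]\circ T^{a_jn}\Big\|_1 .
\]
Thus it suffices to control, uniformly in $h\in[\floor{\sqrt N}]$, the $L^1$-norm of the triple recurrence average of the bounded functions $g^{(h)}_j:=f_j\cdot\overline{f_j\circ T^{a_jh}}$ (with the same exponents $a_1,a_2,a_3$); collecting the resulting bounds by the triangle inequality and taking a square root then produces the claimed form, with the error $(1/\floor{\sqrt N})^{1/2}=1/\floor{\sqrt N}^{1/2}$ as stated.

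For fixed $h$ I would bound $\big\|\tfrac1N\sum_n\prod_{j}g^{(h)}_j\circ T^{a_jn}\big\|_1$ by $\tfrac1N+\big\|\sup_t|\tfrac1N\sum_n e^{2\pi int}\,g^{(h)}_1\circ T^n|\big\|_1^{2/3}$, with a constant depending only on $a_1,a_2,a_3$. This is Theorem~\ref{t:BBfor2} for the pair $(g^{(h)}_1,g^{(h)}_3)$ (exponents $(a_1,a_3)$) once the third factor $g^{(h)}_2\circ T^{a_2n}$ is carried along as a bounded weight; establishing this weighted form means re-running the argument of Appendix~\ref{a:BB} with that weight present inside the relevant van der Corput sums---the device already used to accommodate the arbitrary bounded sequence in the proof of Proposition~\ref{t:Kprod}. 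Since $g^{(h)}_1=f_1\cdot\overline{f_1\circ T^{a_1h}}$, inserting this into the reduction above and using $1/N\le 1/\floor{\sqrt N}$ gives the asserted inequality, but with $\overline{f_1\circ T^{a_1h}}$ in place of $\overline{f_1\circ T^{h}}$.

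It remains to normalize the index. If $a_1>0$ one runs the van der Corput step with window $H=\floor{\sqrt N}/a_1$---legitimate precisely when $\floor{\sqrt N}\ge a_1$, i.e. $N\ge a_1^2$, which is the hypothesis---and reindexes $h\mapsto a_1h$, so the shifts $a_1h$ range over a subset of $[\floor{\sqrt N}]$; this costs a factor $a_1$ absorbed into $C$ and only shrinks the $h$-average. The case $a_1<0$ follows from the identity $f_1\cdot\overline{f_1\circ T^{-m}}=\big(\overline{f_1\cdot\overline{f_1\circ T^{m}}}\big)\circ T^{-m}$ together with the fact that composing a Wiener--Wintner average with $T^{-m}$ and conjugating alters it by only an $O(m/N)$ boundary error. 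Finally, gathering all the (polynomially small) error terms and invoking Hölder's inequality on averages to recover the $2/3$-power yields the statement.

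The step I expect to be the genuine obstacle is the second one: verifying that Theorem~\ref{t:BBfor2} truly survives the insertion of the orbit-type weight $g^{(h)}_2\circ T^{a_2n}$ \emph{without} that weight leaking into the Wiener--Wintner average on the right. This is exactly the non-bilinearity phenomenon that makes higher-order WW functions delicate (compare the ``multilinearity concerns'' of Lemma~\ref{l:mlc}): one must check that the characteristic-factor input underlying Bourgain's argument is not disturbed by the weight, and that the weight can be re-expanded and re-collected while keeping every constant independent of $f_1,f_2,f_3$ and of $N$. A secondary, purely technical burden is the careful bookkeeping of the $O(1/\sqrt N)$-type errors and of the reindexing that forces the constraint $N\ge |a_1|^2$.
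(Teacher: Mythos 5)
Your opening van der Corput step matches the paper's, and the reindexing/normalization of the shift (window $H=\lfloor\sqrt N/|a_1|\rfloor$, extending the $h$-sum to absorb the factor $|a_1|$, whence the hypothesis $N\ge|a_1|^2$) is also essentially what the paper does. But there is a genuine gap at the step you yourself flag as the obstacle, and it is not a removable technicality. After van der Corput you are left with the triple recurrence average of $g^{(h)}_j=f_j\cdot\overline{f_j\circ T^{a_jh}}$, and you propose to treat $g^{(h)}_2\circ T^{a_2n}$ as a bounded weight and invoke a weighted form of Theorem \ref{t:BBfor2} for the pair $(g^{(h)}_1,g^{(h)}_3)$. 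The weight device from Proposition \ref{t:Kprod} does not transfer here: there the weight survives because the argument is a van der Corput correlation estimate, where an arbitrary bounded sequence only contributes factors $a_n\overline{a_{n+m}}$. The proof of Theorem \ref{t:BBfor2} in Appendix \ref{a:BB} is of a different nature --- it realizes the shifted averages as Fourier coefficients of a product of two trigonometric sums and applies Bessel's inequality --- and if you insert an orbit factor $g^{(h)}_2(T^{a_2n}x)$ into the sum carrying the character, it necessarily ends up inside the resulting Wiener--Wintner supremum; the right-hand side then involves $\sup_t\bigl|\frac1N\sum_n e^{2\pi int}g^{(h)}_2(T^{a_2n}x)\,g^{(h)}_1(T^{a_1n}x)\bigr|$ rather than the WW average of $g^{(h)}_1$ alone. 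In other words, the ``weighted Bourgain bound'' with an orbit weight is essentially equivalent to the triple recurrence bound you are trying to prove, so the argument is circular as written.

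The paper closes this gap with a short but essential trick you are missing: after the van der Corput step the shifts $a_jh$ no longer depend on $n$, so one composes the integrand with $T^{-a_3n}$ (using $T$-invariance of $\mu$). This turns the exponents into $a_j-a_3$, making the $j=3$ factor $f_3\cdot\overline{f_3\circ T^{a_3h}}$ constant in $n$; it factors out of the inner sum and is bounded by $1$ in absolute value. What remains is an honest double recurrence average in the two functions $f_j\cdot\overline{f_j\circ T^{a_jh}}$, $j=1,2$, with distinct nonzero exponents $a_j-a_3$, to which Theorem \ref{t:BBfor2} applies verbatim --- no weighted extension is needed. If you replace your weighted-bound step with this invariance-and-factor-out step, the rest of your outline (collecting the $h$-sum, H\"older on averages, reindexing $a_1h\mapsto h$) goes through as in the paper.
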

\begin{remark}
We note that the constant $C$ only depends on the integers $a_1, a_2$, and $a_3$.
\end{remark}
\begin{proof}
Pointwise, we apply the Van der Corput inequality (Lemma \ref{vdc-lem}) to see that for all $1 \leq H<N$, 
\begin{align*}
\left| \frac{1}{N}\sum_{n=1}^N \prod_{j=1}^3 f_j(T^{a_j n} x) \right|^2 &\leq \frac{2}{H+1} + \frac{2(H+N)}{N^2(H+1)^2} \sum_{h=1}^H (H+1 - h) \text{Re}\left( \sum_{n=1}^{N-h} \prod_{j=1}^3 f_j(T^{a_jn}x) \overline{ f_j(T^{a_jn + a_jh}x)}\right) \, .
\end{align*}
Integration yields a squared $L^2$ norm on the left-hand side, which bounds the squared $L^1$ norm of the same term. On the right-hand side, the integral can pass into the $n$ sum, where we can apply the $T$-invariance of $\mu$ by $a_3n$. Bounding $\text{Re}(z)$ by $|z|$, we may clean up to see the following:
\begin{align*}
\left\Vert \frac{1}{N}\sum_{n=1}^N \prod_{j=1}^3 f_j \circ T^{a_j n} \right\Vert_1^2 
&\leq \frac{2}{H+1} + \frac{4}{(H+1)} \sum_{h=1}^H \int \left| \frac{1}{N}\sum_{n=1}^{N-h} \prod_{j=1}^3 f_j \circ T^{(a_j-a_3)n} \cdot \overline{ f_j \circ T^{(a_j-a_3)n + a_jh}}\right| d\mu \, .
\end{align*}
Notice that the $j=3$ term in the product does not depend on $n$. So we can factor it out of the sum, and bound its absolute value away by 1. If we add the terms $n=N-h+1, \dots N$ back into the sum, the leftover term is size $4H/N$. So we have 
\begin{align*}
\left\Vert \frac{1}{N}\sum_{n=1}^N \prod_{j=1}^3 f_j \circ T^{a_j n}  \right\Vert_1^2 
&\leq \frac{2}{H+1} + \frac{4H}{N} + \frac{4}{(H+1)} \sum_{h=1}^H  \left\Vert \frac{1}{N}\sum_{n=1}^{N} \prod_{j=1}^2 [f_j \cdot \overline{f_j \circ T^{a_j h} }] \circ T^{(a_j - a_3)n}\right\Vert_1 \,. 
\end{align*}
Set $H = \lfloor \frac{\sqrt{N}}{|a_1|}\rfloor$, in which both $1/(H+1)$ and $H/N$ are bounded above by $|a_1|\lfloor \sqrt{N} \rfloor^{-1}$. Every term in the $h$ sum is a double recurrence for functions $f_j \cdot \overline{f_j \circ T^{a_j h}}$ with exponents $a_j - a_3$. Hence, on every such term, we can apply the Bourgain bound (Theorem \ref{t:BBfor2}); the constant will depend on the exponents $a_i$, but will be uniform on all functions. Hence, we are left with:
\begin{align*}
\left\Vert \frac{1}{N}\sum_{n=1}^N \prod_{j=1}^3 f_j \circ T^{a_j n} \right\Vert_1^2 
&\leq \frac{6|a_1|}{\lfloor \sqrt{N} \rfloor} + \frac{4|a_1|}{\lfloor \sqrt{N} \rfloor} \sum_{h=1}^{\lfloor \frac{\sqrt{N}}{|a_1|} \rfloor} C\left( \frac {1}{N} + \left\Vert \sup_t \left|\frac{1}{N} \sum_{n=1}^N e^{2\pi i n t} [f_1 \cdot  \overline{f_1 \circ T^{a_1 h}}] \circ T^n \right|\right\Vert_1^{2/3}\right) \,.
\end{align*}
The $1/N$ term can pass out of the sum, and can be absorbed into the $1/\lfloor \sqrt{N} \rfloor$ term:
\begin{align*}
\left\Vert \frac{1}{N}\sum_{n=1}^N \prod_{j=1}^3 f_j \circ T^{a_j n}  \right\Vert_1^2 
&\leq \frac{6|a_1|+4|a_1|C}{\lfloor \sqrt{N} \rfloor} + \frac{4C|a_1|}{\lfloor \sqrt{N} \rfloor} \sum_{h=1}^{\lfloor \frac{\sqrt{N}}{|a_1|} \rfloor}  \left\Vert \sup_t \left|\frac{1}{N} \sum_{n=1}^N e^{2\pi i n t} [f_1 \cdot \overline{f_1\circ T^{a_1 h}}] \circ T^n \right|\right\Vert_1^{2/3} \, .
\end{align*}
By adding in some nonnegative terms, we can remove the $a_1$ on the function $f_1 \cdot \overline{f_1 \circ T^{a_1 h}}$, at the cost of expanding the sum in $h$ to $|a_1| \lfloor \frac{\sqrt{N}}{|a_1|} \rfloor$. Since this term is itself less that $\lfloor \sqrt{N} \rfloor$, we have
\begin{align*}
\left\Vert \frac{1}{N}\sum_{n=1}^N \prod_{j=1}^3 f_j \circ T^{a_j n} \right\Vert_1^2 
&\leq \frac{6|a_1|+4|a_1|C}{\lfloor \sqrt{N} \rfloor} + \frac{4C|a_1|}{\lfloor \sqrt{N} \rfloor} \sum_{h=1}^{\lfloor \sqrt{N} \rfloor}  \left\Vert \sup_t \left|\frac{1}{N} \sum_{n=1}^N e^{2\pi i n t} [f_1 \cdot \overline{f_1\circ T^{ h}}] \circ T^n  \right|\right\Vert_1^{2/3} \, .
\end{align*}
Pulling out the constants, taking the square root of both sides, and using subadditivity of the square root, we get the desired statement.
\end{proof}

\begin{remark}
Similarly to the Bourgain bound (Theorem \ref{t:BBfor2}), the above holds true if we replace the 1-norms with 2-norms, without significant changes to the argument.
\end{remark}


For multiple recurrence averages of four terms, the same application of the Van der Corput inequality bounds the $L^1$ norm in terms of triple recurrence averages. Continuing inductively, we reach the following estimate for higher recurrence.
\begin{theorem}\label{BBgeq3} 
Let $J \in \N$ such that $J \geq 3$. Let $(X, \mathcal{F}, \mu, T)$ be an invertible dynamical system, and for every $j \in [J]$, let $a_j \in \Z$ be distinct and all nonzero. Then there exist constants $C_J > 0$ and $N_J \in \N$ such that for every $f_1, f_2, \ldots, f_J \in L^\infty(\mu)$ such that $\max_{j \in [J]}\norm{f_j}_\infty \leq 1$, and for every $N \in \N$ such that $N \geq N_J$, we have   
\begin{align*}
&\left \Vert \frac{1}{N} \sum_{n=1}^N \prod_{j=1}^J f_j \circ T^{a_jn} \right\Vert_1 \leq \\
 &C_J\left( \frac{1}{\lfloor \sqrt{N}\rfloor^{1/2^{J-2}}} + \left(\frac{1}{\lfloor\sqrt{N}\rfloor^{J-2}} \sum_{h \in [\lfloor\sqrt{N}\rfloor]^{J-2}}\left\Vert \sup_t \left| \frac{1}{N} \sum_{n=1}^N e^{2\pi i n t} \left[ \prod_{\eta \in V_{J-2}}  c^{|\eta|} f_1 \circ T^{h \cdot \eta}\right]\circ T^n\right|\right\Vert_1^{2/3} \right)^{1/2^{J-2}} \right) \, .
\end{align*}
\end{theorem}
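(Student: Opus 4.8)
The plan is to induct on $J \geq 3$, the base case $J=3$ being precisely Theorem \ref{BB3}. So suppose $J \geq 4$ and that the asserted bound holds for $J-1$, with constants $C_{J-1}$ and $N_{J-1}$. First I would run the same Van der Corput reduction as in the proof of Theorem \ref{BB3}: apply Lemma \ref{vdc-lem} pointwise to $v_n = \prod_{j=1}^J f_j(T^{a_j n}x)$ with $H = \lfloor\sqrt N/|a_1|\rfloor$, integrate, bound $\norm{\cdot}_1 \leq \norm{\cdot}_2$ on the left, pass the integral into the inner $n$-sum and use $T$-invariance (shifting by $a_J n$), and factor out the $j=J$ factor of the product, which is then independent of $n$ and of modulus $\leq 1$. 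Restoring the finitely many missing values of $n$ costs $O(\lfloor\sqrt N\rfloor^{-1})$, leaving
\[
\left\Vert \frac1N\sum_{n=1}^N \prod_{j=1}^J f_j\circ T^{a_j n}\right\Vert_1^2 \leq \frac{C}{\lfloor\sqrt N\rfloor} + \frac{C}{H}\sum_{h=1}^H \left\Vert \frac1N\sum_{n=1}^N \prod_{j=1}^{J-1}\bigl[f_j\cdot\overline{f_j\circ T^{a_j h}}\bigr]\circ T^{(a_j-a_J)n}\right\Vert_1,
\]
with $C$ depending only on $a_1,\dots,a_J$.

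For each fixed $h$ the inner norm is a $(J-1)$-fold recurrence average for the functions $g_j := f_j\cdot\overline{f_j\circ T^{a_j h}}$ (each of sup-norm $\leq 1$) and the distinct nonzero integers $a_j - a_J$, $1 \leq j \leq J-1$, so once $N \geq N_{J-1}$ I would apply the inductive hypothesis to it. The heart of the matter is to recognize the weighted average it produces as again of the shape in the statement: since $c$ is multiplicative and $g_1 = f_1 \cdot \overline{f_1 \circ T^{a_1 h}}$, identifying $V_{J-2} = V_{J-3}\times\{0,1\}$ and setting $\widetilde h := (h', a_1 h)$ one checks coordinatewise that
\[
\prod_{\eta\in V_{J-3}} c^{|\eta|}\, g_1\circ T^{h'\cdot\eta} \;=\; \prod_{\widetilde\eta\in V_{J-2}} c^{|\widetilde\eta|}\, f_1\circ T^{\widetilde h\cdot\widetilde\eta}.
\]
Thus, as $h$ runs over $[H]$ and $h'$ over $[\lfloor\sqrt N\rfloor]^{J-3}$, the vector $\widetilde h$ ranges over a subset of $[\lfloor\sqrt N\rfloor]^{J-2}$; extending to the full cube by nonnegativity of the summands, absorbing the factor $|a_1|$ coming from $H\approx\sqrt N/|a_1|$, and disposing of the sign of $a_1$ through invertibility of $T$ (at the cost of boundary errors $O(\lfloor\sqrt N\rfloor^{-1})$, exactly as in Theorem \ref{BB3}), the resulting double average over $h$ and $h'$ is bounded by a constant times $\lfloor\sqrt N\rfloor^{-1}$ plus the target average
\[
W_N \;:=\; \frac{1}{\lfloor\sqrt N\rfloor^{J-2}}\sum_{h\in[\lfloor\sqrt N\rfloor]^{J-2}} \left\Vert \sup_t\left|\frac1N\sum_{n=1}^N e^{2\pi int}\Bigl[\prod_{\eta\in V_{J-2}} c^{|\eta|} f_1\circ T^{h\cdot\eta}\Bigr]\circ T^n\right|\right\Vert_1^{2/3}.
\]

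To assemble: averaging the inductive bound over $h\in[H]$, the term $\lfloor\sqrt N\rfloor^{-1/2^{J-3}}$ is constant in $h$ and passes through, while for the other term I would apply Jensen's inequality to the concave map $x\mapsto x^{1/2^{J-3}}$ (exponent $\leq 1$ since $J\geq 4$) to move the $h$-average inside the power; combined with the previous display and $(a+b)^\theta\leq a^\theta+b^\theta$ for $\theta\in(0,1]$, this yields $\norm{\cdot}_1^2 \leq C''\,\lfloor\sqrt N\rfloor^{-1/2^{J-3}} + C''\,W_N^{1/2^{J-3}}$ for a constant $C''$ depending only on the $a_j$ (and $C_{J-1}$). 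Taking square roots and using $\sqrt{x+y}\leq\sqrt x+\sqrt y$ halves both exponents, since $(\lfloor\sqrt N\rfloor^{-1/2^{J-3}})^{1/2}=\lfloor\sqrt N\rfloor^{-1/2^{J-2}}$ and $(W_N^{1/2^{J-3}})^{1/2}=W_N^{1/2^{J-2}}$, giving exactly the claimed inequality with $C_J := \sqrt{C''}$ and $N_J := \max\{N_{J-1},\,|a_1|^2\}$ (the latter ensuring $H\geq 1$), all depending only on the $a_j$.

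I expect the main obstacle to be the bookkeeping in the middle step: confirming that iterating the Van der Corput reduction — which differences the distinguished function $f_1$ at the successive shifts $a_1 h$ while stacking on the inductively generated auxiliary shifts $h'$ — rebuilds precisely the $(J-2)$-dimensional parallelepiped product $\prod_{\eta\in V_{J-2}}c^{|\eta|}f_1\circ T^{h\cdot\eta}$ over the enlarged cube, and keeping track of the mismatch between the range $[H]$ (dilated by $|a_1|$, with a possible sign) and the full cube $[\lfloor\sqrt N\rfloor]^{J-2}$, together with the attendant boundary errors. None of this is conceptually deep, but it is where the care is needed; every other ingredient (Van der Corput, Jensen, Hölder on averages, square-rooting) is routine and mirrors the proof of Theorem \ref{BB3} almost verbatim.
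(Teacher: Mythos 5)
Your proposal is correct and follows essentially the same route as the paper's proof: the same Van der Corput reduction to a $(J-1)$-fold recurrence of the differenced functions, the same application of the inductive hypothesis, the same observation that the differencing of $f_1$ at shift $a_1 h$ adds one more layer to the parallelepiped product (turning the $V_{J-3}$ cube into the $V_{J-2}$ cube with $\widetilde h=(h',a_1h)$), the same range-extension to absorb the $|a_1|$ scaling, and the same Hölder/Jensen-on-averages plus square-root steps. The constants $C_J$ and $N_J=\max\{N_{J-1},|a_1|^2\}$ also match.
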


\begin{remark}
The constants $C_J$ and $N_J$ are independent of the functions $f_1, f_2, \ldots, f_J$ and system $(X, \mathcal{F}, \mu, T)$, and indeed only depend on $J$ and the exponents $a_1, \dots, a_j$. In Appendix \ref{a:BB}, we compute a value of $C_2$.
\end{remark}

\begin{remark}
By reordering the functions, it follows that the multiple recurrence averages can be bounded by any $f_j$.
\end{remark}

\begin{proof}[Proof of Theorem \ref{BBgeq3}:]
The previous theorem is the base case. To induct, the same argument gets us to the point 
\begin{align*}
\left\Vert \frac{1}{N}\sum_{n=1}^N \prod_{j=1}^J f_j \circ T^{a_j n} \right\Vert_1^2 
&\leq \frac{6|a_1|}{\lfloor \sqrt{N} \rfloor} + \frac{4}{\lfloor \frac{\sqrt{N}}{|a_1|} \rfloor} \sum_{q=1}^{\lfloor \frac{\sqrt{N}}{|a_1|}\rfloor} \left\Vert \frac{1}{N}\sum_{n=1}^{N} \prod_{j=1}^{J-1} [f_j \cdot \overline{f_j \circ T^{a_j q} }] \circ T^{(a_j - a_J)n}\right\Vert_1
\end{align*}
for $N \geq |a_1|^2$. Every summand in $q$ is a $J-1$ recurrence bound, all with the same nonzero exponents. Now apply the inductive step to see for $N \geq N_J :=\max\{|a_1|^2, N_{J-1}\}$ to see
\begin{align*}
\left\Vert \frac{1}{N}\sum_{n=1}^N \prod_{j=1}^J f_j\circ T^{a_j n} \right\Vert_1^2 \leq \frac{6|a_1|}{\lfloor \sqrt{N} \rfloor} + \frac{4}{\lfloor \frac{\sqrt{N}}{|a_1|} \rfloor} \sum_{q=1}^{\lfloor \frac{\sqrt{N}}{|a_1|}\rfloor} C_{J-1}\Bigg( \frac{1}{\lfloor \sqrt{N}\rfloor^{1/2^{J-3}}} + \Bigg(\frac{1}{\lfloor\sqrt{N}\rfloor^{J-3}} \sum_{h \in [\lfloor\sqrt{N}\rfloor]^{J-3}} \\ \left\Vert \sup_t \left| \frac{1}{N} \sum_{n=1}^N e^{2\pi i n t} \left[ \prod_{\eta \in V_{J-3}}  c^{|\eta|} (f_1 \cdot \overline{f_1\circ T^{a_1 q}} )\circ T^{h \cdot \eta}\right](T^n x)\right|\right\Vert_1^{2/3} \Bigg)^{1/2^{J-3}}\Bigg) \, .
\end{align*}
Similarly to before we can pull out the $\lfloor \sqrt{N} \rfloor^{-1/2^{J-3}}$ term, and the $\lfloor \sqrt{N} \rfloor^{-1}$ can be absorbed into it. By Hölder's inequality on averages, we can pull the $q$ sum into the $1/2^{J-2}$ power, which leaves us with
\begin{align*}
&\left\Vert \frac{1}{N}\sum_{n=1}^N \prod_{j=1}^J f_j \circ T^{a_j n} \right\Vert_1^2 \leq \frac{6|a_1| + 4|a_1|C_{J-1}}{\lfloor \sqrt{N}\rfloor^{1/2^{J-3}}}  \\& + C_{J-1}\Bigg( \frac{4}{\lfloor \frac{\sqrt{N}}{|a_1|} \rfloor} \sum_{q=1}^{\lfloor \frac{\sqrt{N}}{|a_1|}\rfloor} \frac{1}{\lfloor\sqrt{N}\rfloor^{J-3}} \sum_{h \in [\lfloor\sqrt{N}\rfloor]^{J-3}} \left\Vert \sup_t \left| \frac{1}{N} \sum_{n=1}^N e^{2\pi i n t} \left[ \prod_{\eta \in V_{J-3}}  c^{|\eta|} (f_1 \cdot \overline{f_1\circ T^{a_1 q}} )\circ T^{h \cdot \eta}\right](T^n x)\right|\right\Vert_1^{2/3} \Bigg)^{1/2^{J-3}} \, .
\end{align*}
But the $f_1 \cdot f_1 \circ T^{a_1 q}$ inside the product along the cube adds one more layer, but with the coefficient of $q$ scaled by $a_1$. By the same argument as before, we can get rid of this scaling by extending the bound on $q$ to $\lfloor \sqrt{N} \rfloor$. Since getting rid of the $\lfloor \frac{\sqrt{N}}{|a_1|} \rfloor$ in the denominator only picks up a constant $|a_1|$, we have exactly constructed a cube on $(h, q) \in [\lfloor \sqrt{N} \rfloor]^{J-3}\times [\lfloor \sqrt{N} \rfloor] = [\lfloor \sqrt{N} \rfloor]^{J-2}$. Specifically, we have
\begin{align*}
&\left \Vert \frac{1}{N} \sum_{n=1}^N \prod_{j=1}^J f_j \circ T^{a_jn} \right\Vert_1^2 \leq \\
 & \frac{6|a_1|+4|a_1|C_{J-1}}{\lfloor \sqrt{N}\rfloor^{1/2^{J-3}}} + \left(\frac{4C_{J-1}|a_1|}{\lfloor\sqrt{N}\rfloor^{J-2}} \sum_{h \in [\lfloor\sqrt{N}\rfloor]^{J-2}}\left\Vert \sup_t \left| \frac{1}{N} \sum_{n=1}^N e^{2\pi i n t} \left[ \prod_{\eta \in V_{J-2}}  c^{|\eta|} f_1 \circ T^{h \cdot \eta}\right]\circ T^n \right|\right\Vert_1^{2/3} \right)^{1/2^{J-3}} \, . 
\end{align*}
Pulling out constants, taking the square root of both sides, and using subadditivity again gives us the desired statement.
\end{proof}

\subsection{Application to pointwise characteristic factors}

For a first order WW dynamical system, pointwise convergence double recurrence averages follows from just by the monotone convergence theorem, as seen in \cite{assani98}. With the higher order recurrence bound we have just established, we get the following:

\begin{lemma}\label{l:mea_cf}
Let $(X, \mathcal{F}, \mu, T)$ be an invertible dynamical system. Suppose that $(f_m)$ is a sequence of $k$-th order WW functions of power type $\alpha$ in $L^p$ that converges in $L^2$ to $f\in L^\infty$. Then for all $g_1, \dots, g_{k}\in L^\infty(\mu)$ and $a_1, \dots, a_{k+1} \in \N$, all distinct and nonzero, we have
\begin{align}\label{pt.conv.to.zero}
\lim_{N} \frac{1}{N} \sum_{n=1}^N \prod_{j=1}^k g_j(T^{a_j n}x)f(T^{a_{k+1}n}x) = 0
\end{align}
for $\mu$-a.e. $x$.
\end{lemma}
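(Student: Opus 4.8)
The plan is to reduce the pointwise statement to the quantitative recurrence bound established above, applied to the approximating sequence $(f_m)$ rather than to $f$ itself (which need not be a WW function), and then to pass from a rapidly growing subsequence of $N$'s to all $N$ via Birkhoff's theorem. I will describe the argument for $k\ge 2$; the case $k=1$ is identical, using Theorem~\ref{t:BBfor2} in place of Theorem~\ref{BBgeq3}.

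First I would record the quantitative estimate. Fix $m$ and set $M_m:=\max\{1,\|g_1\|_\infty,\dots,\|g_k\|_\infty,\|f_m\|_\infty\}$, so that $g_1/M_m,\dots,g_k/M_m,f_m/M_m$ all have sup-norm at most $1$. Applying Theorem~\ref{BBgeq3} with $J=k+1$ to the tuple $(f_m/M_m,\,g_1/M_m,\dots,g_k/M_m)$ with exponents $(a_{k+1},a_1,\dots,a_k)$ — which are distinct and nonzero, and whose reordering puts $f_m$ in the distinguished first slot, as in the remark after Theorem~\ref{BBgeq3} — bounds the $L^1$-norm of the average by its right-hand side, whose bracketed term (raised to the power $1/2^{k-1}$) is precisely the $k$-th order WW average of $f_m/M_m$ computed with $L^1$ in place of $L^p$. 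Since $\|\cdot\|_1\le\|\cdot\|_p$ on a probability space, Definition~\ref{def:hof} together with the scaling property in Lemma~\ref{l:mlc} (a scalar multiple of a $k$-th order WW function of power type $\alpha$ is again one) shows this WW average is at most $D_m N^{-\alpha}$ for a constant $D_m$ depending on $m$. Multiplying back through by $M_m^{k+1}$ yields, for all $N$ past a threshold,
\[
\Big\|\tfrac1N\sum_{n=1}^N\Big(\prod_{j=1}^k g_j\circ T^{a_jn}\Big)\, f_m\circ T^{a_{k+1}n}\Big\|_1\ \le\ C\,M_m^{k+1}\Big(\lfloor\sqrt N\rfloor^{-1/2^{k-1}}+(D_m N^{-\alpha})^{1/2^{k-1}}\Big)=:\varepsilon_m(N),
\]
and for each fixed $m$ one has $\varepsilon_m(N)\to 0$ as $N\to\infty$.

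Next I would upgrade this to a.e.\ convergence. Since the rate $\varepsilon_m(N)$ is only of order $N^{-1/2^{k}}$ — not summable in $N$ — I would fix $r\in\N$ large (any $r>2^{k}\max\{1,1/\alpha\}$ works, since then $\ell^{-r/2^{k}}$ and $\ell^{-r\alpha/2^{k-1}}$ are both summable) and restrict to $N_\ell:=\lfloor\ell^r\rfloor$, along which $\sum_\ell\varepsilon_m(N_\ell)<\infty$. Writing $A_N^{(m)}$ for the average in the display, the monotone convergence theorem gives $\sum_\ell|A_{N_\ell}^{(m)}(x)|<\infty$ and hence $A_{N_\ell}^{(m)}(x)\to 0$ for a.e.\ $x$, for each fixed $m$. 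Let $A_N$ be the target average (with $f$ in the last slot) and $B_N^{(m)}:=A_N-A_N^{(m)}$, the same average with $f$ replaced by $f-f_m$. The crude bound $|B_N^{(m)}|\le\big(\prod_j\|g_j\|_\infty\big)\tfrac1N\sum_{n=1}^N|(f-f_m)\circ T^{a_{k+1}n}|$ together with Birkhoff's theorem for the measure-preserving map $T^{a_{k+1}}$ gives $\limsup_N|B_N^{(m)}(x)|\le\big(\prod_j\|g_j\|_\infty\big)\,\E(|f-f_m| \mid \mathcal{I})(x)$ a.e., where $\mathcal{I}$ is the $T^{a_{k+1}}$-invariant $\sigma$-algebra; and since $\big\|\E(|f-f_m| \mid \mathcal{I})\big\|_1\le\|f-f_m\|_2\to 0$, along a subsequence $m_i$ this conditional expectation tends to $0$ a.e. Combining, $\limsup_\ell|A_{N_\ell}(x)|\le\limsup_\ell|A_{N_\ell}^{(m_i)}(x)|+\limsup_N|B_N^{(m_i)}(x)|\to 0$, so $A_{N_\ell}(x)\to 0$ a.e.

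Finally I would fill in the gaps: for $N_\ell\le N<N_{\ell+1}$, splitting $\tfrac1N\sum_{n=1}^N$ into its part over $[1,N_\ell]$ and its part over $(N_\ell,N]$ bounds $|A_N(x)|$ by $|A_{N_\ell}(x)|$ plus $\big(\prod_j\|g_j\|_\infty\big)$ times a difference of two Ces\`aro averages of $|f|\circ T^{a_{k+1}n}$ scaled by the ratio $N_{\ell+1}/N_\ell$; since $N_{\ell+1}/N_\ell\to 1$ and both averages converge to the same Birkhoff limit $\E(|f| \mid \mathcal{I})$, this extra term vanishes a.e., so $A_N(x)\to 0$ a.e. The main obstacle is structural rather than computational: the Van der Corput remainder only provides the non-summable decay $N^{-1/2^{k}}$, so a direct Borel--Cantelli argument is unavailable and one must detour through a fast subsequence and recover the intermediate $N$ via Birkhoff's theorem; a secondary technical point is that $\|f_m\|_\infty$ is uncontrolled as $m\to\infty$, which is exactly why one normalizes by $M_m$ and channels only the $L^2$-convergence of $f_m$ — never its sup-norm — through the error term $B_N^{(m)}$.
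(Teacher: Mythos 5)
Your proof is correct, and its engine is the same as the paper's: apply the quantitative bound of Theorem~\ref{BBgeq3} (with the functions reordered so that the WW function sits in the distinguished slot), pass to a polynomially growing subsequence of $N$'s so that the resulting rate becomes summable, invoke the monotone convergence theorem to get a.e.\ convergence to zero along that subsequence, and then approximate. The one place where you genuinely deviate is the approximation step. The paper normalizes $\norm{f-f_m}_2\leq m^{-2}$ and applies the maximal inequality (Lemma~\ref{l:maxIneq}) to $\sup_N\frac1N\sum_n|f-f_m|\circ T^{a_{k+1}n}$, obtaining an error that is summable in $m$ and hence, by one more application of monotone convergence, a quantity $\sup_N|A_N-A_N^{(m)}|$ that tends to zero a.e.\ as $m\to\infty$; this yields the full limit over all $N$ in one stroke, with the subsequence-to-full-sequence extension relegated to the (easier) case where the function is itself a WW function. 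You instead control $\limsup_N|B_N^{(m)}|$ by $\bigl(\prod_j\norm{g_j}_\infty\bigr)\,\E(|f-f_m|\mid\mathcal{I})$ via Birkhoff's theorem for $T^{a_{k+1}}$, and extract a subsequence $m_i$ along which these conditional expectations vanish a.e.; you then must fill in the gaps $N_\ell\leq N<N_{\ell+1}$ explicitly, again by Birkhoff applied to $|f|$. Both routes are valid: yours trades the maximal inequality for two applications of the pointwise ergodic theorem and an extra subsequence extraction in $m$, and it has the small advantage of not requiring the normalization $\norm{f-f_m}_2\leq m^{-2}$; the paper's version is slightly more economical because the $\sup_N$ control removes any need to track which $N$'s are being compared. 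One point worth making explicit in your write-up is that the exceptional null sets (one for each $m_i$, one for each Birkhoff application, one for the subsequence convergence of the conditional expectations) are countable in number, so they can be discarded simultaneously.
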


\begin{proof}
Without loss of generality, take $\Vert f \Vert_\infty \leq 1$ and all $\Vert g_i \Vert_\infty \leq 1$. In the case that $f$ itself is a $k$-th order WW function of power type $\alpha$ in $L^p$, the higher-order Bourgain bound (Theorem \ref{BBgeq3}) yields
$$\left \Vert \frac{1}{N} \sum_{n=1}^N \prod_{j = 1}^k g_j\circ T^{a_j n} \cdot f\circ T^{a_{k+1} n} \right\Vert_1 \leq C_J\left(\frac{1}{N^{1/2^{k-2}}} + \left(\frac{C_{f}}{N^\alpha} \right)^{1/2^{k-1}} \right)\leq \frac{C_{f}'}{N^\beta} $$
where $N$ is sufficiently large and $\beta = \min\{1/2^{k-2}, \alpha/2^{k-1}\}$. Choose $\gamma \in \R$ such that $\beta \gamma > 1$. By taking a subsequence of the form $N = \lfloor M^\gamma \rfloor$, and if we sum over $M$, we have
\begin{equation*}
   \sum_{M=1}^\infty \left \Vert \frac{1}{\floor{M^\gamma}} \sum_{n=1}^{\floor{M^\gamma}} \prod_{j = 1}^k g_j\circ T^{a_j n} \cdot f\circ T^{a_{k+1} n} \right\Vert_1 \leq \sum_{M=1}^\infty \frac{C'_f}{\floor{M^{\gamma}}^\beta} < \infty.
\end{equation*}
The monotone convergence theorem tells us that for $\mu$-a.e. $x \in X$, we have
\[ \sum_{M=1}^\infty \left| \frac{1}{\floor{M^\gamma}} \sum_{n=1}^{\floor{M^\gamma}} \prod_{j = 1}^k g_j\circ T^{a_j n}(x) f\circ T^{a_{k+1} n}(x)\right| < \infty \, , \]
which implies that for $\mu$-a.e. $x \in X$, we have
\[ \lim_{M \to \infty} \left| \frac{1}{\floor{M^\gamma}} \sum_{n=1}^{\floor{M^\gamma}} \prod_{j = 1}^k g_j\circ T^{a_j n}(x) f\circ T^{a_{k+1} n}(x)\right| = 0 \, .  \]
The remaining argument of showing $(\ref{pt.conv.to.zero})$ when $f$ is a WW function (i.e. along $N$ instead of the subsequence $\floor{M^\gamma}$) is standard.

For $f_m \to f$ in $L^2$, we assume $\Vert f_m - f \Vert_2 \leq m^{-2}$. By the maximal inequality on averaging operators (Lemma \ref{l:maxIneq}), it follows for each $m\in \N$ that
\begin{align*}
&\left\Vert \sup_N \left| \frac{1}{N} \sum_{n=1}^N \prod_{j=1}^k g_j\circ T^{a_j n} \cdot f\circ T^{a_{k+1}n} - \frac{1}{N} \sum_{n=1}^N \prod_{j=1}^k g_j\circ T^{a_j n} \cdot f_m\circ T^{a_{k+1}n}\right|\right\Vert_1 \\
&\leq \left\Vert \sup_N \frac{1}{N} \sum_{n=1}^N \left|f - f_m\right|\circ T^{a_{k+1}n}\right\Vert_2 \leq 2\Vert f - f_m \Vert_2 \leq \frac{2}{m^2}
\end{align*}
Hence, these terms are summable in $m$. Pushing the sum inside the integral by the monotone convergence theorem, we get pointwise almost everywhere convergence on integrand. Since the sum in $m$ converges to zero, its tail converges to zero, or
$$\lim_m \sup_N \left| \frac{1}{N} \sum_{n=1}^N \prod_{j=1}^k g_j(T^{a_j n}x)f(T^{a_{k+1}n}x) - \frac{1}{N} \sum_{n=1}^N \prod_{j=1}^k g_j(T^{a_j n}x)f_m(T^{a_{k+1}n}x)\right| = 0$$
for almost all $x$. We note for any $m$ that
\begin{align*}
&\limsup_N \left| \frac{1}{N} \sum_{n=1}^N \prod_{j=1}^k g_j(T^{a_j n}x)f(T^{a_{k+1}n}x)\right| \\
&\leq \limsup_N \left| \frac{1}{N} \sum_{n=1}^N \prod_{j=1}^k g_j(T^{a_j n}x)f(T^{a_{k+1}n}x) - \frac{1}{N} \sum_{n=1}^N \prod_{j=1}^k g_j(T^{a_j n}x)f_m(T^{a_{k+1}n}x)\right|  + 0\, .
\end{align*}
Bounding the $\limsup$ by the $\sup$ and taking the limit in $m$, we get the desired statement.
\end{proof}
\begin{remark} 
For decay rates $\log(N+1)^{- \beta}$ with $\beta > 1$, the same standard extension argument works by taking subsequences of the form $\floor {r^M}$ and letting $r \to 1$. Hence, a similar argument can be used to show convergence for WW functions of logarithmic decay. However, since the exponent must be greater than 1 and the Bourgain bound decreases the exponent in the transfer, the WW averages of the function $f$ would have to decay a rate $\log(N+1)^{- \beta}$ for $\beta > 2^{k-1}$ to obtain the same result.

\end{remark}

\begin{remark}
This proof does not require that the limiting function $f$ is itself a $k$-th order WW function. As the constants $C_{f_m}$ for the polynomial rate of decay could be unbounded as $m\to \infty$, the rate of decay for WW averages of $f_m$ does not immediately transfer to $f$. Since generically, sequences $\{a_n\}_{n\in \mathbb{N}}$ of good decay can converge in $\ell^\infty(\mathbb{N})$ to a sequence of worse decay, it is not necessarily expected that higher-order WW functions should form a closed subset of $L^2(\mu)$.

Furthermore, in the case of weakly mixing Lebesgue systems (such as K systems), it cannot be the case that every function in $L^2(\mathcal{Z}_{k-1})^\perp$ is a $k$-th order WW function of some power type. As the $k$-th order WW averages bound $k$ multiple recurrence, they also bound the ``single" recurrences, which are just the classical Birkhoff averages. Since it was shown by Krengel \cite{Krengel} that in every Lebesgue system the Birkhoff averages have no uniform rate of decay, either pointwise or in norm, there must be some function whose Birkhoff average converges at at least a logarithmic rate. In a weakly mixing system, this function must lie in $L^2(\mathcal{Z}_{k-1})^\perp$ for all $k$, and by its rate of decay it is not a $k$-th order WW function for any $k$.

\end{remark}

As the previous result holds pointwise, it is sufficient to create pointwise characteristic factors for multiple recurrence: 

\begin{theorem}\label{t:wwchar}
Let $(X, \mathcal{F}, \mu, T)$ be an invertible dynamical system, and let $\mathcal{B}$ be a $\sigma$-subalgebra of $\mathcal{F}$. Suppose that there exists in $L^2(\mathcal{B})^\perp$ an $L^2$-dense subset of $k$-th order WW functions of power type $\alpha$ for $L^p$. Then $(X, \mathcal{B}, \mu, T)$ is a pointwise characteristic factor for $k+1$-multiple recurrence: for all $f_1, \dots, f_{k+1}\in L^\infty(\mu)$, we have
\begin{align*}
\lim_N \left| \frac{1}{N} \sum_{n=1}^N \prod_{j=1}^{k+1} f_j(T^{jn} x) - \frac{1}{N} \sum_{n=1}^N \prod_{j=1}^{k+1} \E(f_j | \mathcal{B}) (T^{jn} x)\right| = 0
\end{align*}
for $\mu$-a.e. $x\in X$.
\end{theorem}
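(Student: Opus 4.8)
The plan is to deduce this from Lemma~\ref{l:mea_cf} via a telescoping decomposition of the product. First I would split each factor as $f_j = \E(f_j | \mathcal{B}) + h_j$, where $h_j := f_j - \E(f_j | \mathcal{B})$. Since conditional expectation is a contraction on $L^\infty(\mu)$, each $h_j \in L^\infty(\mu)$, and by construction $h_j \in L^2(\mathcal{B})^\perp$. By the hypothesis on $\mathcal{B}$, the $k$-th order WW functions of power type $\alpha$ in $L^p$ form an $L^2$-dense subset of $L^2(\mathcal{B})^\perp$, so each $h_j$ is an $L^2$-limit of such functions; combined with $h_j \in L^\infty(\mu)$, this places $h_j$ in the scope of Lemma~\ref{l:mea_cf}.

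Next I would apply the pointwise identity
\[ \prod_{j=1}^{k+1} u_j - \prod_{j=1}^{k+1} v_j = \sum_{i=1}^{k+1} \Big( \prod_{j < i} u_j \Big)(u_i - v_i)\Big( \prod_{j > i} v_j \Big) \]
with $u_j = f_j(T^{jn}x)$ and $v_j = \E(f_j | \mathcal{B})(T^{jn}x)$, so that $u_i - v_i = h_i(T^{in}x)$. Averaging over $n \in [N]$, the difference in the statement becomes a sum of $k+1$ Ces\`aro averages, the $i$-th of which is
\[ \frac{1}{N}\sum_{n=1}^N \Big( \prod_{j < i} f_j(T^{jn}x) \Big)\, h_i(T^{in}x)\, \Big( \prod_{j > i} \E(f_j | \mathcal{B})(T^{jn}x) \Big). \]

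For each fixed $i$ this is exactly an average of the form treated in Lemma~\ref{l:mea_cf}: take the distinguished exponent $a_{k+1} = i$, let $a_1, \dots, a_k$ enumerate $\{1, \dots, k+1\} \setminus \{i\}$ (these are distinct and nonzero), let the $k$ bounded functions $g_1, \dots, g_k$ be the $f_j$ with $j < i$ and the $\E(f_j | \mathcal{B})$ with $j > i$, and set $f = h_i$. Lemma~\ref{l:mea_cf} then gives that this average tends to $0$ for $\mu$-a.e. $x$. Since there are only finitely many values of $i$, and a.e.\ convergence is preserved under finite sums, the full difference tends to $0$ for $\mu$-a.e. $x$, which is the claim.

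I do not expect a genuine obstacle here; the only points requiring care are the verification that each $h_i$ satisfies the hypotheses of Lemma~\ref{l:mea_cf} --- membership in $L^\infty(\mu)$ (from the contraction property of conditional expectation) and approximability in $L^2$ by $k$-th order WW functions of the prescribed power type (from the density hypothesis on $\mathcal{B}$) --- and the bookkeeping needed to match each telescoped term to the exponent pattern $\{1, \dots, k+1\}$ used in Lemma~\ref{l:mea_cf}.
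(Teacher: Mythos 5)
Your proposal is correct and follows essentially the same route as the paper: decompose each $f_j$ into $\E(f_j|\mathcal{B})$ plus its orthogonal part, observe that the orthogonal part is a bounded $L^2$-limit of $k$-th order WW functions by the density hypothesis, and kill every term containing such a factor via Lemma~\ref{l:mea_cf}. The only (cosmetic) difference is that you organize the difference of products as a telescoping sum of $k+1$ terms, each with exactly one orthogonal factor, whereas the paper expands the product fully into $2^{k+1}$ terms and cancels the purely conditioned one; both reductions are valid and land on the same lemma.
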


\begin{proof}
For every $j \in [k+1]$, we denote $f_j^\mathcal{B} := \E(f_j|\mathcal{B})$ and $f^\perp_j := f_j - \E(f_j|\mathcal{B})$. If we decompose each $f_j = f^\mathcal{B}_j + f^\perp_j$, we note that both pieces are bounded as conditional expectation preserves the $L^\infty$ norm. We write
$$ \left|\frac{1}{N} \sum_{n=1}^N \prod_{j=1}^{k+1} f_j(T^{jn} x) - \frac{1}{N} \sum_{n=1}^N \prod_{j=1}^{k+1} f^\mathcal{B}_j(T^{jn} x) \right|= \left| \frac{1}{N} \sum_{n=1}^N \prod_{j=1}^{k+1} \left[f_j^\mathcal{B}(T^{jn} x) + f_j^\perp (T^{jn}x)\right] - \frac{1}{N} \sum_{n=1}^N \prod_{j=1}^{k+1} f^\mathcal{B}_j(T^{jn} x)\right| \, .$$
If we expand the product in $j$, we get $2^{k+1}$ terms. The multiple recurrence average over the $f^{\mathcal{B}}_j$'s will cancel, and all that remains are $2^{k+1}-1$ terms of the form (\ref{pt.conv.to.zero}), and vanish under the limit.
\end{proof}
Applying this to Theorem \ref{t:Ksys}, we can immediately recover the fact that for any invertible dynamical system $(X, \mathcal{F}, \mu, T)$, the Pinkser subalgebra $(X, \mathcal{P}, \mu, T)$ (where $\mathcal{P}$ is from (\ref{Pinsker})) is a pointwise characteristic factor for multiple recurrence: i.e., for pointwise convergence in multiple recurrence averages, it suffices to consider systems of zero entropy \cite[Proposition 4]{assani98}. 


If $(X, \mathcal{F}, \mu, T)$ is a K system and $(Y, \mathcal{G}, \nu, S)$ is any other dynamical system, we showed in Proposition \ref{t:Kprod} that there exists a dense set of WW functions in $L^2(X \otimes \mathcal{G})^\perp$. Hence, it follows that $(Y, \mathcal{G}, \nu, S)$ is a characteristic factor for multiple recurrence in the product system $(X \times Y, \mathcal{F} \otimes \mathcal{G}, \mu \times \nu, T \times S)$: i.e. if $X$ is a K system and $Y$ satisfies pointwise convergence for multiple recurrence, then so does $X \times Y$.


For $J$-order WW dynamical systems, it follows that $\mathcal{Z}_{J-1}$ is a pointwise characteristic factor for $J$-multiple recurrence. Since $T$ inside $\mathcal{Z}_{J-1}$ acts like translation on a $J-1$ pro-nilmanifold, as shown by Host-Kra  \cite{hostkra}, and pointwise convergence of these transformations has been shown by Leibman \cite{leibman05a}, it follows that we have pointwise convergence for all $f_j \in L^\infty$:

\begin{corollary}\label{AEMultRec} Let $(X, \mathcal{F}, \mu, T)$ be an invertible $J-1$-th order WW system
of power type $\alpha$ in $L^p$ for some $p \in [1, \infty]$, and $f_1, \dots, f_J \in L^\infty$. Then 
$$\lim_{N} \frac{1}{N} \sum_{n=1}^N \prod_{j = 1}^J f_j(T^{jn}x) $$
converges for almost all $x$. Specifically, the characteristic factor of pointwise convergence is $\mathcal{Z}_{J-1}$.
\end{corollary}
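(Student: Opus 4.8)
The plan is to deduce this from the pointwise characteristic factor statement (Theorem~\ref{t:wwchar}) together with Leibman's theorem on pointwise convergence of multiple ergodic averages on nilsystems \cite{leibman05a}. By definition, an invertible $(J-1)$-th order WW system of power type $\alpha$ in $L^p$ carries an $L^2$-dense set of $(J-1)$-th order WW functions of power type $\alpha$ inside $L^2(\mathcal{Z}_{J-1})^\perp$, which is precisely the hypothesis of Theorem~\ref{t:wwchar} with $\mathcal{B} = \mathcal{Z}_{J-1}$ and $k = J-1$. That theorem therefore gives, for all $f_1, \dots, f_J \in L^\infty(\mu)$,
\[
\lim_{N} \left| \frac{1}{N} \sum_{n=1}^N \prod_{j=1}^J f_j(T^{jn}x) - \frac{1}{N} \sum_{n=1}^N \prod_{j=1}^J g_j(T^{jn}x) \right| = 0 \qquad \text{for } \mu\text{-a.e.\ } x,
\]
where $g_j := \E(f_j \mid \mathcal{Z}_{J-1})$, and also identifies $\mathcal{Z}_{J-1}$ as the characteristic factor. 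So it remains only to show that $\frac{1}{N} \sum_{n=1}^N \prod_{j=1}^J g_j(T^{jn}x)$ converges for $\mu$-a.e.\ $x$; note $\norm{g_j}_\infty \le \norm{f_j}_\infty$.

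For this I would reduce to genuine nilsystems. By Host--Kra \cite{hostkra}, $(Z_{J-1}, \mathcal{Z}_{J-1}, \mu, T)$ is an inverse limit of $(J-1)$-step nilsystems, so one may fix an increasing sequence of nilfactors $\mathcal{N}_1 \subset \mathcal{N}_2 \subset \cdots$ with $\bigvee_m \mathcal{N}_m = \mathcal{Z}_{J-1}$ and set $g_j^{(m)} := \E(g_j \mid \mathcal{N}_m)$. Then $\norm{g_j^{(m)}}_\infty \le \norm{g_j}_\infty$, and by the martingale convergence theorem, passing to a subsequence of the $\mathcal{N}_m$ if necessary, we may assume $\norm{g_j - g_j^{(m)}}_2 \le 2^{-m}$ for all $j$. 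For each fixed $m$, Leibman's theorem \cite{leibman05a} applied on the nilsystem carrying $\mathcal{N}_m$ gives that $\frac{1}{N} \sum_{n=1}^N \prod_{j=1}^J g_j^{(m)}(T^{jn}x)$ converges for $\mu$-a.e.\ $x$ (a.e.\ convergence for bounded measurable functions being a standard consequence of Leibman's result for continuous functions together with the maximal inequality).

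The last ingredient is the standard perturbation argument, as at the end of the proof of Lemma~\ref{l:mea_cf}. Telescoping,
\[
\prod_{j=1}^J g_j(T^{jn}x) - \prod_{j=1}^J g_j^{(m)}(T^{jn}x) = \sum_{i=1}^J \Big( \prod_{j<i} g_j^{(m)}(T^{jn}x) \Big) \big( g_i - g_i^{(m)} \big)(T^{in}x) \Big( \prod_{j>i} g_j(T^{jn}x) \Big),
\]
so, with $C := \prod_{j=1}^J \max\{1, \norm{g_j}_\infty\}$, the supremum over $N$ of the modulus of the difference of the two $N$-averages is pointwise at most $C \sum_{i=1}^J \sup_N \frac{1}{N} \sum_{n=1}^N \big| g_i - g_i^{(m)} \big|(T^{in}x)$. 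Applying Lemma~\ref{l:maxIneq} with $T$ replaced by the (still measure preserving) transformation $T^i$, to the real and imaginary parts of $g_i - g_i^{(m)}$, the $L^2(\mu)$-norm of each summand is at most an absolute constant times $\norm{g_i - g_i^{(m)}}_2 \le 2^{-m}$; hence the whole bound is summable in $m$ and, by monotone convergence, for $\mu$-a.e.\ $x$ one has $\sup_N \big| \frac{1}{N} \sum_{n=1}^N \prod_{j} g_j(T^{jn}x) - \frac{1}{N} \sum_{n=1}^N \prod_{j} g_j^{(m)}(T^{jn}x) \big| \to 0$ as $m \to \infty$. Combined with the convergence in $N$ for each fixed $m$, this forces $\frac{1}{N} \sum_{n=1}^N \prod_{j} g_j(T^{jn}x)$ to be Cauchy in $N$ for a.e.\ $x$, hence convergent; together with the first paragraph, this proves the corollary.

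I expect the only genuinely delicate point to be the inverse-limit reduction: Leibman's theorem is stated for honest $(J-1)$-step nilsystems, so one must argue that the pro-nilsystem $\mathcal{Z}_{J-1}$ inherits a.e.\ convergence through the nilfactor approximation, and it is there that the maximal inequality together with the one-coordinate-at-a-time telescoping do the real work. Everything else is a direct appeal to Theorem~\ref{t:wwchar} plus a routine Borel--Cantelli-type argument; there is no new estimate to be proved.
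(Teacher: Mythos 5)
Your proposal is correct and follows exactly the route the paper takes: the paper's entire ``proof'' is the paragraph preceding the corollary, which invokes Theorem~\ref{t:wwchar} to make $\mathcal{Z}_{J-1}$ a pointwise characteristic factor and then cites Host--Kra's pro-nilsystem structure theorem together with Leibman's pointwise convergence result. The only difference is that you spell out the inverse-limit approximation (martingale convergence, telescoping, and the maximal inequality), which the paper leaves implicit; those details are correct as written.
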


\begin{remark}
    We note that the pointwise convergence of multiple recurrence for the K systems is proven by Derrien and Lesigne \cite{DL1996}, and for the classical skew products, one may apply Leibman's result since they are nilsystems \cite{leibman05}. We will see, in Theorem \ref{t:productWWS}, another example of a higher order WW system that is neither weakly mixing nor distal.
\end{remark}

\subsection{Uniform Wiener-Wintner theorem for multiple ergodic averages}
In this subsection, we will show that one can prove a uniform Wiener-Wintner theorem for multiple ergodic averages on a higher order WW system.
\begin{theorem}\label{t:uww_mea}
Let $(X, \mathcal{F}, \mu, T)$ be a $k$-th order WW system of power type $\alpha>0$ in $L^p$ for $k \geq 2$. Suppose $f_1, \dots f_k\in L^\infty(\mu)$ and $f_1\in L^2(\mathcal{Z}_{k-1})^\perp$. Then for all $a_1, \dots, a_{k}\in \Z$ distinct and nonzero, we have
\begin{align*}
\lim_N \sup_{t} \left| \frac{1}{N} \sum_{n=1}^N e^{2\pi i n t} \prod_{j=1}^k f_j(T^{a_j n} x)\right| = 0
\end{align*}
for $\mu$-a.e. $x\in X$.
\end{theorem}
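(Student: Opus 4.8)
The plan is to decompose the distinguished function $f_1$ relative to the factor $\mathcal{Z}_k$ and treat the two resulting pieces by entirely different mechanisms. Write $f_1=f_1'+f_1''$ with $f_1'':=\E(f_1 | \mathcal{Z}_k)$ and $f_1':=f_1-f_1''$. Since $f_1\in L^2(\mathcal{Z}_{k-1})^\perp$ and $\mathcal{Z}_{k-1}$ is a factor of $\mathcal{Z}_k$, the tower property gives $\E(f_1'' | \mathcal{Z}_{k-1})=\E(f_1 | \mathcal{Z}_{k-1})=0$, so $f_1'\in L^2(\mathcal{Z}_k)^\perp$ while $f_1''\in L^2(\mathcal{Z}_k)\cap L^2(\mathcal{Z}_{k-1})^\perp$. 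By linearity of the inner average it suffices to prove the asserted limit separately for $f_1'$ and for $f_1''$ in place of $f_1$.

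For $f_1'$: since the system is a $k$-th order WW system, I would approximate $f_1'$ in $L^2$ by a $k$-th order WW function $\phi$ of power type $\alpha$ in $L^p$, and for fixed $\phi$ apply van der Corput (\ref{vdc-1}) pointwise to $\frac{1}{N}\sum_{n=1}^N e^{2\pi int}\phi(T^{a_1n}x)\prod_{j=2}^k f_j(T^{a_jn}x)$ with window $H=\floor{\sqrt N/|a_1|}$. The weight $e^{2\pi int}$ cancels in the correlation terms, leaving a remainder of size $O(\floor{\sqrt N}^{-1})$ plus the average over $h\in[H]$ of ordinary $k$-fold recurrence averages of the functions $\phi\cdot\overline{\phi\circ T^{a_1h}}$ and $f_j\cdot\overline{f_j\circ T^{a_jh}}$. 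Integrating, applying the higher-order Bourgain bound (Theorem \ref{t:BBfor2} for $k=2$, Theorem \ref{BBgeq3} for $k\ge3$) to each summand with $\phi\cdot\overline{\phi\circ T^{a_1h}}$ as distinguished function, and using the identity
\[ \prod_{\eta\in V_{k-2}}c^{|\eta|}\bigl(\phi\cdot\overline{\phi\circ T^{a_1h}}\bigr)\circ T^{h'\cdot\eta}=\prod_{\tilde\eta\in V_{k-1}}c^{|\tilde\eta|}\,\phi\circ T^{(h',\,a_1h)\cdot\tilde\eta} \]
together with Hölder's inequality on the $(h',h)$-averages and windows $\floor{\sqrt N/|a_j|}$ chosen so that every shift stays inside $[\floor{\sqrt N}]$ (as in the proof of Theorem \ref{BB3}), the resulting bound on $\int\sup_t|\cdot|^2\,d\mu$ reassembles into a constant times the $k$-th order WW average of $\phi$ from Definition \ref{def:hof}, hence is $\le C(\floor{\sqrt N}^{-\beta}+(C_\phi N^{-\alpha})^{\beta'})$ for some $\beta,\beta'>0$ depending only on $k$. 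This polynomial $L^2$-decay upgrades to a.e.\ convergence to $0$ by summing along a sparse subsequence $N=\floor{M^\gamma}$, the monotone convergence theorem, and the standard gap-filling argument, and one passes back from $\phi$ to $f_1'$ via the maximal inequality (Lemma \ref{l:maxIneq}) and a $\limsup$ estimate, exactly as in the proof of Lemma \ref{l:mea_cf}. Running the same argument with $f_j-\E(f_j | \mathcal{Z}_k)$ as distinguished function also lets me replace each $f_j$ $(j\ge2)$ by $\E(f_j | \mathcal{Z}_k)$ when handling the remaining piece.

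For $f_1''$: after those replacements all functions are $\mathcal{Z}_k$-measurable, so the problem descends to the factor $\mathcal{Z}_k$, an inverse limit of $k$-step nilsystems, and it suffices to treat a $k$-step nilsystem $(Z,\nu,S)$ with distinguished function $g_1\in L^2(\mathcal{Z}_{k-1}(Z))^\perp$. There $e^{2\pi int}$ is a linear nilsequence on the rotation by $t$, so $n\mapsto e^{2\pi int}g_1(S^{a_1n}z)\prod_{j\ge2}g_j(S^{a_jn}z)$ is a basic nilsequence on the $k$-step nilsystem $Z\times\T$; by Leibman's equidistribution theorems \cite{leibman05,leibman05a} its averages converge pointwise and, extracting uniformity in the compact parameter $t$ by an equicontinuity argument, uniformly in $t$, while the limit vanishes a.e.\ because $g_1\perp\mathcal{Z}_{k-1}(Z)$ (for a single function this is Theorem \ref{t:UWW}, since $\mathcal{K}\subset\mathcal{Z}_{k-1}$; the product version follows by writing the limit as an integral of the product over the closure of the orbit $(S^nz,nt)$ and using the vanishing of that integral when one factor is orthogonal to the $(k-1)$-step factor). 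I expect this $f_1''$ piece — reconstructing the limit on the nilfactor, the uniformity in $t$, and the vanishing of the orbit integral — to be the genuine obstacle, the $f_1'$ piece being, modulo bookkeeping (reassembling the cube averages, rescaling the $h$-windows by the $a_j$), a direct application of Theorem \ref{BBgeq3} and the transfer mechanism of Lemma \ref{l:mea_cf}.
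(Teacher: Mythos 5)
Your treatment of the $f_1'=f_1-\E(f_1\,|\,\mathcal{Z}_k)$ piece is, modulo bookkeeping, exactly the paper's proof: van der Corput with window $\floor{\sqrt N/|a_1|}$, the higher-order Bourgain bound (Theorem \ref{BBgeq3}) applied to each correlation term with $\phi\cdot\overline{\phi\circ T^{a_1q}}$ as distinguished function, reassembly of the $(h',q)$-averages into the $k$-th order WW average of $\phi$, polynomial decay of the norm, a sparse subsequence plus the monotone convergence theorem, and the maximal-inequality transfer from the approximants $\phi$ to the limit function as in Lemma \ref{l:mea_cf}. That half is fine.

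The $f_1''=\E(f_1\,|\,\mathcal{Z}_k)$ piece is where the proposal breaks, and the failure is not a repairable technicality: the vanishing you assert on the nilfactor is false. Take the skew product $(\T^2,T_\alpha)$ with $T_\alpha(x_1,x_2)=(x_1+\alpha,x_2+x_1)$ and $\alpha$ irrational; this is a $2$-step nilsystem, hence trivially a second-order WW system since $L^2(\mathcal{Z}_2)^\perp=\{0\}$. With $k=2$, $a_1=1$, $a_2=2$, $f_1(x)=e^{2\pi i(-4)x_2}\in L^2(\mathcal{Z}_1)^\perp$ and $f_2(x)=e^{2\pi i x_2}$, one computes from $T_\alpha^nx=(x_1+n\alpha,\,x_2+nx_1+\tfrac{n(n-1)}{2}\alpha)$ that the coefficient of $n^2$ in the phase of $f_1(T^nx)f_2(T^{2n}x)$ is $\alpha\left(\tfrac{-4}{2}+2\cdot 1\right)=0$, so $f_1(T^nx)f_2(T^{2n}x)=e^{2\pi i(c_0(x)+nc_1(x))}$ is an affine phase and $\sup_t\left|\tfrac1N\sum_{n=1}^Ne^{2\pi int}f_1(T^nx)f_2(T^{2n}x)\right|=1$ for every $N$ and every $x$. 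So ``$g_1\perp\mathcal{Z}_{k-1}(Z)$ forces the orbit integral to vanish'' is not a valid principle for twisted products along distinct progressions: the twist $e^{2\pi int}$ pushes the relevant obstruction up to level $\mathcal{Z}_k$, not $\mathcal{Z}_{k-1}$, which is precisely the content of Theorem \ref{t:higher.order.UWW}.

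For what it is worth, the paper's own proof never performs your decomposition: it only approximates $f_1$ in $L^2$ by $k$-th order WW functions, which is possible exactly when $f_1\in L^2(\mathcal{Z}_k)^\perp$, the space in which the system is assumed to carry a dense set of such functions. In other words, the statement that is actually proved (and, by the example above, the only one that can be true) has hypothesis $f_1\in L^2(\mathcal{Z}_k)^\perp$. Your instinct that the stated hypothesis $f_1\in L^2(\mathcal{Z}_{k-1})^\perp$ leaves a $\mathcal{Z}_k$-measurable component unaccounted for is correct, but that component cannot be handled, because the conclusion fails for it.
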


\begin{remark}
   For general ergodic systems, the uniform WW theorem is proven in \cite{ADM2016} for $k = 2$. Later, Zorin-Kranich announced a uniform WW result for any $k \in \N$ with nilsequences \cite{ZK2015}. In those results, the pointwise convergence of $k$-recurrence averages was assumed (partly to be able to apply the dominated convergence theorem to switch the limit and the integral). As we have shown that $k$-th order WW systems satisfy pointwise convergence for $k$-recurrence, the full conclusion of \cite{ZK2015} follows for these systems. However, we remark that we can obtain this simpler version without appealing to Corollary \ref{AEMultRec}.
\end{remark}

\begin{proof}
The idea of the proof is similar to that of Lemma \ref{l:mea_cf}, with additional application of the Van der Corput lemma.

Consider the case in which $f_1$ is a $k$-th order WW function of power type $\alpha$ in $L^p$. Applying the Van der Corput inequality (\ref{vdc-2}) pointwise for $N^2 \geq |a_1|$ and $H = \floor{\frac{\sqrt{N}}{|a_1|}}$, we see that 
\begin{align*}
\sup_{t}\left|\frac{1}{N}\sum_{n=1}^N e^{2\pi i n t} \prod_{j=1}^k f(T^{a_jn}x) \right|^2 &\leq \frac{2|a_1|}{\floor{\sqrt{N}}} + \frac{4}{\floor{\frac{\sqrt{N}}{|a_1|}}} \sum_{q=1}^{\floor{\frac{\sqrt{N}}{|a_1|}}} \left| \frac{1}{N} \sum_{n=1}^{N-q} \prod_{j=1}^k f_j(T^{a_j n}x) \overline{f_j(T^{(a_j + q)n}x)}\right| \, .
\end{align*}
As in previous results, we can easily extend the last average from $N-q$ to $N$ (at the cost of adding an $O(N^{-1/2})$-term). Integrating both sides, we get a $k$-recurrence average on the right which we may bound with the $k$-th order Bourgain bound for sufficiently large $N$: By H\"older's inequality on averages, we get
\begin{align*}
&\left\Vert \sup_{t}\left|\frac{1}{N}\sum_{n=1}^N e^{2\pi i n t} \prod_{j=1}^k f(T^{a_jn}x) \right|\right\Vert_{2}^2 \leq \frac{C}{\floor{\sqrt{N}}} + \frac{C}{\floor{\frac{\sqrt{N}}{|a_1|}}} \sum_{q=1}^{\floor{\frac{\sqrt{N}}{|a_1|}}} \Bigg( \frac{1}{\lfloor \sqrt{N}\rfloor^{1/2^{k-2}}} \\ & + \left(\frac{1}{\lfloor\sqrt{N}\rfloor^{k-2}} \sum_{h \in [\lfloor\sqrt{N}\rfloor]^{k-2}}\left\Vert \sup_t \left| \frac{1}{N} \sum_{n=1}^N e^{2\pi i n t} \left[ \prod_{\eta \in V_{k-2}}  c^{|\eta|} [f_1 \cdot \overline{f_1\circ T^{a_1 q}}] \circ T^{h \cdot \eta}\right]\circ T^n\right|\right\Vert_1^{2/3} \right)^{1/2^{k-2}} \Bigg) \\
&\leq \frac{C}{\lfloor \sqrt{N}\rfloor^{1/2^{k-2}}} + \\ &\left( \frac{C}{\floor{\frac{\sqrt{N}}{|a_1|}}} \sum_{q=1}^{\floor{\frac{\sqrt{N}}{|a_1|}}} \frac{C}{\lfloor\sqrt{N}\rfloor^{k-2}} \sum_{h \in [\lfloor\sqrt{N}\rfloor]^{k-2}}\left\Vert \sup_t \left| \frac{1}{N} \sum_{n=1}^N e^{2\pi i n t} \left[ \prod_{\eta \in V_{k-2}}  c^{|\eta|} [f_1 \cdot \overline{f_1\circ T^{a_1 q}}] \circ T^{h \cdot \eta}\right]\circ T^n\right|\right\Vert_1^{2/3} \right)^{1/2^{k-2}} \, .
\end{align*}
When we remove the $a_1$ scaling on $q$ by extending its range to $\floor{\sqrt{N}}$, we note that we have constructed the $k$-th WW average on the function $f_1$. Since this is a $k$-th order WW function, we have
\begin{align*}
\left\Vert \sup_{t}\left|\frac{1}{N}\sum_{n=1}^N e^{2\pi i n t} \prod_{j=1}^k f_j(T^{a_jn}x) \right|\right\Vert_{1}^2 &\leq  \frac{C}{\lfloor \sqrt{N}\rfloor^{1/2^{k-2}}} + \left(\frac{C_{f_1}}{N^{\alpha}}  \right)^{1/2^{k-2}} \leq \frac{C}{N^\beta}
\end{align*}
for $\beta = \min \{1/2^{k-3}, \alpha/2^{k-2}\}$ and sufficiently large $N$. Hence, by taking a subsequence of the form $\floor{M^\gamma}$, where $\gamma \in \R$ such that $\gamma \beta > 1$, we get pointwise a.e. convergence to zero.

If $g_m \to f_1$ in $L^2$ are a sequence of $k$-th order WW functions, we take $\Vert f - g_m \Vert_2 \leq m^{-2}$, and we apply the maximal inequality (Theorem \ref{l:maxIneq}) to see that
\begin{align*}
&\left\Vert \sup_N \sup_t\left| \frac{1}{N} \sum_{n=1}^N e^{2\pi i n t}\prod_{j=1}^k f_j\circ T^{a_j n} - \frac{1}{N} \sum_{n=1}^N e^{2\pi i n t} g_m\circ T^{a_j n} \prod_{j=2}^k f_j\circ T^{a_j n} \right|\right\Vert_2 \\
&\leq \left\Vert \sup_N \frac{1}{N} \sum_{n=1}^N \left|f_1 - g_m\right|\circ T^{a_{1}n}\right\Vert_2 \leq 2\Vert f_1 - g_m \Vert_2 \leq \frac{2}{m^2} \, .
\end{align*}
Hence, these terms are summable in $m$. Pushing the sum inside the integral by the monotone convergence theorem, we get pointwise almost everywhere convergence on the integrand. Since the sum in $m$ converges to zero, its tail converges to zero, or
$$\lim_m \sup_N \sup_{t}\left| \frac{1}{N} \sum_{n=1}^N e^{2\pi i n t}\prod_{j=1}^k f_j(T^{a_j n}x) - \frac{1}{N} \sum_{n=1}^N e^{2\pi i n t}g_m(T^{a_1 n}x)\prod_{j=2}^k f_j(T^{a_j n}x)\right| = 0$$
for almost all $x$. We note for any $m$ that
\begin{align*}
&\limsup_N \sup_t \left|  \frac{1}{N} \sum_{n=1}^N e^{2\pi i n t}\prod_{j=1}^k f_j(T^{a_j n}x)\right| \\
&\leq \limsup_N \sup_t \left| \frac{1}{N} \sum_{n=1}^N e^{2\pi i n t}\prod_{j=1}^k f_j(T^{a_j n}x) - \frac{1}{N} \sum_{n=1}^N e^{2\pi i n t}g_m(T^{a_1 n}x)\prod_{j=2}^k f_j(T^{a_j n}x)\right|  + 0\, .
\end{align*}
Bounding the $\limsup$ by the $\sup$ and taking the limit in $m$, we get the desired statement.
\end{proof}

\begin{remark}
Since the same reasoning applies for general $\sigma$-subalgebras $\mathcal{B}$ rather than $\mathcal{Z}_{k-1}$, we can show an analogous statement to Theorem \ref{t:wwchar} for uniform Wiener-Wintner multiple recurrence averages, i.e., if there exists a dense set of WW functions in $L^2(\mathcal{B})^\perp$, then for all $f_1, \dots, f_k\in L^\infty(\mu)$ with $f_1 \in L^2(\mathcal{B})^\perp$ and $a_1, \dots, a_k \in \Z$ distinct and nonzero we have
$$\lim_N \sup_t \left| \frac{1}{N} \sum_{n=1}^N e^{2\pi i n t} \prod_{j=1}^k f_j(T^{a_j n}x) \right| = 0$$
for $\mu$-a.e. $x\in X$. By orthogonally decomposing the $f_j$'s on $L^2(\mathcal{B})$ and using subadditivity, we have for any $f_1 \dots, f_k \in L^2(\mathcal{B})^\perp$ that
$$\limsup_N \sup_t \left| \frac{1}{N} \sum_{n=1}^N e^{2\pi i n t} \prod_{j=1}^k f_j(T^{a_j n}x) \right| \leq \limsup_N \sup_t \left| \frac{1}{N} \sum_{n=1}^N e^{2\pi i n t} \prod_{j=1}^k \mathbb{E}(f_j|\mathcal{B})(T^{a_j n}x)\right| \, .$$
Hence, we can recover similar conclusions about these averages as the ones following Theorem \ref{t:wwchar}: for pointwise convergence to zero of uniform WW multiple ergodic averages in a system $X$, we may project those averages to the Pinkser algebra, and if $X$ is a K system, for pointwise convergence of uniform WW multiple ergodic averages in $X \times Y$ for any $Y$, we may project those averages to $Y$.
\end{remark}

\section{Relationship to Gowers-Host-Kra seminorms}\label{s:GHKsemnorm}

Consider an ergodic dynamical system, and recall that the factor $\mathcal{Z}_{J-1}$ is a \textit{universal} characteristic factor for the norm convergence of $J$ multiple recurrence, which for a bounded function $f$ means that $f\in L^2(\mathcal{Z}_{J-1})^\perp$ if and only if
\begin{align}\label{univCF} \forall f_1, \dots, f_J\in L^\infty(\mu), \text{ with } f_j = f \text{ for some }j, \quad \lim_{N} \left\Vert \frac{1}{N} \sum_{n=1}^N \prod_{j=1}^{J} f_j\circ T^{jn} \right\Vert_2 = 0 \, .
\end{align}
Moreover, we have $f\in L^2(\mathcal{Z}_{J-1})^\perp$ if and only if $\VERT f \VERT_J = 0$, where the seminorms $\VERT \cdot \VERT_J$ are constructed inductively:
\begin{align*} 
 \VERT f \VERT_2^4 &= \lim_H \frac{1}{H} \sum_{h=1}^H \left| \int f \cdot \overline{f\circ T^h  } \, d\mu \right|^2 \\
\VERT f \VERT_3^8 &= \lim_H \frac{1}{H} \sum_{h=1}^H \left\VERT f \cdot \overline{f \circ T^h }\right\VERT_2^4 \\
\VERT f \VERT_{k}^{2^k} &= \lim_H \frac{1}{H} \sum_{h=1}^H \left\VERT f\cdot \overline{f\circ T^h}\right\VERT_{k-1}^{2^{k-1}} 
\end{align*}

In the case $J=1$ of double recurrence, much more is known. Noting that $\VERT f\VERT_2^4$ is the average of the Fourier coefficients of the spectral measure of $f$, it follows from Wiener's theorem that $ \VERT f\VERT_2 = 0$ exactly when the spectral measure of $f$ is continuous, which is true exactly when $f\in \K^\perp$ by the spectral theorem. Hence, the Host-Kra-Ziegler factor $\mathcal{Z}_2$ is equal to the Kronecker factor $\K$. 

The Kronecker factor itself can be characterized pointwise through Bourgain's uniform Wiener-Wintner theorem (Theorem \ref{t:UWW}), which states that $f\in \K^\perp$ if and only if the averages $\sup_t \left| \frac{1}{N}\sum_{n=1}^N e^{2\pi i n t} f(T^nx)\right|$ converge to zero for almost every $x$. Since almost everywhere convergence implies norm convergence by the dominated convergence theorem, and norm convergence implies that double recurrence holds by Bourgain's bound, it follows that all of these statements are equivalent. All of these observations are well-known and can be summarized in the following theorem:
\begin{theorem}[Characterizations of norm double recurrence]
Let $(X, \mathcal{F}, \mu, T)$ be an invertible ergodic dynamical system and $f\in L^\infty(\mu)$. Then the following are equivalent:
\begin{enumerate}
    \item (Norm convergence to zero for double recurrence) $f\in L^2(\mathcal{Z}_1)^\perp$
    \item (Pointwise characterization) $$\lim_N \sup_t \left| \frac{1}{N}\sum_{n=1}^N e^{2\pi i n t} f(T^nx)\right| = 0$$ for almost all $x$.
    \item (Norm characterization) $$\lim_N \left\Vert \sup_t \left| \frac{1}{N}\sum_{n=1}^N e^{2\pi i n t} f \circ T^n\right| \right\Vert_2 = 0$$
\end{enumerate}

\end{theorem}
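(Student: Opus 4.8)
The plan is to establish the cycle $(1)\Leftrightarrow(2)$, $(2)\Rightarrow(3)$, $(3)\Rightarrow(1)$; each link is a direct appeal to a result recorded above, so the proof is essentially bookkeeping, the only subtlety lying in the last link.

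First I would use the identification $\mathcal{Z}_1=\K$ (the Kronecker factor) recalled in the discussion preceding the theorem: since $\VERT f\VERT_2^4$ is the $H\to\infty$ limit of the averages $\frac1H\sum_{h=1}^H\bigl|\langle f, f\circ T^h\rangle\bigr|^2=\frac1H\sum_{h=1}^H|\widehat{\sigma_f}(h)|^2$, Wiener's theorem gives $\VERT f\VERT_2=0$ iff the spectral measure $\sigma_f$ is continuous, i.e.\ iff $f\in\K^\perp$ by the spectral theorem; combined with $f\in L^2(\mathcal{Z}_1)^\perp\iff\VERT f\VERT_2=0$ this yields $L^2(\mathcal{Z}_1)^\perp=\K^\perp$. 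Granting this, $(1)\Leftrightarrow(2)$ is precisely the uniform Wiener--Wintner ergodic theorem (Theorem \ref{t:UWW}), which applies because the system is assumed ergodic.

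For $(2)\Rightarrow(3)$ I would invoke dominated convergence: for every $N$ the nonnegative function $x\mapsto\sup_t\bigl|\frac1N\sum_{n=1}^N e^{2\pi int}f(T^nx)\bigr|$ is bounded pointwise by the integrable constant $\norm{f}_\infty$ on the probability space $(X,\mathcal{F},\mu)$, so almost-everywhere pointwise convergence to $0$ promotes to convergence to $0$ in $L^2(\mu)$.

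The implication $(3)\Rightarrow(1)$ is where Bourgain's bound enters, in the $2$-norm form noted in the remark following Theorem \ref{t:BBfor2}. After normalizing $\norm{f}_\infty\le1$, for any distinct nonzero $a_1,a_2\in\Z$ and any $g\in L^\infty(\mu)$ with $\norm{g}_\infty\le1$ one has
\[
\left\Vert\frac1N\sum_{n=1}^N f\circ T^{a_1n}\cdot g\circ T^{a_2n}\right\Vert_2\le C\left(\frac1N+\left\Vert\sup_t\left|\frac1N\sum_{n=1}^N e^{2\pi int}f\circ T^n\right|\right\Vert_2^{2/3}\right),
\]
so $(3)$ forces the left side to $0$ as $N\to\infty$. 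Applying this once with $(a_1,a_2)=(1,2)$ and once with $(a_1,a_2)=(2,1)$ shows that $\frac1N\sum_{n=1}^N f_1\circ T^n\cdot f_2\circ T^{2n}\to0$ in $L^2$ whenever $f_1=f$ or $f_2=f$, the other function being an arbitrary element of $L^\infty(\mu)$; this is exactly condition \eqref{univCF} for $J=2$, and by the universality of $\mathcal{Z}_1$ as characteristic factor for double recurrence it is equivalent to $f\in L^2(\mathcal{Z}_1)^\perp$, i.e.\ $(1)$, closing the loop. I do not expect a genuine obstacle here; the one point to be careful about is that \eqref{univCF} requires placing $f$ in \emph{both} slots of the double recurrence average, which is why Bourgain's bound must be invoked for the two choices of the exponent pair.
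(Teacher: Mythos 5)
Your proposal is correct and follows essentially the same route as the paper: the paper's own argument (given in the discussion immediately preceding the theorem) identifies $\mathcal{Z}_1$ with the Kronecker factor via Wiener's theorem and the spectral theorem, gets $(1)\Leftrightarrow(2)$ from the uniform Wiener--Wintner theorem, $(2)\Rightarrow(3)$ from dominated convergence, and $(3)\Rightarrow(1)$ from Bourgain's bound together with the universality of $\mathcal{Z}_1$. Your extra remark about invoking the bound for both orderings of the exponent pair is just the "reordering" observation the paper makes after Theorem \ref{BBgeq3}, so nothing is genuinely different.
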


\begin{remark}
For the pointwise characterization, convergence on a set of positive measure implies convergence almost everywhere (due to Poincar\'e recurrence), and hence is another equivalent characterization.


\end{remark}
As stated before, the seminorm characterization generalizes to multiple recurrence. The extension of Bourgain's bound (Theorem \ref{BBgeq3}) gives some direction to generalize these characterizations to higher recurrence:

\begin{theorem}\label{t:higher.order.UWW}
Let $(X, \mathcal{F}, \mu, T)$ be an invertible ergodic dynamical system and $f\in L^\infty(\mu)$. Let $k \geq 2$. Then the following are equivalent:

\begin{enumerate}
    \item (Norm convergence for $k+1$-multiple recurrence) $f \in L^2(\mathcal{Z}_{k})^\perp$
    \item[(2a)] (Pointwise characterization, multiple functions) For all collections $(g_\eta)_{\eta\in V_{k-1}}$ with $g_\eta\in L^\infty(\mu)$ and some $g_\eta = f$, we have $$\lim_N \frac{1}{N^{k-1}} \sum_{h \in [N]^{k-1}} \sup_{t} \left| \frac{1}{N} \sum_{n=1}^N e^{2\pi i n t} \left[\prod_{\eta\in V_{k-1}} c^{|\eta|} g_\eta \circ T^{h \cdot \eta} \right](T^{n} x) \right|^2 = 0$$ for almost all $x$.
    \item[(2b)] (Pointwise characterization) $$\lim_N \frac{1}{N^{k-1}} \sum_{h \in [N]^{k-1}} \sup_{t} \left| \frac{1}{N} \sum_{n=1}^N e^{2\pi i n t} \left[\prod_{\eta\in V_{k-1}} c^{|\eta|} f \circ T^{h \cdot \eta} \right](T^{n} x) \right|^2 = 0$$ for almost all $x$.
    \item[(3a)] (Norm characterization, multiple functions) For all collections $(g_\eta)_{\eta\in V_{k-1}}$ with $g_\eta\in L^\infty(\mu)$ and some $g_\eta = f$, we have $$\lim_N \frac{1}{N^{k-1}} \sum_{h \in [N]^{k-1}}\left \Vert \sup_{t} \left| \frac{1}{N} \sum_{n=1}^N e^{2\pi i n t} \left[\prod_{\eta\in V_{k-1}} c^{|\eta|} g_\eta \circ T^{h \cdot \eta} \right]\circ T^{n} \right| \right\Vert_2^2 = 0$$
    \item[(3b)] (Norm characterization) $$\lim_N \frac{1}{N^{k-1}} \sum_{h \in [N]^{k-1}}\left \Vert \sup_{t} \left| \frac{1}{N} \sum_{n=1}^N e^{2\pi i n t} \left[\prod_{\eta\in V_{k-1}} c^{|\eta|} f \circ T^{h \cdot \eta} \right]\circ T^{n} \right| \right\Vert_2^2 = 0$$
    \item[(4a)] (Wiener-Wintner characterization, multiple functions) For all collections $(g_\eta)_{\eta\in V_{k-1}}$ with $g_\eta\in L^\infty(\mu)$ and some $g_\eta = f$, we have $$\lim_N \frac{1}{\lfloor \sqrt{N} \rfloor^{k-1}} \sum_{h \in [\lfloor \sqrt{N} \rfloor]^{k-1}}\left \Vert \sup_{t} \left| \frac{1}{N} \sum_{n=1}^N e^{2\pi i n t} \left[\prod_{\eta\in V_{k-1}} c^{|\eta|} g_\eta \circ T^{h \cdot \eta} \right] \circ T^{n} \right| \right\Vert_2^{2/3} = 0 $$
    \item[(4b)] (Wiener-Wintner characterization) $$\lim_N \frac{1}{\lfloor \sqrt{N} \rfloor^{k-1}} \sum_{h \in [\lfloor \sqrt{N} \rfloor]^{k-1}}\left \Vert \sup_{t} \left| \frac{1}{N} \sum_{n=1}^N e^{2\pi i n t} \left[\prod_{\eta\in V_{k-1}} c^{|\eta|} f \circ T^{h \cdot \eta} \right] \circ T^{n} \right| \right\Vert_2^{2/3} = 0 \, .$$
\end{enumerate}
\end{theorem}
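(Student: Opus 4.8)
The plan is to establish all seven statements as equivalent by linking each of them to $(1)$ in both directions, using the identity $f\in L^2(\mathcal{Z}_k)^\perp\iff\VERT f\VERT_{k+1}=0$ as the hub. The soft implications come first. Specializing $g_\eta\equiv f$ gives $(2a)\Rightarrow(2b)$, $(3a)\Rightarrow(3b)$, $(4a)\Rightarrow(4b)$. In $(2a)$ and $(2b)$ the quantity under the limit is dominated, uniformly in $N$, by $\prod_{\eta}\norm{g_\eta}_\infty^2$ (the $\sup_t$ of a Cesàro average of functions of modulus $\le M$ is $\le M$), so the a.e.\ convergence there upgrades by dominated convergence to $(3a)$ and $(3b)$ respectively; and by Hölder's inequality on averages together with the Van der Corput reduction $WW_{1/2}(f,N)\le WW_1(f,\lfloor\sqrt N\rfloor)+N^{-2/3}$ advertised after Definition \ref{def:hof} (reducing the inner averaging window by one Van der Corput step), $(3a)\Rightarrow(4a)$ and $(3b)\Rightarrow(4b)$. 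For $(4b)\Rightarrow(1)$: given $\varepsilon>0$, for all large $N$ the Wiener–Wintner cube average in $(4b)$ is $<\varepsilon$, hence so is its $L^1$-version (as $\norm{\cdot}_1\le\norm{\cdot}_2$); Theorem \ref{BBgeq3} with $a_j=j$, applied after reordering $(f_1,\dots,f_{k+1})$ so that the distinguished function equal to $f$ occupies the first slot, then bounds $\norm{\tfrac1N\sum_{n=1}^N\prod_{j=1}^{k+1}f_j\circ T^{jn}}_1$ by $C_{k+1}\bigl(\lfloor\sqrt N\rfloor^{-1/2^{k-1}}+\varepsilon^{1/2^{k-1}}\bigr)$ for every bounded $f_1,\dots,f_{k+1}$ one of which is $f$; letting $N\to\infty$ and then $\varepsilon\to0$, and using that these recurrence averages are uniformly bounded in $L^\infty$ (so $L^1\to0$ forces $L^2\to0$), we obtain exactly $(\ref{univCF})$ for $J=k+1$, i.e.\ $(1)$. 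Thus every statement implies $(1)$.

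It remains to recover the norm and Wiener–Wintner statements from $(1)$. The first step is to unfold the inductive definition of the seminorm: writing $V_k=V_{k-1}\times\{0,1\}$, one checks that for $g=\prod_{\eta\in V_{k-1}}c^{|\eta|}g_\eta\circ T^{h\cdot\eta}$ one has $g\cdot\overline{g\circ T^{h_k}}=\prod_{\tilde\eta\in V_k}c^{|\tilde\eta|}\tilde g_{\tilde\eta}\circ T^{\tilde h\cdot\tilde\eta}$ with $\tilde h=(h,h_k)$ and $\tilde g_{(\eta,0)}=\tilde g_{(\eta,1)}=g_\eta$, so iterating the recursion shows that $\VERT f\VERT_{k+1}^{2^{k+1}}$ is the $(k-1)$-parameter Cesàro limit, over $h\in[H]^{k-1}$, of $\VERT\prod_{\eta\in V_{k-1}}c^{|\eta|}f\circ T^{h\cdot\eta}\VERT_2^4$, and, by Wiener's theorem, $\VERT g\VERT_2^4$ equals the sum of the squared masses of the atoms of the spectral measure $\sigma_g$. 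For $(1)\Rightarrow(3a)$ I would apply Van der Corput (\ref{vdc-1}) to $G_h=\prod_{\eta\in V_{k-1}}c^{|\eta|}g_\eta\circ T^{h\cdot\eta}$ with a window $H'=H'(N)\to\infty$, integrate, and average over $h\in[N]^{k-1}$: the norm on the left of $(3a)$ is then controlled by $2C^2/(H'+1)$ plus $4/(H'+1)$ times a multi-parameter average, over the cube variables $h$ and over $1\le h'\le H'$, of the $L^1$-norms $\norm{\tfrac1N\sum_n[\overline{G_h}\,G_h\circ T^{h'}]\circ T^n}_1$; by ergodicity these tend to $|\widehat{\sigma_{G_h}}(h')|$, and the resulting average of $|\widehat{\sigma_{G_h}}(h')|$ over the $k$ parameters is, via Cauchy–Schwarz and the unfolding above, controlled by a positive power of $\VERT f\VERT_{k+1}$ — this is exactly the ``quantitative estimate $\to$ seminorm'' mechanism of \cite{assani-cubes} — hence vanishes under $(1)$. (One could alternatively run the Van der Corput step with window $\lfloor\sqrt N\rfloor$ to obtain $(1)\Rightarrow(4a)$ directly, or use a spectral lower bound on $\norm{\sup_t|\tfrac1N\sum e^{2\pi int}G_h\circ T^n|}_2^2$ — at least the mass of the heaviest atom of $\sigma_{G_h}$ — to pass from $(3b)$ back to $(1)$ without the lemma.) Since $(2a)\Rightarrow(3a)$ by dominated convergence, it now suffices to also produce $(1)\Rightarrow(2a)$, after which $(1)\Rightarrow(2a)\Rightarrow(3a)\Rightarrow(4a)\Rightarrow(4b)\Rightarrow(1)$ closes the cycle and drags $(2b),(3b)$ along.

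For $(1)\Rightarrow(2a)$ I would rerun the same Van der Corput computation pointwise rather than in norm, so that the left side of $(2a)$ is dominated by $2C^2/(H'+1)$ plus $4/(H'+1)$ times an average over $h$ and $h'$ of $\bigl|\tfrac1N\sum_n[\overline{G_h}\,G_h\circ T^{h'}](T^n x)\bigr|$, and then invoke Birkhoff's ergodic theorem together with the a.e.\ control of cube and Wiener–Wintner averages for functions orthogonal to $\mathcal{Z}_k$ developed in \cite{assani-cubes} to conclude that, under $(1)$, this multi-parameter average tends to $0$ for a.e.\ $x$; a diagonal choice of $H'(N)$ then yields $(2a)$, and $(2b)$ is the case $g_\eta\equiv f$. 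I expect this last step to be the main obstacle: managing the several limits in play — the inner average $N\to\infty$, the $k-1$ cube parameters, and the Van der Corput window $H'\to\infty$ — so that one genuinely lands on a Gowers–Host–Kra seminorm, and in particular upgrading the $L^1/L^2$ decay to the a.e.\ statements $(2a)$, $(2b)$ with no quantitative rate available. This is where the real bookkeeping lives (and where one leans on the a.e.\ cube machinery of \cite{assani-cubes}); the Bourgain-bound direction $(4b)\Rightarrow(1)$ is, by comparison, entirely soft.
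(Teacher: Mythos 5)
Your architecture is the paper's: the cycle $1\Rightarrow 2a\Rightarrow 3a\Rightarrow 4a\Rightarrow 4b\Rightarrow 1$, with the b-statements obtained by specializing $g_\eta\equiv f$ and with dominated convergence linking the pointwise and norm versions. All of your soft implications are sound: $(3a)\Rightarrow(4a)$ is indeed the inequality $\norm{\sup_t|\frac1N\sum_{n=1}^N e^{2\pi int}g\circ T^n|}_2\le\norm{\sup_t|\frac1{\lfloor\sqrt N\rfloor}\sum_{n=1}^{\lfloor\sqrt N\rfloor}e^{2\pi int}g\circ T^n|}_2+2N^{-1/2}$ plus H\"older on averages (the paper proves this by splitting the sum into $\lfloor\sqrt N\rfloor$ blocks rather than by a Van der Corput step, but the inequality you quote is the right one), and $(4b)\Rightarrow(1)$ via Theorem \ref{BBgeq3} after reordering is exactly the paper's argument.

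The gap is in $(1)\Rightarrow(2a)$, which you rightly flag as the crux but whose proposed resolution would not go through as written, for two reasons. First, after the pointwise Van der Corput step you cannot ``invoke Birkhoff'' on the terms $\frac1N\sum_n[\overline{G_h}\,G_h\circ T^{h'}](T^nx)$, because $h$ ranges over the growing cube $[N]^{k-1}$: no single full-measure set controls all of these simultaneously, and the average over $h$ is itself a genuinely multi-parameter object. The mechanism that actually closes the induction is the two-function inequality (\ref{two.functions}) from \cite{assani-cubes}: for each fixed shift $m$, the Ces\`aro average over the \emph{last} cube coordinate $h_k$ of the squared correlation averages is bounded, pointwise and for every $N$, by a minimum over $j=0,1$ of $\sup_t$-weighted averages of products over $V_{k-1}$ applied to the functions $g_\eta\cdot\overline{g_\eta\circ T^m}$ --- i.e., by the inductive quantity one level down. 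Without this step the $h$-average never reduces to a lower-order cube average and the induction does not descend. Second, the order of limits: a diagonal window $H'(N)\to\infty$ forces you to apply the inductive hypothesis uniformly over shifts $m\le H'(N)$, for which no uniformity is available (the exceptional null sets depend on $m$). The paper instead keeps $H$ fixed, takes $\limsup_N$ so that only the countably many pairs $(m,j)$ with $m\le H$ enter (their null sets can then be intersected), obtains a bound of the form $\min_\eta\{\frac CH+(\frac CH\sum_{m=1}^H\VERT g_\eta\cdot\overline{g_\eta\circ T^m}\VERT_{k+1}^{2^{k+1}})^{1/2^{k+1}}\}$, and only then lets $H\to\infty$ to land on $C\min_\eta\VERT g_\eta\VERT_{k+2}^2$ via the seminorm recursion, which vanishes under $(1)$. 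With those two corrections your plan coincides with the paper's proof.
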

We pay special attention to the equivalence between (1) and (2b). For the undefined case $k=1$, suitably interpreting $V_0$ as the empty set we recover the uniform Wiener-Wintner theorem. Hence, this equivalence may be considered as an extension of the uniform Wiener-Wintner theorem for bounded functions. 
\begin{theorem}[Uniform Wiener-Wintner theorem for higher orders]\label{t:HUWW}
Let $(X, \mathcal{F}, \mu, T)$ be an invertible ergodic dynamical system and $f\in L^\infty(\mu)$. Then $f\in L^2(\mathcal{Z}_k)^\perp$ if and only if $$\lim_N \frac{1}{N^{k-1}} \sum_{h \in [N]^{k-1}} \sup_{t} \left| \frac{1}{N} \sum_{n=1}^N e^{2\pi i n t} \left[\prod_{\eta\in V_{k-1}} c^{|\eta|} f \circ T^{h \cdot \eta} \right](T^{n} x) \right|^2 = 0$$ for $\mu$-a.e. $x\in X$.
\end{theorem}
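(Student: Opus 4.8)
The statement for $k=1$ (interpreting $V_0$ as empty) reduces to the uniform Wiener--Wintner theorem (Theorem \ref{t:UWW}) applied to a bounded function, using that $\mathcal{Z}_1$ is the Kronecker factor; for $k\ge 2$ it is exactly the equivalence of statements (1) and (2b) in Theorem \ref{t:higher.order.UWW}. So the plan is to prove Theorem \ref{t:higher.order.UWW} by a cycle of implications, from which the present statement is immediate. Throughout I write $F_{\vec h} := \prod_{\eta\in V_{k-1}} c^{|\eta|} f\circ T^{\vec h\cdot\eta}$ for $\vec h\in\Z^{k-1}$, and more generally $G_{\vec h}$ for the analogous product formed from a tuple $(g_\eta)_{\eta\in V_{k-1}}$; all these functions are uniformly bounded ($\|F_{\vec h}\|_\infty\le\|f\|_\infty^{2^{k-1}}$). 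The implications $(2a)\Rightarrow(2b)$, $(3a)\Rightarrow(3b)$, $(4a)\Rightarrow(4b)$ are trivial (take $g_\eta=f$), and $(2b)\Rightarrow(3b)$ is the bounded convergence theorem applied to uniformly bounded integrands. The two engines of the proof are: (i) a Van der Corput reduction that, in either direction, trades the exponentially-weighted supremum averages for unweighted multi-parameter cube averages of the inner products $\int \prod_{\eta\in V_k} c^{|\eta|} f\circ T^{\vec h'\cdot\eta}\,d\mu$, whose box limits are, by Host--Kra \cite{hostkra} (cf. \cite{assani-cubes}), the seminorm $\VERT f\VERT_{k+1}^{2^{k+1}}$ — combined with the fact that $f\in L^2(\mathcal{Z}_k)^\perp \iff \VERT f\VERT_{k+1}=0$; and (ii) the extended Bourgain bound (Theorem \ref{BBgeq3}) together with the universal characteristic factor property (\ref{univCF}).

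To get $(1)\Rightarrow(3a)$ (and, with the range $[\lfloor\sqrt N\rfloor]$ in place of $[N]$, $(1)\Rightarrow(4a)$), I would fix a tuple $(g_\eta)$ with some $g_{\eta_0}=f$ and apply the summation form of Van der Corput (\ref{vdc-2}) (or (\ref{vdc-1})) to the inner sum $\sum_{n=1}^N e^{2\pi int}G_{\vec h}(T^nx)$ with shift parameter $H\asymp N$ (resp. $H\asymp\lfloor\sqrt N\rfloor$); exactly as in the proof of Theorem \ref{BBgeq3}, the product $\overline{G_{\vec h}\circ T^{n+r}}\cdot G_{\vec h}\circ T^n$ reassembles, in the new variable $\vec h'=(\vec h,r)$, into a single $V_k$-product of conjugates of the $g_\eta$. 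After integrating, bounding the $L^1$ norm by the $L^2$ norm, applying Cauchy--Schwarz in $\vec h'$, and expanding the square, the right-hand side becomes a Fej\'er-weighted cube average — in the $k$ coordinates $\vec h'$ and one more coordinate $s$ — of $\int \prod_{\eta\in V_k} c^{|\eta|}(\cdots)\circ T^{\vec h''\cdot\eta}\,d\mu$; since $f$ occupies one of the input slots and $\VERT f\VERT_{k+1}=0$, the box limit of this average is $0$, and the Fej\'er weighting together with the fact that the various coordinate ranges tend to infinity at different rates does not affect that limit. Hölder's inequality on averages then yields $(3a)$ and $(4a)$. For the return direction $(4b)\Rightarrow(1)$, I would apply the $L^2$-variant of Theorem \ref{BBgeq3} with $J=k+1$, reordering the functions so $f$ occupies the ``carried'' slot: this bounds $\|\tfrac1N\sum_{n=1}^N\prod_{j=1}^{k+1}f_j\circ T^{jn}\|_2$ by $C\bigl(\lfloor\sqrt N\rfloor^{-1/2^{k-1}} + W(f,N)^{1/2^{k-1}}\bigr)$, where $W(f,N)$ is precisely the $(4b)$ average (using $\|\cdot\|_1\le\|\cdot\|_2$); letting $N\to\infty$ and invoking (\ref{univCF}) gives $f\in L^2(\mathcal{Z}_k)^\perp$. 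This closes the chain $(1)\Rightarrow(3a),(4a)\Rightarrow(3b),(4b)\Rightarrow(1)$, and the ``multiple function'' variants $(2a),(3a),(4a)$ come along for free since the Van der Corput reduction was already carried out for general tuples $(g_\eta)$.

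The remaining — and, I expect, genuinely hardest — step is the pointwise direction $(1)\Rightarrow(2a)$ (hence $(2b)$). Having already derived the norm statement $(3b)$ from $(1)$, the plan is to upgrade it to a.e. convergence of the nonnegative cube averages $\Phi_N(x):=\tfrac1{N^{k-1}}\sum_{\vec h\in[N]^{k-1}}\sup_t\bigl|\tfrac1N\sum_{n=1}^N e^{2\pi int}F_{\vec h}(T^nx)\bigr|^2$: once these are known to converge a.e. to \emph{some} limit, $(3b)$ together with bounded convergence forces that limit to have vanishing integral, hence to be $0$ a.e. The a.e. convergence of $\Phi_N$ for an arbitrary $f\in L^\infty(\mu)$ is itself a uniform-Wiener--Wintner statement for Host--Kra cubes; I would derive it by combining the pointwise theory of cube averages (cf. \cite{assani-cubes}) with the $\sup_t$-argument underlying the proof of Theorem \ref{t:UWW}, using the maximal inequality (Lemma \ref{l:maxIneq}) to control oscillations and pass from a convenient subsequence of $N$'s to the full sequence. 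The two places where I would be most careful are: the bookkeeping that makes the $\lfloor\sqrt N\rfloor$-range averages in $(4a)$/$(4b)$ match up with the $N$-range averages produced by Van der Corput (this is the estimate $WW_{1/2}(f,N)\le WW_1(f,\lfloor\sqrt N\rfloor)+\tfrac{1}{N^{2/3}}$ mentioned after Definition \ref{def:hof}); and the verification that Fej\'er-weighted multi-parameter cube averages of $\int\prod_{\eta}c^{|\eta|}f\circ T^{\vec h\cdot\eta}\,d\mu$ converge to the corresponding seminorm no matter how the parameters are sent to infinity.
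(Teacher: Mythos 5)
Your overall architecture --- reduce Theorem \ref{t:HUWW} to the cycle of equivalences in Theorem \ref{t:higher.order.UWW}, close the norm part of the cycle with a Van der Corput/seminorm computation, and close the return direction $(4b)\Rightarrow(1)$ with the $L^2$ version of Theorem \ref{BBgeq3} together with (\ref{univCF}) --- matches the paper's, and those portions (including the $\lfloor\sqrt N\rfloor$-versus-$N$ bookkeeping, which is handled by the block-splitting estimate you quote) are sound.

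The gap is in the step you yourself flag as hardest, namely $(1)\Rightarrow(2a)$ (hence $(2b)$). Your plan is: first obtain the norm statement $(3b)$, then prove that for an \emph{arbitrary} bounded $f$ the nonnegative averages $\Phi_N(x)$ converge a.e.\ to some limit, and finally use $(3b)$ plus bounded convergence to force that limit to vanish. The second ingredient --- a.e.\ convergence of $\Phi_N$ for arbitrary $f\in L^\infty(\mu)$ --- is never established, and it is at least as deep as the theorem being proved: it is itself a uniform Wiener--Wintner statement for Host--Kra cube averages, and nothing in the toolkit you cite delivers it. In particular, Lemma \ref{l:maxIneq} is a maximal inequality for ordinary Birkhoff averages and gives no control on the oscillation of $\sup_t$-weighted cube averages, and \cite{assani-cubes} does not produce a.e.\ convergence of such averages; what it produces is a pointwise $\limsup$ bound. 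That is exactly the device the paper uses and your proposal misses: one shows directly, by induction on $k$ via Van der Corput and the two-function estimate (\ref{two.functions}), that for a.e.\ $x$ one has $\limsup_N \Phi_N(x)\le C\,\VERT f\VERT_{k+1}^2$ (and the multi-function version with $\min_{\eta}\VERT g_\eta\VERT_{k+1}^2$ on the right). When $f\in L^2(\mathcal{Z}_k)^\perp$ the seminorm vanishes and $(2a)$ follows at once; a.e.\ convergence of $\Phi_N$ for general $f$ is never needed, and the paper never asserts it. A secondary point: even granting your convergence claim, the upgrade argument as written only treats the single-function averages, so it would yield $(2b)$ but not the tuple version $(2a)$, which your chain still requires. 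To repair the proposal, replace the ``converge to some limit, then identify the limit'' scheme by the inductive $\limsup$-by-seminorm bound.
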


\begin{remark}[Seminorm control of higher order WW average]\label{WW.average.bound}
We can witness from the proof of Theorem \ref{t:higher.order.UWW} that higher-order WW averages can be controlled by the seminorms: Given $k \in \N$, there exists $C > 0$ such that for every $f \in L^\infty(\mu)$, we have
$$\limsup_N \frac{1}{\lfloor \sqrt{N} \rfloor^{k-1}} \sum_{h \in [\lfloor \sqrt{N} \rfloor]^{k-1}}\left \Vert \sup_{t} \left| \frac{1}{N} \sum_{n=1}^N e^{2\pi i n t} \left[\prod_{\eta\in V_{k-1}} c^{|\eta|} f \circ T^{h \cdot \eta} \right] \circ T^{n} \right| \right\Vert_2^{2/3} \leq C \VERT f \VERT_{k+1}^{2/3} \, .$$ 
We note that the averages in the left-hand side of the estimate above appears in the right-hand side of the estimate in Theorem \ref{BBgeq3}; from this, we obtain a qualitative estimate (i.e. the estimate where we let $N \to \infty$) of the norm of the multiple ergodic averages in terms of the seminorms.
\end{remark}

\begin{proof}[Proof of Theorem \ref{t:higher.order.UWW}]
Without loss of generality, we may assume that $\Vert f \Vert_\infty \leq 1$ throughout, and the same for any functions $g_\eta$.

($1 \implies 2a$) We claim that for each $k\geq 2$, there exists a constant $C$ such that
$$\limsup_{N}\frac{1}{N^{k-1}}\sum_{h_1, \dots, h_{k-1}=1}^N \sup_{t} \left|\frac{1}{N} \sum_{n=1}^N e^{2\pi i n t} \left[\ \prod_{\eta\in V_{k-1}} c^{|\eta|} g_\eta \circ T^{h \cdot \eta}\right](T^n x) \right|^2 \leq C\min_{\eta\in V_{k-1}} \left\{\VERT g_\eta \VERT_{k+1}^2 \right\}$$
for $\Vert g_\eta \Vert_\infty \leq 1$ and almost all $x$. Although not explicitly stated, this claim is shown by Assani \cite[Lemma 3]{assani-cubes} , where the $k=2$ case is proved and the argument extends by induction. He also proves the claim that 
\begin{align}\label{two.functions}
\frac{1}{N} \sum_{n=1}^N \left|\frac{1}{N} \sum_{m=1}^N f_1(T^mx) \overline{f_2(T^{n+m} x)}\right|^2 \leq C \min_{j = 1, 2} \left\{ \sup_t \left| \frac{1}{jN} \sum_{n=1}^{jN} e^{2\pi i n t} f_j(T^n x) \right|^2 \right\}
\end{align}
for normalized functions and an absolute constant $C$, which will be needed in the induction proof (specifically, he remarks that such a bound exists for each $f_1$ and $f_2$ separately, so taking the larger constant gives a bound on the minimum) \cite[p. 248]{assani-cubes}. The above bound is also generalized in \cite[p.2, Lemma 1]{assani-cubes-2}.

Toward this end, we prove the $k+1$ case following \cite{assani-cubes}. As in that argument, the constant $C$ may change from line to line, but only ever picks up dependence on $k$. For $(H+1)^2 < N$, we notice by the Van der Corput estimate (Lemma \ref{vdc-lem}) and the Cauchy-Schwarz estimate that
\begin{align*}
&\frac{1}{N^{k}}\sum_{h_1, \dots, h_k=1}^N \sup_{t} \left|\frac{1}{N} \sum_{n=1}^N e^{2\pi i n t} \left[\ \prod_{\eta\in V_{k}} c^{|\eta|} g_\eta \circ T^{h \cdot \eta}\right](T^n x) \right|^2 \\
&\leq \frac{1}{N^{k}}\sum_{h_1, \dots, h_k=1}^N \left( \frac{C}{H} + \frac{C}{H} \sum_{m=1}^H \left|\frac{1}{N} \sum_{n=1}^N  \left[\ \prod_{\eta\in V_{k}} c^{|\eta|} [g_\eta \cdot \overline{g_\eta \circ T^m}] \circ T^{h \cdot \eta}\right](T^n x) \right| \right) \\
&\leq   \frac{C}{H} + \left(\frac{C}{H} \sum_{m=1}^H \frac{1}{N^{k}}\sum_{h_1, \dots, h_k=1}^N \left|\frac{1}{N} \sum_{n=1}^N  \left[\ \prod_{\eta\in V_{k}} c^{|\eta|}  [g_\eta \cdot \overline{g_\eta \circ T^m}] \circ T^{h \cdot \eta}\right](T^n x) \right|^2 \right)^{1/2} \, .
\end{align*}
For tuples $h, \eta$, let $h'$ and $\eta'$ be the tuples with the last component removed. If we break up the product along the $\eta_k$ components, we can apply (\ref{two.functions}) to the sum in $h_k$:
\begin{align*}
&\frac{C}{H} + \left(\frac{C}{H} \sum_{m=1}^H \frac{1}{N^{k}}\sum_{h_1, \dots, h_k=1}^N \left|\frac{1}{N} \sum_{n=1}^N  \left[\ \prod_{\eta\in V_{k}} c^{|\eta|}  [g_\eta \cdot \overline{g_\eta \circ T^m}] \circ T^{h \cdot \eta}\right](T^n x) \right|^2 \right)^{1/2}\\
&= \frac{C}{H} + \Bigg(\frac{C}{H} \sum_{m=1}^H \frac{1}{N^{k-1}}\sum_{h_1, \dots, h_{k-1=1}}^N \\ & \frac{1}{N} \sum_{h_k = 1}^N \Bigg|\frac{1}{N} \sum_{n=1}^N  \left[\ \prod_{\overset{\eta\in V_{k}} {\eta_k = 0}} c^{|\eta'|}  [g_\eta \cdot \overline{g_\eta \circ T^m}] \circ T^{h' \cdot \eta'}\right](T^n x) \overline{\left[\ \prod_{\overset{\eta\in V_{k}} {\eta_k = 1}} c^{|\eta'|}  [g_\eta \cdot \overline{g_\eta \circ T^m}] \circ T^{h' \cdot \eta'}\right]}(T^{n+h_k} x) \Bigg|^2 \Bigg)^{1/2} \\
&\leq \frac{C}{H} + \Bigg(\frac{C}{H} \sum_{m=1}^H \frac{1}{N^{k-1}}\sum_{h_1, \dots, h_{k-1=1}}^N   \\ &\min_{j=0, 1} \Bigg\{\sup_t\Bigg|\frac{1}{(j+1)N} \sum_{n=1}^{(j+1)N} e^{2\pi i n t} \left[\ \prod_{\overset{\eta\in V_{k}} {\eta_k = j}} c^{|\eta'|}  [g_\eta \cdot \overline{g_\eta \circ T^m}] \circ T^{h' \cdot \eta' }\right](T^n x) \Bigg|^2\Bigg\} \Bigg)^{1/2} \\
&\leq \min_{j=0, 1} \Bigg\{\frac{C}{H} + \Bigg(\frac{C}{H} \sum_{m=1}^H \frac{1}{((j+1)N)^{k-1}}\sum_{h_1, \dots, h_{k-1=1}}^{(j+1)N} \\ & \sup_t\Bigg|\frac{1}{(j+1)N} \sum_{n=1}^{(j+1)N}  e^{2\pi i n t}\left[\ \prod_{\overset{\eta\in V_{k}} {\eta_k = j}} c^{|\eta'|}  [g_\eta \cdot \overline{g_\eta \circ T^m}] \circ T^{h' \cdot \eta'}\right](T^n x) \Bigg|^2 \Bigg)^{1/2} \Bigg\} 
\end{align*}
where the sum in the last line is extended by adding positive terms, and the extra $j+1$ terms are absorbed in the constant. If we take the $\limsup$ in $N$ of both sides, the $\limsup$ on the right passes through the minimum and the $m$ sum. Moreover, the $j+1$ scaling is lost, as we can bound the $\limsup$ along the subsequence $(j+1)N$ by the $\limsup$ along the entire sequence:
\begin{align*}
& \limsup_N\frac{1}{N^{k}}\sum_{h_1, \dots, h_k=1}^N \sup_{t} \left|\frac{1}{N} \sum_{n=1}^N e^{2\pi i n t} \left[\ \prod_{\eta\in V_{k}} c^{|\eta|} g_\eta \circ T^{h \cdot \eta}\right](T^n x) \right|^2 \\
&\leq \min_{j=0, 1} \left\{\frac{C}{H} + \left(\frac{C}{H} \sum_{m=1}^H \limsup_N \frac{1}{N^{k-1}}\sum_{h_1, \dots, h_{k-1=1}}^N \sup_t\left|\frac{1}{N} \sum_{n=1}^N e^{2\pi i n t} \left[\ \prod_{\overset{\eta\in V_{k}} {\eta_k = j}} c^{|\eta'|}  [g_\eta \cdot \overline{g_\eta \circ T^m}] \circ T^{h' \cdot \eta'}\right](T^n x) \right|^2 \right)^{1/2} \right\} \, .
\end{align*}
Since we have lost dependence on $\eta_k$ in the product over $V_k$, we are really taking a product over $V_{k-1}$. By the inductive hypothesis, for every $m\in \N$ and $j = 0, 1$, there exists a set of full measure where on which 
\begin{align*}
&\limsup_N\frac{1}{N^{k-1}}\sum_{h_1, \dots, h_{k-1}=1}^N  \sup_t\left|\frac{1}{N} \sum_{n=1}^N e^{2\pi i n t} \left[\ \prod_{\overset{\eta\in V_{k}}{\eta_k = j}} c^{|\eta'|} (g_\eta \cdot \overline{g_\eta \circ T^m})\circ T^{h'\cdot \eta'}\right](T^n x)\right|^2 \\ 
&\leq C\min_{\overset{\eta\in V_k} {\eta_k = j}} \{\VERT g_\eta \cdot \overline{g_\eta\circ T^m } \VERT_{k+1}^2 \}
\end{align*}
holds. Since there are countably many pairs $\{(m, j) : m \in \N \text{ and } j = 0, 1 \}$, we can find a set of full measure where the above holds for all simultaneously. Hence, for any $x$ in that set we have
\begin{align*}
&\limsup_N\frac{1}{N^{k}}\sum_{h_1, \dots, h_k=1}^N \sup_{t} \left|\frac{1}{N} \sum_{n=1}^N e^{2\pi i n t} \left[\ \prod_{\eta\in V_{k}} c^{|\eta|} g_\eta \circ T^{h \cdot \eta}\right](T^n x) \right|^2 \\
&\leq \min_{j=0, 1} \left\{\frac{C}{H} + \left(\frac{C}{H} \sum_{m=1}^H \min_{\{\eta\in V_k: \eta_k = j\}} \{\VERT g_\eta \cdot \overline{g_\eta\circ T^m } \VERT_{k+1}^2 \} \right)^{1/2} \right\} \\
&\leq \min_{\eta \in V_k} \left\{\frac{C}{H} + \left(\frac{C}{H} \sum_{m=1}^H  \VERT g_\eta \cdot \overline{g_\eta\circ T^m } \VERT_{k+1}^2  \right)^{1/2} \right\} \\
&\leq \min_{\eta \in V_k} \left\{\frac{C}{H} + \left(\frac{C}{H} \sum_{m=1}^H  \VERT g_\eta \cdot \overline{g_\eta\circ T^m } \VERT_{k+1}^{2^{k+1}}  \right)^{1/2^{k+1}} \right\} 
\end{align*}

Taking the limit in $H$, it passes in the minimum on the right-hand side we get $\min_{\eta \in V_k} \{C\VERT g_\eta \VERT_{k+2}^{2^{k+2}})^{1/2^{k+1}} \} = C\min_{\eta\in V_k} \{\VERT g_\eta \VERT_{k+2}^2\}$, which establishes the claim.

($2a \implies 2b$, $3a \implies 3b$, $4a \implies 4b$) Let $g_\eta = f$ for all $\eta$.

($2a \implies 3a$, $2b \implies 3b$) Since everything is uniformly bounded, this follows from the dominated convergence theorem.

($3a \implies 4a$, $3b \implies 4b$) Notice for an arbitrary function $g$ (taken to have $\Vert g \Vert_\infty \leq 1$) we have
\begin{align*}
\left\Vert \sup_t \left| \frac{1}{N} \sum_{n=1}^N e^{2\pi i n t} g\circ T^n  \right| \right\Vert_2 &\leq \left\Vert \sup_t \left| \frac{1}{N} \sum_{n=1}^{\lfloor \sqrt{N} \rfloor^2} e^{2\pi i n t} g \circ T^n  \right| \right\Vert_2 + \left\Vert \sup_t \left| \frac{1}{N} \sum_{n=\lfloor \sqrt{N} \rfloor^2+1}^{N} e^{2\pi i n t} g \circ T^n  \right| \right\Vert_2 \\
&\leq \left\Vert \sup_t \left| \frac{1}{\lfloor \sqrt{N} \rfloor^2} \sum_{n=1}^{\lfloor \sqrt{N} \rfloor^2} e^{2\pi i n t} g \circ T^n  \right| \right\Vert_2 + \frac{2}{\sqrt{N}} \\
&= \left\Vert \sup_t \left| \frac{1}{\lfloor \sqrt{N} \rfloor^2} \sum_{j=0}^{\lfloor \sqrt{N} \rfloor-1}\sum_{n=1}^{\lfloor \sqrt{N} \rfloor} e^{2\pi i (n + j\lfloor \sqrt{N} \rfloor) t} g \circ T^{n + j\lfloor \sqrt{N} \rfloor}  \right| \right\Vert_2 + \frac{2}{\sqrt{N}} \\
&\leq \frac{1}{\lfloor \sqrt{N} \rfloor}\sum_{j=0}^{\lfloor \sqrt{N} \rfloor-1}\left\Vert \sup_t \left| \frac{1}{\lfloor \sqrt{N} \rfloor} \sum_{n=1}^{\lfloor \sqrt{N} \rfloor} e^{2\pi i (n + j\lfloor \sqrt{N} \rfloor) t} g \circ T^{n + j\lfloor \sqrt{N} \rfloor}  \right| \right\Vert_2 + \frac{2}{\sqrt{N}} \\
&= \frac{1}{\lfloor \sqrt{N} \rfloor}\sum_{j=0}^{\lfloor \sqrt{N} \rfloor-1}\left\Vert \sup_t \left| \frac{1}{\lfloor \sqrt{N} \rfloor} \sum_{n=1}^{\lfloor \sqrt{N} \rfloor} e^{2\pi i n t} g \circ T^{n}  \right| \right\Vert_2 + \frac{2}{\sqrt{N}} \\
&= \left\Vert \sup_t \left| \frac{1}{\lfloor \sqrt{N} \rfloor} \sum_{n=1}^{\lfloor \sqrt{N} \rfloor} e^{2\pi i n t} g \circ T^{n}  \right| \right\Vert_2 + \frac{2}{\sqrt{N}} \, ,
\end{align*}
where the $j$ dependence is lost in the absolute value and measure-preserving norm. Hence,
\begin{align*}
&\frac{1}{\lfloor \sqrt{N} \rfloor^{k-1}} \sum_{h \in [\lfloor \sqrt{N} \rfloor]^{k-1}}\left \Vert \sup_{t} \left| \frac{1}{N} \sum_{n=1}^N e^{2\pi i n t} \left[\prod_{\eta\in V_{k-1}} c^{|\eta|} g_\eta \circ T^{h \cdot \eta} \right] \circ T^{n} \right| \right\Vert_2^{2/3} \\
&\leq \frac{1}{\lfloor \sqrt{N} \rfloor^{k-1}} \sum_{h \in [\lfloor \sqrt{N} \rfloor]^{k-1}} \left( \left \Vert \sup_{t} \left| \frac{1}{\lfloor \sqrt{N} \rfloor} \sum_{n=1}^{\lfloor \sqrt{N} \rfloor} e^{2\pi i n t} \left[\prod_{\eta\in V_{k-1}} c^{|\eta|} g_\eta \circ T^{h \cdot \eta} \right] \circ T^{n} \right| \right\Vert_2 + \frac{2}{\sqrt{N}}\right)^{2/3} \\
&\leq \frac{1}{\lfloor \sqrt{N} \rfloor^{k-1}} \sum_{h \in [\lfloor \sqrt{N} \rfloor]^{k-1}} \left \Vert \sup_{t} \left| \frac{1}{\lfloor \sqrt{N} \rfloor} \sum_{n=1}^{\lfloor \sqrt{N} \rfloor} e^{2\pi i n t} \left[\prod_{\eta\in V_{k-1}} c^{|\eta|} g_\eta \circ T^{h \cdot \eta} \right] \circ T^{n} \right| \right\Vert_2 ^{2/3} + \left(\frac{2}{\sqrt{N}}\right)^{2/3} \\
&\leq \left(\frac{1}{\lfloor \sqrt{N} \rfloor^{k-1}} \sum_{h \in [\lfloor \sqrt{N} \rfloor]^{k-1}} \left \Vert \sup_{t} \left| \frac{1}{\lfloor \sqrt{N} \rfloor} \sum_{n=1}^{\lfloor \sqrt{N} \rfloor} e^{2\pi i n t} \left[\prod_{\eta\in V_{k-1}} c^{|\eta|} g_\eta \circ T^{h \cdot \eta} \right] \circ T^{n} \right| \right\Vert_2 ^{2}\right)^{1/3} + \left(\frac{2}{\sqrt{N}}\right)^{2/3} \, ,
\end{align*}
which convergence to zero by assumption. 

($4b \implies 1$) Let $f_1 ,\dots, f_{k+1}$ be uniformly bounded by 1, and have some $f_j = f$. By the Bourgain bound on multiple recurrence (Theorem \ref{BBgeq3}, with 2-norm, as previously remarked), we have
\begin{align*}
&\left \Vert \frac{1}{N} \sum_{n=1}^N \prod_{j=1}^{k+1} f_j \circ T^{jn} \right\Vert_2 \leq \\
 &C_{k+1}\left( \frac{1}{\lfloor \sqrt{N}\rfloor^{1/2^{k-1}}} + \left(\frac{1}{\lfloor\sqrt{N}\rfloor^{k-1}} \sum_{h \in [\lfloor\sqrt{N}\rfloor]^{k-1}}\left\Vert \sup_t \left| \frac{1}{N} \sum_{n=1}^N e^{2\pi i n t} \left[ \prod_{\eta \in V_{k-1}}  c^{|\eta|} f_1 \circ T^{h \cdot \eta}\right] \circ T^nm\right|\right\Vert_2^{2/3} \right)^{1/2^{k-1}} \right)
\end{align*}
for sufficiently large $N$. By assumption, the limit on the right-hand side is zero. Hence, 
$$\lim_N \left \Vert \frac{1}{N} \sum_{n=1}^N \prod_{j=1}^{k+1} f_j\circ T^{jn} \right\Vert_2 = 0$$
for all $f_1, \dots, f_{k+1}$ with some $f_j = f$. So $f\in L^2(\mathcal{Z}_k)^\perp$.
\end{proof}

\section{WW Stability under products of K systems}\label{s:K-stability}

In this section, we will show that a product of a K system (which we know is a $j$-th order WW system for any $j \in \N$) and a $k$-th order WW system is also a $k$-th order WW system. More specifically, we will prove the following:
\begin{theorem}\label{t:productWWS}
Let $(X, \mathcal{F}, \mu, T)$ be a K system and $(Y, \mathcal{G}, \nu, S)$ be a $k$-th order WW system of power type $\alpha$ in $L^p$ for $p\in [1, \infty]$. Then $(X \times Y, \mathcal{F} \otimes \mathcal{G}, \mu \times \nu, T \times S)$ is a $k$-th order WW system of power type $\min\{1/6, \alpha\}$ in $L^{\min\{2,p\}}(\mu \times \nu)$.
\end{theorem}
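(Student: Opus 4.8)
The plan is to produce an explicit $L^2$-dense family of $k$-th order WW functions inside $L^2(\mathcal{Z}_k(T\times S))^\perp$ and verify the decay of Definition \ref{def:hof} for each member; note first that $X\times Y$ is ergodic since $X$ is (weakly) mixing and $Y$ ergodic. For $k=1$ the claim is essentially \cite[Theorem 7]{assani} (cf.\ Remark \ref{r:product}): first-order WW functions of a fixed power type form a subspace, so one may take the span of $\{f\otimes g:f\in\mathcal{E}\}$ together with a lift of $Y$'s dense WW set and no multilinearity issue arises. Assume from now on $k\geq 2$.

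To identify the factor, recall that a K system is weakly mixing, so all its HKZ factors are trivial; thus $L^2_0(\mu)=L^2(\mathcal{Z}_j(T))^\perp$ for every $j$ and the Pinsker algebra $\mathcal{P}$ is trivial. A short induction on the recursion defining the Gowers-Host-Kra seminorms gives, on $X\times Y$, the bounds $\VERT f\otimes g\VERT_{k+1}\leq \norm{g}_\infty\VERT f\VERT_{k+1}$ and $\VERT \1_X\otimes g\VERT_{k+1}=\VERT g\VERT_{k+1}$, which together with the fact that any $k$-step nilfactor of $S$ lifts to a $k$-step nilfactor of $T\times S$ (hence sits in $\mathcal{Z}_k(T\times S)$) yield
\[
L^2(\mathcal{Z}_k(T\times S))=\C\,\1_X\otimes L^2(\mathcal{Z}_k(S)),\qquad L^2(\mathcal{Z}_k(T\times S))^\perp=\overline{L^2_0(\mu)\otimes L^2(\nu)}\ \oplus\ \bigl(\C\,\1_X\otimes L^2(\mathcal{Z}_k(S))^\perp\bigr).
\]
(Alternatively, the first equality follows by pairing Proposition \ref{t:Kprod} with Theorem \ref{t:HUWW}, or from the structure theory of products with a weakly mixing factor.) Now let $\mathcal{E}\subset L^\infty(\mu)$ be the set (\ref{setE}), whose span is dense in $L^2_0(\mu)$ and consists of $k$-th order WW functions of power type $1/6$ in $L^2$ by Theorem \ref{t:Ksys}, and let $\mathcal{D}\subset L^2(\mathcal{Z}_k(S))^\perp$ be an $L^2$-dense set of $k$-th order WW functions of power type $\alpha$ in $L^p(\nu)$ supplied by the hypothesis on $Y$. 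The candidate family is
\[
\mathcal{W}:=\bigl\{\,w_1+\1_X\otimes w_2\ :\ w_1\in\operatorname{span}\{f\otimes g:f\in\mathcal{E},\ g\in L^\infty(\nu)\},\ w_2\in\mathcal{D}\,\bigr\},
\]
which is $L^2$-dense in $L^2(\mathcal{Z}_k(T\times S))^\perp$ because $\operatorname{span}\{f\otimes g\}$ is dense in $L^2_0(\mu)\otimes L^2(\nu)$ and $\1_X\otimes\mathcal{D}$ is dense in $\C\,\1_X\otimes L^2(\mathcal{Z}_k(S))^\perp$. Everything then reduces to showing that each $w\in\mathcal{W}$ is a $k$-th order WW function of power type $\min\{1/6,\alpha\}$ in $L^{\min\{2,p\}}$.

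The hard part, and where the multilinearity concerns bite, is this last verification. Fix $w=w_1+\1_X\otimes w_2$ with $\norm{w}_\infty\leq1$. Expanding $\prod_{\eta\in V_{k-1}}c^{|\eta|}\,w\circ(T\times S)^{h\cdot\eta}$ over the two summands of $w$ and, inside $w_1$, over its finitely many simple-tensor constituents, and using subadditivity of $\sup_t$, $\norm{\cdot}$ and $\xi\mapsto\xi^{2/3}$ (as in the proof of Lemma \ref{l:mlc}), the $k$-th order WW average of $w$ is dominated by a finite sum of WW averages of cube-products $\prod_{\eta\in V_{k-1}}c^{|\eta|}\phi_\eta\circ(T\times S)^{h\cdot\eta}$ in which each $\phi_\eta$ is either a simple tensor $f_\eta\otimes g_\eta$ (with $f_\eta\in\mathcal{E}$) or the single function $\1_X\otimes w_2$; write $S_1=\{\eta:\phi_\eta=f_\eta\otimes g_\eta\}$. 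If $S_1=\varnothing$, the cube-product equals $\1_X\otimes\bigl(\prod_\eta c^{|\eta|}w_2\circ S^{h\cdot\eta}\bigr)$, so---the $X$-factor contributing $\mu(X)=1$---its WW average is exactly the \emph{diagonal} $k$-th order WW average of $w_2$ in $Y$, hence $\leq C_{w_2}N^{-\alpha}$. If $S_1\neq\varnothing$, the $X$-component is $\prod_{\eta\in S_1}c^{|\eta|}f_\eta\circ T^{h\cdot\eta}$, a product of $\#S_1\geq1$ translates of martingale differences from $\mathcal{E}$, and the $Y$-component is, for each fixed $y$, a bounded sequence in $n$; the computation behind Proposition \ref{t:Kprod}---which tolerates an arbitrary bounded weight $a_n$ and, via the conditioning-on-$T^{-q}\A$ argument of Theorem \ref{t:Ksys}, applies to a product of any positive number of martingale differences with separated measurability degrees (i.e.\ for all sufficiently large $h$)---then bounds the $L^2(\mu)$-integral by $CN^{-1/2}$ uniformly in $y$ and in large $h$; integrating over $Y$, taking the $1/3$ power, and averaging over $h$ (the $O(\lfloor\sqrt N\rfloor^{k-2})$ small values of $h$ being absorbed) gives $CN^{-1/6}$ in $L^2(\mu\times\nu)$. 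Summing the finitely many contributions, and passing between $L^2$ and $L^{\min\{2,p\}}$ via $\norm{\cdot}_q\leq\norm{\cdot}_r$ ($q\leq r$) on the probability space, produces the bound $C_w N^{-\min\{1/6,\alpha\}}$ for the $k$-th order WW average of $w$, completing the proof. The device that makes this work---and that avoids having to establish mixed multilinear estimates on $Y$ itself---is keeping the ``$Y$-only'' part of every element of $\mathcal{W}$ equal to a \emph{single} WW function $\1_X\otimes w_2$: then the only cube-products with no $X$-factor are diagonal, handled by $Y$'s WW property alone, while all others retain at least one K-system martingale difference and succumb to the weighted Kolmogorov estimate.
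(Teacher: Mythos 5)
Your proposal is correct and its core is the same as the paper's: you build the same dense family (a span of simple tensors $f\otimes g$ with $f$ a martingale difference from $\mathcal{E}$, plus a single lifted WW function from $Y$), expand the cube product by multilinearity, handle the all-$Y$ case by the diagonal WW property of $w_2$, and handle every other case by a weighted estimate on products of translated martingale differences with separated measurability degrees (this is exactly the paper's Lemma \ref{l:an}, with the bad set of $h$ of size $O(\lfloor\sqrt N\rfloor^{k-2})$ absorbed in the average). The observation you flag as the key device---keeping the $Y$-only part equal to a \emph{single} WW function so that the only cube products without an $X$-factor are diagonal---is precisely the structure of the paper's Theorem \ref{th:prodfunc}.

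The one place you genuinely diverge is the identification $\mathcal{Z}_k^{X\times Y}=X\otimes\mathcal{Z}_k^Y$. You get it up front from the seminorm inequalities $\VERT f\otimes g\VERT_{k+1}\leq\norm{g}_\infty\VERT f\VERT_{k+1}$ and $\VERT\1_X\otimes g\VERT_{k+1}=\VERT g\VERT_{k+1}$ together with the lifting of nilfactors; the paper instead bootstraps: it first proves the WW property for the candidate family relative to $X\otimes\mathcal{Z}_k^Y$, deduces via Theorem \ref{t:wwchar} that this algebra is a (pointwise, hence norm) characteristic factor, and then runs a chain of equivalences through the universality property (\ref{univCF}) to conclude the two factors coincide. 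Your route is shorter and more self-contained on the structural side but leans on the seminorm calculus for product systems; the paper's route stays entirely inside the WW machinery it has built and so makes the characteristic-factor criterion do double duty. Either way the verification step is identical, so the proof goes through.
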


\begin{remark}
    For instance, if $(Y, \mathcal{G}, \nu, S)$ from the theorem is a $k$-dimensional classical skew product, which we have shown that is a $k-1$-th order WW system from Theorem \ref{t:csp}, then the product system in the theorem is a $k-1$-th order WW system that is neither weakly mixing (since the classical skew product is not weakly mixing) nor distal (since the product system has a positive entropy). Hence, this is an example of a WW system where the pointwise convergence of multiple recurrence has not been studied previously.
\end{remark}

In order to prove Theorem \ref{t:productWWS}, we first need the following lemma:
\begin{lemma}\label{l:an}
Let $(X, \mathcal{F}, \mu, T)$ is a measure-preserving system, and let $\mathcal{A}$ be a subsigma-algebra of $\mathcal{F}$ that satisfies (\ref{Pinsker}). Let $J\in \N$ and for $j = 1, \dots, J$ let $k_j, l_j \in \Z$ and $A_j\in T^{-l_j}\A$, and let $f_{k_j}^{A_j} = \1_{A_j} - \E(\1_{A_j}|T^{-k_j}\mathcal{A})$. Then there exists $C>0$ and $L\in \N$ such that for all bounded complex sequences $(a_n)_n$ and for all $p_1, \dots, p_J \in \N$ with $|p_\alpha - p_\beta| > L$ for all $\alpha \neq \beta$, the following bound holds for all $N \in \N$:
\begin{align}
\left\Vert \sup_t \left| \frac{1}{N}\sum_{n=1}^N e^{2\pi i n t} a_n \left[ \prod_{j=1}^J f_{k_j}^{A_j} \circ T^{p_j} \right] \circ T^n \right| \right\Vert_2^2 \leq \frac{C\Vert a_n \Vert_{\ell^\infty}^2}{N^{1/2}} \, .
\end{align}
\end{lemma}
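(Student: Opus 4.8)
The plan is to run the Van der Corput plus conditional-expectation argument used in the proofs of Theorem~\ref{t:Ksys} and Proposition~\ref{t:Kprod}, streamlined so as to peel off one factor of the product at a time (rather than two, which is what forced the four-term cancellation in Theorem~\ref{t:Ksys}). Write $F := \prod_{j=1}^J f_{k_j}^{A_j}\circ T^{p_j}$, note $\norm{F}_\infty \le 2^J$, and assume $l_j < k_j$ for every $j$, since otherwise the product is $0$. Set $m_0 := \max_j (k_j - l_j)$ and let $L$ be any integer exceeding $\max_i k_i - \min_j l_j$; both quantities depend only on the $k_j$ and $l_j$, as required. As in Proposition~\ref{t:Kprod}, the weights $a_n$ will be carried along untouched, used only via the crude bound $|a_n| \le \norm{a_n}_{\ell^\infty}$, and they never interact with the $\A$-measurability bookkeeping.

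First I would apply the summation form of Van der Corput's inequality~(\ref{vdc-2}) pointwise to $u_n = a_n F(T^n x)$. Since $F$ is real-valued this gives
\[
\sup_t \Bigl| \sum_{n=1}^N e^{2\pi i n t} a_n F(T^n x) \Bigr|^2 \le 2\cdot 4^J \norm{a_n}_{\ell^\infty}^2 N + 4 \sum_{m=1}^{N-1} \Bigl| \sum_{n=1}^{N-m} \overline{a_{n+m}}\, a_n\, F(T^n x) F(T^{n+m} x) \Bigr|.
\]
For the finitely many $m \le m_0$ I would bound the inner sum trivially by $4^J \norm{a_n}_{\ell^\infty}^2 N$, contributing an $O(N)$ term (implied constant depending on $J$). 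For $m > m_0$ I would integrate in $x$, bound the $L^1(\mu)$-norm of the inner sum by its $L^2(\mu)$-norm, and expand the square, obtaining a double sum over $n, n'$ of the quantities $\int F(T^n x) F(T^{n+m} x) F(T^{n'} x) F(T^{n'+m} x)\, d\mu(x)$ weighted by products of four $a$'s of modulus $\le \norm{a_n}_{\ell^\infty}$.

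The heart of the matter — and the step I expect to require the most care — is to show this integral vanishes as soon as $m > m_0$ and $|n - n'| > m_0$. Its integrand is a product of $4J$ functions of the form $f_{k_j}^{A_j} \circ T^q$, each of which is $T^{-(q+l_j)}\A$-measurable and has vanishing conditional expectation onto $T^{-(q+k_j)}\A$. Let $j^\ast$ attain $\min_j p_j$ and say $n \le n'$; I would single out the one factor $f_{k_{j^\ast}}^{A_{j^\ast}} \circ T^{p_{j^\ast}+n}$ sitting at the globally smallest shift and condition everything on $\mathcal{C} := T^{-(p_{j^\ast}+n+k_{j^\ast})}\A$. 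The $L$-separation of the $p_j$ (so that $p_j - p_{j^\ast} > L \ge k_{j^\ast} - l_j$ for $j \ne j^\ast$), together with $m > m_0$ and $n' - n > m_0$, is exactly what forces every one of the remaining $4J - 1$ factors to be $\mathcal{C}$-measurable, while $\E\bigl(f_{k_{j^\ast}}^{A_{j^\ast}} \circ T^{p_{j^\ast}+n} \mid \mathcal{C}\bigr) = \E\bigl(f_{k_{j^\ast}}^{A_{j^\ast}} \mid T^{-k_{j^\ast}}\A\bigr) \circ T^{p_{j^\ast}+n} = 0$ by $T$-equivariance of conditional expectation and the definition of the functions in~(\ref{setE}); the integral then vanishes by the pull-out property. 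The only genuine work here is the combinatorial bookkeeping confirming that $L$ and $m_0$ can be taken independent of how the $p_j$ are placed, which is why the peeling is done from the extreme shift.

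Granting the vanishing, in the double sum only the $O(m_0)$ values of $n'$ with $|n-n'| \le m_0$ survive, so $\int \bigl| \sum_{n=1}^{N-m} \overline{a_{n+m}} a_n F(T^n x) F(T^{n+m} x) \bigr|^2 d\mu \le (2m_0+1) 16^J \norm{a_n}_{\ell^\infty}^4 N$, whence the $L^1$-norm of the inner sum is $\le C_1 \norm{a_n}_{\ell^\infty}^2 N^{1/2}$ uniformly in $m > m_0$. Summing over the $\le N$ values of $m$ and folding in the $O(N)$ contributions from the Van der Corput diagonal and from $m \le m_0$, I get $\int \sup_t | \sum_{n=1}^N e^{2\pi i n t} a_n F(T^n x) |^2 d\mu \le C \norm{a_n}_{\ell^\infty}^2 N^{3/2}$ with $C$ depending only on $J$ and the $k_j, l_j$; dividing by $N^2$ yields the stated bound. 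I would remark that the general cases of Theorem~\ref{t:Ksys} (Appendix~\ref{a:PA}) and Proposition~\ref{t:Kprod} are essentially the $a_n \equiv 1$, cube-structured instances of this same argument, so the write-up can borrow heavily from there.
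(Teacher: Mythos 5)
Your proposal is correct, and it follows the same overall architecture as the paper's proof (Van der Corput in summation form, bounding the $L^1$-norm of the $m$-th correlation sum by its $L^2$-norm, expanding the square, and killing the off-diagonal terms by conditioning on a suitable level of the filtration $T^{-q}\A$). Where you genuinely diverge is in the cancellation step. The paper normalizes $q_j = p_j - p_\alpha$, conditions on $T^{-(n+m+l_\alpha)}\A$, pulls out the $3J$ factors at shifts $n+m$, $n'$, $n'+m$, and is then left to show that the entire block $\prod_j f^{A_j}_{k_j}\circ T^{n+q_j}$ has vanishing conditional expectation; this forces the expansion into $2^{J-1}$ terms $G_\theta$ and a pairwise cancellation between the $\1_{A_\alpha}$ and $\E(\1_{A_\alpha}\mid T^{-k_\alpha}\A)$ pieces. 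You instead condition directly on the larger algebra $\mathcal{C}=T^{-(p_{j^\ast}+n+k_{j^\ast})}\A$: the separation $p_j-p_{j^\ast}>L\ge k_{j^\ast}-l_j$ makes the companions of $f^{A_{j^\ast}}_{k_{j^\ast}}\circ T^{p_{j^\ast}+n}$ at shift $n$ already $\mathcal{C}$-measurable, so all $4J-1$ remaining factors pull out at once and the single peeled factor dies because $\E(f^{A_{j^\ast}}_{k_{j^\ast}}\mid T^{-k_{j^\ast}}\A)=0$. I checked the measurability inequalities ($m>m_0$ and $n'-n>m_0$ handle the $j=j^\ast$ companions; $L>\max_i k_i-\min_j l_j$ handles $j\ne j^\ast$ at shift $n$), and they all close, so your version eliminates the $G_\theta$ bookkeeping at the cost of a slightly cruder survivor count ($|n-n'|\le m_0$ instead of $n'-n\le k_\alpha-l_\alpha$), which is immaterial for the final $N^{-1/2}$ bound. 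The handling of the weights $(a_n)$ by pulling $\lVert a_n\rVert_{\ell^\infty}$ out of every estimate matches the paper's treatment in Proposition~\ref{t:Kprod}.
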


\begin{remark}
The condition that the $p_j$'s be sufficiently far apart can be weakened to $|p_\alpha - \min_j p_j| > L$ for all $\alpha$ that do not minimize $p$.
\end{remark}

\begin{proof}

Let $(a_n)_n$ and $p_1, \dots, p_J$ be as in the statement. Assume that $\Vert a_n \Vert_{\ell^\infty} = 1$ and recall that we may assume $l_j < k_j$, or else the entire term is zero. Pick
$$L = \max_{a, b \in [J], a \neq b} \{|l_a - l_b|, |k_a - l_b|, |k_a - k_b|\}\, .$$
Denote $\alpha$ as the index such that $\min_j p_j = p_\alpha $. For each $j$, define $q_j = p_j - p_\alpha$. Notice that for all $j\neq \alpha$, we have $q_j > L$. By the $T$-invariance of $\mu$, we note that
$$\left\Vert \sup_t \left| \frac{1}{N}\sum_{n=1}^N e^{2\pi i n t} a_n \left[ \prod_{j=1}^J f_{k_j}^{A_j} \circ T^{p_j} \right] \circ T^n \right| \right\Vert_2^2 = \left\Vert \sup_t \left| \frac{1}{N}\sum_{n=1}^N e^{2\pi i n t} a_n \left[ \prod_{j=1}^J f_{k_j}^{A_j} \circ T^{q_j} \right] \circ T^n \right| \right\Vert_2^2 \, ,$$
so it suffices to consider the latter term, which has $q_{\alpha} = 0$.

Define $F = \prod_{j=1}^J f_{k_j}^{A_j} \circ T^{q_j}$, and note that $\Vert F \Vert_\infty \leq 2^{J}$. We compute pointwise by the Van der Corput inequality (\ref{vdc-2}) that for almost all $x$, we have
\begin{align*}
\sup_{t} \left|\sum_{n=1}^N e^{2\pi i n t} a_n F(T^{n} x) \right|^2 &\leq 2^{2J+1}N + 4\sum_{m = 1}^{N-1}\left| \sum_{n = 1}^{N-m} a_n F(T^{n} x) \overline{a_{n+m}}F(T^{n + m} x)\right| \\
&\leq 2^{2J+1}N + 2^{2J+2}(k_\alpha - l_\alpha)N + 4\sum_{m = k_\alpha - l_\alpha+1}^{N-1}\left| \sum_{n = 1}^{N-m} a_n \overline{a_{n+m}}F(T^{n} x)F(T^{n + m} x)\right| \, .
\end{align*}
By integrating both sides and bounding the $L^1(\mu)$ norm by the $L^2(\mu)$ norm, we have
.
\begin{equation} \label{lemma-k-est}
\begin{split}
&\int \sup_{t} \left| \sum_{n=1}^N e^{2\pi i n t} a_n F(T^{n} x) \right|^2 \, d\mu(x) \\ &\leq 2^{2J+1}N + 2^{2J+2}(k_\alpha - l_\alpha)N + 4\sum_{m = k_\alpha - l_\alpha+1}^{N-1} \left(\int\left| \sum_{n = 1}^{N-m} a_n \overline{a_{n+m}}F(T^{n} x)F(T^{n + m} x)\right|^2 \, d\mu(x)\right)^{1/2} \, .
\end{split}
\end{equation}

Consider the inner $n$ sum, which is squared. If we factor out this product, the diagonal terms can be bounded away by $2^{4J}N$, and we are left with off-diagonal terms
\begin{align*}
&\sum_{n \neq n'}^{N-m} \int  a_{n} \overline{a_{n+m}} \overline{a_{n'}} a_{n' + m}F(T^{n} x)F(T^{n + m} x)F(T^{n'} x)F(T^{n' + m} x) \, d\mu(x) \\
&= 2\sum_{n < n'}^{N-m} \real(a_n \overline{a_{n+m}} \overline{a_{n'}} a_{n'+m})\int \prod_{j=1}^J f^{A_j}_{k_j} \circ T^{n + q_{j}} \cdot f^{A_j}_{k_j} \circ T^{n + m + q_j} \cdot f^{A_j}_{k_j} \circ T^{n' + q_j} \cdot f^{A_j}_{k_j} \circ T^{n' +m + q_j}\, d\mu \, .
\end{align*}
Recall that $m > k_\alpha - l_\alpha$. We claim that if $n' > n + k_\alpha - l_\alpha$, then 
\begin{align} \label{lemma-int}
\int \prod_{j=1}^J f^{A_j}_{k_j} \circ T^{n + q_{j}} \cdot f^{A_j}_{k_j} \circ T^{n + m + q_j} \cdot f^{A_j}_{k_j} \circ T^{n' + q_j} \cdot f^{A_j}_{k_j} \circ T^{n' +m + q_j}\, d\mu = 0 \, .
\end{align}

To this end, we consider the case $n' > n + m$. Notice that in the product, the above four functions in the product are measurable to the degrees $T^{-b}\A$ for the following values of $b$:
$$n + q_j + l_j \quad n + m + q_j + l_j \quad n' + q_j + l_j \quad n' + m + q_j + l_j \, .$$
Hence, by the condition $n' > n+m$, and the fact that each $q_j \geq l_\alpha - l_j$, we see that
$$\prod_{j=1}^J f^{A_j}_{k_j} \circ T^{n + m + q_j} \cdot f^{A_j}_{k_j} \circ T^{n' + q_j} \cdot f^{A_j}_{k_j} \circ T^{n'+m + q_j} \quad \text{is } T^{-(n + m + l_\alpha)}\A \text{ measurable.}$$
So for the integral (\ref{lemma-int}), we would get the same value if we conditioned the integrand on $T^{-(n + m + l_\alpha)}\A$, and the above product of functions can factor out of this conditional expectation. So the claim that (\ref{lemma-int}) is zero reduces to showing 
$$\E\left( \prod_{j=1}^J f^{A_j}_{k_j} \circ T^{n+ q_j} \Bigg| T^{-(n + m + l_\alpha)}\A\right) = 0 \, .$$

Recall that $f_{k_j}^{A_j} = \1_{A_j} - \E(\1_{A_j} |T^{-k_j})$. As we take the product of $J$ such functions, we wish to expand all such $f$'s except for the $j = \alpha$ term. For $\theta\in V_{J}$, let
$$G_\theta = \prod_{\overset{j\in [J]-\alpha}{ \theta_j = 0}} \1_{A_{j}}\circ T^{q_j} \prod_{\overset{j\in [J]-\alpha} {\theta_j = 1}} (-1)\E(\1_{A_j} | T^{-k_j}\A) \circ T^{q_j} \, . $$
Notice that each of the functions in the left product are $T^{-(q_j+ l_j)}\A$ measurable and each of the functions in the right product are $T^{-(q_j+ k_j)}\A$ measurable.

Expanding the product we wish to condition, we see
$$\prod_{j=1}^J f^{A_j}_{k_j} \circ T^{n+ q_j} = \sum_{\overset{\theta\in V_{J}}{\theta_\alpha = 0}} \1_{A_{\alpha}} \circ T^n \cdot G_\theta \circ T^n - \E(\1_{A_{\alpha}} | T^{-k_\alpha}) \circ T^n \cdot G_\theta \circ T^n$$
where restricting to $\theta_\alpha = 0$ prevents double-counting. We claim for each $\theta$ that the above two summands cancel under the $T^{-(n + m + l_\alpha)}\A$ conditional. Looking at the left summand, we have
$$\E(\1_{A_\alpha} \circ T^n \cdot G_\theta \circ T^n | T^{-(n + m + l_\alpha)}\A) = \E(\1_{A_\alpha} \cdot G_\theta | T^{-(m + l_\alpha)}\A) \circ T^n \, ,$$
On the right, the term $G_\theta$ may pass into the $T^{-k_\alpha}$ conditional, as we have $q_j > k_\alpha - l_j$ and $q_j > k_\alpha - k_j$ for all $j \neq \alpha$:
\begin{align*}
\E(\E(\1_{A_\alpha}|T^{-k_\alpha}\A) \circ T^n \cdot G_\theta \circ T^n | T^{-(n + m + l_\alpha)}\A) &= \E(\E(\1_{A_\alpha}|T^{-k_\alpha}\A) \cdot G_\theta | T^{-( m + l_\alpha)}\A) \circ T^n \\
&= \E(\E(\1_{A_\alpha} \cdot G_\theta |T^{-k_\alpha}\A)  | T^{-( m + l_\alpha)}\A) \circ T^n \\
&= \E(\1_{A_\alpha} \cdot G_\theta  | T^{-( m + l_\alpha)}\A) \circ T^n
\end{align*}
as $m +l_\alpha > k_\alpha$. So these terms cancel, and the integral (\ref{lemma-int}) does indeed vanish under the conditioning.

For the case $n + m \geq n' > n + k_\alpha - l_\alpha$, we similarly observe that
$$\prod_{j=1}^J f^{A_j}_{k_j} \circ T^{n + m + q_j} \cdot f^{A_j}_{k_j} \circ T^{n' + q_j} \cdot f^{A_j}_{k_j} \circ T^{n'+m + q_j} \quad \text{is } T^{-(n' + l_\alpha)}\A \text{ measurable.}$$
and by the same argument, conditioning (\ref{lemma-int}) under $T^{-(n' + l_\alpha)}\A$ shows that it is zero.

Returning to our bound (\ref{lemma-k-est}), we have observed that the off-diagonal terms vanish when $n'$ is larger than $n$ by $k_\alpha - l_\alpha$. Hence, for each $n = 1$ to $N - m$, at most $k_\alpha - l_\alpha$ terms are nonzero, and the total number of nonzero terms in the double sum over $n$ and $n'$ can be bounded by $N(k_\alpha -l_\alpha)$. Plugging this into our estimate (\ref{lemma-k-est}), we see
\begin{align*}
&\int \sup_{t} \left| \sum_{n=1}^N e^{2\pi i n t} a_n F(T^{n} x) \right|^2 \, d\mu(x) \\
&\leq 2^{2J+1}N + 2^{2J+2}(k_\alpha - l_\alpha)N + 4\sum_{m = k_\alpha - l_\alpha+1}^{N-1} \left(2^{4J} N + 2^{4J+1}N(k_\alpha-l_\alpha) \right)^{1/2}\, .
\end{align*}
Hence, we have the bound
\begin{align*}
\int \sup_{t} \left| \frac{1}{N} \sum_{n=1}^N e^{2\pi i n t} a_n \left[ \prod_{j=1}^J f^{A_j}_{k_\eta} \circ T^{q_j}\right](T^nx) \right|^2 \, d\mu(x) &\leq \frac{C}{N^{1/2}} \, .
\end{align*}
Accounting for $\Vert a_n \Vert_{\ell^\infty}$, we get the desired bound.
\end{proof}

\begin{theorem} \label{th:prodfunc}
Let $(X, \mathcal{F}, \mu, T)$ be a K system and $(Y, \mathcal{G}, \nu, S)$ be a $k$-th order WW dynamical system of power type $\alpha$ in $L^p$ for $p\in [1, \infty]$. Let $F\in L^\infty(\mu \times \nu)$ be of the form $$F(x, y) = g(y) + \sum_{i=1}^I \alpha_i f_{k_i}^{A_i}(x) g_i(y) \, ,$$ where $g\in L^\infty(\nu)$ is a $k$-th order WW function in $Y$ of power type $\alpha$ in $L^p$, $g_i(y)\in L^\infty(\nu)$, and $f_{k_j}^{A_j} \in L^\infty(\mu)$ are functions in $\mathcal{E}$ from (\ref{setE}). Then $F$ is a $k$-th order WW function of power type $\min\{\alpha, 1/6\}$ in $L^{\min\{p, 2\}}$.
\end{theorem}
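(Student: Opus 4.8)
The plan is to mimic the reduction used in the proof of the multilinearity concerns lemma (Lemma~\ref{l:mlc}). Writing $F = g + \sum_{i=1}^I \alpha_i (f_{k_i}^{A_i}\otimes g_i)$ and expanding the cube product $\prod_{\eta\in V_{k-1}} c^{|\eta|} F\circ (T\times S)^{h\cdot\eta}$ turns it into a sum of $(I+1)^{2^{k-1}}$ terms; by the subadditivity of $\sup_t$, of $\|\cdot\|_{L^{\min\{p,2\}}}$, and of $\xi\mapsto\xi^{2/3}$, the $k$-th order WW average of $F$ for the product system is bounded by the sum of the corresponding averages of the individual terms. Each term is indexed by an assignment sending every $\eta\in V_{k-1}$ either to ``$g$'' or to some index $i(\eta)\in[I]$; let $S_1\subseteq V_{k-1}$ be the set of $\eta$ receiving an index. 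Since complex conjugation and composition distribute over tensor products and the $f_{k_i}^{A_i}$ are real-valued, such a term composed with $(T\times S)^n$ and weighted by $e^{2\pi int}$ factors as a fixed constant times an $X$-part $\bigl[\prod_{\eta\in S_1} f_{k_{i(\eta)}}^{A_{i(\eta)}}\circ T^{h\cdot\eta}\bigr]\circ T^n$ and a uniformly bounded $Y$-part $\bigl[\prod_{\eta\notin S_1} c^{|\eta|} g\circ S^{h\cdot\eta}\cdot\prod_{\eta\in S_1} c^{|\eta|} g_{i(\eta)}\circ S^{h\cdot\eta}\bigr]\circ S^n$.

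For the single term with $S_1=\emptyset$ there is no $x$-dependence, so its $L^{\min\{p,2\}}(\mu\times\nu)$ norm equals its $L^{\min\{p,2\}}(\nu)$ norm, which is at most its $L^p(\nu)$ norm since $\nu$ is a probability measure and $\min\{p,2\}\le p$; the corresponding $h$-average is then exactly the $k$-th order WW average of $g$ on $Y$ and is $\le C_g N^{-\alpha}$ by hypothesis. For each term with $S_1\ne\emptyset$, first fix a constant $L$ large enough that Lemma~\ref{l:an} applies to every sub-multiset of $\{f_{k_1}^{A_1},\dots,f_{k_I}^{A_I}\}$ of size at most $2^{k-1}$ (there are only finitely many such applications, so a common $(C,L)$ exists), and split the sum over $h\in[\lfloor\sqrt N\rfloor]^{k-1}$ according to whether $h$ is \emph{generic}, meaning $|h\cdot(\eta-\eta')|>L$ for all $\eta\ne\eta'$ in $V_{k-1}$, or not. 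For generic $h$ the translations $\{h\cdot\eta:\eta\in S_1\}$ are distinct and pairwise $L$-separated, so for each fixed $y$ Lemma~\ref{l:an} (applied with $(a_n)_n$ the bounded $Y$-part sequence evaluated at $y$) bounds the $L^2(\mu)$ norm of the $x$-function by $\sqrt{C}\,B\,N^{-1/4}$, where $B$ bounds all the $g$ and $g_i$ factors; integrating in $y$ promotes this to an $L^2(\mu\times\nu)$ bound on the full term, and raising to the $2/3$ power (using $\|\cdot\|_{L^{\min\{p,2\}}}\le\|\cdot\|_{L^2}$ on the probability space) gives a per-$h$ bound $O(N^{-1/6})$, hence the same bound after averaging over generic $h$. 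For non-generic $h$ we use only the trivial bound $\|F\|_\infty^{2^{k-1}}$; the non-generic $h$ in $[\lfloor\sqrt N\rfloor]^{k-1}$ number $O(\lfloor\sqrt N\rfloor^{k-2})$, because for each of the finitely many nonzero $v=\eta-\eta'\in\{-1,0,1\}^{k-1}$, fixing all but one coordinate of $h$ leaves at most $2L+1$ bad choices of the remaining coordinate, so their contribution to the normalized average is $O(N^{-1/2})$. Summing the finitely many terms and absorbing the $N^{-1/2}$ part (using $\min\{\alpha,1/6\}\le 1/6<1/2$) gives the desired bound $C_F N^{-\min\{\alpha,1/6\}}$.

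The main obstacle is the bookkeeping around this exceptional set of $h$ and around degenerate cube-translations: Lemma~\ref{l:an} wants the translation amounts to be positive, distinct, and $L$-separated, but the values $h\cdot\eta$ can coincide, be close, or vanish (the latter when $\vec 0\in S_1$). The two points that make this harmless are that the bad $h$ form a union of codimension-one slabs and so are negligible after normalization, and that for generic $h$ one may always take $\min_{\eta\in S_1} h\cdot\eta$ as the reference translation (which is exactly the shift performed inside the proof of Lemma~\ref{l:an}), so that the vanishing of $h\cdot\vec 0$ is irrelevant. Checking that a single $(C,L)$ serves all the finitely many invocations of Lemma~\ref{l:an} is routine but should be stated explicitly, since it relies on $F$ being a finite sum.
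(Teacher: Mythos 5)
Your proposal is correct and follows essentially the same route as the paper's proof: reduce via the multilinearity expansion to terms indexed by which cube vertices carry an $f_{k_i}^{A_i}\otimes g_i$ factor, handle the all-$g$ term by the WW hypothesis on $Y$, and for the remaining terms apply Lemma~\ref{l:an} with $(a_n)$ the frozen $Y$-part for each $h$ outside an exceptional set of $h$'s of density $O(N^{-1/2})$, which is exactly the paper's split into $H$ and $H^c$. The only (immaterial) differences are that you expand $F$ directly rather than invoking Lemma~\ref{l:mlc} on the generating set, and that you raise to the $2/3$ power per~$h$ rather than applying H\"older to the averaged squares at the end.
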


\begin{proof}
By the multilinearity concerns, we need to establish polynomial decay on WW averages for all possible diagonal and off-diagonal terms. So let $(e_\eta)_{\eta \in V_{k-1}}$ be a sequence of functions in $L^\infty(\mu \otimes \nu)$ for which $$e_\eta \in \{g\} \cup \{f_{k}^{A} \otimes \gamma: k, l\in \Z, A\in T^{-l}\mathcal{A}, \gamma \in L^\infty(\nu)\}$$
which absorbs the constants $\alpha$ into $g$ (recall that the subsigma-algebra $\mathcal{A} \subset \mathcal{F}$ is from (\ref{Pinsker}); the Pinsker algebra $\mathcal{P}$ is trivial in our case). We are interested in the terms
$$W_N^{p} ((e_\eta)_\eta) :=  \frac{1}{\lfloor\sqrt{N}\rfloor^{k-1}}\sum_{h \in [\lfloor \sqrt{N}\rfloor ]^{k-1} } \left\Vert \sup_t \left|\frac{1}{N} \sum_{n=1}^N e^{2\pi i n t} \left[ \prod_{\eta \in V_{k-1}} c^{|\eta|} e_\eta \circ (T\times S)^{h\cdot \eta} \right] \circ (T\times S)^n \right|\right\Vert_{L^p(\mu\times \nu)}^{2/3}$$
In the case that all $e_\eta = g$, for all $N \in \N$, there exists $C_g > 0$ such that 
$$W_N^{\min\{p, 2\}}((e_\eta)_\eta) \leq W_N^{p}((e_\eta)_\eta) \leq \frac{C_g}{N^{\alpha}} \leq \frac{C_g}{N^{\min\{\alpha, 1/6\}}}\, ,$$
since $g$ is a WW function and the $L^p(\mu\times \nu)$ and $L^p(\nu)$ norms coincide because of the lack of $x$ dependence. 

For $(e_\eta)_\eta$ where not all $e_\eta = g$, we partition $V_{k-1}$ as follows: Define $U\subset V_{k-1}$ as the collection of $\eta$ such that $e_\eta \neq g$. Hence, $U$ and $U^c$ is a disjoint partition of $V_{k-1}$ where $U$ is nonempty, and for $\eta \in U$ we can write $e_\eta = f_{k_\eta}^{A_\eta}(x)g_\eta(y)$ as expected. Let $J = \# U$. Applying Lemma \ref{l:an} for indexing set $[J] = U$, $k_\eta, l_\eta, A_\eta$, and $p_\eta = \eta\cdot h$ for $h\in \N^{k-1}$, we get an $L\in \N$ and $C > 0$ such that
\begin{equation}\label{hNotInH}
    \left\Vert \sup_t \left| \frac{1}{N}\sum_{n=1}^N e^{2\pi i n t} a_n \left[ \prod_{\eta\in U} f_{k_\eta}^{A_\eta} \circ T^{\eta \cdot h} \right] \circ T^n \right| \right\Vert_2^2 \leq \frac{C\Vert a_n \Vert_{\ell^\infty}^2}{N^{1/2}}
\end{equation}
holds for all bounded sequences so long as $|(\eta_1 - \eta_2)\cdot h| > L$ for all $\eta_1\neq \eta_2 \in U$. Letting $H$ be the set of $h$ that fail to satisfy this condition, we note for any $N$ that
\begin{equation}\label{cardBound}
    \#(H\cap [N]^{k-1}) \leq {\# U \choose 2} (2L+1) N^{k-2}
\end{equation}
as for each pair of distinct $\eta_1, \eta_2 \in U$, the term $(\eta_1 - \eta_2)\cdot h$ depends nontrivially on some component $h_i$, and fixing all other components there are at most $2L+1$ values of $h_i$ that would satisfy $|(\eta_1 - \eta_2)\cdot h| \leq L$.

Let $h \in H^c$. Then (\ref{hNotInH}) holds for any bounded sequence $a_n$ and $N \in \N$. If $y\in Y$, we may set
$$a_n = \prod_{\eta \in U^c} c^{|\eta|}g(S^{n + \eta \cdot h} y) \prod_{\eta\in U} c^{|\eta|} g_\eta(S^{n + \eta \cdot h}y) \, $$
which is uniformly bounded for almost all $y$ by $\prod_{\eta \in U^c } \Vert g \Vert_\infty^2 \prod_{\eta \in U} \Vert g_\eta \Vert_\infty^2$ to see
$$\left\Vert \sup_t \left| \frac{1}{N}\sum_{n=1}^N e^{2\pi i n t} \prod_{\eta \in U^c} c^{|\eta|}g(S^{n+ \eta\cdot h}y) \prod_{\eta\in U} f_{k_\eta}^{A_\eta} \circ T^{n + \eta\cdot h} \cdot c^{|\eta|}g_\eta(S^{n + \eta \cdot h}y) \right| \right\Vert_{L^2(\mu)}^2 \leq \frac{C}{N^{1/2}}$$
for an increased $C$. Integrating in $\nu$, we recover the $L^2(\mu \times \nu)$ norm on the left-hand side. But by the partition in $U$, we have recovered the product over $\eta$ of $e_\eta$'s that we were looking for:
\begin{equation}\label{hNotInH2}
    \left\Vert \sup_t \left| \frac{1}{N}\sum_{n=1}^N e^{2\pi i n t} \left[\prod_{\eta \in V_{k-1}} c^{|\eta|}e_\eta \circ (T \times S)^{\eta\cdot h}) \right] \circ (T\times S)^n   \right| \right\Vert_{L^2(\mu \times \nu)}^2 \leq \frac{C}{N^{1/2}} \, .
\end{equation}
Recall that (\ref{hNotInH2}) holds for all $N$ and $h\notin H$. For $h \in H$, we bound the average above trivially by $C' = \prod_{\eta\in V_{k-1}} \Vert e_\eta \Vert_\infty^2$, and their contribution may be lost in the average over $h$: By applying the triangle inequality as well as the estimates (\ref{cardBound}) and (\ref{hNotInH2}), we get
\begin{align*}
&\frac{1}{\lfloor\sqrt{N}\rfloor^{k-1}}\sum_{h \in [\lfloor \sqrt{N}\rfloor ]^{k-1} } \left\Vert \sup_t \left|\frac{1}{N} \sum_{n=1}^N e^{2\pi i n t} \left[ \prod_{\eta \in V_{k-1}} c^{|\eta|} e_\eta \circ (T\times S)^{h\cdot \eta} \right] \circ (T\times S)^n \right|\right\Vert_{L^2(\mu\times \nu)}^{2} \\
&\leq \frac{\#(H \cap [\lfloor \sqrt{N}\rfloor]^{k-1})C'}{\lfloor \sqrt{N}\rfloor^{k-1}} + \frac{C}{N^{1/2}} \leq \frac{C''}{N^{1/2}}
\end{align*}
for an increased $C''$. By Hölder's inequality on averages, we may reintroduce the 2/3 power for WW inequalities by changing $C''N^{-1/2}$ to $C''^{1/3}N^{{-1/6}}$.
Adjusting to the $L^{\min\{p, 2\}}(\mu \times \nu)$ norm and power type $\min\{\alpha, 1/6\}$ exactly as in the previous case, we see that we have satisfied the multilinearity conditions to show that $g(y) + \sum_{i=1}^I \alpha_i f_{k_i}^{A_i}(x) g_i(y)$ is a WW function. 
\end{proof}

\begin{proof}[Proof of Theorem \ref{t:productWWS}]
In this proof, we denote the trivial $\sigma$-subalgebra $\{\emptyset, X\}$ as just $X$. 

For the $\sigma$-subalgebra $X\otimes \mathcal{G} \subset \mathcal{F} \otimes \mathcal{G}$, the subspace $L^2(X\otimes \mathcal{G}, \mu\times \nu) \subset L^2(\mu \times \nu)$ is naturally isomorphic to $L^2(\nu)$, as these functions have no $x$ dependence. Treating this isomorphism as an equivalence, we write for $F\in L^2(\mu\times \nu)$ that
$$\E(F|X \otimes \mathcal{G})(y) = \int F(s, y) \, d\mu(s) \in L^2(\nu)$$
and
$$L^2(X\otimes\mathcal{G})^\perp = \left\{F: \int F(s, y)\, d\mu(s) = 0 \text{ for } \nu\text{-a.e. }y\right\}\, .$$
By the same analysis on the $\sigma$-subalgebra $X \otimes \mathcal{Z}_k^Y$, where $\mathcal{Z}_k^Y \subset \mathcal{G}$ is the $k$-th Host-Kra-Ziegler factor of $Y$, the subspace $L^2(X \otimes \mathcal{Z}_k^Y, \mu \times \nu) \subset L^2(\mu \times \nu)$ is naturally equivalent to $L^2(\mathcal{Z}_k^Y, \nu)$. For $F$ as before, we write
$$\E(F|X \otimes \mathcal{Z}_k^Y)(y) = \E\left( \int f(s, y) \, d\mu(s) \bigg| \mathcal{Z}_k^Y \right)\in L^2(\nu)$$
and
$$L^2(X\otimes \mathcal{Z}_k^Y, \mu \times \nu)^\perp = \left\{ F: \int F(s, y)\, d\mu(s) \in L^2(\mathcal{Z}_k^Y )^\perp \right\}\, .$$
Since for any $F\in L^2(X\otimes \mathcal{Z}_k^Y, \mu \times \nu)^\perp$ we have the orthogonal decomposition
$$F(x, y) = \left( F(x, y) - \int F(s, y)\, d\mu(s)\right) + \int F(s, y)\, d\mu(s)\, ,$$
where the first term of the right-hand side lies in $\{f\in L^2(\mu): \int f \, d\mu = 0\} \otimes L^2(\nu)$, while the other term lies in $L^2(\mathcal{Z}_k^Y)$, it follows by the previous theorem that we have found an $L^2$-dense set of $k$-th order WW functions in $L^2(X\otimes \mathcal{Z}_k^Y, \mu \times \nu)^\perp$. 
Since the K system $X$ is mixing, it follows that $X \times Y$ is ergodic, in which its $k$-th order Host-Kra-Ziegler factor $\mathcal{Z}_k^{X\times Y}$ is well-defined. Based on the dense set of WW functions we have found, it would suffice to show that $\mathcal{Z}_k^{X\times Y} = X \otimes\mathcal{Z}_k^{Y}$.

To this end, we create a chain of equivalent statements. For $F\in L^\infty(\mu \times \nu)$, (\ref{univCF}) tells us that we have $F\in L^2(\mathcal{Z}_k^{X \times Y}, \mu \times \nu)^\perp$ if and only if the following statement is true:
$$\text{For all }F_1, \dots, F_{k+1} \in L^\infty(\mu \times \nu) \text{ with some }F_i = F\text{, we have }\lim_N \left\Vert \frac{1}{N} \sum_{n=1}^N \prod_{j=1}^{k+1} F_j \circ (T \times S)^{jn} \right\Vert_{L^2(\mu \times \nu)} = 0\, .$$
The WW criterion for characteristic factors (Theorem \ref{t:wwchar}) applied to the set of WW functions found in Theorem \ref{th:prodfunc} shows that $X \otimes \mathcal{Z}_k^Y$ is a pointwise characteristic factor for $k+1$ multiple recurrence in $X \times Y$. By the dominated convergence theorem, it follows that it is also a characteristic factor in norm convergence. Hence, the above statement is equivalent to the following statement:
\begin{align*}
&\text{For all }F_1, \dots, F_{k+1} \in L^\infty(\mu \times \nu) \text{ with some }F_i = F\text{, } \\ &\text{we have }\lim_N \left\Vert \frac{1}{N} \sum_{n=1}^N \prod_{j=1}^{k+1} \E(F_j| X \otimes \mathcal{Z}_k^{Y}) \circ (T \times S)^{jn} \right\Vert_{L^2(\mu \times \nu)} = 0\, .
\end{align*}
Since we have already noted that this conditional expectation maps into $L^2(\mathcal{Z}_k^Y, \nu)$ as a subset of $L^2(\mu \times \nu)$, the above statement has no dependence in $x$. Reinterpreting it as a statement purely in $L^2(\nu)$, it is equivalent to the following statement:
$$\text{For all }g_1, \dots, g_{k+1} \in L^\infty(\mathcal{Z}_k^Y, \nu) \text{ with some }g_i = \E(F|X \otimes \mathcal{Z}_k^Y) \text{, we have }\lim_N \left\Vert \frac{1}{N} \sum_{n=1}^N \prod_{j=1}^{k+1} g_j \circ S^{jn} \right\Vert_{L^2(\nu)} = 0\, .$$
Notice that we may extend the choice of the $g_j$'s to all of $L^\infty(\nu)$ at no cost, as decomposing each $g_j$ as $(g_j - \E(g_j|\mathcal{Z}_k^Y) ) + \E(g_j|\mathcal{Z}_k^Y)$, we expand out the multiple recurrence product and all of the pieces containing some $g_j - \E(g_j|\mathcal{Z}_k^Y)$ converge to zero anyway. Hence, the above is equivalent to 
$$\text{For all }g_1, \dots, g_{k+1} \in L^\infty(\nu) \text{ with some }g_i = \E(F|X \otimes \mathcal{Z}_k^Y) \text{, we have }\lim_N \left\Vert \frac{1}{N} \sum_{n=1}^N \prod_{j=1}^{k+1} g_j \circ S^{jn} \right\Vert_{L^2(\nu)} = 0\, ,$$
or $\E(F|X \otimes \mathcal{Z}_k^Y) \in L^2(\mathcal{Z}_k^Y)^\perp$. Since $\E(F|X \otimes \mathcal{Z}_k^Y) \in L^2(\mathcal{Z}_k^Y)$, this is only possible if $\E(F|X \otimes \mathcal{Z}_k^Y)=0$, or $F \in L^2(X \otimes \mathcal{Z}_k^Y)^\perp$.

Hence, a bounded function $F$ lies in $L^2(\mathcal{Z}_k^{X \times Y})^\perp$ if and only if it lies in $L^2(X \otimes \mathcal{Z}_k^Y)^\perp$. By density and closure, it follows that $L^2(\mathcal{Z}_k^{X \times Y})^\perp = L^2(X \otimes \mathcal{Z}_k^Y)^\perp$, in which $\mathcal{Z}_k^{X \times Y} = X \otimes \mathcal{Z}_k^Y$, proving the claim.
\end{proof}

\section{Return times theorem for multiple ergodic averages}\label{s:rtt_mea}
We recall that the non-trivial part in the proof of the return times theorem (Theorem \ref{t:RTT}) was to show the following: 

Let $(X, \mathcal{F}, \mu, T)$ be an ergodic theorem, and let $f \in L^\infty(\mu)$. If $f$ is orthogonal to $L^2(\mathcal{Z}_1)$ (i.e. the Kronecker factor), then there exists a set of full measure $X_f$ such that for every $x \in X_f$, and for any other measure-preserving system $(Y, \mathcal{G}, \nu, S)$ and a bounded function $g \in L^\infty(\nu)$, we have
\[ \lim_{N \to \infty} \frac{1}{N} \sum_{n=1}^N f(T^nx) g(S^ny) = 0 \]
for $\nu$-a.e. $y \in Y$. 

Our goal in this section is to show the analogous result for multiple recurrence return times theorem. Let $(X, \mathcal{F}, \mu, T)$ and $(Y, \mathcal{G}, \nu, S)$ be two different ergodic systems. Suppose $J, K \in \N$, and $f_1, \ldots, f_J \in L^\infty(\mu)$ and $g_1, \ldots, g_K \in L^\infty(\nu)$. We consider the averages
\[ \frac{1}{N} \sum_{n=1}^N \prod_{j=1}^J f_j (T^{jn}x) \prod_{k=1}^K g_k (S^{kn}y) \, . \]
In particular, we will show that if one of the functions, $f_j$ for some $1 \leq j \leq J$, is a $J+K-1$-th order Wiener-Wintner function of power type $\alpha > 0$, then there exists a set of full measure in $X$ that is independent of the other system $(Y, \mathcal{G}, \nu, S)$ and functions $g_1, \ldots, g_K$ such that the averages above converge to $0$, provided that the pointwise convergence of multiple ergodic averages hold on the system $(Y, \mathcal{G}, \nu, S)$.

\begin{theorem}[Multiple recurrence return times theorem for Wiener-Wintner functions]\label{t:returnTimesMain}
		Let $(X, \mathcal{F}, \mu, T)$ be an ergodic measure-preserving system, let $J, K \in \N$, and $K \geq 2$ and let $f_1, f_2, \ldots f_J \in L^\infty(\mu)$. Suppose that, for some $\alpha>0$, $f_1$ is a $J+K-1$-th Wiener-Wintner function of power type $\alpha$. Then there exists $X' \subset X$ such that $\mu(X') = 1$, and for every $x \in X'$, for any other measure-preserving system $(Y, \mathcal{G}, \nu, S)$ and $g_1, g_2, \ldots, g_K \in L^\infty(\nu)$, and for $\nu$-a.e. $y \in Y$ we have
		\[\lim_{N \to \infty} \frac{1}{N} \sum_{n=1}^N \prod_{j=1}^J f_j(T^{jn}x) \prod_{k=1}^K g_k(S^{kn}y) = 0 \, . \]
	\end{theorem}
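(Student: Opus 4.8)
The plan is to combine a reduction to an $L^2(\nu)$-norm control (the analogue of Lemma \ref{RT_WWest} advertised in the introduction) with a Borel–Cantelli / subsequence argument, exactly in the spirit of the proof of Lemma \ref{l:mea_cf}. First I would set up the key norm estimate: for fixed $x$, view the sum $\frac{1}{N}\sum_{n=1}^N \prod_{j=1}^J f_j(T^{jn}x)\prod_{k=1}^K g_k(S^{kn}y)$ as an average over the system $(Y,\mathcal G,\nu,S)$ of the functions $g_k$ weighted by the bounded scalar sequence $a_n(x) := \prod_{j=1}^J f_j(T^{jn}x)$. The goal is to show that there is a constant $C$ (depending only on $J,K$ and $\|f_j\|_\infty$) such that for $N$ large,
\[
\left\Vert \frac{1}{N}\sum_{n=1}^N a_n(x) \prod_{k=1}^K g_k\circ S^{kn} \right\Vert_{L^2(\nu)} \;\le\; C\left( \frac{1}{\lfloor\sqrt N\rfloor^{1/2^{J+K-3}}} + \Big(\,\text{a $(J+K-1)$-th order WW average of }f_1\text{ at }x\,\Big)^{1/2^{J+K-2}}\right),
\]
where the WW average is the quantity $WW_{1/2}(f_1,N)$ from Definition \ref{def:hof} evaluated along the orbit of $x$. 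To obtain this, I would iterate the Van der Corput inequality $K$ times to peel off the $g_k$ factors (each application trades one $g_k$ factor for a shift-difference and introduces an auxiliary averaging variable, just as in the proof of Theorem \ref{BBgeq3}), reducing to an average with no $Y$-dependence of the form $\frac1N\sum_n a'_n(x)\,(\text{stuff})$, and then continue applying Van der Corput to the $f_j$ factors ($J-1$ more times), at which point one is left precisely with a weighted $e^{2\pi i nt}$-average of products of shifts of $f_1$ — this is where the $(J+K-1)$-th order WW average appears, because $J-1$ shifts from the $f$-side plus $K$ shifts from the $g$-side (minus the usual one) give a cube $V_{J+K-2}$. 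Here I would use, as in Proposition \ref{t:Kprod} and Lemma \ref{l:an}, that the Van der Corput argument is unaffected by the presence of an arbitrary bounded scalar sequence $a_n$, so the $g_k$-averages can be absorbed. The supremum over $t$ is retained throughout by the $e^{2\pi int}$-weighted form of Van der Corput, inequality (\ref{vdc-2}).

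Next I would invoke the Wiener–Wintner hypothesis on $f_1$. Since $f_1$ is a $(J+K-1)$-th order WW function of power type $\alpha$, Definition \ref{def:hof} gives a constant $C_{f_1}$ with
\[
\frac{1}{\lfloor\sqrt N\rfloor^{J+K-2}}\sum_{h\in[\lfloor\sqrt N\rfloor]^{J+K-2}}\left\Vert \sup_t\left|\frac1N\sum_{n=1}^N e^{2\pi int}\Big[\prod_{\eta\in V_{J+K-2}} c^{|\eta|} f_1\circ T^{h\cdot\eta}\Big]\circ T^n\right|\right\Vert_p^{2/3}\le \frac{C_{f_1}}{N^\alpha},
\]
and by Hölder on averages this transfers to the $L^1$ (hence also pointwise-in-$x$ after integrating in $\mu$) control needed above. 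Integrating the squared norm estimate over $x\in X$ and applying Tonelli yields
\[
\int_X \left\Vert \frac1N\sum_{n=1}^N a_n(x)\prod_{k=1}^K g_k\circ S^{kn}\right\Vert_{L^2(\nu)}^2\,d\mu(x)\;\le\; \frac{C'}{N^\beta}
\]
for a suitable $\beta>0$ depending on $\alpha,J,K$. Taking a subsequence $N=\lfloor M^\gamma\rfloor$ with $\gamma\beta>1$ and summing in $M$, a Borel–Cantelli / monotone convergence argument produces a single set $X'\subset X$ of full $\mu$-measure — crucially \emph{independent of $(Y,\mathcal G,\nu,S)$ and of $g_1,\dots,g_K$}, since the bound is uniform over all systems and all functions with $\|g_k\|_\infty\le 1$ — such that for $x\in X'$ the $L^2(\nu)$-norm of the average tends to zero along the subsequence. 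Then the standard passage from the subsequence $\lfloor M^\gamma\rfloor$ to the full sequence $N$ (as in Lemma \ref{l:mea_cf}), together with extracting a further subsequence where the $L^2(\nu)$-norm convergence upgrades to $\nu$-a.e. convergence, gives the conclusion. The role of the hypothesis that pointwise convergence of multiple ergodic averages holds on $(Y,\mathcal G,\nu,S)$ is that we only prove the average converges to $0$ after projecting $f_1$ to $L^2(\mathcal Z_{J+K-1})^\perp$ (which it already lies in, being a WW function) — wait, more precisely it is used to guarantee the full average converges (not just to zero) when combined with the decomposition, but since here $f_1$ is assumed to be a WW function the whole product average already goes to zero, so in our situation the $Y$-side hypothesis enters only to let us pass from norm to pointwise convergence cleanly; I would state it exactly as in the theorem hypothesis.

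The main obstacle I anticipate is bookkeeping the iterated Van der Corput applications cleanly enough to see that the $K$ shifts coming from the $g$-side and the $J-1$ shifts from the $f$-side really do assemble into the single cube $V_{J+K-2}$ that Definition \ref{def:hof} wants, with the correct conjugation pattern $c^{|\eta|}$ and the correct normalization $\lfloor\sqrt N\rfloor^{J+K-2}$ in the denominator; the combinatorics of which variable ranges over $[\lfloor\sqrt N\rfloor]$ versus $[\lfloor\sqrt N\rfloor/|a_j|]$ and how the scalar coefficients telescope must be tracked carefully (this is where the ``multilinearity concerns'' in the spirit of Lemma \ref{l:mlc} and the scaling-removal trick from Theorem \ref{BBgeq3} reappear). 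A second, more technical point is ensuring that the supremum over $t\in\R$ survives all the reductions — this is handled by always using the $e^{2\pi int}$-weighted Van der Corput inequality (\ref{vdc-2}) rather than the unweighted Lemma \ref{vdc-lem}, but one must check the extra scalar weights $a_n$ do not interfere, which follows because $|a_n|\le\prod_j\|f_j\|_\infty$ is a uniform bound. I expect the detailed computation to be lengthy but to follow the template already established by Theorem \ref{BBgeq3}, Lemma \ref{l:an}, and the proof of Lemma \ref{l:mea_cf}, and I would relegate the fully general bookkeeping to an appendix, presenting here the case $J=1$ (or a small $J$) to convey the idea.
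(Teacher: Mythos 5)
Your plan follows essentially the same route as the paper: a norm estimate of the type of Lemma \ref{RT_WWest} bounding $\bigl\Vert \frac1N\sum_n \prod_j f_j(T^{jn}x)\prod_k g_k\circ S^{kn}\bigr\Vert_{L^2(\nu)}$ by a pure-$X$ quantity at $x$, followed by the Bourgain-bound machinery to reach the $(J+K-1)$-th order WW average of $f_1$, and then a Borel--Cantelli argument along a subsequence $\lfloor M^\gamma\rfloor$ with the standard extension to all $N$. Two mechanism-level points in your narrative are not quite where the work actually happens. First, on the $Y$-side the decoupling of the last factor $g_K$ --- which is what makes the exceptional set $X'$ independent of $(Y,\mathcal G,\nu,S)$ and of the $g_k$'s --- is not achieved by the weighted Van der Corput inequality but by the spectral theorem: after $K-1$ Van der Corput applications one writes $\bigl\Vert\frac1N\sum_n c_n(x)\,G\circ S^n\bigr\Vert_{L^2(\nu)}^2=\int\bigl|\frac1N\sum_n c_n(x)e^{2\pi int}\bigr|^2\,d\sigma_G(t)\le\sup_t|\cdots|^2$; this is where the $\sup_t$ enters. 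Second, iterating Van der Corput pointwise on the $X$-side over the product $\prod_{j=1}^J$ leaves a multiple recurrence average in \emph{all} the $f_j$'s, not shifts of $f_1$ alone; passing to $f_1$ alone requires integrating in $\mu$ and invoking the $L^1$-norm bound of Theorem \ref{BBgeq3} (whose engine is Bessel's inequality), which is exactly what the paper does after one further Van der Corput application. Since you cite Theorem \ref{BBgeq3} as your template and your target inequality has the right shape, these are repairable imprecisions rather than fatal gaps. Finally, note that Theorem \ref{t:returnTimesMain} carries no hypothesis about pointwise convergence on $(Y,\mathcal G,\nu,S)$ --- the conclusion is convergence to zero, for which the WW hypothesis on $f_1$ suffices; that extra hypothesis only enters later, in Corollary \ref{c:retutnTimesCF} and its surrounding discussion.
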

Of course, one can rearrange the functions so that the statement is valid if $f_j$ is a $J+K-1$-th Wiener-Wintner function for any $j \in [J]$. 

In order to prove Theorem \ref{t:returnTimesMain}, we need the following lemma.

\begin{lemma}\label{RT_WWest}
Let $(X, \mathcal{F}, \mu, T)$ be a measure-preserving system, let $J, K \in \N$ for which $K \geq 2$, and let $f_1, f_2, \ldots f_J \in L^\infty(\mu)$. Then for every $x \in X$, there exists a constant $C > 0$ such that for every $N \in \N$, for every measure-preserving system $(Y, \mathcal{G}, \nu, S)$, and for every $g_1, g_2, \ldots, g_K \in L^\infty(\nu)$ such that $\max_{k \in [K]} \norm{g_k}_{L^\infty(\nu)} \leq 1$, we have
	\begin{align*}
	&\norm{\frac{1}{N} \sum_{n=1}^N \prod_{j=1}^J f_j(T^{jn}x) \prod_{k=1}^K g_k \circ S^{kn} }_{L^2(\nu)}  \\
	&\leq C \left(\frac{1}{\floor{\sqrt{N}}} + \frac{1}{\floor{\sqrt{N}}^{K-1}} \sum_{h \in [\floor{\sqrt{N}}]^{K-1}} \sup_t \left| \frac{1}{N} \sum_{n=1}^N  e^{2\pi int} \prod_{j=1}^J \left[ \prod_{\eta \in V_{K-1}} (c^{|\eta|} f_j \circ T^{jh\cdot\eta} )\right]  (T^{jn}x) \right| \right)^{2^{-(K-1)}} \, .
	\end{align*}
\end{lemma}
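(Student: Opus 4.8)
The plan is to fix $x\in X$, regard $a_n:=\prod_{j=1}^J f_j(T^{jn}x)$ as a bounded scalar sequence with $\Vert a\Vert_{\ell^\infty}\le A:=\prod_j\Vert f_j\Vert_\infty$, and prove the estimate by induction on $K\ge 2$, obtaining a constant that depends only on $A$ (hence in particular one that works for every $x$). Throughout one may normalize $\max_j\Vert f_j\Vert_\infty\le 1$ and $\max_k\Vert g_k\Vert_\infty\le 1$, and the cases of small $N$ (where $\lfloor\sqrt N\rfloor$ is too small to apply Van der Corput) are absorbed into the constant. For the base case $K=2$, apply the Van der Corput estimate (Lemma \ref{vdc-lem}) in the Hilbert space $L^2(\nu)$ to the vectors $v_n:=a_n\,g_1\circ S^n\,g_2\circ S^{2n}$ with $H=\lfloor\sqrt N\rfloor$; this yields a remainder $O(1/\lfloor\sqrt N\rfloor)$ together with $\tfrac{4}{\lfloor\sqrt N\rfloor}\sum_{h=1}^{\lfloor\sqrt N\rfloor}\bigl|\tfrac1N\sum_n\langle v_n,v_{n+h}\rangle_{L^2(\nu)}\bigr|$. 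For each fixed $n$ one performs the change of variables $y\mapsto S^{-n}y$ inside $\langle v_n,v_{n+h}\rangle$: this turns the $k=2$ factor into $g_2(S^n y)\overline{g_2(S^{n+2h}y)}$ and the $k=1$ factor into $g_1(y)\overline{g_1(S^h y)}$, which is now independent of $n$ and hence factors out of the $n$-sum; bounding it by $1$, using $\Vert\cdot\Vert_{L^1(\nu)}\le\Vert\cdot\Vert_{L^2(\nu)}$, and restoring the truncated sum to length $N$ (at cost $O(1/\lfloor\sqrt N\rfloor)$) leaves $\bigl\Vert\tfrac1N\sum_n\tilde a_n\,(g_1\cdot\overline{g_1\circ S^h})\circ S^n\bigr\Vert_{L^2(\nu)}$ with $\tilde a_n:=a_n\overline{a_{n+h}}$. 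By the spectral theorem this norm equals $\bigl(\int_{\T}|\tfrac1N\sum_n\tilde a_n e^{2\pi in\theta}|^2\,d\sigma(\theta)\bigr)^{1/2}$ for the spectral measure $\sigma$ of $g_1\cdot\overline{g_1\circ S^h}$ (total mass $\le1$), hence is $\le\sup_t|\tfrac1N\sum_n\tilde a_n e^{2\pi int}|$; and $\tilde a_n=\prod_j[f_j\cdot\overline{f_j\circ T^{jh}}](T^{jn}x)=\prod_j\bigl[\prod_{\eta\in V_1}c^{|\eta|}f_j\circ T^{jh\cdot\eta}\bigr](T^{jn}x)$, which is exactly the summand of the claimed bound for $K=2$. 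Taking a square root produces the $2^{-1}$ exponent.

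For the inductive step (assume the statement for $K-1\ge2$, for every measure-preserving system and every point), run the same Van der Corput / change-of-variables argument on $v_n:=a_n\prod_{k=1}^K g_k\circ S^{kn}$: shifting by the smallest exponent $1$ makes the $k=1$ factor $g_1\cdot\overline{g_1\circ S^h}$ independent of $n$, and after factoring it out and reindexing $\ell=k-1$ one is left, for each $h\in[\lfloor\sqrt N\rfloor]$, with $\bigl\Vert\tfrac1N\sum_n\tilde a_n^{(h)}\prod_{\ell=1}^{K-1}\tilde g_{\ell+1}^{(h)}\circ S^{\ell n}\bigr\Vert_{L^2(\nu)}$, where $\tilde a_n^{(h)}=\prod_j\tilde f_j^{(h)}(T^{jn}x)$, $\tilde f_j^{(h)}:=f_j\cdot\overline{f_j\circ T^{jh}}$, and $\tilde g_k^{(h)}:=g_k\cdot\overline{g_k\circ S^{kh}}$ (all of sup-norm $\le1$, so that $\prod_j\Vert\tilde f_j^{(h)}\Vert_\infty\le A^2$ keeps the inductive constant uniform in $h$). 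Apply the inductive hypothesis to this $(K-1)$-term average; its right-hand side contains a cube over $V_{K-2}$ in directions $h'\in[\lfloor\sqrt N\rfloor]^{K-2}$. The key bookkeeping step is the cube-combining identity: since $c$ is multiplicative and $c^{|\eta|}\overline{(\cdot)}=c^{|\eta|+1}(\cdot)$, one checks that for $\tilde\eta=(\eta_0,\eta)\in\{0,1\}\times V_{K-2}=V_{K-1}$ and $\tilde h=(h,h')$,
\[
\prod_{\eta\in V_{K-2}}c^{|\eta|}\,\tilde f_j^{(h)}\bigl(T^{j(n+h'\cdot\eta)}x\bigr)=\prod_{\tilde\eta\in V_{K-1}}c^{|\tilde\eta|}\,f_j\bigl(T^{j(n+\tilde h\cdot\tilde\eta)}x\bigr),
\]
so the $V_{K-2}$-cube plus the outer direction $h$ assembles into the desired $V_{K-1}$-cube in directions $(h,h')\in[\lfloor\sqrt N\rfloor]^{K-1}$. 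Then use subadditivity of $\xi\mapsto\xi^{2^{-(K-2)}}$ to split off the remainder, and Hölder's inequality on averages to pull $\tfrac1{\lfloor\sqrt N\rfloor}\sum_h$ inside the $2^{-(K-2)}$ power, collapsing $\tfrac1{\lfloor\sqrt N\rfloor}\sum_h\tfrac1{\lfloor\sqrt N\rfloor^{K-2}}\sum_{h'}$ into $\tfrac1{\lfloor\sqrt N\rfloor^{K-1}}\sum_{(h,h')}$; a final square root turns the $2^{-(K-2)}$ power into $2^{-(K-1)}$ and closes the induction.

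I expect the main obstacle to be the execution rather than the idea — which is light: one iterated Van der Corput for each $g_k$ that must be removed, closed off by a single spectral-theorem estimate. The delicate points are (i) carrying out the per-$n$ change of variables while tracking which absolute values sit inside versus outside the $n$-sum, so that the lowest-exponent factor can legitimately be pulled out; (ii) the cube-combining identity above, which must be arranged so that the conjugations $c^{|\eta|}$ on $\tilde f_j^{(h)}$ match those on $f_j$ along both halves $\eta_0=0$ and $\eta_0=1$; and (iii) keeping the constant independent of $h$ (and $x$) as the induction proceeds — this works precisely because passing from $f_j$ to $\tilde f_j^{(h)}=f_j\cdot\overline{f_j\circ T^{jh}}$ only replaces $\prod_j\Vert f_j\Vert_\infty$ by its square. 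The truncation errors ($n\le N-h$ versus $n\le N$, $H$ versus $H+1$) and the small-$N$ cases are routine and are swallowed by the $1/\lfloor\sqrt N\rfloor$ remainder and the constant. (As elsewhere in the paper, one may first present the $K=2$ and $K=3$ cases in the main text and defer the general induction to an appendix.)
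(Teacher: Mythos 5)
Your proposal is correct and follows essentially the same route as the paper's proof: Van der Corput in $L^2(\nu)$, a shift by the smallest $S$-exponent to factor out the $n$-independent $g_1\cdot\overline{g_1\circ S^h}$ term, the spectral theorem at the base case $K=2$, and an induction that assembles the $V_{K-2}$-cube plus the new Van der Corput direction into a $V_{K-1}$-cube via Cauchy--Schwarz/H\"older on averages. (Minor slip: in your base case the factor surviving inside the $n$-sum, and hence the function whose spectral measure you invoke, should be $g_2\cdot\overline{g_2\circ S^{2h}}$ rather than $g_1\cdot\overline{g_1\circ S^{h}}$; this does not affect the argument.)
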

	\begin{proof}
		Without loss of generality, we will assume that $\norm{f_j}_{L^\infty(\mu)} \leq 1$ for every $j = 1, 2, \ldots, J$.
		
We denote $F_{j, h} := \overline{f_j} \cdot f_j \circ T^{jh}$ and $G_{k, h} := \overline{g_k} \cdot g_k \circ S^{k h}$ for $j = 1, 2, \ldots J$ and $k = 1, 2, \ldots, K$. 

We will proceed by induction on $K$. We first consider the base case $K=2$. Let $y \in Y$. Van der Corput's lemma (Lemma \ref{vdc-lem}) tells us that for each $N \in \N$ and $1 \leq H < N$, we have
	\begin{align*}
	&\left| \frac{1}{N} \sum_{n=1}^N \prod_{j=1}^J f_j(T^{jn}x) \prod_{k=1}^2 g_k(S^{kn}y) \right|^2 \\
	&\leq 	\frac{2}{N(H+1)} \sum_{n=0}^{N-1} \left|\prod_{j=1}^J f_j(T^{jn}x) \prod_{k=1}^2 g_k(S^{kn}y) \right|^2 \\
	&+ \frac{2(N+H)}{N^2(H+1)^2} \sum_{h=1}^H (H+1-h) \real\left( \sum_{n=0}^{N-h-1} \prod_{j=1}^J F_{j, h}(T^{jn}x) \prod_{k=1}^2 G_{k, h}(S^{kn}y) \right).
	\end{align*}
We integrate both sides of the inequality with respect to $\nu$ to get
\begin{align*}
	&\norm{ \frac{1}{N} \sum_{n=1}^N \prod_{j=1}^J f_j(T^{jn}x) \prod_{k=1}^2 g_k \circ S^{kn}}_{L^2(\nu)} ^2 \\
	&\leq 	\frac{2}{N(H+1)} \sum_{n=0}^{N-1} \norm{ \prod_{j=1}^J f_j(T^{jn}x) \prod_{k=1}^2 g_k \circ S^{kn}}_{L^2(\nu)}^2 \\
	&+ \frac{2(N+H)}{N^2(H+1)^2} \sum_{h=1}^H (H+1-h) \real\left( \int  \sum_{n=0}^{N-h-1} \prod_{j=1}^J F_{j, h}(T^{jn}x) \prod_{k=1}^2 G_{k, h} \circ S^{kn}  \, d\nu \right)\\
	&\leq \frac{2}{H+1} + \frac{4}{H+1} \sum_{h=1}^H \left| \int \frac{1}{N} \sum_{n=0}^{N-h-1} \prod_{j=1}^J F_{j, h}(T^{jn}x) G_{1, h} \cdot G_{2, h} \circ S^{n} \, d\nu \right|.
\end{align*}
Since this estimate holds for any $1 \leq H \leq N-1$, we set $H = \floor{\sqrt{N}}$. Furthermore, by the triangle inequality and H\"older's inequality, we get
\begin{align*}
& \left| \int \frac{1}{N} \sum_{n=0}^{N-h-1} \prod_{j=1}^J F_{j, h}(T^{a_jn}x) G_{1, h} \cdot G_{2, h} \circ S^{n} \, d\nu \right| \\
&\leq \frac{1}{\floor{\sqrt{N}}} + \int \left| G_{1, h}\frac{1}{N} \sum_{n=0}^{N} \prod_{j=1}^J F_{j, h}(T^{a_jn}x) G_{2, h} \circ S^{n} \right| d\nu\\
&\leq \frac{1}{\floor{\sqrt{N}}} + \norm{\frac{1}{N} \sum_{n=0}^{N} \prod_{j=1}^J  F_{j, h}(T^nx) G_{2, h} \circ S^n }_{L^2(\nu)} \, .
\end{align*}
By the spectral theorem, we have the spectral measure of $G_{2, h}$, denoted $\sigma_{G_{2, h}}$, on $\T$ such that
\begin{align*} 
\norm{\frac{1}{N} \sum_{n=0}^{N} \prod_{j=1}^J F_{j, h}(T^nx) G_{2, h} \circ S^n }_{L^2(\nu)} 
&= \left( \int \left| \frac{1}{N} \sum_{n=1}^N \prod_{j=1}^J F_{j, h}(T^nx) e^{2\pi int} \right|^2 \, d\sigma_{G_{2, h}}(t) \right)^{1/2}\\
&\leq \sup_t \left| \frac{1}{N} \sum_{n=1}^N \prod_{j=1}^J F_{j, h}(T^nx) e^{2\pi int} \right|.  
\end{align*}
This completes the base case.

Now suppose the statement is true for $K = q$. To show the statement holds for the case $K=q+1$, we again start with the Van der Corput lemma and H\"older's estimate to see that
\begin{align*}
&\norm{ \frac{1}{N}  \sum_{n=1}^N \prod_{j=1}^J f_j(T^{jn}x) \prod_{k=1}^{q+1} g_k \circ S^{k n} }_{L^2(\nu)}^2 \\
&\leq  \frac{2}{\floor{\sqrt{N}}} + \frac{4}{\floor{\sqrt{N}}} \sum_{h=1}^{\floor{\sqrt{N}}} \norm{\frac{1}{N} \sum_{n=1}^N \prod_{j=1}^J F_{j, h}(T^{jn}x) \prod_{j=2}^{q+1} G_{j, h} \circ S^{j n} }_{L^2(\nu)} \, .
\end{align*}
By applying the inductive hypothesis, there exists a constant $C > 0$ such that for every $h \in \N$ such that $1 \leq h \leq \floor{N}$, for every measure-preserving system $(Y, \mathcal{G}, \nu, S)$ and $K$ many functions $G_{2, h}, \ldots G_{K+1, h}$, such that
\begin{align*}
&\norm{ \frac{1}{N} \sum_{n=1}^N \prod_{j=1}^J f(T^{jn}x) \prod_{k=1}^{q+1} g_k \circ S^{k n} }_{L^2(\nu)}^2 \\
&\leq  \frac{2}{\floor{\sqrt{N}}} + \frac{4C}{\floor{\sqrt{N}}} \sum_{h=1}^{\floor{\sqrt{N}}}  \\
& \left(\frac{1}{\floor{\sqrt{N}}} + \frac{1}{\floor{\sqrt{N}}^{q-1}} \sum_{h \in [\floor{\sqrt{N}}]^{q-1}} \sup_t \left| \frac{1}{N} \sum_{n=1}^N  e^{2\pi int} \prod_{j=1}^J \left[ \prod_{\eta \in V_{q-1}} (c^{|\eta|} f_j \circ T^{jh\cdot\eta} )  (T^{jn}x)\right] \right| \right)^{2^{-(q-1)}} \, .
\end{align*}
Once we apply the Cauchy-Schwarz inequality ($q-1$ times), and noting the sub-additivity of the function $\xi \mapsto \xi^{2^{-(q-1)}}$, we may conclude the proof.
\end{proof}

\begin{proof}[Proof of Theorem \ref{t:returnTimesMain}]
	We first consider the case $J=1$. Because $f_1$ is a $K$-th order WW function of type $\alpha$, and since we may and will assume that $\norm{f_1}_{L^\infty(\mu)} \leq 1$ without loss of generality, there exists a constant $C>0$ such that for every $N \in \N$, we have
	\[ \frac{1}{\floor{\sqrt{N}}^{K-1}} \sum_{h \in [\floor{\sqrt{N}}]^{K-1}} \norm{ \sup_t \left| \frac{1}{N} \sum_{n=1}^N  e^{2\pi int}  \left[ \prod_{\eta \in V_{K-1}} (c^{|\eta|} f_1 \circ T^{h\cdot\eta} ) \circ T^n \right] \right| }_{L^1(\mu)} <  \frac{C}{N^{\alpha}} \, . \]
	
	Choose $\gamma \in \N$ such that $\alpha \gamma > 1$. Then
	\begin{align*} 
	&\sum_{M=2}^\infty \frac{1}{\floor{M^{\gamma/2}}^{K-1}} \sum_{h \in [\floor{\sqrt{M^{\gamma}}}]^{K-1}} \int \sup_t \left| \frac{1}{\floor{\sqrt{M^\gamma}}} \sum_{n=1}^{\floor{\sqrt{M^\gamma}}} e^{2\pi int} \prod_{\eta \in V_{K-1}} c^{|\eta|} (f_1 \circ T^{h \cdot \eta}) \circ T^n \right| \, d\mu  \\
	&< \sum_{M=2}^\infty \frac{C}{M^{\alpha \gamma}} < \infty.  
	\end{align*}
	By the monotone convergence theorem, we may conclude that there exists a set of full measure $X'$ such that for every $x \in X'$,
	\[ \sum_{M=2}^\infty \frac{1}{\floor{M^{\gamma/2}}^{K-1}} \sum_{h \in [\floor{M^{\gamma/2}}]^{K-1}} \sup_t \left| \frac{1}{M^\gamma} \sum_{n=1}^{M^\gamma} e^{2\pi int} \prod_{\eta \in V_{m-1}} c^{|\eta|} (f_1 \circ T^{h \cdot \eta})  (T^nx) \right| < \infty. \]
    Without loss of generality, we will assume that $\max_{k \in [K]} \norm{g_k}_{L^\infty(\nu)} \leq 1$. Lemma \ref{RT_WWest} tells us that for every $x \in X'$, we have
	\[\sum_{M=2}^\infty \norm{ \frac{1}{M^\gamma} \sum_{n=1}^{M^\gamma} f_1(T^nx) \prod_{k=1}^K g_k \circ S^{k n}}_{L^1(\nu)} < \infty. \]
	Again by the monotone convergence theorem, for every $x \in X'$ and $\nu$-a.e. $y \in Y$, we have
	\[ \sum_{M=2}^\infty  \left|\frac{1}{M^\gamma} \sum_{n=1}^{M^\gamma} f_1(T^nx) \prod_{k=1}^K g_k (S^{k n}y) \right| < \infty \, . \]
	 Therefore, for every $x \in X'$ and for $\nu$-a.e. $y \in Y$, we have
	\[ \lim_{M \to \infty} \frac{1}{M^\gamma} \sum_{n=1}^{M^\gamma} f_1(T^nx) \prod_{k=1}^K g_k (S^{k n}y) = 0\, .\]
	The remainder of the argument (i.e. convergence of the averages for the case outside of the subsequence $(M^\gamma)$) is standard, with details in the proof of Theorem \ref{t:csp}.
	
	One may use similar argument to show that the theorem holds for the case $J > 1$, but requires additional intermediate steps. Without loss of generality, we will assume that $\max_{j \in [J]} \norm{f_j}_{L^\infty(\mu)} \leq 1$. In order to apply Lemma \ref{RT_WWest}, we must have control over the average
	\[ \sup_t \left| \frac{1}{N} \sum_{n=1}^N  e^{2\pi int} \prod_{j=1}^J \left[ \prod_{\eta \in V_{K-1}} (c^{|\eta|} f_j \circ T^{jh\cdot\eta} )  (T^{jn}x)\right] \right|^2 \, . \]
	By applying the Van der Corput lemma (\ref{vdc-1}), we have
    \begin{align}\label{est1}
        &\sup_t \left| \frac{1}{N} \sum_{n=1}^N  e^{2\pi int} \prod_{j=1}^J \left[ \prod_{\eta \in V_{K-1}} (c^{|\eta|} f_j \circ T^{jh\cdot\eta} )  (T^{jn}x)\right] \right|^2 \\
        &\leq \frac{2}{\floor{\sqrt{N}}} + \frac{4}{\floor{\sqrt{N}}} \sum_{\ell=1}^{\floor{\sqrt{N}}} \left| \frac{1}{N} \sum_{n=1}^N \prod_{j=1}^J \left( \prod_{\eta \in V_{K}} c^{|\eta|} f_j \circ T^{j (h, \ell)\cdot \eta}  \right) (T^{jn}x) \right| \nonumber
    \end{align} 
    for every $x \in X$ (where $(h, l)$ denotes the vector of length $K$ obtained by adjoining $l$ to $h$). With this estimate in mind, we consider the integral of the right-hand side of the inequality in Lemma \ref{RT_WWest} with respect to $\mu$. After applying H\"older's estimate several times (as well as the Cauchy-Schwarz inequality to square the $\sup_t$ term) and applying the estimate (\ref{est1}), there exists a positive constant $C_1>0$ for which the integral is bounded above by
	\begin{align*}
	&C_1 \left[ \frac{1}{\floor{\sqrt{N}}} + \left( \frac{1}{\floor{\sqrt{N}}^{K-1}} \sum_{h \in [\floor{\sqrt{N}}]^{K-1}} \Bigg( \frac{1}{\floor{\sqrt{N}}} \right. \right. \\
	&\left. \left. \left. + \frac{1}{\floor{\sqrt{N}}} \sum_{\ell=1}^{\floor{\sqrt{N}}} \norm{\frac{1}{N} \sum_{n=1}^N \prod_{j=1}^J \left( \prod_{\eta \in V_K} c^{|\eta|} f_j \circ T^{j(h, \ell) \cdot \eta} \right) \circ T^{jn} }_{L^1(\mu)}  \right) \right)^{1/2} \right]^{2^{-(K-1)}} \, . \end{align*}
	Let us focus on the average
	\[\frac{1}{\floor{\sqrt{N}}^{K-1}} \sum_{h \in \floor{\sqrt{N}}^{K-1}} \frac{1}{\floor{\sqrt{N}}} \sum_{\ell=1}^{\floor{\sqrt{N}}} \norm{\frac{1}{N} \sum_{n=1}^N \prod_{j=1}^J \left( \prod_{\eta \in V_K} c^{|\eta|} f_j \circ T^{j(h, \ell) \cdot \eta} \right) \circ T^{jn} }_{L^1(\mu)} \, , \]
	which equals to
	\[\frac{1}{\floor{\sqrt{N}}^{K}} \sum_{h \in \floor{\sqrt{N}}^{K}}  \norm{\frac{1}{N} \sum_{n=1}^N \prod_{j=1}^J \left( \prod_{\eta \in V_K} c^{|\eta|} f_j \circ T^{jh \cdot \eta} \right) \circ T^{jn} }_{L^1(\mu)} \, . \]
	By one of our Bourgain bounds (Theorems \ref{t:BBfor2}, or \ref{BBgeq3}), there exists a positive constant $C_2>0$ (that does \textit{not} depend on the values of $h \in [\floor{\sqrt{N}}]^{K-1}$) such that for every $N \in \N$, last average is bounded above by
	\begin{align*}
		&\frac{C_2}{\floor{\sqrt{N}}^K} \sum_{h \in [\floor{\sqrt{N}}]^K}\left( \frac{1}{\floor{\sqrt{N}}^{2^{-(J-2)}}} + \frac{1}{\floor{\sqrt{N}}^{J-2}} \sum_{\ell \in {\floor{\sqrt{N}}}^{J-2}} \right. \\
		&\left.  \norm{\sup_t \left| \frac{1}{N} \sum_{n=1}^N e^{2\pi int} \left[\prod_{\rho \in V_{J-2}} c^{|\rho|} \left( \prod_{\eta \in V_k} c^{|\eta|} f_1 \circ T^{h \cdot \eta} \right) \circ T^{\ell \cdot \rho} \right] \circ T^n \right| }_{L^1(\mu)}^{2/3} \right)^{2^{-(J-2)}} \, .
	\end{align*}
	By applying the Cauchy-Schwarz inequality, the last display is bounded above by
	\begin{align*}
		&C_2\left( \frac{1}{\floor{\sqrt{N}}^{2^{-(J-2)}}} + \frac{1}{\floor{\sqrt{N}}^{J+K-2}} \sum_{h \in {[\floor{\sqrt{N}}}]^{J+K-2}} \right. \\
		&\left.  \norm{\sup_t \left| \frac{1}{N} \sum_{n=1}^N e^{2\pi int} \left[\prod_{\eta \in V_{J+K-2}} c^{|\eta|} \left( f_1 \circ T^{h \cdot \eta} \right)\right] \circ T^n \right| }_{L^1(\mu)}^{2/3} \right)^{2^{-(J-2)}} \, .
	\end{align*}
	By recalling that $f_1$ is a $J+K-1$-th WW function, one may apply the argument involving the monotone convergence theorem as we have witnessed for the case $J=1$.
\end{proof}

\begin{corollary}\label{c:retutnTimesCF}
     Let $J, K \in \N$, and $K \geq 2$. Let $(X, \mathcal{F}, \mu, T)$ be a $J+K-1$-th Wiener-Wintner system, and let $f_1, f_2, \ldots f_J \in L^\infty(\mu)$. Suppose that for some $j \in [J]$, $f_j \in L^2(\mathcal{Z}_{J+K-1})^\perp$. Then there exists a set $X' \subset X$ such that $\mu(X') = 1$, and for every $x \in X'$ and any other measure-preserving system $(Y, \mathcal{G}, \nu, S)$ and $g_1, g_2, \ldots, g_K \in L^\infty(\nu)$, we have
		\[\lim_{N \to \infty} \frac{1}{N} \sum_{n=1}^N \prod_{j=1}^J f_j(T^{jn}x) \prod_{k=1}^K g_k(S^{kn}y) = 0 \]
		for $\nu$-a.e. $y \in Y$.
 \end{corollary}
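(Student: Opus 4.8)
The plan is to deduce Corollary~\ref{c:retutnTimesCF} from Theorem~\ref{t:returnTimesMain} by an $L^2$-density argument, in the same spirit in which Theorem~\ref{t:wwchar} is obtained from Lemma~\ref{l:mea_cf}. Using the general-index form of Theorem~\ref{t:returnTimesMain} (the remark following its statement), we fix the index $j_0 \in [J]$ with $f_{j_0} \in L^2(\mathcal{Z}_{J+K-1})^\perp$, and (assuming $f_{j_0}\neq 0$, else the statement is trivial) we rescale so that $\norm{f_i}_\infty \le 1$ for all $i \in [J]$, which only rescales the averages. Since $(X,\mathcal{F},\mu,T)$ is a $(J+K-1)$-th order WW system, there are $\alpha>0$ and $p\in[1,\infty]$ and an $L^2$-dense set of $(J+K-1)$-th order WW functions of power type $\alpha$ in $L^p$ inside $L^2(\mathcal{Z}_{J+K-1})^\perp$; as that subspace is closed and contains $f_{j_0}$, we may choose such WW functions $f_{j_0}^{(m)}$ with $\norm{f_{j_0}-f_{j_0}^{(m)}}_{L^2(\mu)}\le m^{-2}$. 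These lie in $L^\infty(\mu)$ by definition, and crucially we do not need a uniform $L^\infty$ bound on them.

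For each $m$ we apply Theorem~\ref{t:returnTimesMain} to the $J$-tuple obtained by replacing the $j_0$-th entry of $(f_1,\dots,f_J)$ by the WW function $f_{j_0}^{(m)}$; this produces $X'_m\subset X$ with $\mu(X'_m)=1$, \emph{independent} of the second system and the functions on it, such that for every $x\in X'_m$, every system $(Y,\mathcal{G},\nu,S)$, every $g_1,\dots,g_K\in L^\infty(\nu)$, and $\nu$-a.e.\ $y$, the corresponding average (with $f_{j_0}^{(m)}$ in place of $f_{j_0}$) tends to $0$. Separately, since $\mu$ is $T^{j_0}$-invariant, the maximal inequality (Lemma~\ref{l:maxIneq}) applied to the transformation $T^{j_0}$ gives
\[
\Bigl\| \sup_N \frac1N \sum_{n=1}^N \bigl| f_{j_0}-f_{j_0}^{(m)} \bigr|\circ T^{j_0 n} \Bigr\|_{L^2(\mu)} \le 2m^{-2},
\]
which is summable in $m$, so by the monotone convergence theorem there is a full-measure $X_1\subset X$ on which $\varepsilon_m(x):=\sup_N \frac1N \sum_{n=1}^N |f_{j_0}-f_{j_0}^{(m)}|(T^{j_0 n}x)\to 0$ as $m\to\infty$. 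Put $X':=X_1\cap\bigcap_m X'_m$, a full-measure subset of $X$ depending only on $(X,\mathcal{F},\mu,T)$ and the $f_i$.

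Now fix $x\in X'$, an arbitrary system $(Y,\mathcal{G},\nu,S)$, and $g_1,\dots,g_K\in L^\infty(\nu)$ normalized so that $\norm{g_k}_\infty\le 1$. For each $m$ take the full-measure $Y_m\subset Y$ given by $x\in X'_m$, and set $Y_0:=\bigcap_m Y_m$. For $y\in Y_0$ and every $m$, splitting $f_{j_0}=(f_{j_0}-f_{j_0}^{(m)})+f_{j_0}^{(m)}$ and using $\bigl|\prod_{i\neq j_0} f_i\bigr|\le 1$ and $\bigl|\prod_k g_k\bigr|\le 1$,
\begin{align*}
&\limsup_N \Bigl| \frac1N \sum_{n=1}^N \prod_{j=1}^J f_j(T^{jn}x)\prod_{k=1}^K g_k(S^{kn}y) \Bigr| \\
&\qquad\le \varepsilon_m(x) + \limsup_N \Bigl| \frac1N \sum_{n=1}^N \Bigl(\prod_{i\neq j_0} f_i(T^{in}x)\Bigr) f_{j_0}^{(m)}(T^{j_0 n}x)\prod_{k=1}^K g_k(S^{kn}y) \Bigr|,
\end{align*}
and the last $\limsup$ vanishes because $x\in X'_m$ and $y\in Y_m$. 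Letting $m\to\infty$ drives $\varepsilon_m(x)\to0$ while the left-hand side is independent of $m$, so it equals $0$; this is exactly the assertion of the corollary.

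The argument is routine once Theorem~\ref{t:returnTimesMain} is available, and the one place to be careful is \emph{not} to try to set up a maximal inequality for the weighted multiple average on $Y$. The point that makes the proof short is that $\prod_{k=1}^K g_k(S^{kn}y)$ is bounded by $1$, so the discrepancy between the $f_{j_0}$-average and its $f_{j_0}^{(m)}$-approximation is dominated, uniformly in $N$, in $y$, and in the choice of $(Y,S)$ and the $g_k$, by the ordinary Birkhoff maximal function of $|f_{j_0}-f_{j_0}^{(m)}|$ for $T^{j_0}$, to which the scalar maximal inequality applies directly. All the heavy machinery---the Bourgain-type bounds of Theorems~\ref{t:BBfor2} and~\ref{BBgeq3}, Lemma~\ref{RT_WWest}, and the WW decay hypothesis---is used only inside Theorem~\ref{t:returnTimesMain} and is thereby fully encapsulated in the sets $X'_m$.
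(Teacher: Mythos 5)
Your proposal is correct and follows essentially the same route as the paper's own proof: approximate $f_{j_0}$ in $L^2$ by Wiener--Wintner functions $\phi_m$ with $\norm{f_{j_0}-\phi_m}_2\le m^{-2}$, invoke Theorem \ref{t:returnTimesMain} for each $\phi_m$ to get the sets $X'_m$, control the discrepancy uniformly in $N$, $y$, and the second system by the Birkhoff maximal function of $|f_{j_0}-\phi_m|$ via Lemma \ref{l:maxIneq}, sum over $m$ and apply monotone convergence, and finish with the triangle inequality on the $\limsup$. Your explicit observation that the error term $\varepsilon_m(x)$ is independent of $(Y,\mathcal{G},\nu,S)$ and the $g_k$ is a slightly cleaner articulation of a point the paper leaves implicit, but the argument is the same.
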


 \begin{proof}
Without loss of generality, we will assume that $f_1 \in L^2(\mathcal{Z}_{J+K-1})^\perp$. We will further assume that for every $j \in [J]$, $\norm{f_j}_{L^\infty(\mu)} \leq 1$. Let $(Y, \mathcal{G}, \nu, S)$ be any measure-preserving system, and for every $k \in [K]$, we will also assume that $g_k \in L^\infty(\nu)$ for which $\norm{g_k}_{L^\infty(\nu)} \leq 1$.

Because $(X, \mathcal{F}, \mu, T)$ is a $J+K-1$-th WW system, there exists a sequence $(\phi_m)_{m \in \N}$ in $L^2(\mathcal{Z}_{J+K-1})^\perp$ such that $\phi_m \to f$ in $L^2(\mu)$-norm. We will assume (by possibly passing to a subsequence) that we have $\norm{f_1 - \phi_m} < m^{-2}.$ By the maximal inequality (Lemma \ref{l:maxIneq}), for $\nu$-a.e. $y \in Y$ and for every $m \in \N$, one has
\begin{align*}
& \norm{ \sup_N \left| \frac{1}{N} \sum_{n=1}^N \prod_{j=1}^J f_j \circ T^{jn} \prod_{k=1}^K g_k(S^{kn}y) - \frac{1}{N} \sum_{n=1}^N \phi_m \circ T^n \prod_{j=2}^J f_j \circ T^{jn}x \prod_{k=1}^K g_k(S^{kn}y) \right|  }_{L^1(\mu)} \\
&\leq \norm{ \sup_N \frac{1}{N} \sum_{n=1}^N | f_1 - \phi_m | \circ T^n }_{L^2(\mu)} \leq 2 \norm{f_1 - \phi_m}_{L^2(\mu)}  \leq 2m^{-2} \, .
\end{align*}
By summability of the sequence $m^{-2}$, we apply the monotone convergence theorem to show that there exists a set of full measure $\tilde{X} \subset X$ such that for every $x \in \tilde{X}$, we have
\begin{equation}\label{m_to_infty}
\lim_{m \to \infty} \sup_N \left| \frac{1}{N} \sum_{n=1}^N \prod_{j=1}^J f_j(T^{jn}x) \prod_{k=1}^K g_k(S^{kn}y) - \frac{1}{N} \sum_{n=1}^N \phi_m (T^nx) \prod_{j=2}^J f_j \circ T^{jn} \prod_{k=1}^K g_k(S^{kn}y) \right| = 0 \, .
\end{equation}

For any $m \in \N$, let $X_m \subset X$ be the set of full measure from Theorem \ref{t:returnTimesMain} that is associated to $\phi_m$ as well as the functions $f_2, f_3, \ldots, f_J$. Set $X' = \tilde{X} \cap \left( \bigcap_{m=1}^\infty X_m \right)$. Clearly, $\mu(X') = 1$, and we claim that this is the desired set of full measure. Suppose that $x \in X'$. For any other measure-preserving system $(Y, \mathcal{G}, \nu, S)$ and $g_1, g_2, \ldots g_k \in L^\infty(\mu)$, we may apply the triangle inequality to show that for any $m \in \N$, we have
\begin{align*}
    & \limsup_{N \to \infty} \left|\frac{1}{N} \sum_{n=1}^N \prod_{j=1}^J f_j(T^{jn}x) \prod_{k=1}^K g_k(S^{kn}y) \right| \\
    &\leq \limsup_{N \to \infty} \left| \frac{1}{N} \sum_{n=1}^N \prod_{j=1}^J f_j(T^{jn}x) \prod_{k=1}^K g_k(S^{kn}y) - \frac{1}{N} \sum_{n=1}^N \phi_m(T^nx) \prod_{j=2}^J f_j(T^{jn}x) \prod_{k=1}^K g_k(S^{kn}y) \right| + 0 \\
    &\leq \sup_{N \in \N} \left| \frac{1}{N} \sum_{n=1}^N \prod_{j=1}^J f_j(T^{jn}x) \prod_{k=1}^K g_k(S^{kn}y) - \frac{1}{N} \sum_{n=1}^N \phi_m(T^nx) \prod_{j=2}^J f_j(T^{jn}x) \prod_{k=1}^K g_k(S^{kn}y) \right| \, .
\end{align*}
We may conclude the proof by letting $m \to \infty$ and applying (\ref{m_to_infty}).
\end{proof}

\begin{remark}
In the statement of Corollary \ref{c:retutnTimesCF}, we can replace $\mathcal{Z}_{J+K-1}$ with any $\sigma$-subalgebra $\mathcal{B}$ for which the $(J+K-1)$-th order WW functions (of a certain power type) are dense in $L^2(\mathcal{B})^\perp$. For instance, Corollary \ref{c:PinskerWW} tells us that the Pinsker algebra $\mathcal{P}$ is such a factor. Furthermore, if $X := X_1 \times X_2$ where $(X_1, \mathcal{F}_1, \mu_1, T_1)$ is a K system and $(X_2, \mathcal{F}_2, \mu_2, T_2)$ is an ergodic system, then the $\sigma$-subalgebra $X \times \mathcal{F}_2$ is a characteristic factor of the multiple recurrence return times theorem by Theorem \ref{t:Kprod}. If $(X_2, \mathcal{F}_2, \mu_2, T_2)$ is a $J+K-1$-th order WW system, then the $\sigma$-subalgebra $X_1 \times \mathcal{Z}^Y_{J+K-1}$ is the characteristic factor for the multiple recurrence return times theorem by Theorem \ref{t:productWWS}.
\end{remark}

\appendix
\section{Proof of Theorem \ref{t:Ksys} (Pinsker algebra/K system example) for general case}\label{a:PA}
\begin{theorem}\label{t:generalPA}
The span of functions $f_k^A$, where $A\in T^{-l}\A$ and $k\in \Z$, are $J$-th order WW functions of power type $1/6$ in $L^2$ for all $J\geq 1$
\end{theorem}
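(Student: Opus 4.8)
The plan is to combine the multilinearity-concerns lemma (Lemma \ref{l:mlc}) with the conditional-expectation estimate behind Lemma \ref{l:an}, specialized to the constant sequence $a_n\equiv 1$. By Lemma \ref{l:mlc} (with $p=2$ and $\alpha=1/6$), it suffices to fix an arbitrary collection $(e_\eta)_{\eta\in V_{J-1}}$ with each $e_\eta=f_{k_\eta}^{A_\eta}$, where $k_\eta,l_\eta\in\Z$ and $A_\eta\in T^{-l_\eta}\A$, and to produce a constant $C$ (depending only on these data) for which the estimate (\ref{est:mlc}) holds for all $N$. As usual we may assume $l_\eta<k_\eta$ for every $\eta$ (otherwise $f_{k_\eta}^{A_\eta}=0$ and there is nothing to prove), and the case $J=1$ is \cite[Theorem 4]{assani}, so we take $J\ge 2$.

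Fix $h\in[\lfloor\sqrt N\rfloor]^{J-1}$. Since $c^{|\eta|}$ acts trivially on the real-valued $f_{k_\eta}^{A_\eta}$, the product $\prod_{\eta\in V_{J-1}}c^{|\eta|}f_{k_\eta}^{A_\eta}\circ T^{h\cdot\eta}$ has exactly the shape $\prod_{j}f_{k_j}^{A_j}\circ T^{p_j}$ treated in Lemma \ref{l:an}, now reindexed by $\eta\in V_{J-1}$ (so that the parameter $J$ of that lemma becomes $2^{J-1}$) with translates $p_\eta:=h\cdot\eta$. Because every coordinate of $h$ is positive, $\eta=\mathbf{0}$ is the \emph{unique} minimizer of $p_\eta$ and $p_{\mathbf{0}}=0$, so $\mathbf{0}$ plays the role of the distinguished index $\alpha$ (and of the shift $q_\alpha=0$) in that argument. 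Set $L:=\max\{|l_a-l_b|,|k_a-l_b|,|k_a-k_b|:a,b\in V_{J-1}\}$ and call $h$ \emph{good} when $h\cdot\eta>L$ for all $\eta\ne\mathbf{0}$. For good $h$ the hypothesis of Lemma \ref{l:an} is met (in its weakened form, where only the non-minimal translates need to exceed $\min_j p_j$ by more than $L$), so its proof — an application of Van der Corput followed by the conditional-expectation cancellation that expands each factor with $\eta\ne\mathbf{0}$ as $\1_{A_\eta}-\E(\1_{A_\eta}\mid T^{-k_\eta}\A)$ and uses $h\cdot\eta>L$ to carry those factors through the $T^{-k_{\mathbf{0}}}\A$-conditioning — yields a constant $C$, \emph{independent of $h$}, with
\[
\norm{\sup_t\Bigl|\frac{1}{N}\sum_{n=1}^N e^{2\pi i n t}\Bigl[\prod_{\eta\in V_{J-1}}c^{|\eta|}f_{k_\eta}^{A_\eta}\circ T^{h\cdot\eta}\Bigr]\circ T^n\Bigr|}_2^2\le\frac{C}{N^{1/2}},
\]
and hence, raising to the power $1/3$, $\norm{\,\cdot\,}_2^{2/3}\le C^{1/3}N^{-1/6}$ for every good $h$.

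To finish, the \emph{bad} $h$ (those with $h\cdot\eta\le L$ for some $\eta\ne\mathbf{0}$) are absorbed in the average exactly as in the proof of Theorem \ref{th:prodfunc}: for a fixed $\eta\ne\mathbf{0}$ whose support has size $s\ge 1$, the condition $h\cdot\eta\le L$ forces the $s$ coordinates $h_i$ with $\eta_i=1$ into one of $O_L(1)$ configurations while the other $J-1-s$ coordinates vary freely, so the number of bad $h$ in $[\lfloor\sqrt N\rfloor]^{J-1}$ is $O_L(\lfloor\sqrt N\rfloor^{J-2})$ (this is the count (\ref{cardBound})). Bounding the Wiener--Wintner average trivially by $(2^{2^{J-1}})^{2/3}$ on those $h$, their total contribution to the average in (\ref{est:mlc}) is $O(\lfloor\sqrt N\rfloor^{-1})\le O(N^{-1/6})$; together with the good-$h$ estimate (an average of terms each at most $C^{1/3}N^{-1/6}$) this establishes (\ref{est:mlc}) with $\alpha=1/6$, after enlarging $C$ to cover the finitely many small $N$. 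Lemma \ref{l:mlc} then gives that every element of $span(\mathcal E)$ is a $J$-th order WW function of power type $1/6$ in $L^2$.

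I expect the only real work to be bookkeeping: checking that the $2^{J-1}$ translates $h\cdot\eta$ obey the same finiteness-of-exceptions hypothesis as the exponents in Lemma \ref{l:an} — the key point being that $\mathbf{0}$ is the distinguished minimal index, which makes each conditional-expectation cancellation go through verbatim — and carrying out the (routine) higher-dimensional count of the bad $h$. Should one prefer to keep the appendix independent of Lemma \ref{l:an}, it is equally straightforward to rerun the computation in the proof of Theorem \ref{t:Ksys} with the pair $(f_{k_1}^{A_1},f_{k_2}^{A_2})$ at positions $(0,h)$ replaced by the family $(f_{k_\eta}^{A_\eta})_{\eta\in V_{J-1}}$ at positions $(h\cdot\eta)_{\eta}$.
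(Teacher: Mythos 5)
Your proposal is correct and follows essentially the same route as the paper's Appendix~\ref{a:PA}: Van der Corput, then the martingale/conditional-expectation cancellation of the off-diagonal terms anchored at the minimal index $\eta=\mathbf{0}$, a bound uniform over the ``good'' $h$, and a trivial bound on the $O_L(\lfloor\sqrt N\rfloor^{J-2})$ exceptional $h$ absorbed by the average. The only difference is packaging — you invoke Lemma~\ref{l:an} with $a_n\equiv 1$ (in its weakened form) where the appendix reruns the computation of Theorem~\ref{t:Ksys} directly, but the proof of Lemma~\ref{l:an} is that same computation, so this is a legitimate and slightly tidier shortcut.
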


\begin{proof}

For every $\eta \in V_{J-1}$, we let $k_\eta, l_\eta \in \Z$ and $A_\eta \in T^{-l_\eta}\mathcal{A}$. Again, we may assume that $l_\eta < k_\eta$. For a fixed $h\in \N^{J-1}$, we are interested in 
$$ \left \Vert \sup_{t} \left | \frac{1}{N} \sum_{n=1}^N e^{2\pi i n t} \left[ \prod_{\eta\in V_{J-1}} f^{A_\eta}_{k_\eta} \circ T^{\eta \cdot h}\right] \circ T^n  \right| \right\Vert_2^{2/3}$$

For notational convention, let $(0, \dots, 0) \in V_{J-1}$ be denoted $0$. Let $h = (h_1, \dots, h_{J-1})$ be such that for each component $h_i$ we have $h_i > \max_\eta\{l_0 - l_\eta, k_0 - l_\eta, k_0 - k_\eta, 1\}$. This ensures that for all $\eta \neq 0$, the following conditions hold:
$$\eta \cdot h + l_\eta > l_0 \quad \quad \eta\cdot h + l_\eta > k_0 \quad \quad \eta \cdot h + k_\eta > k_0$$
For all such $h$, we establish polynomial decay on the above term. Since only finitely many $h$ fail to satisfy these conditions, we bound their terms trivially and their contribution is lost in the average.

Define $F = \prod_{\eta\in V_{J-1}} f^{A_\eta}_{k_\eta} \circ T^{\eta \cdot h} $. As each $\Vert f_{k_\eta}^{A_\eta}\Vert_\infty \leq 2$, we have $\Vert F\Vert_\infty \leq 2^{2^{J-1}}$. As before, we compute pointwise by Van der Corput to see that
\begin{align*}
\sup_{t} \left|\sum_{n=1}^N e^{2\pi i n t} F(T^{n} x) \right|^2 &\leq 2^{2^{J}+1}N + 4\sum_{m = 1}^{N-1}\left| \sum_{n = 1}^{N-m} F(T^{n} x)F(T^{n + m} x)\right| \\
&\leq 2^{2^{J}+1}N + 2^{2^{J}+2}(k_0 - l_0)N + 4\sum_{m = k_0 - l_0+1}^{N-1}\left| \sum_{n = 1}^{N-m} F(T^{n} x)F(T^{n + m} x)\right|
\end{align*}
holds for almost all $x$. By integrating both sides and bounding the $L^1(\mu)$ norm by the $L^2(\mu)$ norm, we have
\begin{equation} \label{gen-k-est}
\begin{split}
&\int \sup_{t} \left| \sum_{n=1}^N e^{2\pi i n t} F(T^{n} x) \right|^2 \, d\mu(x) \\ &\leq 2^{2^{J}+1}N + 2^{2^{J}+2}(k_0 - l_0)N + 4\sum_{m = k_0 - l_0+1}^{N-1} \left(\int\left| \sum_{n = 1}^{N-m} F(T^{n} x)F(T^{n + m} x)\right|^2 \, d\mu(x)\right)^{1/2} \, .
\end{split}
\end{equation}
Consider the inner $n$ sum, which is squared. If we factor out this product, the diagonal terms can be bounded away by $2^{2^{J+1}}N$, and we are left with off-diagonal terms
\begin{align*}
&2\sum_{n < j}^{N-m} \int  F(T^{n} x)F(T^{n + m} x)F(T^{j} x)F(T^{j + m} x) \, d\mu(x) \\
&= 2\sum_{n < j}^{N-m} \int \prod_{\eta\in V_{J-1}} f^{A_\eta}_{k_\eta} \circ T^{n + \eta \cdot h} \cdot f^{A_\eta}_{k_\eta} \circ T^{n + m + \eta \cdot h} \cdot f^{A_\eta}_{k_\eta} \circ T^{j + \eta \cdot h} \cdot f^{A_\eta}_{k_\eta} \circ T^{j+m + \eta \cdot h}\, d\mu \, .
\end{align*}
Recall that $m > k_0 - l_0$. We claim that if $j > n + k_0 - l_0$, then 
\begin{align} \label{gen-int}
\int \prod_{\eta\in V_{J-1}} f^{A_\eta}_{k_\eta} \circ T^{n + \eta \cdot h} \cdot f^{A_\eta}_{k_\eta} \circ T^{n + m + \eta \cdot h} \cdot f^{A_\eta}_{k_\eta} \circ T^{j + \eta \cdot h} \cdot f^{A_\eta}_{k_\eta} \circ T^{j+m + \eta \cdot h}\, d\mu = 0 \, .
\end{align}
To this end, we consider the case $j > n + m$. Notice that in the product, the 4 above functions are measurable to the degrees $T^{-q}\A$ for the following values of $q$:
$$n + \eta\cdot h + l_\eta \quad n + m + \eta\cdot h + l_\eta \quad j + \eta\cdot h + l_\eta \quad j + m + \eta\cdot h + l_\eta $$
Hence, by the condition $j > n+m$, and the initially chosen conditions on $h$, we have

$$\prod_{\eta\in V_{J-1}} f^{A_\eta}_{k_\eta} \circ T^{n + m + \eta \cdot h} \cdot f^{A_\eta}_{k_\eta} \circ T^{j + \eta \cdot h} \cdot f^{A_\eta}_{k_\eta} \circ T^{j+m + \eta \cdot h} \quad \text{is } T^{-(n + m + l_0)}\A \text{ measurable.}$$
So for the integral (\ref{gen-int}), we would get the same value if we conditioned the integrand on $T^{-(n + m + l_0)}\A$, and the above product of functions can factor out of this conditional expectation. So the claim that (\ref{gen-int}) is zero reduces to showing 
$$\E\left( \prod_{\eta \in V_{J-1}} f^{A_\eta}_{k_\eta} \circ T^{n+\eta \cdot h} \Bigg| T^{-(n + m + l_1)}\A\right) = 0 \, .$$
Recall that $f_{k_\eta}^{A_\eta} = \1_{A_\eta} - \E(\1_{A_\eta} | T^{-k_\eta})$. Hence, the product of many such functions factors out. As a slight abuse of notation, we let $\theta \in V_{2^{J-1}-1}$ be the list of elements of $\{0, 1\}^{2^{J-1}-1}$, but indexed by nonzero $\eta \in V_{J-1}$, of which there are $2^{J-1}-1$. Define
$$G_\theta = \prod_{\overset{\eta \in V_{J-1} - \{0\}}{\theta_\eta = 0}} \1_{A_\eta}\circ T^{\eta \cdot h} \prod_{\overset{\eta \in V_{J-1} - \{0\}}{\theta_\eta = 1}} (-1)\E(\1_{A_\eta} | T^{-k_\eta}\A) \circ T^{\eta\cdot h}$$
Notice that all of the functions in the left product are $T^{-(\eta\cdot h+ l_\eta)}\A$ measurable and all of the functions in the right product are $T^{-(\eta\cdot h+ k_\eta)}\A$ measurable. We expand the product to see
$$\prod_{\eta \in V_{J-1}} f^{A_\eta}_{k_\eta} \circ T^{n+\eta \cdot h} = \sum_{\theta \in V_{2^{J-1}-1}} \1_{A_0} \circ T^n \cdot G_\theta \circ T^n - \E(\1_{A_0} | T^{-k_0} \A) \circ T^n \cdot G_\theta \circ T^n$$
We claim for each $\theta$ that the above two summands cancel under the $T^{-(n + m + l_0)}\A$ conditional. Looking at the left summand, we have

$$\E(\1_{A_0} \circ T^n \cdot G_\eta \circ T^n | T^{-(n + m + l_0)}\A) = \E(\1_{A_0} \cdot G_\theta | T^{-(m + l_0)}\A) \circ T^n \, ,$$
On the right, all of our conditions on $h$ allow $G_\theta$ to pass inside the $T^{-k_0}\A$ conditioning:
\begin{align*}
\E(\E(\1_{A_0}|T^{-k_0}\A) \circ T^n \cdot G_\theta \circ T^n | T^{-(n + m + l_0)}\A) &= \E(\E(\1_{A_0}|T^{-k_0}\A) \cdot G_\theta | T^{-( m + l_0)}\A) \circ T^n \\
&= \E(\E(\1_{A_0} \cdot G_\theta |T^{-k_0}\A)  | T^{-( m + l_0)}\A) \circ T^n \\
&= \E(\1_{A_0} \cdot G_\theta  | T^{-( m + l_0)}\A) \circ T^n
\end{align*}
as $m +l_0 > k_0$. So these terms cancel, and the integral (\ref{gen-int}) does indeed vanish under the conditioning.

For the case $n + m \geq j > n + k_0 - l_0$, we similarly observe that
$$\prod_{\eta\in V_{J-1}} f^{A_\eta}_{k_\eta} \circ T^{n + m + \eta \cdot h} \cdot f^{A_\eta}_{k_\eta} \circ T^{j + \eta \cdot h} \cdot f^{A_\eta}_{k_\eta} \circ T^{j+m + \eta \cdot h} \quad \text{is } T^{-(j + l_0)}\A \text{ measurable.}$$
and by the same argument, conditioning (\ref{gen-int}) under $T^{-(j + l_0)}\A$ shows that it is zero.

Returning to our bound (\ref{gen-k-est}), we have observed that the off-diagonal terms vanish when $j$ is larger than $n$ by $k_0 - l_0$. Hence, for each $n = 1$ to $N - m$, at most $k_0 - l_0$ terms are nonzero, and the total number of nonzero terms in the sum over $n$ and $J$ can be bounded by $N(k_0 -l_0)$. Plugging this into our estimate (\ref{gen-k-est}), we see
\begin{align*}
&\int \sup_{t} \left| \sum_{n=1}^N e^{2\pi i n t} F(T^{n} x) \right|^2 \, d\mu(x) \\
&\leq 2^{2^{J}+1}N + 2^{2^{J}+2}(k_0 - l_0)N + 4\sum_{m = k_0 - l_0+1}^{N-1} \left(2^{2^{J+1}} N + 2^{2^{J+1}+1}N(k_0-l_0) \right)^{1/2}\, .
\end{align*}
Hence, we have the bound
\begin{align*}
\int \sup_{t} \left| \frac{1}{N} \sum_{n=1}^N e^{2\pi i n t} \left[ \prod_{\eta\in V_{J-1}} f^{A_\eta}_{k_\eta} \circ T^{\eta \cdot h}\right](T^nx) \right|^2 \, d\mu(x) &\leq \frac{C}{N^{1/2}} \, ,
\end{align*}
which is uniform in sufficiently large $h$ with all components $h_i > L$, as selected before. The constant $C$ depends only on $A_\eta, k_\eta, l_\eta$. Raising both sides to the $1/3$ power, we have
\begin{align*}
\left\Vert \sup_{t} \left| \frac{1}{N} \sum_{n=1}^N e^{2\pi i n t} \left[ \prod_{\eta\in V_{J-1}} f^{A_\eta}_{k_\eta} \circ T^{\eta \cdot h}\right]\circ T^n\right|\right\Vert_2^{2/3} &\leq \frac{C}{N^{1/6}} \, .
\end{align*}
For small $1 \leq h \leq L$, we have the bound
\begin{align*}
\left\Vert \sup_{t} \left| \frac{1}{N} \sum_{n=1}^N e^{2\pi i n t} \left[ \prod_{\eta\in V_{J-1}} f^{A_\eta}_{k_\eta} \circ T^{\eta \cdot h}\right] \circ T^n \right|\right\Vert_2^{2/3} &\leq 2^{2^{J}/3}
\end{align*}
by the triangle inequality. As before they are lost in the average for $N > (L + 1)^2$. Therefore,
\begin{align*}
&\quad\frac{1}{\lfloor \sqrt{N} \rfloor^{J-1}} \sum_{h \in [\lfloor \sqrt{N} \rfloor]^{J-1}} \left\Vert \sup_{t} \left| \frac{1}{N} \sum_{n=1}^N e^{2\pi i n t} \left[ \prod_{\eta\in V_{J-1}} f^{A_\eta}_{k_\eta} \circ T^{\eta \cdot h}\right]\circ T^n \right|\right\Vert_2^{2/3} \\
& = \frac{1}{\lfloor \sqrt{N} \rfloor^{J-1}} \sum_{h \in [L]^{J-1}} \left\Vert \sup_{t} \left| \frac{1}{N} \sum_{n=1}^N e^{2\pi i n t} \left[ \prod_{\eta\in V_{J-1}} f^{A_\eta}_{k_\eta} \circ T^{\eta \cdot h}\right] \circ T^n \right|\right\Vert_2^{2/3} \\
&\quad +\frac{1}{\lfloor \sqrt{N} \rfloor^{J-1}} \sum_{h \in \{L+1, \dots, \lfloor \sqrt{N} \rfloor\}^{J-1}} \left\Vert \sup_{t} \left| \frac{1}{N} \sum_{n=1}^N e^{2\pi i n t} \left[ \prod_{\eta\in V_{J-1}} f^{A_\eta}_{k_\eta} \circ T^{\eta \cdot h}\right]\circ T^n \right|\right\Vert_2^{2/3} \\
& \leq \frac{2^{2^{J}/3}L^{J-1}}{\lfloor \sqrt{N} \rfloor^{J-1}}  + \frac{(\lfloor \sqrt{N} \rfloor - L - 1)^{J-1}}{\lfloor \sqrt{N} \rfloor^{J-1}} \frac{C}{N^{1/6}} \leq \frac{C'}{N^{1/6}}
\end{align*}
for a larger $C'$ that still only depends on $A_\eta, l_\eta, k_\eta$. Since the bound $N>(L+1)^2$ only depends on the same constants, we can increase $C'$ further to get the above bound for small $N$, without changing the dependence.

Hence, the multilinearity conditions have been satisfied.
\end{proof}

\section{Proof of Theorem \ref{t:csp} (skew product example) for general case}\label{a:sp}

This proof uses the following lemma:
\begin{lemma}
Let $Q$ be a nonzero polynomial of degree $\leq J$ and in $k$ variables. Then
$$\#\{h\in [N]^k: Q(h) = 0\} \leq JkN^{k-1}$$
\end{lemma}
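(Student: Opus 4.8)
The plan is to prove the estimate by induction on the number of variables $k$; this is a weak form of the Schwartz--Zippel lemma, and a crude counting argument suffices for the stated bound. For the base case $k = 1$, a nonzero polynomial of degree $\leq J$ in one variable has at most $J$ roots, so $\#\{h \in [N] : Q(h) = 0\} \leq J = J \cdot 1 \cdot N^{0}$, which is exactly the claim.

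For the inductive step, suppose the bound holds for nonzero polynomials in $k-1$ variables of any degree $\leq J$. Given a nonzero $Q$ in $k$ variables of degree $\leq J$, I would write it as a polynomial in the last coordinate,
\[
Q(x_1, \dots, x_k) = \sum_{i=0}^{d} Q_i(x_1, \dots, x_{k-1}) \, x_k^{i},
\]
where $d$ is the largest index with $Q_d \not\equiv 0$. If $d = 0$, i.e. $Q$ does not depend on $x_k$, one simply regards $Q$ as a nonzero $(k-1)$-variable polynomial, applies the inductive hypothesis, and multiplies by the at most $N$ free choices of $h_k$; this case is strictly easier, so assume $d \geq 1$. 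Note that $Q_d$ is then a nonzero polynomial in $k-1$ variables of degree $\leq J - d$. Now split $[N]^{k-1}$, the set of possible values of $(h_1, \dots, h_{k-1})$, into two parts. On the part where $Q_d(h_1, \dots, h_{k-1}) \neq 0$, the one-variable polynomial $x_k \mapsto Q(h_1, \dots, h_{k-1}, x_k)$ has exact degree $d$ and hence at most $d \leq J$ zeros; bounding the number of such tuples crudely by $N^{k-1}$ contributes at most $d \, N^{k-1}$ solutions. On the part where $Q_d(h_1, \dots, h_{k-1}) = 0$, the inductive hypothesis (applied to $Q_d$, of degree $\leq J - d$) gives at most $(J-d)(k-1)N^{k-2}$ such tuples, and each admits at most $N$ values of $h_k$, contributing at most $(J-d)(k-1)N^{k-1}$ solutions.

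Adding the two contributions, the total is at most $\bigl(d + (J-d)(k-1)\bigr) N^{k-1}$, so it remains only to verify the arithmetic inequality $d + (J-d)(k-1) \leq Jk$. Expanding, this is equivalent to $d(2-k) \leq J$, which holds automatically: for $k \geq 2$ the left-hand side is nonpositive, and for $k = 1$ it reads $d \leq J$, true by construction. This closes the induction. I do not expect a genuine obstacle here; the only points requiring any care are tracking the degree bound $\deg Q_d \leq J - d$ so as to feed the inductive hypothesis correctly, and handling separately the degenerate case in which the last variable is absent — everything else is bookkeeping. (One could also quote the sharper Schwartz--Zippel bound $\#\{h \in [N]^k : Q(h) = 0\} \leq (\deg Q)\, N^{k-1} \leq J N^{k-1}$, which is stronger than needed, but the induction above is self-contained and yields exactly the stated $JkN^{k-1}$.)
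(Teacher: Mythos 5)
Your proof is correct and follows essentially the same route as the paper's: induction on the number of variables, writing $Q$ as a polynomial in the last coordinate and splitting according to whether a distinguished nonzero coefficient polynomial vanishes at $(h_1,\dots,h_{k-1})$. The only difference is bookkeeping — you take the top nonzero coefficient $Q_d$ and track its degree as $J-d$, while the paper picks any nonvanishing coefficient and bounds its degree by $J$ — and both yield the stated $JkN^{k-1}$.
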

\begin{proof}
Consider induction on $k$. If $Q$ is a polynomial in $k=1$ variables, then it has less than $J$ distinct zeroes. Hence, this holds for all $N$.

For a general polynomial $Q$ in $k$ variables, we write
$$Q(h) = Q(h', h_k) = a_J(h')h_k^J + a_{J-1}(h')h_k^{J-1} + \dots + a_1(h')h_k + a_0(h') $$
where $a_j$ are polynomials in $k-1$ variables, and $h' = (h_1, \dots, h_{k-1})$. As $Q$ is nonzero, we can find $j$ so that $a_j(h')$ is not the zero polynomial in the variables $h_1, \dots, h_{J-1}$.

For $h' \in [N]^{k-1}$, $Q(h', \cdot)$ is a polynomial in one variable. If $Q(h', \cdot)$ is the zero polynomial, then for any $h_k\in [N]$, we have $Q(h) = 0$. In order for $Q(h', \cdot)$ to be the zero polynomial, it must be the case that all $a$'s vanish at $h'$. The number of such $h'$ is bounded by those where at least $a_j$ vanishes. Since $a_{j}$ has $k-1$ variables and is a polynomial of degree $\leq J$, by induction this is less than $J(k-1)N^{k-2}$. So the total number of zeroes that may arise when $Q(h', \cdot)$ is the zero polynomial is bounded by $N \cdot J(k-1)N^{k-2} = J(k-1)N^{k-1}$.

If $Q(h', \cdot)$ is not the zero polynomial, then again it has at most $J$ zeroes. So the number of zeroes that can arise in this case is bounded by $J$ times the total number of $h'$, which is $N^{k-1}$. Between the two cases, the total number of zeroes is bounded by $J(k-1)N^{k-1} + JN^{k-1} = JkN^{k-1}$ as desired.
\end{proof}

\begin{theorem}\label{t:generalCSP}
Let $k\geq 4$. For Lebesgue a.e.  $\alpha\in \R$, the system $(\T^k, T_{\alpha}, m)$ is a $k-1$th order WW system of power type $1/24$ in $L^2$.
\end{theorem}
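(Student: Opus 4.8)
The plan is to mirror the proof of the $k=3$ case (Theorem~\ref{t:csp}) essentially line for line, supplying two new ingredients in place of the ad hoc facts used there: an algebraic statement that a certain exponent polynomial is genuinely quadratic in $n$ for most shift vectors $h$, and the polynomial zero-counting lemma above, which turns the ``all but one $h_0$'' of the $k=3$ case into ``all but $O((\sqrt N)^{k-3})$ values of $h$''. Since $(\T^k,T_\alpha,m)$ is a $k$-step nilsystem, $L^2(\mathcal{Z}_{k-1})^\perp$ is the closed linear span of the characters $f_a(x)=e^{2\pi i(a\cdot x)}$ with $a\in\Z^k$ and $a_k\neq 0$; I would take $\mathcal{E}=\{f_a:a_k\neq 0\}$, so that by the multilinearity concerns lemma (Lemma~\ref{l:mlc}) it suffices to prove, for every fixed collection $(a^{(\eta)})_{\eta\in V_{k-2}}$ with each $a^{(\eta)}_k\neq 0$, polynomial decay of
\[
\frac{1}{\lfloor\sqrt N\rfloor^{k-2}}\sum_{h\in[\lfloor\sqrt N\rfloor]^{k-2}}\left\Vert\sup_{t}\left|\frac1N\sum_{n=1}^N e^{2\pi int}\Big[\prod_{\eta\in V_{k-2}}c^{|\eta|}f_{a^{(\eta)}}\circ T_\alpha^{h\cdot\eta}\Big]\circ T_\alpha^{n}\right|\right\Vert_2^{2/3}.
\]
Using the explicit iteration formula for $T_\alpha$, the bracketed product at a point $x$ equals $\text{Exp}\big[Q_h(n)\alpha+\dot Q_{h,x}(n)\big]$, where, writing $P_1(m)=m$, the polynomial $Q_h(n)=\sum_{\eta\in V_{k-2}}(-1)^{|\eta|}\sum_{j=1}^{k}a^{(\eta)}_j P_j(n+h\cdot\eta)$ is integer-valued in $n$ of degree at most $k$, depending on $(a^{(\eta)})$ and $h$ but not on $x$, and $\dot Q_{h,x}$ gathers the $x$-dependent exponents --- exactly the roles of $Q$ and $\dot Q$ in the $k=3$ proof.

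The key new observation is that the $n^2$-coefficient $q_2(h)$ of $Q_h$, viewed as a polynomial in $h=(h_1,\dots,h_{k-2})$, is \emph{not identically zero}. Only the $P_k$-summands can contribute terms of degree $k-2$ in $h$, and among those only the top vertex $\eta=(1,\dots,1)$ produces the fully mixed monomial $h_1h_2\cdots h_{k-2}$, whose coefficient one computes --- using that $P_k$ has leading coefficient $1/k!$ and the multinomial coefficient $(k-2)!$ --- to be $(-1)^{k-2}a^{(1,\dots,1)}_k/2$; this is nonzero since $a^{(1,\dots,1)}_k\neq 0$, and lower-degree-in-$h$ terms cannot cancel it. Thus $q_2$ is a nonzero polynomial in $k-2$ variables of degree $k-2$, so the counting lemma above gives $\#\{h\in[M]^{k-2}:q_2(h)=0\}\leq(k-2)^2M^{k-3}$. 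For every $h$ outside this sparse exceptional set we have $\deg Q_h\geq 2$, hence for each $m\geq 1$ the finite difference $P(n):=Q_h(n)-Q_h(n+m)$ is an integer-valued polynomial in $n$ of degree $\deg Q_h-1\geq 1$.

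From this point the computation is a verbatim adaptation of Theorem~\ref{t:csp}. Applying the summation Van der Corput inequality (\ref{vdc-2}) pointwise with $u_n=\text{Exp}[Q_h(n)\alpha+\dot Q_{h,x}(n)]$, integrating over $x$ and over $\alpha\in[0,1]$ (permissible by Fubini--Tonelli and boundedness, together with the Cauchy--Schwarz inequality applied to the $\alpha$- and $m$-averages), the inner double sum collapses to $\sum_{n,j:\,P(n)=P(j)}(\cdots)$; since $P(\,\cdot\,)-P(j)$ is a nonzero polynomial in $n$ of degree at most $k-1$, at most $k-1$ values of $n$ contribute for each $j$, and one obtains $\int_0^1 g_N(\alpha,h)\,d\alpha\leq C_k/\sqrt N$ uniformly over non-exceptional $h$, with $C_k=2+4\sqrt{k-1}$ (so $C_3=2+4\sqrt2$, as before), while $g_N(\alpha,h)\leq 1$ trivially on the exceptional $h$, which number $O((\sqrt N)^{k-3})$ and hence contribute $O(1/\sqrt N)$ after normalization. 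Setting $N=M^4$, integrating $M^{1/2}$ times the $h$-average over $[M^2]^{k-2}$, summing in $M$, and invoking the monotone convergence theorem gives, for a.e.\ $\alpha$, the bound $\frac{1}{(M^2)^{k-2}}\sum_{h\in[M^2]^{k-2}}g_{M^4}(\alpha,h)\leq C(\alpha)M^{-1/2}$. The standard simultaneous extension of the $n$- and $h$-sums from $M^4$ to general $N$ (identical to the one in Theorem~\ref{t:csp}) upgrades this to $\frac{1}{\lfloor\sqrt N\rfloor^{k-2}}\sum_h\Vert\cdots\Vert_2^2\leq C'(\alpha)N^{-1/8}$, and Hölder's inequality on averages reinstates the $2/3$ exponent at the cost of a cube root, yielding the Wiener--Wintner decay $C''(\alpha)N^{-1/24}$. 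Intersecting the resulting full-measure sets of $\alpha$ over the countably many collections $(a^{(\eta)})$ and applying Lemma~\ref{l:mlc} (ergodicity of $T_\alpha$ for irrational $\alpha$ being classical) completes the proof.

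The main obstacle is precisely this algebraic step: one must rule out a conspiracy among the characters $a^{(\eta)}$ that would force $q_2(h)$ to vanish identically in $h$ --- in which case the Weyl-sum estimate would collapse --- and this reduces to the clean fact that the coefficient of the fully mixed monomial $h_1\cdots h_{k-2}$ in $q_2$ is a nonzero scalar multiple of the nonzero top Fourier coordinate $a^{(1,\dots,1)}_k$, after which the zero set is automatically of negligible density by the counting lemma. Everything else is bookkeeping parallel to the $k=3$ case, with the mild extra care that $C_k$ and the cardinality bound $(k-2)^2M^{k-3}$ depend only on $k$ (not on the collection, and not on $\alpha$), which is what makes the countable intersection over collections legitimate.
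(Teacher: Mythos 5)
Your proposal follows the paper's own proof (Appendix~\ref{a:sp}) essentially step for step: the same polynomial zero-counting lemma, the same observation that $q_2(h)$ contains a nonvanishing $h_1\cdots h_{k-2}$ monomial coming only from $P_k''$ at $\eta=(1,\dots,1)$, the same exceptional-set bound $(k-2)^2M^{k-3}$, and the same Van der Corput/Fubini/monotone-convergence chain with $N=M^4$ leading to the $1/24$ exponent. The only addition is your explicit evaluation of that monomial's coefficient as $(-1)^{k-2}a^{(1,\dots,1)}_k/2$, which the paper merely asserts to be nonzero; this is a correct and harmless refinement.
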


\begin{proof} 
The orthogonal complement of $L^2(\mathcal{Z}_{k-1})^\perp$ of $(\T^k, T_\alpha, m)$ is spanned by functions 
$$f_a(x_1, x_2, \dots, x_k) = \prod_{l=1}^k e^{2\pi i a_l x_l}= e^{2\pi i( a \cdot x)} $$
where $a = (a_1, a_2, \dots, a_k)\in \Z^k$ has $a_k \neq 0$. By the multilinearity concerns , we are interested in 
$$\left \Vert \sup_{t} \left|\frac{1}{N} \sum_{n=1}^N e^{2\pi i n t} \left[ \prod_{\eta \in V_{k-2}} c^{|\eta|}f_{a^\eta} \circ T_\alpha^{h \cdot \eta} \right]\circ T_\alpha^n \right|\right \Vert_2 $$
for a fixed collection of $a^\eta = (a^\eta_1, a^\eta_2, \dots, a^\eta_k)$ with all $\eta$ having $a^\eta_k \neq 0$ and a fixed $h = (h_1, \dots, h_{k-2})\in \mathbb{N}^{k-2}$. Suppose $\alpha \neq 0$. For $x \in \T^k$, we compute
\begin{align*}
f_{a^\eta}(T_\alpha^nx) = \text{Exp}\big[& 1(a^\eta_1x_1 + a^\eta_2x_2 + \dots + a^\eta_kx_k) 
\\ + &n(a^\eta_1\alpha + a^\eta_2 x_1 + \dots a^\eta_k x_{k-1}) 
\\ + &P_2(n)(a^\eta_2\alpha + a_3^\eta x_1 + \dots + a^\eta_k x_{k-2}) 
\\ + & \dots
\\ + &P_{k-1}(n)(a^{\eta}_{k-1}\alpha + a^\eta_k x_1)
\\ + &P_k(n)(a_k\alpha) \big] \, ,
\end{align*}
from which it follows
\begin{align*}
&\left[\prod_{\eta\in V_{k-2}} c^{|\eta|} f_{a^\eta} \circ T_\alpha^{h \cdot \eta} \right](T_\alpha^n x)
\\ &= \text{Exp} \left[ \begin{pmatrix} \sum_{\eta \in V_{k-2}} (-1)^{|\eta|}(n+ \eta\cdot h)(a^\eta_1)
\\ + \sum_{\eta \in V_{k-2}} (-1)^{|\eta|}P_2(n+ \eta\cdot h)(a^\eta_2)
\\ + \dots 
\\ + \sum_{\eta \in V_{k-2}} (-1)^{|\eta|}P_{k-1}(n+ \eta\cdot h)(a^\eta_{k-1})
\\ + \sum_{\eta \in V_{k-2}} (-1)^{|\eta|}P_k(n+ \eta\cdot h)(a^\eta_k)\end{pmatrix} \alpha + \begin{pmatrix} \sum_{\eta \in V_{k-2}} (-1)^{|\eta|} 1 (a^\eta_1x_1 + \dots + a^\eta_k x_k) \\ + \sum_{\eta \in V_{k-2}} (-1)^{|\eta|}(n+ \eta\cdot h)(a^\eta_2x_1 + \dots + a^\eta_k x_{k-1}) \\ + \dots \\ + \sum_{\eta \in V_{k-2}} (-1)^{|\eta|}P_{k-2}(n+ \eta\cdot h)(a^\eta_{k-1} x_1 + a^\eta_k x_2) \\ + \sum_{\eta \in V_{k-2}} (-1)^{|\eta|}P_{k-1}(n+ \eta\cdot h)(a^\eta_k x_1) \end{pmatrix} \right] \\
&=: \text{Exp}\left[ Q(n)\alpha + \dot Q_x(n)\right] \, .
\end{align*}
Consider the coefficients of the polynomial $Q(n)$ to depend on $h$ and the fixed $a^\eta$. Since all $a^\eta_j$ are integers, and each $P_j$ is integer-valued, it follows that $Q$ is integer valued for $n$. The remainder polynomial $\dot Q_x$ depends on $a^\eta, h$ and the point $x$.

As each $P_j$ has degree exactly $j$, we see that the degree of $Q(n)$ is at most $k$. Consider that the $n^2$ coefficient of $Q(n)$, which is a polynomial in $h = (h_1, \dots, h_{k-2})$, is given by
$$q_2(h) := \frac{Q''(0)}{2} = \frac{1}{2}\sum_{\eta \in V_{k-2}} (-1)^{|\eta|}\left( \sum_{j=1}^{k} a_j^\eta P_j''(\eta \cdot h)\right) \, ,$$
where the derivatives are taken in $n$. We observe that $q_2(h)$ is not the zero polynomial in $h$, as there is a nonzero $h_1h_2\dots h_{k-2}$ term that is contributed only by $P''_k(\eta\cdot h)$ for $\eta = (1, 1, \dots, 1)$. If we denote $H_0\subset \mathbb{N}^{k-2}$ to be $h$ where $q_2(h)= 0$, it follows by the lemma that
$$\#(H_0 \cap [N]^{k-2}) \leq (k-2)^2N^{k-3} $$
for all $N$. Hence, for $h$ not in $H_0$, the term $q_2(h)$ does not vanish and $Q(n)$ has degree at least 2.

For such $h$, we continue exactly as before. Applying the Van der Corput Lemma pointwise, we see
\begin{align*}
\sup_t\left|\frac{1}{N} \sum_{n=1}^N e^{2\pi i n t} \left[ \prod_{\eta \in V_{k-2}} c^{|\eta|}f_{a^\eta} \circ T_\alpha^{h \cdot \eta} \right](T_\alpha^n x) \right|^2 &= \sup_t\left|\sum_{n=1}^N e^{2\pi i n t} e^{2\pi i [Q(n)\alpha + \dot Q_x(n)]}\right|^2 \\
&\leq 2N + 4\sum_{m=1}^{N-1} \left|\sum_{n=1}^{N-m} e^{2\pi i [(Q(n) - Q(n+m))\alpha + (\dot Q_x(n) - \dot Q_x(n+m))]}\right| \, .
\end{align*}
Define $P(n)= Q(n) - Q(n+m)$ and $\dot P_x = \dot Q_x(n) - \dot Q_x(n+m)$. Notice that this finite difference operation decreases the degrees of $Q$ and $\dot Q$ by exactly one, as $m\neq 0$. Hence, $P(n)$ has degree between $1$ and $k-1$. Notably, $P(n)$ has $m$ dependence, but is still integer valued. By the Cauchy-Schwarz inequality, we arrive at
\begin{align*}
\sup_t\left|\frac{1}{N} \sum_{n=1}^N e^{2\pi i n t} \left[ \prod_{\eta \in V_{k-2}} c^{|\eta|}f_{a^\eta} \circ T_\alpha^{h \cdot \eta} \right](T_\alpha^n x) \right|^2 \leq \frac{2}{N} + 4\left( \frac{1}{N}\sum_{m=1}^{N-1} \left|\frac{1}{N}\sum_{n=1}^{N-m} e^{2\pi i [P(n)\alpha + \dot P_x(n)]}\right|^2\right)^{1/2} \, .
\end{align*}
Taking the integral of both sides in $x$, we have
\begin{align*}
\left\Vert \sup_t\left|\frac{1}{N} \sum_{n=1}^N e^{2\pi i n t} \left[ \prod_{\eta \in V_{k-2}} c^{|\eta|}f_{a^\eta} \circ T_\alpha^{h \cdot \eta} \right]\circ T_\alpha^n \right| \right\Vert_2^2 \leq \int \frac{2}{N} + 4\left( \frac{1}{N}\sum_{m=1}^{N-1} \left|\frac{1}{N}\sum_{n=1}^{N-m} e^{2\pi i [P(n)\alpha + \dot P_x(n)]}\right|^2\right)^{1/2} d\mu(x) \, .
\end{align*}
Let us denote $g_N(\alpha, h) := \left\Vert \sup_t \left|\frac{1}{N} \sum_{n=1}^N e^{2\pi i n t} \left[ \prod_{\eta \in V_{k-2}} c^{|\eta|}f_{a^\eta} \circ T^{h \cdot \eta} \right]\circ T_\alpha^n  \right|\right\Vert_2^2$. If we integrate both sides in $\alpha$, notice that since everything is uniformly bounded, we can use Fubini-Tonelli to switch the order of integration:
\begin{align*}
\int g_N(\alpha, h) \, d\alpha \leq \int \frac{2}{N} + 4\left( \frac{1}{N}\sum_{m=1}^{N-1}  \frac{1}{N^2} \int \left|\sum_{n=1}^{N-m} e^{2\pi i [P(n)\alpha + \dot P_x(n)] \alpha}\right|^2 \, d\alpha \right)^{1/2} d\mu(x) \, .
\end{align*}
On the inside integral, we expand as a double sum exactly as before:
\begin{align*}
\int \left|\sum_{n=1}^{N-m} e^{2\pi i [P(n)\alpha + \dot P_x(n)] \alpha}\right|^2 \, d\alpha &\leq \sum_{j=1}^N \left(\sum_{n=1}^{N} \left| \int e^{2\pi i (P(n) - P(j))\alpha  }\, d\alpha \right |  \right) \, .
\end{align*}
Recall that $P$ depends on $m, h$ and $x$. Since $h$ is fixed and we are looking at the above sum inside the integral in $x$ and the sum in $m$, the only nontrivial dependence of $P$ is in the variables $n$ and $j$. Inside the parenthesis, $j$ is fixed, and $P(n) - P(j)$ is an integer-valued polynomial of degree between $1$ and $k-1$. Hence, there are at most $k-1$ values of $n$ where $P(n) - P(j) = 0$. Since $\int e^{2\pi i n l \alpha}$ is 0 for $l \neq 0$ and 1 otherwise, it follows that the inner $n$ sum is bounded by $k-1$, and the whole term bounded by $(k-1)N$.

Substituting this into the original estimate, we see
\begin{align*}
\int g_N(\alpha, h) \, d\alpha \leq \int \frac{2}{N} + 4\left( \frac{1}{N}\sum_{m=1}^{N-1}  \frac{1}{N^2} ((k-1)N) \right)^{1/2} d\mu(x) \leq \frac{2}{N} + \frac{4\sqrt{k-1}}{N^{1/2}} \leq \frac{C}{N^{1/2}}
\end{align*}
for $C = 2 + 4\sqrt{k-1}$. Hence, if we take $N = M^4$, we have 
$$\int M^{1/2}g_{M^4}(\alpha, h) \, d\alpha \leq \frac{C}{M^{3/2}} \,.$$
Recall that this estimate holds for all $h\notin H_0$. If we bound $g_{M^4}(\alpha, h_0)$ trivially by 1 for all $h_0 \in H_0$, we get the estimate $\int M^{1/2}g_{M^4}(\alpha, h_0) \leq M^{1/2}$. Using our bound on the size of $H_0$, we see that

\begin{align*}
&\int \frac{M^{1/2}}{(M^2)^{k-2}}\sum_{h\in [M^2]^{k-2}} g_{M^4}(\alpha, h) \, d\alpha \\ &= \frac{1}{(M^2)^{k-2}}\sum_{h\in [M^2]^{k-2}}  \int M^{1/2}g_{M^4}(\alpha, h) \, d\alpha \\
&= \frac{1}{(M^2)^{k-2}}\sum_{h\in H_0 \cap [M^2]^{k-2}} \int M^{1/2}g_{M^4}(\alpha, h) \, d\alpha + \frac{1}{(M^2)^{k-2}}\sum_{h\in H_0^c \cap [M^2]^{k-2}} \int M^{1/2}g_{M^4}(\alpha, h) \, d\alpha\\
&\leq \frac{\#(H_0 \cap [M^2]^{k-2})}{(M^2)^{k-2}}M^{1/2} + \frac{\#(H_0^c \cap [M^2]^{k-2})}{(M^2)^{k-2}} \frac{C}{M^{3/2}} 
\\ &\leq \frac{(k-2)^2(M^2)^{k-3}}{(M^2)^{k-2}}M^{1/2} + \frac{\#([M^2]^{k-2})}{(M^2)^{k-2}} \frac{C}{M^{3/2}} \\ &\leq \frac{(k-2)^2 + C}{M^{3/2}} \, .
\end{align*}
The monotone convergence theorem then tells us that $$\int \sum_{M} \frac{M^{1/2}}{(M^2)^{k-2}}\sum_{h\in [M^2]^{k-2}} g_{M^4}(\alpha, h) \, d\alpha < \infty \, ,$$ so for almost all $\alpha$, the term $\frac{M^{1/2}}{(M^2)^{k-2}}\sum_{h\in [M^2]^{k-2}} g_{M^4}(\alpha, h)$ goes to zero, and is hence bounded by some constant $C(\alpha)$. So 
$$\frac{1}{(M^2)^{k-2}}\sum_{h\in [M^2]^{k-2}} g_{M^4}(\alpha, h) \leq \frac{C(\alpha)}{M^{1/2}}$$
holds for all $M$. Recalling what $g$ is, we have 
$$\frac{1}{(M^2)^{k-2}}\sum_{h\in [M^2]^{k-2}} \left\Vert \sup_t \left|\frac{1}{M^4} \sum_{n=1}^{M^4} e^{2\pi i n t} \left[ \prod_{\eta \in V_{k-2}} c^{|\eta|}f_{a^\eta} \circ T_\alpha^{h \cdot \eta} \right] \circ T_\alpha^n \right|\right\Vert_2^2\leq \frac{C(\alpha)}{M^{1/2}}\, .$$

To extend from $M^4$ to all $N$, consider that for any $N$ there exists $M$ with $M^4 \leq N < (M+1)^4$. Note that it is also the case that $M^2 \leq \lfloor \sqrt{N} \rfloor < (M+1)^2$. We first note that

\begin{align*}
&\frac{1}{\lfloor \sqrt{N} \rfloor^{k-2}}\sum_{h\in [\lfloor \sqrt{N} \rfloor]^{k-2}} \left\Vert \sup_t \left|\frac{1}{N} \sum_{n=1}^N e^{2\pi i n t} \left[ \prod_{\eta \in V_{k-2}} c^{|\eta|}f_{a^\eta} \circ T_\alpha^{h \cdot \eta} \right] \circ T_\alpha^n \right|\right\Vert_2^2 \\
&\leq \frac{1}{(M^2)^{k-2}}\sum_{h\in [M^2]^{k-2}} \left\Vert \sup_t \left|\frac{1}{N} \sum_{n=1}^N e^{2\pi i n t} \left[ \prod_{\eta \in V_{k-2}} c^{|\eta|}f_{a^\eta} \circ T_\alpha^{h \cdot \eta} \right]\circ T_\alpha^n \right|\right\Vert_2^2  \\
&+ \frac{1}{(M^2)^{k-2}}\sum_{h\in \{M^2 + 1, \dots, (M+1)^2\}^{k-2}} \left\Vert \sup_t \left|\frac{1}{N} \sum_{n=1}^N e^{2\pi i n t} \left[ \prod_{\eta \in V_{k-2}} c^{|\eta|}f_{a^\eta} \circ T_\alpha^{h \cdot \eta} \right]\circ T_\alpha^n  \right|\right\Vert_2^2 \, . 
\end{align*}
The total number of summands in the last term $(2M)^{k-2}$. Bounding each of them trivially, the whole term is bounded by $\frac{2^{k-2}}{M^{k-2}}$. For the remainder of the term, we apply the same line of reasoning inside the sum:

\begin{align*}
&\left\Vert \sup_t\left|\frac{1}{N}\sum_{n=1}^{N} e^{2\pi i n t}\left[ \prod_{\eta \in V_{k-2}} c^{|\eta|}f_{a^\eta} \circ T_\alpha^{h \cdot \eta} \right]\circ T_\alpha^n \right| \right\Vert_2^2 \\
&\left(\left\Vert \sup_t\left|\frac{1}{M^4}\sum_{n=1}^{M^4} e^{2\pi i n t} \left[ \prod_{\eta \in V_{k-2}} c^{|\eta|}f_{a^\eta} \circ T_\alpha^{h \cdot \eta} \right]\circ T_\alpha^n \right| \right\Vert_2 + \left\Vert \sup_t\left|\frac{1}{M^4}\sum_{n=M^4 + 1}^{(M+1)^4} e^{2\pi i n t} \left[ \prod_{\eta \in V_{k-2}} c^{|\eta|}f_{a^\eta} \circ T_\alpha^{h \cdot \eta} \right]\circ T_\alpha^n \right| \right\Vert_2 \right)^2 \\
&\leq \left(\left\Vert \sup_t\left|\frac{1}{M^4}\sum_{n=1}^{M^4} e^{2\pi i n t} \left[ \prod_{\eta \in V_{k-2}} c^{|\eta|}f_{a^\eta} \circ T_\alpha^{h \cdot \eta} \right]\circ T_\alpha^n \right| \right\Vert_2 + \frac{(M+1)^4-M^4-1}{M^4} \right)^2 \\ 
&\leq \left(\left\Vert \sup_t\left|\frac{1}{M^4}\sum_{n=1}^{M^4} e^{2\pi i n t} \left[ \prod_{\eta \in V_{k-2}} c^{|\eta|}f_{a^\eta} \circ T_\alpha^{h \cdot \eta} \right]\circ T_\alpha^n \right| \right\Vert_2 + \frac{14}{M} \right)^2 \\
&\leq 2\left\Vert \sup_t\left|\frac{1}{M^4}\sum_{n=1}^{M^4} e^{2\pi i n t} \left[ \prod_{\eta \in V_{k-2}} c^{|\eta|}f_{a^\eta} \circ T_\alpha^{h \cdot \eta} \right]\circ T_\alpha^n \right| \right\Vert_2^2 + 2\left(\frac{14}{M}\right)^2 \, .
\end{align*}
Notice that we could still bound $1/N$ by $1/M^4$ by factoring it in and out of the norm.
Applying these to our initial estimates, we see
\begin{align*}
&\frac{1}{\lfloor \sqrt{N} \rfloor^{k-2}}\sum_{h\in [\lfloor \sqrt{N} \rfloor]^{k-2}} \left\Vert \sup_t \left|\frac{1}{N} \sum_{n=1}^N e^{2\pi i n t} \left[ \prod_{\eta \in V_{k-2}} c^{|\eta|}f_{a^\eta} \circ T_\alpha^{h \cdot \eta} \right]\circ T_\alpha^n \right|\right\Vert_2^2 \\
&\leq \frac{1}{(M^2)^{k-2}}\sum_{h\in [M^2]^{k-2}} \left( 2 \left\Vert \sup_t \left|\frac{1}{M^4} \sum_{n=1}^{M^4} e^{2\pi i n t} \left[ \prod_{\eta \in V_{k-2}} c^{|\eta|}f_{a^\eta} \circ T_\alpha^{h \cdot \eta} \right]\circ T_\alpha^n \right|\right\Vert_2^2 + \frac{392}{M^2} \right) + \frac{2^{k-2}}{M^{k-2}} \\
&\leq \frac{2C(\alpha) + 392 + 2^{k-2}}{M^{1/2}} \leq \frac{4C(\alpha) + 784 + 2^{k-1}}{(M+1)^{1/2}} \leq \frac{4C(\alpha)+784 + 2^{k-1}}{N^{1/8}} \, ,
\end{align*}
and the $2/3$ exponent is accounted for by Hölder's inequality on averages, yielding an overall power of $(1/N^{1/8})^{1/3} = 1/N^{1/24}$. For each choice of a collection of $a^\eta$'s, we can find a full measure set where the above holds. Since there are countably many choices, we can find a set of full measure where all hold simultaneously. For each $\alpha$ in this set, the multilinearity conditions are satisfied and $(\T^k, T_{\alpha}, m)$ is a $k-1$th order WW system of order $1/24$ in $L^2$.
\end{proof}

\section{Bourgain's bound on double recurrence}\label{a:BB}

\begin{theorem}[Bourgain's bound on double recurrence]\label{BB2} Let $(X, \mathcal{F}, \mu, T)$ be an invertible dynamical system, and let $a_1, a_2 \in \Z$ be distinct and both nonzero. Then there exists $C > 0$ such that for every $N \in \N$ and $f_1, f_2\in L^\infty(\mu)$ for which $\max_{j=1, 2} \norm{f_j}_\infty \leq 1$, we have
$$\left \Vert \frac{1}{N} \sum_{n=1}^N f_1 \circ T^{a_1n} \cdot f_2 \circ T^{a_2n} \right\Vert_1 \leq C\left( \frac{1}{N} + \left\Vert \sup_t \left| \frac{1}{N} \sum_{n=1}^N e^{2\pi i n t} f_1 \circ T^n \right|\right\Vert_1^{2/3} \right) \, .$$
\end{theorem}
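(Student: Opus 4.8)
The plan is to reproduce the reduction of the double recurrence average to a Wiener--Wintner average from Assani's presentation of Bourgain's argument \cite{AssaniWW, bourgain}, but to keep track of every constant so that the resulting inequality holds for each fixed $N$ with a constant depending only on $a_1$ and $a_2$ (and not on $f_1$ or $f_2$). First I would pass to $L^2$ via $\|\cdot\|_1\le\|\cdot\|_2$ and square. Writing $v_n(x) = f_1(T^{a_1 n}x)\,f_2(T^{a_2 n}x)$, an application of van der Corput's estimate (Lemma \ref{vdc-lem}, with $H=N-1$) to the sequence $(v_n)_n$ gives, pointwise in $x$,
\[
\Big|\tfrac1N\sum_{n=1}^N v_n(x)\Big|^2 \;\le\; \frac{2}{N}\;+\;\frac{4}{N^2}\sum_{h=1}^{N-1}\Big|\sum_{n}\overline{v_{n+h}(x)}\,v_n(x)\Big|,
\]
and the first term already supplies the $1/N$ of the statement. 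Regrouping, $\overline{v_{n+h}}v_n = F_{1,h}(T^{a_1n}x)\,F_{2,h}(T^{a_2n}x)$ with $F_{j,h} := f_j\cdot\overline{f_j\circ T^{a_j h}}$, each of sup-norm $\le 1$.

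Next I would integrate in $x$, apply $\|\cdot\|_1\le\|\cdot\|_2$ to each correlation term, expand the square, and use the $T$-invariance of $\mu$ (shifting the dummy variable by a multiple of $a_2$) to make the $F_{2,h}$-factor independent of the summation index and factor it out with norm $\le1$; a Cauchy--Schwarz against it then reduces the problem, after averaging over $h$, to bounding
\[
\Big(\frac1{N-1}\sum_{h=1}^{N-1}\Big|\frac1N\sum_{n} f_1(T^{bn}x)\,\overline{f_1(T^{bn+a_1h}x)}\Big|^2\Big)^{1/2},\qquad b := a_1 - a_2 \neq 0 .
\]
The key step is to dominate this by the Wiener--Wintner average of $f_1$, and this is exactly what Assani's ``cubes'' estimate (\ref{two.functions}) — with its absolute constant — provides, once two reindexings are performed: (i) the factor $b$ in $T^{bn}$ is removed by splitting the $n$-sum into residue classes modulo $b$ and treating $T^{b}$ as the base transformation (comparing its Wiener--Wintner average with that of $T$ costs only a bounded factor, $T$ being invertible), and (ii) the factor $a_1$ in the shift $a_1h$ is removed by enlarging the range of the $h$-average to all integers $\le |a_1|(N-1)$, adding nonnegative terms and picking up a factor $|a_1|$. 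Integrating in $x$, bounding $\sup_t|\cdots|^2\le\sup_t|\cdots|$ (both are $\le1$) to pass from an $L^2$-type bound to the $L^1$ Wiener--Wintner norm, and closing with Hölder's inequality on averages to land on the exponent $2/3$, yields the asserted inequality; reading the constants off this chain gives the explicit $C=C(a_1,a_2)$ recorded as $C_2$ after Theorem \ref{BBgeq3}.

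The main obstacle is not any individual estimate but the uniformity. In \cite{AssaniWW} several of these steps are carried out only in the limit $N\to\infty$, so that the residue-class and range-enlargement discrepancies, the edge effects from adjusting the summation ranges in van der Corput, and the degradation of exponents through Cauchy--Schwarz and Hölder are all absorbed into the limit; here each must instead be bounded for a fixed finite $N$ with a constant that sees only $a_1$ and $a_2$. In particular one has to organize the reindexings so that every remainder decays at least like $1/N$, and one has to check that the constant in (\ref{two.functions}) is genuinely absolute (Assani notes such a constant exists for each of the two functions separately, so one takes the larger). None of this alters the underlying ideas, but it is where all of the care goes.
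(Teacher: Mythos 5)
Your route is genuinely different from the paper's, and as described it does not reach the stated inequality: the exponents come out wrong. Once you pass to $\norm{\cdot}_2^2$ and apply van der Corput (Lemma \ref{vdc-lem}), every subsequent estimate lives inside the \emph{square} of $\norm{A_N(f_1,f_2)}_1$. The remainder $2/N$ therefore yields $N^{-1/2}$ after the final square root, not $N^{-1}$, so it does not ``already supply the $1/N$ of the statement.'' More seriously, writing $\delta := \norm{\sup_t|\frac1N\sum_{n=1}^N e^{2\pi i n t}f_1\circ T^n|}_1$, the best your chain can produce is $\norm{A_N}_1^2\leq C(N^{-1}+\delta)$ — the $h$-average of squared correlations is controlled by (\ref{two.functions}) via $\sup_t|\cdots|^2\leq \sup_t|\cdots|$, and a Cauchy--Schwarz over $h$ costs a square root — hence $\norm{A_N}_1\leq C(N^{-1/2}+\delta^{1/2})$ (or $\delta^{1/4}$, depending on where you integrate). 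Since $\delta\leq 1$, both $\delta^{1/2}$ and $\delta^{1/4}$ exceed $\delta^{2/3}$, and Hölder's inequality on averages only moves exponents in the unfavorable direction, so it cannot ``land on'' $2/3$ from there: you prove a strictly weaker inequality that does not imply the theorem. There is also no free parameter to tune — decreasing $H$ below $N-1$ enlarges both the remainder $2/(H+1)$ and the factor $(N/H)^{1/2}$ incurred when comparing the truncated $h$-average to the full one in (\ref{two.functions}).

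The paper's proof avoids van der Corput entirely at this level. It uses the near-invariance $\norm{A_N(f_1,f_2)-A_N(f_1\circ T^{a_1k},f_2\circ T^{a_2k})}_1\leq 2N_1/N$ for $1\leq k\leq N_1$, averages over $k$, and recognizes the shifted averages $A_N(f_1,f_2\circ T^{(a_2-a_1)k})$ as Fourier coefficients of the product $\bigl(\sum_n e^{2\pi i n a_2 t}f_1(T^{a_1n}x)\bigr)\bigl(\sum_m e^{-2\pi i m t}f_2(T^m x)\bigr)$; Bessel and Parseval then give $\frac{1}{N_1}\sum_{k\leq N_1}|A_N(f_1,f_2\circ T^{(a_2-a_1)k})(x)|^2\leq C\frac{N}{N_1}\sup_t|\frac1N\sum_n e^{2\pi i n t}f_1(T^{a_1n}x)|^2$, so that $\norm{A_N}_1\leq C(N_1/N+(N/N_1)^{1/2}\delta)$. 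The exponent $2/3$ is produced precisely by optimizing $N_1\approx N\delta^{2/3}$ against the shift error — an optimization that has no analogue in your reduction. Your mechanism (van der Corput to pass to self-correlations, then invoke the one-step-lower bound) is exactly the right one for the inductive step from $J$ to $J+1$ functions — it is Theorem \ref{BB3} — but the base case needs the Fourier/Bessel argument applied directly to the double recurrence average. A secondary issue: your two reindexings around (\ref{two.functions}) are in tension, since the shift $a_1h$ is generally not a multiple of $b=a_1-a_2$, so $f_1\circ T^{a_1h}$ is not of the form $f_1\circ(T^b)^{h'}$; this can be patched by splitting into residues mod $b$, but it is moot given the exponent problem.
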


As mentioned before, the relevant ideas and techniques originate from the beginning of Bourgain's paper on double recurrence \cite{bourgain}. The structure and presentation of this proof is modeled after the argument by Assani \cite{AssaniWW}.

The proof also uses the following lemma:
\begin{lemma}\label{l:power}
Let $(X, \mathcal{F}, \mu, T)$ be an invertible dynamical system, $f\in L^\infty(\mu)$, $p \in [1, \infty]$, and $a\in \Z$ nonzero. Then
$$\left\Vert \sup_{t} \left|\frac{1}{N} \sum_{n=1}^N e^{2\pi i n t} f \circ T^{an}  \right| \right\Vert_p \leq |a| \left\Vert \sup_{t} \left|\frac{1}{N} \sum_{n=1}^N e^{2\pi i n t} f\circ T^{n} \right| \right\Vert_p  \, .$$
\end{lemma}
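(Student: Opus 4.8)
The statement to prove is Lemma~\ref{l:power}: for an invertible system, $f \in L^\infty(\mu)$, $p \in [1,\infty]$, and nonzero $a \in \Z$,
\[
\left\Vert \sup_t \left| \frac{1}{N} \sum_{n=1}^N e^{2\pi int} f \circ T^{an} \right| \right\Vert_p \leq |a| \left\Vert \sup_t \left| \frac{1}{N} \sum_{n=1}^N e^{2\pi int} f \circ T^n \right| \right\Vert_p.
\]

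The plan is to decompose the sum over $n \in [N]$ according to the residue class of $n$ modulo $|a|$, and then recognize each residue-class piece as a sub-sum of the ``undilated'' averaging operator applied at a shifted base point, after re-indexing. First I would reduce to the case $a > 0$: since $T$ is invertible, replacing $a$ by $-a$ just replaces the inner expression $f \circ T^{an}$ by $f \circ (T^{-1})^{|a|n}$, and running the whole argument with $T^{-1}$ in place of $T$ (which is again measure-preserving and invertible) gives the same bound with $|a|$ on the right; alternatively one absorbs the sign into $t \mapsto -t$ inside the supremum over $t \in \R$. So assume $a \geq 1$. Write each $n \in \{1, \dots, N\}$ uniquely as $n = a m + r$ with $r \in \{0, 1, \dots, a-1\}$ and $m$ ranging over an interval of integers of length at most $\lceil N/a \rceil$; then
\[
\frac{1}{N} \sum_{n=1}^N e^{2\pi int} f(T^{an}x) = \frac{1}{N} \sum_{r=0}^{a-1} e^{2\pi i r t} \sum_{m} e^{2\pi i (am) t} f\big(T^{a^2 m}(T^{ar}x)\big).
\]
The key observation is that for each fixed $r$, the inner sum $\sum_m e^{2\pi i(am)t} f(T^{a^2 m}(T^{ar}x))$, after the substitution $s = at$ and viewing $T^{a^2}$ is not quite what we want — instead I would take the cleaner route: for each fixed $r$, set $y = T^{ar}x$ and $S = T^a$, so the inner sum becomes $\sum_m e^{2\pi i (am)t} f(S^{am} y)$; substituting $u = am$ this is a sub-sum (over $u$ in an arithmetic progression with common difference $a$, which we may further split, or simply dominate) of $\sum_{u} e^{2\pi i u t} f(S^u y)$. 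Actually the most economical argument avoids all of this: split $[N]$ into residue classes mod $a$ and in the $n = am+r$ piece substitute to reduce to the operator for $T^{a^2}$, then iterate — but that loops.

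So let me state the argument I would actually carry out. Split $\{1,\dots,N\} = \bigsqcup_{r=0}^{a-1}\{\,n : n \equiv r \pmod a\,\}$. For the $r$-th class, write $n = a m' + r$; the inner phase $e^{2\pi int}$ only depends on $n$, so pull $e^{2\pi irt}$ out and we are left with $\sum_{m'} e^{2\pi i a m' t}\, f(T^{a(am'+r)}x) = \sum_{m'} e^{2\pi i a m' t}\, g_r(T^{a^2 m'}x)$ where $g_r = f \circ T^{ar}$. Now $g_r \circ T^{a^2 m'} = g_r \circ (T^{a^2})^{m'}$, and $\|g_r\|_\infty = \|f\|_\infty$. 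The sub-sum over $m'$ runs over an interval of length $M_r \leq \lceil N/a\rceil$. By the triangle inequality inside $|\cdot|$ and then over the $a$ classes,
\[
\sup_t \left| \frac{1}{N}\sum_{n=1}^N e^{2\pi int} f(T^{an}x)\right| \leq \sum_{r=0}^{a-1} \frac{1}{N}\sup_t\left| \sum_{m'} e^{2\pi i a m' t} g_r((T^{a^2})^{m'}x)\right|.
\]
For each $r$, bounding the partial sum over the interval of $m'$'s by two full sums of the form $\sum_{m'=1}^{M} e^{2\pi i a m' t}(\cdots)$ (a standard telescoping of partial sums, each comparable to $M/M = 1$ times the normalized average) and using $M_r \le N$, and then noting $\sup_t$ over the dilated frequency $at$ is $\le \sup_t$ over all $t$, each term is controlled by $\sup_t \big| \frac{1}{?}\sum e^{2\pi i m' t} g_r \circ (T^{a^2})^{m'}x\big|$; applying $\|\cdot\|_p$, using the $T$-invariance (hence $T^{a^2}$-invariance, and $T^{ar}$-invariance) of $\mu$ to remove the dependence on $r$ and replace $T^{a^2}, g_r$ by $T, f$ after a further identical residue-splitting, and summing the $a$ contributions yields the factor $|a|$. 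The \textbf{main obstacle} is bookkeeping: making the partial-sum-to-full-sum comparison and the $t \mapsto at$ frequency change rigorous without accruing extra constants beyond $|a|$, and correctly handling that the inner sub-sums are over intervals not starting at $1$ (this is where one uses that $\sup_t \big|\frac1N\sum_{n=1}^{N} e^{2\pi int}h\circ T^n\big|$ dominates $\sup_t\big|\frac1N\sum_{n=N_1}^{N_2}e^{2\pi int}h\circ T^n\big|$ for any $1 \le N_1 \le N_2 \le N$, via a difference of two such averages and the measure-preserving shift). Once that comparison is in hand and the measure-invariance is used to collapse all $a$ residue pieces to a common bound, the $|a|$ factor falls out of the $a$-fold triangle inequality, completing the proof.
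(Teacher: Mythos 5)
Your proposal contains a genuine gap, and it is one you half-noticed yourself. Splitting the index set $\{1,\dots,N\}$ of the sum $\sum_{n=1}^N e^{2\pi i n t} f(T^{an}x)$ into residue classes $n = am'+r$ sends $T^{an}$ to $T^{a^2 m'}\circ T^{ar}$, so each residue-class piece is a Wiener--Wintner-type average along the orbit of $T^{a^2}$, not of $T$. To close the argument you would have to relate the $T^{a^2}$-average back to the $T$-average, which is exactly the statement of the lemma with $a$ replaced by $a^2$ --- the circularity you flagged with ``then iterate --- but that loops.'' The subsequent paragraph does not escape this: ``a further identical residue-splitting'' produces $T^{a^4}$, and so on; the recursion never bottoms out at $T$. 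In addition, the partial-sum comparison you invoke ($\sup_t$ of a sub-sum over $\{N_1,\dots,N_2\}$ versus $\sup_t$ of the full sum) costs a factor of $2$ by the triangle inequality applied to the two truncations, so even the bookkeeping you defer would push the constant past $|a|$.

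The paper's proof goes in the opposite direction: rather than decomposing the index set of the dilated sum, it \emph{embeds} the dilated orbit $\{T^{an}x\}_{n\le N}$ into the consecutive orbit $\{T^{m}x\}_{m\le aN}$. Concretely, one writes
\begin{equation*}
\frac{1}{N}\sum_{n=1}^{N} e^{2\pi i n t} f(T^{an}x) \;=\; \frac{1}{N}\sum_{m=1}^{aN} e^{2\pi i m (t/a)} f(T^{m}x)\left(\frac{1}{a}\sum_{k=1}^{a} e^{2\pi i m k/a}\right),
\end{equation*}
using the roots-of-unity filter $\frac{1}{a}\sum_{k=1}^a e^{2\pi i mk/a} = \1_{a\mid m}$. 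Expanding the filter and using the triangle inequality over $k$, each of the $a$ resulting terms is a full sum $\sum_{m=1}^{aN} e^{2\pi i m(t/a+k/a)} f(T^m x)$ whose frequency shift is absorbed by the supremum over $t$; this yields the single undilated average of length $aN$ with no loss of constant. One then splits $[aN]$ into $a$ consecutive blocks of length $N$, pulls out the unimodular factor $e^{2\pi i lNt}$, and uses the $T$-invariance of $\mu$ to erase the $T^{lN}$ shift in the $L^p$-norm; the $a$-fold triangle inequality over blocks gives exactly the factor $|a|$. The reduction to $a>0$ via $T^{-1}$ is the same in both arguments. If you want to salvage your write-up, the device you are missing is precisely that filter identity, which converts the arithmetic-progression sum into an average of sums over consecutive powers of $T$.
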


\begin{proof} 

Without loss of generality, take $a>0$, as we can apply this same lemma to $T^{-1}$. Pointwise, we may note that
\begin{align*}
\sup_{t} \left|\frac{1}{N} \sum_{n=1}^N e^{2\pi i n t} f(T^{an}x) \right| &= \sup_{t} \left|\frac{1}{N} \sum_{n=1}^{aN} e^{2\pi i n (t/a)} f(T^{n}x) \left(\frac{1}{a}\sum_{k=1}^a e^{2\pi i n k/a} \right)\right| \\
&\leq \frac{1}{a}\sum_{k=1}^a \sup_{t} \left|\frac{1}{N} \sum_{n=1}^{aN} e^{2\pi i n (t/a + k/a)} f(T^{n}x)\right| \\
&= \sup_{t} \left|\frac{1}{N} \sum_{n=1}^{aN} e^{2\pi i n t} f(T^{n}x) \right| \, ,
\end{align*}
as the $k$ dependence vanishes under the supremum. Taking the $p$-norm of both sides, it follows that 
\begin{align*}
\left\Vert \sup_{t} \left|\frac{1}{N} \sum_{n=1}^N e^{2\pi i n t} f\circ T^{an} \right|\right\Vert_p &\leq \left\Vert \sup_{t} \left|\frac{1}{N} \sum_{n=1}^{aN} e^{2\pi i n t} f\circ T^{n} \right| \right\Vert_p 
\\&= \left\Vert \sup_{t} \left|\sum_{l = 0}^{a-1}\frac{1}{N} \sum_{n=1}^{N} e^{2\pi i(lN + n) t} f\circ T^{lN + n} \right| \right\Vert_p \\
&\leq \sum_{l = 0}^{a-1}\left\Vert \sup_{t} \left|\frac{1}{N} \sum_{n=1}^{N} e^{2\pi i n t} f \circ T^{lN + n} \right| \right\Vert_p \\
&= a \left\Vert \sup_{t} \left|\frac{1}{N} \sum_{n=1}^{N} e^{2\pi i n t} f \circ T^{n} \right| \right\Vert_p \, ,
\end{align*}
as the $l$ dependence is lost under the norm.
\end{proof}

\begin{remark} If $a>0$, then invertibility is not necessary.
\end{remark}

\begin{proof}[Proof of theorem] The idea for this proof originates from the first two pages of Bourgain's paper on double recurrence.
Define the function
$$A_N(f_1, f_2)(x) = \frac{1}{N}\sum_{n=1}^N f_1(T^{a_1n} x) f_2(T^{a_2 n} x) \, .$$
Let $1 < N_1 < N$ be integers. Pointwise, we can verify that for any $1\leq k \leq N_1$ we have
\begin{align*}
|A_N(f_1, f_2) - A_N(f_1 \circ T^{a_1k}, f_2 \circ T^{a_2k})| &\leq \frac{2N_1}{N}\Vert f_1 \Vert_\infty \Vert f_2 \Vert_\infty \, .
\end{align*}
Hence, the same holds true if we take the integral (1-norm) of both sides. Using this, we see that
\begin{align*}
\Vert A_N(f_1, f_2) \Vert_1 &= \left\Vert \sum_{k=1}^{N_1} \frac{1}{N_1}A_N(f_1, f_2) - \sum_{k=1}^{N_1} \frac{1}{N_1}A_N(f_1 \circ T^{a_1k}, f_2 \circ T^{a_2k}) + \sum_{k=1}^{N_1} \frac{1}{N_1}A_N(f_1 \circ T^{a_1k}, f_2 \circ T^{a_2k})\right\Vert_1 \\
&\leq \frac{1}{N_1}\sum_{k=1}^{N_1} \Vert A_N(f_1, f_2) - A_N(f_1 \circ T^{a_1k}, f_2 \circ T^{a_2k})\Vert_1 + \frac{1}{N_1} \sum_{k=1}^{N_1} \Vert A_N(f_1 \circ T^{a_1k}, f_2 \circ T^{a_2k} )\Vert_1 \\
&\leq \frac{1}{N_1}\sum_{k=1}^{N_1} \frac{2N_1}{N} \Vert f_1 \Vert_\infty \Vert f_2 \Vert_\infty + \frac{1}{N_1} \sum_{k=1}^{N_1} \Vert A_N(f_1, f_2 \circ T^{(a_2-a_1)k} )\Vert_1 \\ 
&= \frac{2N_1}{N} \Vert f_1 \Vert_\infty \Vert f_2 \Vert_\infty + \left\Vert\frac{1}{N_1} \sum_{k=1}^{N_1}  \left|A_N(f_1, f_2 \circ T^{(a_2-a_1)k})\right|\right\Vert_1  \\ 
&\leq \frac{2N_1}{N} \Vert f_1 \Vert_\infty \Vert f_2 \Vert_\infty + \left\Vert \left(\frac{1}{N_1} \sum_{k=1}^{N_1}  \left|A_N(f_1, f_2 \circ T^{(a_2-a_1)k})\right|^2 \right)^{1/2}\right\Vert_1 
\end{align*}
by the triangle inequality and the measure preserving norm. The final line follows by Hölder's inequality on sums.

So we are interested in the functions
$$A_N(f_1, f_2\circ T^{(a_2-a_1)k})(x) = \frac{1}{N} \sum_{n=1}^N f_1(T^{a_1n}x)f_2(T^{a_2n+(a_2-a_1)k}x) \, .$$
Consider expanding the following product of sums and ($L^2$ in the $t$ variable) inner product:

$$\left \langle \left(\sum_{n=1}^{N} e^{2\pi i n a_2t} f_1(T^{a_1n} x)\right)\left( \sum_{m=-N(2|a_2| + |a_1|)}^{N(2|a_2| + |a_1|)} e^{-2\pi i m t} f_2(T^{m} x) \right), e^{2\pi i (a_1-a_2)k t} \right \rangle \, .$$
Since the $e^{2\pi i l t}$ terms for $l \in \mathbb{Z}$ are an orthonormal set, the only terms that can survive are where $e^{2\pi i (a_2n-m) t} = e^{2\pi i (a_1-a_2)k t}$, or $m = a_2n+(a_2-a_1)k$. Hence, for every summand in $n$, there is at most one summand in $m$ that can survive. 
Recall that $1 \leq n \leq N $ and $1 \leq k\leq N_1 < N$. Hence, all possible values of $a_1n + (a_2 - a_1)k$ lie between $-N(2|a_2| + |a_1|)$ and $N(2|a_2| + |a_1|)$.  Because of the bounds that appear on $m$, every such value that pairs with an $n$ to survive does appear.
For each $n$, the product of summands that does not disappear in the inner product has coefficient $f_1(T^{a_1n}x)f_2(T^{a_2n+(a_2 - a_1)k}x)$, so the above term expands to be exactly $\sum_{n=1}^N f_1(T^{a_1n}x) f_2(T^{a_2n+(a_2-a_1)k}x)$, which is the sum we are interested in. Since $N$ is fixed, the $1/N$ term on the outside is a constant. 

As we look at the terms $A_N(f_1, f_2 \circ T^{(a_2 - a_1)k})$ for $k = 1, \dots, N_1$, the above reasoning shows that each is a Fourier coefficient of the product of sums. By Bessel's inequality, when we square sum these terms over any $x$, we are bounded above by the squared $L^2$ norm in $t$, which is an integral from 0 to 1:

\begin{align*}
&\sum_{k=1}^{N_1} |A_N(f_1, f_2\circ T^{(a_2-a_1)k})(x)|^2\\ &= \frac{1}{N^2} \sum_{k=1}^{N_1} \left|\sum_{n=1}^N f_1(T^{a_1n}x) f_2(T^{a_2n+(a_2-a_1)k}x)\right|^2 \\
&\leq \frac{1}{N^2} \sum_{k=-\infty}^{\infty} \left|\sum_{n=1}^N f_1(T^{a_1n}x) f_2(T^{a_2n+(a_2 - a_1)k}x)\right|^2 \\
&\leq \frac{1}{N^2} \int_0^1 \left|\sum_{n=1}^{N} e^{2\pi i n a_2t} f_1(T^{a_1n} x) \right|^2 \left| \sum_{m=-N(2|a_2| + |a_1|)}^{N(2|a_2| + |a_1|)} e^{-2\pi i m t} f_2(T^{m} x)\right|^2dt \\
&\leq  \left(\sup_{t} \left|\frac{1}{N}\sum_{n=1}^{N} e^{2\pi i n a_2t} f_1(T^{a_1n} x) \right|^2 \right)\int_0^1\left| \sum_{m=-N(2|a_2| + |a_1|)}^{N(2|a_2| + |a_1|)} e^{-2\pi i m t} f_2(T^{m} x)\right|^2dt \\
&\leq  2N(2|a_2| + |a_1|+1) \Vert f_2 \Vert_\infty^2\left(\sup_{t} \left|\frac{1}{N}\sum_{n=1}^{N} e^{2\pi i n t} f_1(T^{a_1n} x) \right| \right)^2
\end{align*}
(note that at the end the $a_2$ is lost in the supremum in $t$). Call $C = 4|a_2| + 2|a_1| + 2$. If we divide by $N_1$, take the square root, and integrate, we have
\begin{align*}
\left\Vert \left( \frac{1}{N_1}\sum_{k=1}^{N_1} |A_N(f_1, f_2\circ T^{(a_2-a_1)k})|^2\right)^{1/2} \right\Vert_1 &\leq C^{1/2}\left(\frac{N}{N_1}\right)^{1/2}\Vert f_2 \Vert_\infty\left\Vert\sup_{t} \left|\frac{1}{N}\sum_{n=1}^{N} e^{2\pi i n t} f_1 \circ T^{a_1n} \right| \right\Vert_1 \\
&\leq C^{1/2}\left(\frac{N}{N_1}\right)^{1/2}\Vert f_2 \Vert_\infty |a_1|\left\Vert\sup_{t} \left|\frac{1}{N}\sum_{n=1}^{N} e^{2\pi i n t} f_1 \circ T^{n} \right| \right\Vert_1
\end{align*}
using Lemma \ref{l:power}.

Call $C' = C^{1/2}|a_1|$. Combining our two estimates, we get
\begin{align*}
\Vert A_N(f_1, f_2) \Vert_1 &\leq \Vert f_2 \Vert_\infty\max\{2\Vert f_1\Vert_\infty, C'\}\left( \frac{N_1}{N} + \left(\frac{N}{N_1}\right)^{1/2} \left\Vert\sup_{t} \left|\frac{1}{N}\sum_{n=1}^{N} e^{2\pi i n t} f_1 \circ T^n \right| \right\Vert_1\right)
\end{align*}
or
\begin{align*}
\Vert A_N(f_1, f_2) \Vert_1 &\leq C'\left( \frac{N_1}{N} + \left(\frac{N}{N_1}\right)^{1/2} \left\Vert\sup_{t} \left|\frac{1}{N}\sum_{n=1}^{N} e^{2\pi i n t} f_1\circ T^n  \right| \right\Vert_1\right) \, .
\end{align*}

Set $\delta = \left\Vert\sup_{t} \left|\frac{1}{N}\sum_{n=1}^{N} e^{2\pi i n t} f_1 \circ T^n  \right| \right\Vert_1$, which is less than 1, as $f_1$ is bounded by 1. If $N\delta^{2/3} \geq1$, choose $N_1 = \lfloor N\delta^{2/3} \rfloor$. Then the previous estimate becomes
\begin{align*}
\Vert A_N(f_1, f_2) \Vert_1 &\leq C'\left( \frac{\lfloor N \delta^{2/3}\rfloor}{N} + \left(\frac{N}{\lfloor N \delta^{2/3} \rfloor}\right)^{1/2} \delta\right) \, .
\end{align*}
Since $\frac{\lfloor a \rfloor}{a} \leq 1$ and $\frac{a}{\lfloor a \rfloor} \leq 2$ for all $a \geq 1$, we see that
\begin{align*}
\Vert A_N(f_1, f_2) \Vert_1 &\leq C'\left( \frac{\lfloor N \delta^{2/3}\rfloor}{N} + \left(\frac{N}{\lfloor N \delta^{2/3} \rfloor}\right)^{1/2} \delta\right) \\
&= C'\left( \delta^{2/3}\frac{\lfloor N \delta^{2/3}\rfloor}{N\delta^{2/3}} + \left(\frac{N\delta^{2/3}}{\lfloor N \delta^{2/3} \rfloor}\right)^{1/2} \delta^{2/3}\right) \\
&\leq C'(1 + \sqrt{2})\delta^{2/3} = C'(1 + \sqrt{2})\left\Vert\sup_{t} \left|\frac{1}{N}\sum_{n=1}^{N} e^{2\pi i n t} f_1\circ T^n \right| \right\Vert_1^{2/3} \, .
\end{align*}
If $N\delta^{2/3} < 1$, then $\delta < \frac{1}{N^{3/2}}$. Hence, if we pick $N_1 = 1$, we see that
\begin{align*}
\Vert A_N(f_1, f_2) \Vert_1 &\leq C'\left( \frac{1}{N} + N^{1/2}  \delta\right) \leq C' \left( \frac{1}{N} + N^{1/2}\frac{1}{N^{3/2}}\right) \leq \frac{2C'}{N} \, .
\end{align*}
So $\Vert A_N(f_1, f_2) \Vert_1$ is bounded by one of the above terms or the other, hence the sum, and we get the claimed bound:

$$\Vert A_N(f_1, f_2) \Vert_1 \leq C''\left(\frac{1}{N} +  \left\Vert \sup_{t} \left|\frac{1}{N} \sum_{n=1}^N f_1 \circ T^n e^{2\pi i n t} \right| \right\Vert_1^{2/3} \right) \, ,$$
where $C'' = (1 + \sqrt{2})C' = (1 + \sqrt{2})(4|a_2| + 2|a_1| + 2)^{1/2}|a_1|\, .$ 
\end{proof}

\begin{remark}
So long as $a_1, a_2$ are positive and $a_1 < a_2$, invertibility is not necessary.
\end{remark}
\begin{remark}
From the same ideas, it is possible to get a stronger bound, without the $1/N$ term. But this bound only holds true for sufficiently large $N$, depending on $f_1$.
\end{remark}

\printbibliography
\end{document}